\newtheorem{theorem}{Theorem}[subsection]
\newtheorem{prop}[theorem]{Proposition}
\newtheorem{lemma}[theorem]{Lemma}
\newtheorem{cor}[theorem]{Corollary}
\newtheorem{conj}[theorem]{Conjecture}
\newtheorem{problem}[theorem]{Problem}
\theoremstyle{definition}
\newtheorem{definition}[theorem]{Definition}
\newtheorem{observation}[theorem]{Observation}
\newtheorem{notation}[theorem]{Notation}
\newtheorem{convention}[theorem]{Convention}
\newtheorem{remark}[theorem]{Remark}
\newtheorem{example}[theorem]{Example}
\definecolor{orange}{rgb}{.95,0.5,0}
\definecolor{light-gray}{gray}{0.75}
\definecolor{brown}{cmyk}{0, 0.8, 1, 0.6}
\definecolor{plum}{rgb}{.5,0,1}
\DeclareMathOperator{\open}{{\sf open}}
\DeclareMathOperator{\inj}{{\sf inj}}
\DeclareMathOperator{\surj}{{\sf surj}}
\DeclareMathOperator{\creat}{{\sf creat}}
\DeclareMathOperator{\emb}{{\sf emb}}
\DeclareMathOperator{\rf}{{\sf ref}}
\DeclareMathOperator{\pcbl}{{\sf p.cbl}}
\DeclareMathOperator{\cylr}{{\sf Cylr}}
\DeclareMathOperator{\cylo}{{\sf Cylo}}
\DeclareMathOperator{\uno}{\mathbbm{1}}
\DeclareMathOperator{\isot}{{\sf Isot}}
\DeclareMathOperator{\bDelta}{\mathbf{\Delta}}
\DeclareMathOperator{\Set}{\sf Set}
\DeclareMathOperator{\Link}{\sf Link}
\DeclareMathOperator{\Unzip}{\sf Unzip}
\DeclareMathOperator{\Spaces}{\cS{\sf paces}}
\DeclareMathOperator{\Aut}{\sf Aut}
\DeclareMathOperator{\colim}{{\sf colim}}
\DeclareMathOperator{\limit}{{\sf lim}}
\DeclareMathOperator{\cone}{\sf cone}
\DeclareMathOperator{\Hom}{\sf Hom}
\DeclareMathOperator{\Fun}{{\sf Fun}}
\DeclareMathOperator{\Map}{{\sf Map}}
\DeclareMathOperator{\exit}{\sf Exit}
\DeclareMathOperator{\Exit}{\bcE{\sf xit}}
\DeclareMathOperator{\fexit}{\fE{\sf xit}}
\DeclareMathOperator{\Shv}{\sf Shv}
\DeclareMathOperator{\Cat}{{\sf Cat}}
\DeclareMathOperator{\gpd}{{\sf Gpd}}
\DeclareMathOperator{\cMfd}{{\sf c}\cM{\sf fd}}
\newcommand{\ov}{\overline}
\newcommand{\w}{\widetilde}
\newcommand{\lag}{\langle}
\newcommand{\rag}{\rangle}
\DeclareMathOperator{\Ar}{{\sf Ar}}
\DeclareMathOperator{\unzip}{\sf Unzip}
\DeclareMathOperator{\man}{\sf Man}
\DeclareMathOperator{\m}{\sf Mod}
\DeclareMathOperator{\shv}{\sf Shv}
\DeclareMathOperator{\psh}{\sf PShv}
\DeclareMathOperator{\Psh}{\sf PShv}
\DeclareMathOperator{\PShv}{\sf PShv}
\DeclareMathOperator{\lkan}{{\sf LKan}}
\DeclareMathOperator{\Diff}{{\sf Diff}}
\DeclareMathOperator{\op}{\mathsf{op}}
\DeclareMathOperator{\bu}{\cB\mathsf{un}}
\DeclareMathOperator{\Bun}{\cB\mathsf{un}}
\DeclareMathOperator{\cBun}{{\sf c}\cB\mathsf{un}}
\DeclareMathOperator{\bun}{\mathsf{Bun}}
\DeclareMathOperator{\cbun}{\mathsf{cBun}}
\DeclareMathOperator{\Bsc}{\cB {\sf sc}}
\DeclareMathOperator{\Snglr}{\cS{\sf nglr}}
\DeclareMathOperator{\st}{\mathsf{st}}
\DeclareMathOperator{\ev}{\mathsf{ev}}
\DeclareMathOperator{\cls}{\mathsf{cls}}
\DeclareMathOperator{\act}{\mathsf{act}}
\DeclareMathOperator{\cbl}{\mathsf{cbl}}
\DeclareMathOperator{\wcbl}{\mathsf{w.cbl}}
\DeclareMathOperator{\Mfd}{{\cM}\mathsf{fd}}
\DeclareMathOperator{\Emb}{\mathsf{Emb}}
\DeclareMathOperator{\strat}{\mathsf{Strat}}
\DeclareMathOperator{\Strat}{\cS\mathsf{trat}}
\DeclareMathOperator{\kan}{\mathsf{Kan}}
\DeclareMathOperator{\spaces}{\cS\mathsf{paces}}
\DeclareMathOperator{\spectra}{\cS\mathsf{pectra}}
\DeclareMathOperator{\sing}{\mathsf{Sing}}
\DeclareMathOperator{\set}{{\mathsf{Set}}}
\DeclareMathOperator{\vfr}{\sf vfr}
\DeclareMathOperator{\fr}{\sf fr}
\DeclareMathOperator{\sfr}{\sf sfr}
\DeclareMathOperator{\Bord}{\cB{\sf ord}}
\DeclareMathOperator{\Sing}{\mathsf{Sing}}
\def\ot{\otimes}
\DeclareMathOperator{\Fin}{\sf Fin}
\DeclareMathOperator{\oo}{\infty}
\DeclareMathOperator{\rfib}{\sf RFib}
\DeclareMathOperator{\Stri}{\boldsymbol\cS\mathsf{tri}}
\DeclareMathOperator{\trans}{\mathsf{Trans}}
\DeclareMathOperator{\tr}{\triangleright}
\DeclareMathOperator{\tl}{\triangleleft}
\newcommand{\ra}{\rightarrow}
\newcommand{\la}{\leftarrow}
\newcommand{\xra}{\xrightarrow}
\newcommand{\xla}{\xleftarrow}
\newcommand{\un}{\underline}
\def\cB{\mathcal B}\def\cC{\mathcal C}\def\cD{\mathcal D}
\def\cE{\mathcal E}\def\cF{\mathcal F}\def\cG{\mathcal G}
\def\cJ{\mathcal J}\def\cK{\mathcal K}\def\cL{\mathcal L}
\def\cM{\mathcal M}\def\cO{\mathcal O}\def\cP{\mathcal P}
\def\cR{\mathcal R}\def\cS{\mathcal S}\def\cT{\mathcal T}
\def\cU{\mathcal U}\def\cV{\mathcal V}\def\cX{\mathcal X}
\def\cY{\mathcal Y}
\def\AA{\mathbb A}\def\DD{\mathbb D}
\def\RR{\mathbb R}
\def\ZZ{\mathbb Z}
\def\sB{\mathsf B}\def\sC{\mathsf C}\def\sD{\mathsf D}
\def\sE{\mathsf E}
\def\sT{\mathsf T}
\def\bdelta{\mathbf\Delta}
\def\fC{\frak C}
\def\fE{\frak E}\def\fF{\frak F}
\def\fI{\frak I}
\def\bcE{\boldsymbol{\mathcal E}}
\def\oC{\ov{\sC}}
\begin{document}

\title{A stratified homotopy hypothesis}
\author{David Ayala}
\author{John Francis}
\author{Nick Rozenblyum}

\address{Department of Mathematics\\Montana State University\\Bozeman, MT 59717}
\email{david.ayala@montana.edu}
\address{Department of Mathematics\\Northwestern University\\Evanston, IL 60208}
\email{jnkf@northwestern.edu}
\address{Department of Mathematics\\University of Chicago\\Chicago, IL 60637}
\email{nick@math.uchicago.edu}
\thanks{DA was supported by the National Science Foundation under award 1508040.}
\thanks{JF was supported by the National Science Foundation under awards 1207758 and 1508040.}
\thanks{NR was supported by the National Science Foundation	while he was in residence at MSRI, during the Fall 2014 semester.}

\begin{abstract}
We show that conically smooth stratified spaces embed fully faithfully into $\oo$-categories. This articulates a stratified generalization of the homotopy hypothesis proposed by Grothendieck. As such, each $\oo$-category defines a stack on conically smooth stratified spaces, and we identify the descent conditions it satisfies. 
These include $\RR^1$-invariance and descent for open covers and blow-ups, analogous to sheaves for the h-topology in $\AA^1$-homotopy theory.
In this way, we identify $\oo$-categories as \emph{striation sheaves}, which are those sheaves on conically smooth stratified spaces satisfying the indicated descent. 
We use this identification to construct by hand two remarkable examples of $\oo$-categories: $\Bun$, an $\oo$-category classifying constructible bundles; and $\Exit$, the absolute exit-path $\oo$-category. 
These constructions are deeply premised on stratified geometry, the key geometric input being a characterization of conically smooth stratified maps between cones and the existence of pullbacks for constructible bundles.

\end{abstract}

\keywords{Stratified spaces. $\oo$-Categories. Complete Segal spaces. Quasi-categories. Constructible bundles. Constructible sheaves. Exit-path category. Striation sheaves. Transversality sheaves. Blow-ups. Resolution of singularities.}

\subjclass[2010]{Primary 57N80. Secondary 57P05, 32S60, 55N40, 57R40.}

\maketitle

\tableofcontents

\section*{Introduction}

The present work lies in a program whose aim is to fuse manifold topology with $\oo$-category theory. The following hypothesis guides our program: a manifold can be encoded as a moduli space for its stratifications. The manifold itself is a point-set geometric object, while the moduli space is $\oo$-categorical. The first step in our program was taken in work with Hiro Lee Tanaka in \cite{aft1}.
There, using a key notion of conical smoothness, foundations were laid for a theory of conically smooth stratified spaces so as to accommodate moduli.

\smallskip

The present work studies the interplay of geometry and homotopy theory of conically smooth stratified spaces. We show that the homotopy theory of conically smooth maps carries a universal property; this property forms a stratified generalization of the homotopy hypothesis put forward by Grothendieck in \cite{groth}, which we now recall.
To a manifold, one can associate an $\oo$-category whose objects are points of the manifold and whose morphisms are paths between points. The homotopy hypothesis asserts that this association is fully faithful: the space of maps between manifolds is homotopy equivalent to the space of functors between associated $\oo$-categories, whatever model for $\oo$-categories one selects. The $\oo$-categories this association produces are $\oo$-groupoids: every morphism is an equivalence.

\smallskip

This association of an $\oo$-groupoid to a manifold has a stratified generalization: the exit-path $\oo$-category of Lurie \cite{HA}. As proposed first by MacPherson and developed by Treumann \cite{treumann}, from a suitable stratified space $X$ one can define an entity $\exit(X)$ whose objects are points of the space and whose morphisms are those paths whose direction is restricted by the stratification: the paths can exit a stratum into a less deep stratum, but they cannot return. We prove that the space of conically smooth maps between conically smooth stratified spaces is homotopy equivalent to the space of functors between their exit-path $\oo$-categories. That is, the exit-path functor
\[\xymatrix{
\Strat\ar@{^{(}->}[rr]^\exit&&\Cat_{\oo}}
\]
is a fully faithful embedding of conically smooth stratified spaces, and conically smooth maps among them, into $\oo$-categories. 
This is our articulation of a stratified homotopy hypothesis.
The $\oo$-categories produced by this association have the property that each endomorphism of an object is an equivalence.

\smallskip

The proof of this result follows from a formulation of d\'evissage of stratified structures, like that alluded to by Grothendieck in \cite{esquisse}. In essence, ours states that one can understand conically smooth stratified spaces by a combined understanding of cones, blow-ups, manifolds with corners, and induction on depth. Further, the same is true in families by the strong regularity properties afforded by conical smoothness.

\smallskip

As a consequence, one can view an $\oo$-category as a pre-stack on conically smooth stratified spaces: given an $\oo$-category $\cC$ and a conically smooth stratified space $K$, a $K$-point of $\cC$ is a functor
\[\exit(K)\longrightarrow\cC\]
from the exit-path $\oo$-category of $K$ to $\cC$. One can then ask what descent properties this pre-stack is endowed with. The following is a list of locality properties that a space-valued presheaf on conically smooth stratified spaces might or might not possess; we make these precise in ~\S\ref{sec.striation} (see Definition~\ref{def.striation}, amplified by Remark~\ref{strong-striation}).

\begin{itemize}
\item {\bf Sheaf:} the presheaf satisfies descent for open covers.
\item {\bf Constructible:} the sheaf is locally constant on each stratum of each conically smooth stratified space.
\item {\bf Cone-local:} the constructible sheaf satisfies descent for blow-ups along deepest strata.
\item {\bf Consecutive:} the cone-local constructible sheaf has values on iterated cones determined by values on single cones.  
\item {\bf Univalent:} the consecutive cone-local constructible sheaf has \emph{underlying} locally constant sheaf which agrees with its \emph{maximal sub}-locally constant sheaf.

\end{itemize}

We define the collection of striation sheaves, $\Stri$, to consist of those presheaves on conically smooth stratified spaces which satisfy all of these conditions. The following is the first main theorem of this paper.
\begin{theorem}\label{thm.strat-hom-hyp}
There is an equivalence 
\[\Stri ~\simeq~\Cat_{\oo}\]
between striation sheaves and $\oo$-categories. This equivalence sends an $\oo$-category $\cC$ to the presheaf on $\Strat$ taking values
\[K \mapsto \Map(\exit(K),\cC)\]
where $\exit(K)$ is the exit-path $\oo$-category of the conically smooth stratified space $K$.
\end{theorem}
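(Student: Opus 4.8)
The plan is to compare both sides of the claimed equivalence with the model of $\infty$-categories as complete Segal spaces, thereby reducing the theorem to a purely geometric dévissage statement about striation sheaves on simplices and cones. First I would name the candidate functor $\Phi\colon\Cat_{\oo}\to\Psh(\Strat)$, $\cC\mapsto\bigl(K\mapsto\Map(\exit(K),\cC)\bigr)$ — the restricted Yoneda functor along $\exit$ — and verify that it factors through $\Stri$, i.e.\ that $\Phi(\cC)$ satisfies the five locality conditions of Definition~\ref{def.striation} for every $\cC$. Each of these is a geometric property of $\exit$ transported through the fact that a mapping space out of a colimit of $\infty$-categories is the corresponding limit of mapping spaces: the \emph{sheaf} condition comes from a van Kampen/excision presentation of $\exit$ of an open cover as a colimit; \emph{constructible} from $\RR^{1}$-invariance and local triviality of $\exit$ along each stratum; \emph{cone-local} from the fact that the blow-up of a stratified space along its deepest stratum presents $\exit$ of the space as a colimit built from $\exit$ of the blow-up and of the deepest stratum; \emph{consecutive} from the analogous decomposition of $\exit$ of an iterated cone into composites built from single cones; and \emph{univalent} from the defining feature of exit-path $\infty$-categories that every endomorphism is an equivalence, so that the \emph{underlying} and \emph{maximal sub} locally constant sheaves of $\Phi(\cC)$ coincide.

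Next I would fix a cosimplicial stratified space $\Delta^{\bullet}\colon\bDelta\to\Strat$ — the closed simplices with the standard stratification for which $\exit(\Delta^{n})\simeq[n]$, so that the Segal gluings of $\Delta^{\bullet}$ correspond to gluings of the cone $C(\ast)\simeq\Delta^{1}$ — and restrict presheaves along it, writing $\delta^{\ast}\colon\Psh(\Strat)\to\Psh(\bDelta)$ with left adjoint $\delta_{!}$. By construction $\delta^{\ast}\circ\Phi$ is the nerve functor $\cC\mapsto\bigl([n]\mapsto\Map_{\Cat_{\oo}}([n],\cC)\bigr)$, which is well known to be fully faithful with essential image the complete Segal spaces. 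Hence the theorem reduces to the single assertion that $\delta^{\ast}$ carries $\Stri$ equivalently onto the full subcategory of complete Segal spaces: granting this together with the previous step, $\Phi$ and $\delta^{\ast}\vert_{\Stri}$ fit into a commuting triangle whose third edge is an equivalence, so $\Phi$ is an equivalence by the two-out-of-three property, with the asserted value $K\mapsto\Map(\exit(K),\cC)$.

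The hard part will be that last assertion, which is precisely a dévissage of striation sheaves: one must show that a striation sheaf is freely and faithfully reconstructed from its restriction to simplices and cones, constrained by exactly the Segal and completeness conditions. I would argue by induction on depth. The \emph{sheaf} condition reduces $F(K)$ to the values of $F$ on the basics covering $K$; each basic has the form $\RR^{k}\times C(Z)$ with $Z$ compact of strictly smaller depth, and \emph{constructible} deletes the Euclidean factor; \emph{cone-local} expresses $F(C(Z))$ via $F$ of the blow-up $\Link$-bundle and of the cone point, lowering the depth; \emph{consecutive} organizes the resulting iterated-cone data so the Segal maps of $\delta^{\ast}F$ become equivalences; and \emph{univalent} is verbatim the completeness of $\delta^{\ast}F$. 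The essential geometric input making all of these reductions functorial and mutually compatible — so that the reconstruction of $F$ from $\delta^{\ast}F$ is canonical and leaves no data beyond a complete Segal space — is the characterization of conically smooth maps between cones recalled earlier in the paper; this is the step I expect to be the main obstacle.

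Finally I would organize the whole comparison as a tower of accessible Bousfield localizations of $\Psh(\Strat)$, successively imposing \emph{sheaf}, then \emph{constructible}, then \emph{cone-local}, then \emph{consecutive}, then \emph{univalent}, and match it stage by stage against the localization of $\Psh(\bDelta)$ onto complete Segal spaces, with the adjunction $\delta_{!}\dashv\delta^{\ast}$ descending to the desired equivalence at the top. Composing with the equivalence between complete Segal spaces and $\Cat_{\oo}$ then yields $\Stri\simeq\Cat_{\oo}$, realized by $\Phi$ as claimed.
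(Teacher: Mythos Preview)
Your proposal is correct and follows essentially the same architecture as the paper: compare both sides to complete Segal spaces via the cosimplicial stratified space $\Delta^{\bullet}$, so that the theorem reduces to showing that restriction along $\bdelta\to\Strat$ carries striation sheaves equivalently onto complete Segal spaces, and then run a d\'evissage on basics and depth. The paper organizes this slightly differently by first isolating the intermediate equivalence $\shv^{\sf cone,\cbl}(\strat)\simeq\Psh(\bdelta)$ (Lemma~\ref{cone-sheaves}) and only afterwards matching \emph{consecutive} with Segal and \emph{univalent} with complete; the technical heart of that lemma is exactly the step you flag as the main obstacle, handled there via Lemma~\ref{cones-of-covers} (controlling values on iterated cones of a hypercover) together with a downward induction on the maximal $p$ for which $Z\cong\oC^p(L)$, which is the precise form your ``induction on depth'' must take.
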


Striation sheaves offer a workable model for $\oo$-category theory with appealing theoretical features -- for instance, the $\ZZ/2$ action on $\Cat_{\oo}$ sending $\cC \mapsto \cC^{\op}$ acts on $\Stri$ by Poincar\'e duality of stratifications (see Remark \ref{rm.not-canonical}). However, we do not introduce striation sheaves in order to compete with quasi-categories as a foundational setting of choice. Rather, we use them to make interesting $\oo$-categories by hand from geometry.

\smallskip

The following perspective informs this use. Quasi-categories make for such an economical model for $\oo$-category theory in part by allowing for coherently, rather than strictly, associative compositions of morphisms. It is technically useful to allow for coherence, but it is technically difficult to specify it. As a result, quasi-categories of interest are rarely arrived at by concretely specifiying a simplicial set and verifying the inner horn filling conditions. Rather, they typically result from a sequence of three steps. First, one writes down a topological, simplicial, or otherwise enriched category concretely. Second, one converts it into a quasi-category via a nerve construction. Third, one performs a formal $\oo$-categorical maneuver (such as taking limits/colimits, presheaves, Dwyer--Kan localization, \&c) to effect a desired end result.

\smallskip

We posit that there are profound examples of $\oo$-categories which are impracticable to produce by this sequence, and we advance striation sheaves as a conduit through which manifold topology can form such profound examples. In the present work we construct several: the $\oo$-category $\Bun$ classifying constructible bundles, and an absolute exit-path $\oo$-category $\Exit$. These build toward a third example, the earlier mentioned moduli space of stratifications on a manifold, which is the subject of a future work. 

\smallskip

First, we give a construction of the exit-path $\oo$-category $\exit(X)$ of a conically smooth stratified space $X$ as a striation sheaf. It is the presheaf represented by $X$ itself. The exit-path quasi-category is one of the few quasi-categories which is actually written down by hand, and the verification of inner horn filling conditions by repeated subdivisions is quite involved (see Appendix A of~\cite{HA}). Our verification that $\exit(X)$ is a striation sheaf is perhaps easier because $\exit(X)$ is so geometric. For locality in geometry, covers are often more workable than subdivisions. Specifically, one can take open covers of simplices. Such a maneuver lives outside the world of quasi-categories and simplicial sets, but within that of striation sheaves.

\smallskip

Our foremost example of interest ensues from the geometry of {\it constructible bundles}, a notion introduced in this work's predecessor \cite{aft1}.\footnote{We recently learned that a spiritually similar notion was introduced by Ren\'e Thom in the 1960s; he called them gentle maps, and they have since been called Thom mappings by Mather \cite{mather.survey} and others. See Conjecture \ref{conj.thommaps}.} A constructible bundle $X\ra K$ is a conically smooth map of stratified spaces whose restriction to each stratum $X_{|K_q}\ra K_q$ is a stratified fiber bundle over an ordinary smooth manifold; a likewise condition is also required on links. We now describe the $\oo$-category, $\Bun$, which results from this notion. The objects are conically smooth stratified spaces. A morphism in $\Bun$ between between two conically smooth stratified spaces, $X_0 \ra X_1$, consists of: a third conically smooth stratified space $X$; a constructible bundle $f:X \ra [0,1]$ to the stratified interval whose stratification is given by $\{0\}\subset [0,1]$; identifications of the fibers $X_0 \cong X_{|\{0\}}$ and $X_1 \cong X_{|\{1\}}$. The following commutative diagram depicts a morphism from $X_0$ to $X_1$:
\[\xymatrix{
X_0 \ar@{^{(}->}[r]\ar[d]&X\ar[d]_f^{\cbl}&\ar@{_{(}->}[l]X_1\ar[d]
\\
\{0\}\ar[r]&[0,1]&\ar[l]\{1\}.\\}
\] 
Here $\cbl$ denotes the condition of $f$ being constructible. This implies, in particular, that the restriction $X_{|(0,1]}$ is isomorphic to the product bundle $X_1 \times (0,1]$.

One comes to grips with the intrinsic $\oo$-categorical nature of $\Bun$ in attempting to compose morphisms. Given two constructible bundles $X\ra [0,1]$ and $Y \ra [1, 2]$ with identifications $X_1 \cong Y_1$, one would like to glue the two intervals end-to-end and form a space $X\cup Y \ra [0,2]$ representing a morphism in $\Bun$ from $X_0$ to $Y_2$. However, the restriction $X\cup Y_{|(0,2]}$ need not be equivalent to the product bundle $Y_2\times (0,2]$. To solve this composition problem requires deforming $X\cup Y$ into a conically smooth stratified space mapping constructibly to the stratified interval $(\{0\}\subset [0,2])$. This deformation constitutes a resolution of singularities, retracting certain floating strata to $\{0\}$. Here, one confronts an inner horn-filling problem as a fundamental feature of the geometry of constructible bundles. The solution has several essential steps:
\begin{enumerate}
\item\label{horn.one} There exists an essentially unique constructible bundle $Z\ra \Delta^2$ with restrictions $Z_{|\Delta^{\{0<1\}}} \cong X$ and $Z_{|\Delta^{\{1<2\}}} \cong Y$: this requires a condition on links (Definition \ref{def.cbl}).
\item The restriction $Z_{|\Delta^{\{0<2\}}}\ra \Delta^{\{0<2\}}$ is again a constructible bundle: this is a special case of the pullback property of constructible bundles (Lemma \ref{cbls-pullback}).
\end{enumerate}

\smallskip

Contemplating this problem might cement two notions. First, $\Bun$ does not admit any obvious manifestation in a model of $\oo$-categories with strictly associative compositions, such as topological categories. Second, in verifying this horn-filling condition one would want to use open covers of the topological simplex $\Delta^2$ as a topological space, rather than subdivisions of the simplicial set $\Delta[2]$. Our theory of striation sheaves is tailored to such situations.

\smallskip

Due to the pullback property of Lemma \ref{cbls-pullback}, one can regard $\Bun$ in a natural way as a presheaf on conically smooth stratified spaces, where the value on a stratified space
\[\Bun(K)\]
consists of all constructible bundles over $K$.
Viewed in this way, the previous horn-filling condition of step (\ref{horn.one}) above becomes the consecutivity axiom for striation sheaves. The second main theorem of our work is the following.
\begin{theorem}\label{main2}
$\Bun$ is a striation sheaf.
\end{theorem}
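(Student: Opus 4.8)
The plan is to verify each of the five defining conditions of a striation sheaf (Definition~\ref{def.striation}) in turn for the presheaf $K \mapsto \Bun(K)$, where $\Bun(K)$ is the space of constructible bundles over $K$. Before checking the axioms, I would first settle the foundational point that $\Bun$ is indeed a presheaf of spaces: for a conically smooth map $K' \to K$ one pulls back a constructible bundle $X \to K$ to $X\times_K K' \to K'$, and one must verify that this pullback exists in the category of stratified spaces, is again constructible, and is functorial. The key geometric input here is the characterization of stratified maps between cones advertised in the introduction, which controls how constructible bundles restrict and pull back near deep strata; I would invoke this together with the local structure theory of conically smooth maps from \cite{aft1}.

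With that in place, I would check the conditions in order. \emph{Sheaf}: given an open cover $\{U_i\}$ of a stratified space $K$, a constructible bundle over $K$ is the same datum as a compatible family of constructible bundles over the $U_i$ together with gluing isomorphisms over the intersections satisfying a cocycle condition — this is descent for the stack of stratified spaces, and constructibility is a local condition on the base, so it descends. \emph{Constructible}: one must show that over a single stratum $K_q$ (an ordinary smooth manifold) the restriction of a constructible bundle is locally constant as a function of the base point; this follows from the definition of constructible bundle, since $X_{|K_q} \to K_q$ is by hypothesis a stratified fiber bundle, hence locally trivial, hence the assignment of the fiber is locally constant on $K_q$. \emph{Cone-local}: descent for blow-ups along deepest strata — here one uses that a constructible bundle over the blow-up of $K$ along its deepest stratum, together with its restriction over a punctured neighborhood and the evident compatibility, recovers a constructible bundle over $K$; again the cone-map characterization is what makes the gluing near the deepest stratum go through, since it identifies the possible behaviors of $f$ over a cone with the blow-up data. \emph{Consecutive}: values on iterated cones $\mathsf{C}(\mathsf{C}(L))$ are determined by values on single cones; this is essentially the associativity/composition phenomenon discussed in the introduction around gluing $X \cup Y \to [0,2]$, reduced to its germ at the cone points. \emph{Univalent}: the underlying locally constant sheaf of $\Bun$ agrees with its maximal sub-locally constant sheaf — concretely, a constructible bundle over an interval all of whose specialization data are invertible (i.e. isomorphisms rather than merely constructible bundles) is the same as a locally trivial bundle, so no information is lost, and endomorphisms in the associated $\oo$-category that are ``equivalences'' on underlying spaces are genuine equivalences.

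I expect the main obstacle to be the \emph{consecutivity} axiom. This is precisely the point where the inner-horn-filling / composition problem for $\Bun$ lives: given a constructible bundle over $\mathsf{C}(\mathsf{C}(L))$, or equivalently two constructible bundles over adjacent cone-intervals with matching middle fiber, one must produce — coherently, and functorially in the link $L$ — a single constructible bundle over one cone that carries the same data, which amounts to a resolution-of-singularities argument retracting the ``floating'' intermediate strata onto the cone point. The delicate part is not the existence of such a deformation in a single instance but its naturality and compatibility: one needs the resolution to be performed conically smoothly, to commute with restriction to links and with pullback along maps of $L$, and to be canonical up to a contractible space of choices so that the resulting comparison of mapping spaces is an equivalence rather than merely a bijection on components. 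I would handle this by isolating a \emph{local} normal-form statement for constructible bundles over a cone (from the cone-map characterization), using it to build the deformation on a neighborhood of the cone point, and then patching via the already-established sheaf condition; the contractibility of the space of such deformations would be extracted from the contractibility of the relevant spaces of conically smooth collar/vector-field data, as in \cite{aft1}. The remaining four axioms I expect to be comparatively routine once the pullback and local-structure foundations are in hand.
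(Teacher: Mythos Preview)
Your proposal has the right overall shape and correctly identifies consecutivity as the crux, but it has a structural gap that the paper's approach is designed precisely to avoid.

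\textbf{The missing model for $\Bun(K)$ as a space.} You never say what ``the space of constructible bundles over $K$'' actually \emph{is}. At the point-set level one has the groupoid $\bun(K)$ of constructible bundles and isomorphisms, but the striation sheaf axioms are conditions on limits of \emph{spaces}, and the naive classifying space $B\bun(K)$ does not obviously have the right homotopy type or functoriality to make, e.g., the sheaf condition hold. The paper's route is to introduce the \emph{topologizing diagram} $K\mapsto |\bun(K\times\Delta^\bullet_e)|$ and prove a general theorem (Theorem~\ref{transversality.conditions}) that point-set conditions on the groupoid-valued presheaf $\bun$ --- sheaf, isotopy extension, isotopy equivalence, cone-local, consecutive, univalent --- force the topologizing diagram to be a striation sheaf. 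The proof of that general theorem is where the real work on the sheaf condition lives: one shows that restriction along a weakly regular subspace induces a Kan fibration of simplicial sets (via the diagonal model structure on bisimplicial sets), and then homotopy descent follows from strict descent. Your ``compatible family + cocycle'' sketch is groupoid-level descent; promoting it to space-level descent is exactly the content you are skipping.

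\textbf{The constructible condition is misread.} The condition is $\RR$-invariance of the presheaf: $\Bun(K)\simeq\Bun(K\times\RR)$. This has nothing to do with the fibers of an individual bundle $X\to K$ being locally constant on strata of $K$. The actual argument (Lemma~\ref{cbls-with-R}) is that any constructible bundle over $K\times\RR$ is isomorphic to a product $X_0\times\RR$ over $K\times\RR$, proved by constructing a parallel vector field on the total space lifting $\partial_t$; this is a nontrivial inductive argument on depth.

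\textbf{Consecutivity.} Your ``local normal form plus contractible space of deformations'' idea is in the right spirit, but the paper executes it differently and more sharply. Rather than deforming, it classifies constructible bundles over $\oC^p(Z)$ combinatorially via iterated spans (the categories ${\sf Burn}_p(Z)$ of \S\ref{sec.decompose}), proves that ${\sf Burn}_p(Z)\to{\sf Cbl}(\oC^p(Z))$ is essentially surjective (Corollary~\ref{gpd-ess}), and uses the Segal-type pullback~(\ref{burn-segal}) of the Burn categories to deduce that $\bun(\Delta^p)\to\bun(\Delta^k)\times_{\bun(\ast)}\bun(\Delta^{p-k})$ is essentially surjective and full (Corollary~\ref{Cbl-ess}). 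Crucially, this is a \emph{groupoid-level} statement; its promotion to the space level is again handled once and for all by Theorem~\ref{transversality.conditions}. A direct space-level consecutivity argument along your lines would require controlling not just existence but all higher coherences of the resolution, which is substantially harder.

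\textbf{Univalence.} The paper's argument here is the equivalence $\Aut(X)\simeq\Emb^\sim(X,X)$ for finitary $X$, proved by a shrinking argument using a compact core $Z\subset X$ and a conjugation-by-flow homotopy; your sketch does not identify this mechanism.

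In short: your five-axiom outline is reasonable, but without the transversality-sheaf / topologizing-diagram layer you have no concrete handle on $\Bun(K)$ as a space, and several of your verifications are either at the wrong level (groupoid vs.\ space) or aimed at the wrong statement.
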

As such, we regard $\Bun$ as an $\oo$-category by way of Theorem \ref{thm.strat-hom-hyp}. A $K$-point of this $\oo$-category, namely, a functor
\[\exit(K) \longrightarrow\Bun~,\]
exactly consists of a constructible bundle $X\ra K$.
The $\infty$-category $\Bun$ encodes a great deal.  From it one can extract spaces of conically smooth open embeddings, the models of moduli spaces of manifolds of Hatcher \cite{hatcher} and Waldhausen \cite{waldhausen}, and spaces of stratifications on manifolds; combinatorially, one can extract the category of based finite sets and the simplicial category $\bdelta$.

\smallskip

Our first two examples of striation sheaves, $\Bun$ and exit-path $\oo$-categories, are related to one another. $\Bun$ parametrizes an absolute version of the exit-path $\oo$-category, $\Exit$, which we now describe. An object of $\Exit$ is a conically smooth stratified space with a point. A morphism between two pointed stratified spaces $X_0$ and $X_1$ is a third stratified space $X$ with a constructible bundle $X\ra [0,1]$, identifications of the fibers with $X_0$ and $X_1$, together with the additional datum of a stratified section $[0,1] \ra X$ starting from the distinguished point of $X_0$ and ending at the distinguished point of $X_1$. This is depicted in the following diagram:
\[\xymatrix{
X_0 \ar@{^{(}->}[r]\ar[d]&X\ar[d]^{\cbl}&\ar@{_{(}->}[l]X_1\ar[d]\\
\ar@/^1pc/[u]\{0\}\ar[r]&[0,1]\ar@/^1pc/[u]&\ar@/_1pc/[u]\ar[l]\{1\}~.\\}
\]
We solve the existence of compositions in $\Exit$ in the same way as for $\Bun$, by realizing $\Exit$ as a presheaf on conically smooth stratified spaces. This description is likewise very natural: an object of $\Exit(K)$ is a constructible bundle to $K$ together with a stratified section, $X\rightleftarrows K$. The following corollary of Theorem~\ref{main2} realizes $\Exit$ as an absolute exit-path $\oo$-category, tying together the three principal examples of striation sheaves introduced in this work.

\begin{cor}
$\Exit$ is a striation sheaf. For a $K$-point of $\Bun$ defined by a constructible bundle $X\ra K$, there is a resulting commutative diagram
\[\xymatrix{
\exit(X)\ar[rr]^-{(X\underset{K}\times X \rightleftarrows X)}\ar[d]&&\Exit\ar[d]\\
\exit(K)\ar[rr]^-{(X\xra{\cbl} K)}&&\Bun\\}\]
which is a limit diagram of $\oo$-categories.
\end{cor}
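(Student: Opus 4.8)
The plan is to derive both assertions from Theorem~\ref{main2} together with the characterization of conically smooth maps of cones that powers it.

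\textbf{Step 1: $\Exit$ is a striation sheaf.} I would first record the forgetful map of presheaves $u\colon\Exit\to\Bun$, whose value on a stratified space $K$ discards the stratified section, $(X\xrightarrow{\cbl}K,\ \sigma\colon K\to X)\mapsto(X\xrightarrow{\cbl}K)$, and whose fiber over a constructible bundle $X\to K$ is the space of conically smooth sections of $X\to K$. The strategy is to check each of the five conditions of Definition~\ref{def.striation} for $\Exit$ by combining the corresponding condition for $\Bun$ (Theorem~\ref{main2}) with the elementary fact that stratified sections of a fixed constructible bundle form a sheaf on the base. Thus: descent for open covers holds because a constructible bundle-with-section over $K$ is assembled from such data over a cover — the bundle by the Sheaf axiom for $\Bun$, the section because sections glue along covers of the base; the Constructible and Univalent conditions for $\Exit$ follow from those for $\Bun$, since along the isotopies and equivalences that witness local constancy of $\Bun$ the space of sections is dragged along functorially; and Cone-local descent along blow-ups of deepest strata again factors into the bundle part (handled by $\Bun$) and a section over the blow-up descending uniquely by its evident compatibility.

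\textbf{Step 2: the crux — consecutivity.} The one axiom that does not reduce formally is Consecutive: a constructible bundle over an iterated cone $\cone(\cone(Z))$ equipped with a stratified section emanating from the deepest cone point must be reconstructed from the corresponding data over single cones. This is exactly the composition problem for $\Exit$ from the introduction — gluing $X\to[0,1]$ and $Y\to[1,2]$, each carrying a section, end to end and resolving the union into a constructible bundle over $(\{0\}\subset[0,2])$ — now with the added demand that the resolution-of-singularities deformation be carried out compatibly with the prescribed section, so that the section is deformed along to one emanating from $\{0\}$. I expect this to be the main obstacle: it requires upgrading the deformation built in the proof of Theorem~\ref{main2} so that it fixes, or continuously carries, a given stratified section, which in turn amounts to applying the characterization of stratified maps between cones to the pair (bundle, section) rather than to the bundle alone; once that characterization is in hand in this slightly enriched form, the reconstruction proceeds as for $\Bun$.

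\textbf{Step 3: the square is a limit diagram.} Granting Steps~1--2, all four corners lie in $\Stri$, where limits are computed pointwise in presheaves on $\Strat$; so it suffices to show, for every stratified space $T$, that the map
\[
\Strat(T,X)\;=\;\exit(X)(T)\;\longrightarrow\;\Exit(T)\underset{\Bun(T)}{\times}\exit(K)(T)
\]
is an equivalence of spaces. I would unwind the target: using that $\exit(K)(T)=\Strat(T,K)$, that $\Bun(T)$ is the space of constructible bundles over $T$, and that $\Exit(T)$ is the space of such bundles equipped with a stratified section, a point of the target is a conically smooth map $g\colon T\to K$ together with a stratified section of the pullback $g^{\ast}X=T\times_K X\to T$; such a section is the same datum as a conically smooth map $h\colon T\to X$ subject to $\pi_X\circ h=g$, so that $g$ is redundant and the target is canonically $\Strat(T,X)$. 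Tracing the two legs out of $\exit(X)$ — the upper one being the $X$-point of $\Exit$ named by $X\times_K X\to X$ with its diagonal section, the lower one being $\pi_X\colon X\to K$ — identifies the comparison map, under this identification, with the identity of $\Strat(T,X)$. The only point requiring care is that this identification be natural in $T$ and homotopy coherent, which follows from the way $\Exit\to\Bun$ is organized as a presheaf so that base change along any $K$-point of $\Bun$ returns the presheaf represented by the total space.
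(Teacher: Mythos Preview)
Your Step~3 is essentially the paper's argument for the pullback square: it is the content of the Proposition following Theorem~\ref{exit-transverse}, whose one-line proof is exactly your identification of a section of $g^\ast X\to T$ with a conically smooth map $T\to X$ over $K$.

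Your Steps~1--2 take a different route from the paper and, in Step~2, leave a genuine gap. The paper does not verify the striation-sheaf axioms for $\Exit$ directly; rather it shows that the underlying groupoid-valued right fibration $\fexit\to\strat$ is a \emph{transversality sheaf} (Theorem~\ref{exit-transverse}) and then invokes Theorem~\ref{transversality.conditions}. This matters because the point-set conditions of Theorem~\ref{transversality.conditions} are what one can actually check by hand; the passage to the space-valued conditions of Definition~\ref{def.striation} is absorbed by the topologizing-diagram machinery. Several of your Step~1 reductions (``sections glue'', ``dragged along functorially'') are really statements about $\fexit$, not about $\Exit$, and they become precise only at that level. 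In particular, Univalence for $\Exit$ is not a formal consequence of Univalence for $\Bun$: it is the \emph{pointed} statement $\Aut_\ast(X)\simeq\Emb^\sim_\ast(X,X)$, obtained from the unpointed one by comparing fibers of the evaluation maps.

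The substantive gap is in Consecutivity. Your description---``upgrading the deformation built for $\Bun$ so that it carries a given stratified section''---does not name a mechanism, and the resolution-of-singularities picture from the introduction is heuristic, not how the paper actually argues. The paper's trick is different and concrete: given $X\to\Delta^p$ and a section $s_0$ over the spine $\Delta^{\{0<\dots<k\}}\cup_{\Delta^{\{k\}}}\Delta^{\{k<\dots<p\}}$, first \emph{refine} the total space so that $s_0$ becomes a constructible embedding; then use the already-proved Consecutivity of $\bun$ to extend this refined bundle $\w{X}_0$ to some $\w{X}\to\Delta^p$; finally, take $S\subset\w{X}$ to be the intersection of all constructible subspaces containing the image of $s_0$, observe via Lemma~\ref{to-R} that $S\to\Delta^p$ is a constructible bijection, hence an isomorphism, and compose its inverse with the refinement $\w{X}\to X$ (supplied by Corollary~\ref{gpd-ess}) to obtain the desired section $s\colon\Delta^p\to X$. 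None of this is visible in your sketch, and without it Step~2 is not a proof.
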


\subsection*{Future works and conjectures}
This work is the second paper in a larger program, currently in progress. 
We now outline a part of this program, in order of logical dependency. 
This part consists of a number of papers, the last of which proves the cobordism hypothesis, after Baez--Dolan \cite{baezdolan}, Costello \cite{cycat}, Hopkins--Lurie (unpublished), and Lurie \cite{cobordism}.
\begin{enumerate}

\item[\bf \cite{aft1}:] {\bf Local structures on stratified spaces}, by the first two authors with Hiro Lee Tanaka, establishes a theory of stratified spaces based on the notion of conical smoothness.
This theory is tailored for the present program, and intended neither to supplant or even address outstanding theories of stratified spaces.
This theory of conically smooth stratified spaces and their moduli is closed under the basic operations of taking products, open cones of compact objects, restricting to open subspaces, and forming open covers, and it has a notion of derivative which, in particular, gives the following critical result: 
\begin{itemize}
\item[~] For the open cone $\sC(L)$ on a compact conically smooth stratified space $L$, taking the derivative at the cone-point implements a homotopy equivalence between \emph{spaces} of conically smooth automorphisms
\[
\Aut\bigl(\sC(L)\bigr)~\simeq~\Aut(L)~.
\] 
\end{itemize}
This work also introduces the notion of a constructible bundle, along with other classes of maps between stratified spaces.

\item[\bf Present:]  The present work proves that stratified spaces are parametrizing objects for $\infty$-categories.
Specifically, we construct a functor $\exit\colon \strat \to \Cat_\infty$ and show that the resulting restricted Yoneda functor $\Cat_\infty \to \Psh(\strat)$ is fully faithful.
The image is characterized by specific geometric descent conditions.
We call these presheaves \emph{striation sheaves}. 
We develop this theory so as to construct particular examples of $\infty$-categories by hand from stratified geometry: $\Bun$, $\Exit$, and variations thereof. 
As striation sheaves, $\Bun$ classifies constructible bundles, $\Bun \colon  K\mapsto \{X\xra{\sf cbl} K\}$,
while $\Exit$ classifies constructible bundles with a section.

\item[\bf \cite{emb1a}:]
{\bf Factorization homology I: higher categories} proves that vari-framed stratified $n$-manifolds are parametrizing objects for $(\infty,n)$-categories.  
Namely, we construct a tangent classifier $\sT \colon \Exit \to {\cV}{\sf ect}^{\sf inj}$ to an $\infty$-category of vector spaces and injections there among.
We use this to define $\infty$-categories $\cMfd_n^{\vfr}$ of \emph{vari-framed compact $n$-manifolds}, and $\cMfd_n^{\sfr}$ of \emph{solidly framed compact $n$-manifolds}. As a striation sheaf, $\cMfd_n^{\vfr}$ classifies proper constructible bundles equipped with a trivialization of their fiberwise tangent classifier, and $\cMfd_n^{\sfr}$ classifies proper constructible bundles equipped with an injection of their fiberwise tangent classifier into a trivial  $n$-dimensional vector bundle.
We then construct a functor $\fC\colon (\cMfd_n^{\vfr})^{\op} \to \Cat_{(\infty,n)}$ between $\infty$-categories, and use this to define \emph{factorization homology}. This takes the form of a functor between $\infty$-categories
\[
\int \colon \Cat_{(\infty,n)} \longrightarrow \Fun\bigl(\cMfd_n^{\vfr}, \Spaces\bigr)
\]
that we show is fully faithful. In this sense, vari-framed compact $n$-manifolds define parametrizing spaces for $(\infty,n)$-categories. Subsequent papers characterize the essential image of this functor, and establish likewise results for $(\oo,n)$-categories with adjoints, as they relate to solidly $n$-framed compact manifolds.

\item[\bf \cite{bord}:]
{\bf The cobordism hypothesis}, by the first two authors, proves the cobordism hypothesis.  
Namely, for $\cX$ a symmetric monoidal $(\infty,n)$-category with adjoints and with duals, the space of fully extended (framed) topological quantum field theories is equivalent to the underlying $\infty$-groupoid of $\cX$: 
\[
\Map^{\ot}(\Bord_n^{\fr}, \cX) ~\simeq~ \cX^\sim~.
\]  
We first prove the tangle hypothesis, one form of which states that, for $\ast \xra{\uno} \cC$ a pointed $(\infty,n+k)$-category with adjoints, there is a canonical identification 
$\Map^{\ast/}(\cT{\sf ang}_{n\subset n+k}^{\fr}, \cC) \simeq k{\sf End}_\cC(\uno)$ between the space of pointed functors and the space of $k$-endomorphisms of the point in $\cC$.  
The tangle hypothesis is proved in two steps.
The first step establishes versions of the factorization homology functors above in which the higher categories are replaced by \emph{pointed} higher categories, and the manifolds are replaced by possibly non-compact manifolds.  
The second step shows that the pointed $(\infty,n+k)$-category $\cT{\sf ang}^{\fr}_{n\subset n+k}$, as a copresheaf on $\Mfd_{n+k}^{\sfr}$, is represented by the object $\RR^k$. The cobordism hypothesis follows from the tangle hypothesis, represented by the equivalence $\Bord_n^{\fr} \simeq \varinjlim \Omega^k \RR^k$ as copresheaves on $\Mfd_n^{\sfr}$.

\end{enumerate}

We have just described how we ourselves do and intend to build on the present work. 
Next, we outline several problems and conjectures posed by the present work that we ourselves are not presently pursuing.

\begin{conj}
Topological exit-paths define a fully faithful functor
\[
\exit: \Strat^{C^0} \longrightarrow \Cat_{\oo}
\]
from an $\infty$-category of $C^0$ stratified spaces to that of $\oo$-categories.
\end{conj}

\begin{problem}
Find a strict model for $\cBun$. That is, construct a natural topological or $\kan$-enriched category (consequently, with strictly associative compositions) which is equivalent to $\cBun$ as an $\oo$-category. 

\end{problem}

\begin{remark}
By the results of this paper, a morphism in $\cBun$ from $X$ to $Y$ is equivalent to a {\it pre-constructible} bundle map $Y\ra X$, in the sense defined in \cite{aft1}. The $\oo$-category $\cBun^{\op}$ therefore presents an $\oo$-category of pre-constructible bundles. This latter notion does not offer an immediate strict model of $\cBun$, however, because the set of pre-constructible bundle maps $\{Y\ra X\}$ does not in any immediate way form a space with the homotopy type of $\cBun(X,Y)$. Our construction $\cBun^{\op}$ can therefore be thought of as a possible solution to the problem of defining an $\oo$-category whose objects are compact stratified spaces and whose morphisms are pre-constructible bundles among them.
\end{remark}

\begin{conj}\label{conj.thommaps}
Conjecture 1.5.3 in~\cite{aft1} holds that Whitney stratified spaces are examples of conically smooth stratified spaces.  
Supposing this conjecture is true, we conjecture that \emph{Thom mappings}, in the sense of~\cite{mather.survey}, are examples of constructible bundles in the sense of Definition~\ref{def.cbl}.

\end{conj}

\begin{conj}
The essential image of the fully faithful functor
\[
\exit : \Strat \longrightarrow \Cat_{\oo}
\]
consists of the finite $\oo$-categories in which each endomorphism is an equivalence.
\end{conj}

The next problem concerns constructing a natural stratification of a suitable topological space.

\begin{problem}\label{00}
Let $X$ be a locally compact Hausdorff topological space.
Consider the dualizing sheaf $\omega \in \Shv_{\spectra}(X)$ implementing Verdier duality on $X$.  
By Kan extension, this dualizing sheaf defines a limit-preserving functor
\[
\omega \colon \Shv(X)^{\op} \longrightarrow \spectra~.
\]
Consider the terminal localization
\[
\omega \colon \Shv(X)^{\op} \xra{~\rm localization~}  \ov{\Shv}(X)^{\op}  \xra{~\rm conservative~} \spectra
\]
through which this functor factors.
\begin{enumerate}
\item
Consider the full $\infty$-subcategory $\sE(X)^{\op}\subset \ov{\Shv}(X)$ consisting of completely compact objects.
Identify checkable intrinsic conditions on $X$ for which the canonical colimit preserving functor
\[
\Psh\bigl(\sE(X)^{\op} \bigr)\longrightarrow \ov{\Shv}(X)
\]
an equivalence between $\infty$-categories.

\item
Identify checkable intrinsic conditions on $X$ for which the $\infty$-category $\sE(X)$ has the property that each endomorphism in it is an equivalence; yet more, for which $\sE(X)$ is the exit-path $\infty$-category associated to a stratification $\w{X}$ of $X$.

\end{enumerate}
\end{problem}

Here is a suggested approach to the previous problem:
identify checkable intrinsic conditions on $X$ for which the following sequence of properties hold.
\begin{itemize}
\item For each $x\in X$, the ind-spectrum represented by the functor
\[
\{x\in U\underset{\rm open}\subset X\}^{\op}~\subset~{\sf Opens}(X)^{\op} \xra{~\omega~}\spectra
\]
is in fact a finite spectrum $\omega_x$ (in which case, necessarily, it agrees with the stalk of $\omega$ at $x$).

\item
The following relation on $X$ is an equivalence relation:
$x\sim y$ means there exists an open subset $x,y\in U\subset X$ for which the canonical maps between spectra $\omega_x \la \omega(U) \to \omega_y$ are equivalences.  

\item 
Provided the previous point, the following relation on the set of $X_{/\sim}$ is a partial order:
$[x]\leq [y]$ means there is an open subset $x,y\in U\subset X$ about representatives for which the canonical map between spectra $\omega_x \la \omega(U)$ is an equivalence.  
Write this poset as $P_X$.

\item 
Provided the previous two points, the canonical map between underlying sets $X \to P_X$ is continuous.
Provided so, this data $(X\to P_X)$ is a conical stratification in the sense of Definition~A.5.5 of~\cite{HA}.
\end{itemize}

\begin{remark}
Problem~\ref{00}(1), and the first part of~(2), can be posed for a general $\infty$-topos $\cX$ in place of $\Shv(X)$, provided the existence of a dualizing object $\omega\in \Shv_{\spectra}(\cX)$.
As so, we see this line of inquiry as potentially transportable to less point-set topological contexts, such as in finite characteristic algebraic geometry where there is partial evidence of Poincar\'e duality along specified loci (Poitou--Tate duality, for instance).

\end{remark}

\begin{problem}\label{prob.model}
Endow the ordinary categories of striation sheaves and transversality sheaves with model category structures. These model categories should be Quillen equivalent, in parallel, with the complete Segal space and quasi-category model categories.
\end{problem}

\begin{problem}
For $X$ a conically smooth stratified space with $\Aut(X)$ its space of automorphisms and $\Emb^\sim(X)$ its space of self-embeddings which are isotopic to automorphisms, then is the inclusion
\[
\Aut(X) \longrightarrow\Emb^\sim(X)
\]
a homotopy equivalence? 
This is shown under the assumption that $X$ is finitary in the proof of Theorem \ref{isaquasi-cat}.
\end{problem}

We make several remarks before concluding this portion of the introduction.

\begin{remark}\label{rm.not-canonical}
The identification $\Cat_\infty \simeq \Stri$ is not quite canonical.
The space of such identifications is a torsor for $\Aut(\Cat_\infty)\simeq \ZZ/2$, with the opposite identification implemented by considering enter-paths, as opposed to exit-paths. 
Consequently, this description of $\oo$-categories as sheaves on $\strat$ is balanced between opposites (like a 1-dimensional real vector space without a preferred generator). Our equivalence $\Cat_\infty\simeq \Stri$ determines a $\ZZ/2$-action on $\Stri$. 
This action is not obvious, because it is not inherited from an action on $\strat$.  
Nevertheless, there are some $\ZZ/2$-orbits in $\Stri$ consisting entirely of representables: these are the stratified spaces for which there is a Poincar\'e dual stratified space. An example of such is a polygonized smooth closed manifold of dimension at least $1$; a non-example is an unstratified closed manifold. The involution exchanges Poincar\'e duals, and so one can regard this involution on $\Stri$ as a form of Poincar\'e duality.
\end{remark}

\begin{remark}
After To\"en in \cite{toen}, there is a canonical equivalence between any two $\oo$-categories of $\oo$-categories up to taking opposites. Consequently, we will typically not use notation to distinguish between an $\oo$-category which has first been constructed in a particular model, such as simplicial categories, and the associated $\oo$-category in a different model, such as quasi-categories. For instance, $\Strat$ will stand for both a simplicial category and an $\oo$-category. When we make an argument using specific point-set features of a particular model (e.g., by using Kan fibrations of simplicial categories), we will use the name of that model. When we make a argument which works in every model for $\oo$-category theory, then we avoid use of a particular term and simply refer to $\oo$-categories.
\end{remark}

\begin{remark}
We work throughout internal to $\oo$-category theory. For instance, after  \cite{toen} and \cite{clark-chris}, we employ Rezk's complete Segal spaces as an internal construction of the $\oo$-category of $\oo$-categories, rather than as a model category. As such, we do not construct a model structure for striation sheaves or transversality sheaves, it not being necessary to do so for our larger purpose. Nevertheless, such model structures could be beneficial. See Problem~\ref{prob.model}.
\end{remark}

\begin{notation}
In the following, we encounter a number of examples of collections of topological objects which form both a discrete category of interest, such as the category $\strat$ of stratified spaces, and a quite different higher category of interest, such the $\oo$-category $\Strat$ of stratified spaces. In such cases, we will often use calligraphic typeface for the first letter of the higher category. In cases which will never be ambiguous we will stay with the plain sans serifed type. (These cases which will never be ambiguous include: the $\oo$-category $\Cat_{\oo}$ of small $\oo$-categories; the $\oo$-category of functors $\Fun(\cC, \cD)$ between two $\oo$-categories $\cC$ and $\cD$; the $\oo$-category $\psh(\cC):=\Fun(\cC^{\op}, \spaces)$ of space-valued presheaves on an $\oo$-category $\cC$.)
\end{notation}

In what remains of this introduction, we relate the present results and approach to previous work and then give a more technical linear overview of the sections of this work and the results therein.

\subsection*{Relation to previous works}
The theory of {\it conically smooth} stratified spaces, introduced in \cite{aft1} and further developed throughout this work, is designed to carry both the geometric features of the classical theory of stratifications, after Whitney and Thom--Mather, as well as have robust behavior in families, after the homotopy-theoretic stratifications of Siebenmann and Quinn. We detail these connections.

\smallskip

The geometric study of stratifications dates to the seminal works of Whitney, Thom, and Mather (\cite{whitney0}, \cite{whitney1}, \cite{whitney2}, \cite{thom}, and \cite{mather}), introduced toward the study of singular algebraic varieties and of dynamics of smooth maps. The theory of homotopy-theoretic stratifications was advanced by Siebenmann \cite{siebenmann} and Quinn \cite{quinn}. Object-wise, the geometric theory has fine geometric features, such as the openness of transversality due to Trotman \cite{trotman}.
Conversely, the latter topological theories has robust homotopy theoretic features: Siebenmann introduced the study of spaces of stratified maps between his locally-cone stratified sets, and showed this theory has well-behaved moduli. In particular, he proved these spaces of stratified maps are locally contractible, which can be interpreted as an isotopy extension theorem and
ultimately accommodates the existence of classifying spaces for fiber bundles. In contrast, Whitney stratified spaces have not been studied in families, because the naive notion of a map of Whitney stratified spaces, one which is smooth on strata separately, leads to pathologies. Consequently, Siebenmann's results have no historical counterpart in the geometric theory after Whitney. However, the homotopy-theoretric stratifications are insufficent for more geometry. Nonexistence of tubular neighborhoods (see \cite{normal}), absence of transversality, and difficulty in establishing the existence of pullbacks all hinder the coarser topological theory.

\smallskip

The notion of stratified spaces studied in this work modifies the established definitions of Whitney--Thom--Mather by adding a requirement of conical smoothness for stratifications and maps thereof. This notion is intrinsic and makes no recourse to an ambient smooth manifold. Conical smoothness ensures strong regularity properties along closed strata in a conically smooth stratified space, so that there exist tubular neighborhoods along singularity loci. This regularity implies the Whitney conditions as well as many of the differing excellent properties on both the geometric and homotopy-theoretic sides. On the geometric side, Trotman's openness of transversality can be deduced from the stratified inverse function theorem of \cite{aft1}. On the topological side, Siebenmann's stratified isotopy extension theorem follows in our theory by standard arguments on tubular neighborhoods.

\smallskip

Conical smoothness also cures pathologies in the geometric and homotopy-theoretic theories. It does so for a unified reason: on both sides these pathologies stem from pseudo-isotopy theory, and conical smoothness evades pseudo-isotopy. For instance, in Quinn's theory of homotopically stratified sets, there exist obstructions to the existence of tubular neighborhoods; counterexamples using pseudo-isotopy theory are constructed in \cite{htww}. Hughes proves an {\it approximate} tubular neighborhood theorem, valid only in higher dimensions, in \cite{hughes}; his result uses the h-cobordism theorem, which is again an application of pseudo-isotopy theory. Likewise, the same methods of \cite{htww} point out pathologies in the naive notion of families of Whitney stratified spaces: a naive bundle of Whitney stratified spaces need not itself be Whitney stratified. Examples are given by cones parametrized over a circle; unless the associated pseudo-isotopy is a diffeomorphism, the link of the cone-locus of the total space lacks a smooth structure. 
Conical smoothness averts pseudo-isotopic difficulties because of the identification, from \cite{aft1}, between the homotopy types of the space of conically smooth automorphisms of a cone $\sC(X)$ and the conically smooth automorphisms of the compact conically smooth stratified space $X$ which is the link of the cone-point. Without the conical smoothness, automorphisms of a cone is a pseudo-isotopy space for $X$, and consequently one can form pathological stratifications in families by gluing along pseudo-isotopies rather than along automorphisms; see the introduction of \cite{aft1} for a discussion of this construction. The authors of \cite{htww} state that their work concerns the glue that holds together stratified spaces; a conclusion of their work is that pseudo-isotopy permeates this glue. With our methods, one can glue without pseudo-isotopy by requiring conical smoothness; one applies the glue in layers rather than all at once.

\smallskip

A main avatar of this work, the exit-path $\oo$-category, was previously studied in depth by Lurie in \cite{HA}. This notion of exit-paths was first proposed by MacPherson, unpublished, and developed 2-categorically by Treumann in \cite{treumann} and, afterward, by Woolf \cite{woolf}. We use our enriched category $\Strat$ of stratified spaces and conically smooth maps to give a construction of the exit-path $\oo$-category of a conically smooth stratified space as a complete Segal space; Lurie's construction is a quasi-category. Our proof of the fully faithfulness of the functor $\exit:\Strat\ra \Cat_{\oo}$ implies, in particular, that the functor is conservative. As such, it implies a detection criterion for stratified homotopy equivalences: they are stratified maps which induce homotopy equivalences on all strata and all links of strata. This recovers a result first proved by Miller \cite{miller}.

\smallskip

Our study of the descent properties of stacks on conically smooth stratified spaces represented by $\oo$-categories bears a close analogy with Morel--Voevodsky's motivic homotopy theory~(\cite{morel-voevodsky}). We show that our enriched category $\Strat$ is the $\RR$-localization of the ordinary category of stratified spaces $\strat$, in clear analogy with $\AA^1$-localization. We further prove descent for blow-ups, a topological analogue of descent for the h-topology in algebraic geometry (see~\cite{SV}). The univalence property of a striation sheaf corresponds to Rezk's completeness condition, as formulated in \cite{rezk-n}, and Voevodsky's univalence axiom in homotopy type theory. Our construction of striation sheaves from transversality sheaves via the topologizing diagram is a topological analogue of Joyal--Tierney's equivalence between complete Segal spaces and quasi-categories in \cite{joyaltierney}, using Rezk's classifying diagram~(\cite{rezk}). Our topologizing diagram is inspired by and generalizes the method of Hatcher and Waldhausen for constructing moduli spaces of smooth manifolds in \cite{hatcher} and \cite{waldhausen}. To establish all these results makes use of a uniform system for decomposing stratified spaces in terms of links, blow-ups, and induction on depth of strata, which we conceive of as a d\'evissage for stratified structures like that continually mentioned by Grothendieck in his Esquisse d'un Programme \cite{esquisse}.

\smallskip

Lastly, our notion of a constructible bundle is a geometric refinement of the stratified systems of fibrations  introduced by Quinn in \cite{quinnends} for the study of h-cobordisms in families. (The constructible bundle is a further refinement of this notion, as there exists an additional stratification on the total space.)  In Quinn's theory, strata have regular neighborhoods which are open mapping cylinders; it is shown that demanding such for each pair of strata grants as much for links of links. This composability feature foreshadows the consecutivity of $\Bun$, the paramount technical property which allows for composing morphisms in $\Bun$ and so used to prove that it indeed forms an $\oo$-category. Lastly, the verification of univalence for $\Bun$ is a stratified generalization of the homotopy equivalence $\Diff(M)\simeq\Emb^\sim(M,M)$ between diffeomorphisms and self-embeddings of a manifold isotopic to a diffeomorphism.

\subsection*{Linear overview}

{\bf Section 1} recapitulates the fundamental features of conically smooth stratified spaces, including links and the unzipping construction, as developed in \cite{aft1}. Constructible closed stratified subspaces have regular neighborhoods; we make essential use of this technical feature throughout.

\smallskip

{\bf Section 2} studies $\Strat$, a simplicial enrichment of the discrete category of stratified spaces $\strat$. The construction of $\Strat$ is an application of the {\it topologizing diagram}, which produces space-valued presheaves from groupoid-valued presheaves on conically smooth stratified spaces. We introduce isotopy sheaves as a class of groupoid-valued presheaves which become constructible sheaves after applying the topologizing diagram. From this, we deduce that $\Strat$ is the Dwyer--Kan localization of $\strat$ with respect to stratified homotopy equivalences. We proceed to study homotopy colimits in $\Strat$. The key technical input is a description of the homotopy type of spaces of conically smooth maps between cones. Using this, we identify the following distinguished classes of homotopy colimit diagrams: open covering sieves, blow-ups for deepest strata; and double cone gluing diagrams.

\smallskip

{\bf Section 3} concerns the interaction of $\Strat$ with Rezk's theory of complete Segal spaces. We show that the standard stratification of the $n$-simplices defines a fully faithful functor
\[\xymatrix{
\bdelta\ar@{^{(}->}[r]^{\st}& \Strat}
\]
from the simplicial indexing category to the simplicial category of stratified spaces; this sends $[p]$ to the topological $p$-simplex $\Delta^p$ equipped with the standard stratification. We define the simplicial space $\exit(X)$ as the composite,
\[\xymatrix{
\exit(X)\colon \bdelta^{\op}\ar[rr]^{\st}&&\Strat^{\op}\ar[rr]^{\Strat(-,X)}&&\spaces~,}
\]
given by restricting to $\bdelta^{\op}$ the representable presheaf $X$. Using the analysis of homotopy colimits in $\Strat$ from \S2, we prove that $\exit(X)$ is a complete Segal space, hence an $\oo$-category. The equivalence between quasi-categories and complete Segal spaces exchanges $\exit(X)$ with Lurie's exit-path quasi-category of $X$. We show that the resulting functor $\exit: \Strat\ra \Cat_{\oo}$ preserves several distinguished classes of colimit diagrams.

\smallskip

{\bf Section 4} introduces striation sheaves. These are constructible sheaves on conically smooth stratified spaces that send the distinguished classes of colimit diagrams to limit diagrams in $\spaces$. We prove that the $\oo$-category of simplicial spaces $\psh(\bdelta)$ is equivalent to the $\oo$-category $\shv^{\sf cone,\cbl}(\strat)$ of those constructible sheaves on conically smooth stratified spaces which satisfy descent for blow-ups. Using this, we prove the further equivalence between the $\oo$-category of striation sheaves $\Stri$ and the $\oo$-category of $\oo$-categories $\Cat_{\oo}$. These proofs use all of our analysis of homotopy colimits of stratified spaces to show that $\Strat$ is generated under homotopy colimits of distinguished diagrams by the three element $\oo$-subcategory $\{\emptyset, \ast, \Delta^1\}$ consisting of the empty set, a point, and the standardly stratified 1-simplex. As a corollary, we deduce that the exit-path functor $\exit:\Strat\ra \Cat_{\oo}$ is fully faithful: 
\[
\Strat(X,Y)~\simeq~\Map\bigl(\exit(X),\exit(Y)\bigr)~.
\]
 
\smallskip

{\bf Section 5} introduces transversality sheaves as an efficient point-set means for constructing striation sheaves. We prove that the topologizing diagram sends transversality sheaves to striation sheaves; this elaborates on the construction of constructible sheaves from isotopy sheaves from \S2. For transversality sheaves, the topologizing diagram agrees with the classifying diagram of Rezk. This connects our passage between transversality sheaves and striation sheaves to that between quasi-categories and complete Segal spaces of Joyal--Tierney \cite{joyaltierney}.

\smallskip

{\bf Section 6} constructs $\bun$ and $\exit$ using the notion of a constructible bundle of stratified spaces. Toward this study, we establish a number of basic crucial properties of constructible bundles of stratified spaces. By extensive use of the results and techniques of \cite{aft1} we prove: that constructible bundles pull back; that constructible bundles compose; that constructible bundles can be recognized stratum locally in the source. These lemmas are used to prove that $\bun$ and $\exit$ are transversality sheaves, and thus that their topologizing diagrams $\Bun$ and $\Exit$ form striation sheaves, hence $\oo$-categories. The most technically involved verification is the consecutivity condition, which relies on a classification of isomorphism classes of constructible bundles over a cone. In doing so, we prove a conceptually appealing description of morphisms in $\Bun$. 
Namely, there is a surjection from the set of isomorphism classes of spans 
\[
X_0 \xla{~\pcbl~} L  \xra{~\open~}  X_1
\]
to the set of isomorphism classes of constructible bundles $X\to \Delta^1$, and thereafter a surjection to $\pi_0\Bun(\Delta^1)$. As such, $\Bun$ receives an essentially surjective functor from a type of Burnside $\oo$-category formed from the classes of proper constructible and open maps in $\Strat$.

We single out special $\oo$-subcategories of $\Bun$, consisting of closed, creation, refinement, and embedding morphisms. We exhibit a factorization system on $\Bun$ in terms of these morphisms.
 There are two cylinder functors, the open cylinder $\cylo$ and the reversed cylinder $\cylr$, which define monomorphisms $\Strat^{\open}\hookrightarrow \Bun$ and $(\Strat^{\pcbl})^{\op}\hookrightarrow \Bun$ from the $\oo$-categories of stratified spaces with open maps and from the $\oo$-category of stratified spaces with proper constructible bundles. Lastly, we identify the distinguished $\oo$-subcategories of $\Bun$ in terms of these cylinder functors.
\begin{itemize}
\item Embeddings: $\Bun^{\emb}$ is equivalent to the $\oo$-subcategory of $\Strat^{\open}$ consisting of those maps which are stratified open embeddings.
\item Refinements: $\Bun^{\rf}$ is equivalent  to the $\oo$-subcategory of $\Strat^{\open}$ consisting of those maps which are homeomorphisms of underlying topological spaces.
\item Closed morphisms: $\Bun^{\cls}$ is contravariantly equivalent to the $\oo$-subcategory of $\Strat^{\pcbl}$ consisting of those proper constructible bundles which are injective.
\item Creation morphisms: $\Bun^{\creat}$ is contravariantly equivalent to the $\oo$-subcategory of $\Strat^{\pcbl}$ consisting of those proper constructible bundles which are surjective.
\end{itemize}

\subsection*{Acknowledgements}
The present work would not have been conceived without Jacob Lurie's illuminating treatment of exit-paths and constructible sheaves in~\cite{HA}. We thank David Nadler for several conversations, and for pointing out the relation between constructible bundles and Thom's gentle maps. We thank the referees for their careful reading of this paper. JF thanks Jesse Ball.

\section{Conically smooth stratified spaces}\label{recall}
We recall some relevant notions among conically smooth stratified spaces as developed in~\cite{aft1}.
This section is very much an overview, so we point the unfamiliar reader to that reference for precise definitions and details.

\subsection{Stratified topological spaces}

Before defining the \emph{conically smooth} stratified spaces which occupy this work, we first review the baseline topological notions.
We will regard a poset $P$ as a topological space by declaring the subsets $P_{\leq p}$, for each element $p\in P$, to be closed.
A {\it stratified topological space (with stratifying poset $P$)} is a paracompact Hausdorff topological space $X$ with a continuous map $X\ra P$.
Typically we omit the stratifying poset from notation, and simply write $X$ in place of $(X\to P)$.  
For $p\in P$, the {\it $p$-stratum (of $X$)} is the preimage $X_p\subset X$ is the preimage of the element $p\in P$; a stratum of $X$ is a $p$-stratum for some $p\in P$.
The \emph{depth} of a stratified topological space is the depth of its image in its stratifying poset, by which we mean the maximum among lengths of strictly increasing sequences, should it exist.
A map between stratified topological spaces $X\ra Y$ is a commutative diagram of topological spaces
\[
\xymatrix{
X\ar[r]\ar[d]&Y \ar[d]\\
P\ar[r]& Q}
\]
where $P$ and $Q$ are the stratifying posets for $X$ and $Y$. Such maps are closed under composition, thereby organizing stratified topological spaces as a category.

\begin{example}
We observe an essential operation for generating new stratified topological spaces from old ones: taking cones. First, for $P$ is a poset, then the left-cone $P^{\tl}$ on $P$ is the poset defined by adjoining a new minimum element to $P$.
Note the identification $[n]^{\tl} = [n+1]$. For $X$ a stratified topological space indexed by $P$, then the open cone on $X$,
\[
\sC(X):=\{0\} \underset{\{0\}\times X}\coprod [0,1)\times X \longrightarrow \{0\} \underset{\{0\}\times P}\amalg [1]\times P = P^{\tl}~,
\]
is a stratified topological space indexed by $P^{\tl}$. 
Note that the cone $\sC(X)$ carries a natural action by multiplication of the nonnegative reals $\RR_{\geq 0}$, by scaling in the cone coordinate.
Note also that the stratifying poset $P^{\tl}$ has strictly greater depth than the poset $P$, provided the latter exists.
\end{example}

Open subsets of a stratified space inherit stratifications, as follows.
Let $X=(X\to P)$ be a stratified topological space, let $U\subset X$ be an open subset of its underlying topological space.
Consider the poset over $P$
\[
P_{|U}  \longrightarrow P
\]
for which, for each $p\in P$, the fiber over $p$ is the discrete partial order on the connected components of $U\cap X_p$, equipped with the partial ordering so that $(U\cap X_p)_\alpha \leq (U\cap X_q)_\beta$ means the intersection with the closure $(U\cap X_p)_\alpha \cap \ov{(U\cap X_q)_\beta}\neq \emptyset$ is not empty.
The \emph{inherited stratification} of $U$ is the continuous map $U \to P_{|U}$.
Note the morphism between stratified topological spaces
\[
(U \to P_{|U})\longrightarrow (X\to P)~.
\]
We say a morphism between stratified topological spaces is an \emph{open embedding} if it is isomorphic to a morphism of this form.
The composition of two such open embeddings is again an open embedding.
The category of {\it $C^0$ stratified space} is the smallest full subcategory of stratified topological spaces with the following properties:

	\begin{enumerate}
	\item The empty set $\emptyset$ is a $C^0$ stratified space stratified by the empty poset.
	\item If $X$ is a compact $C^0$ stratified space and its stratifying poset $P$ is finite, then the open cone $\sC(X)$ is a $C^0$ stratified space.
	\item If $X$ and $Y$ are $C^0$ stratified spaces, then the product stratified space $X\times Y$ is a $C^0$ stratified space. 
	\item  If $X$ is a $C^0$ stratified space and $U\hookrightarrow X$ is an open embedding between topological spaces, then $U$ is a $C^0$ stratified space.
	\item  If $X$ is a stratified topological space admitting an open cover by $C^0$ stratified spaces, then $X$ is a $C^0$ stratified space.
	\end{enumerate}	
Observe that the following define examples of $C^0$ stratified spaces:
\begin{itemize}
\item a singleton, $\ast= (\ast \to \ast)$;
\item a half-open interval, $[0,1) = \bigl([0,1))\to [1]\bigr)$, with stratification such that the $0$-stratum is precisely $\{0\}$;
\item Euclidean spaces, $\RR^i = (\RR^i\to \ast)$;
\item any $C^0$ manifold;
\item for $X$ a compact $C^0$ manifold, its open cone $\sC(X)$ is a $C^0$ stratified space.  

\end{itemize}
Note that, for $X$ a $C^0$ stratified space, the identity map $X \xra{\sf id} X$ is an open embedding; in other words, a pair of elements $p,q\in P$ in its stratifying poset are related if and only if the intersection with the closure $X_p\cap \ov{X_q}$ is not empty.  
It follows that $C^0$ stratified spaces and open embeddings among them is a category.

\subsection{Conical smoothness}\label{sec.stratified-spaces}

We now add a {\it conical smoothness} condition to our stratifications. The condition of conical smoothness will be present throughout this work. Our definition of this is, unfortunately and perhaps ineluctably, an inductive one.
The induction is on the \emph{depth} of the stratifying poset.

A \emph{conically smooth stratified space} is a $C^0$ stratified space $X$ that is equipped with a \emph{conically smooth atlas}
\[
\Bigl\{\RR^{i_\alpha}\times \sC(Z_\alpha) \hookrightarrow X\Bigr\}_\alpha
\]
by \emph{basics}.
We now explain these terms.
Each $Z_\alpha$ is a compact conically smooth stratified space of depth strictly less than that of $X$; so we assume, by virtue of our induction, that we have already defined what it means for $Z_\alpha$ to be equipped with a conically smooth atlas.
In general, a \emph{basic} is the data of a non-negative integer $i$ and a compact conically smooth stratified space $Z$; together, these data define the $C^0$ stratified space $\RR^i\times \sC(Z)$, which might also be referred to as a \emph{basic} when the conically smooth atlas on $Z$ is understood.
For $P$ the stratifying poset of $Z$, the stratifying poset of $\RR^i\times \sC(Z)$ is $P^{\tl}$; the \emph{cone-locus} of this basic is the stratum $\RR^i = \RR^i\times \ast$, which is that indexed by the adjoined minimum.
Continuing toward our definition of a conically smooth stratified space, a \emph{conically smooth atlas} is a collection of basics openly embedding as $C^0$ stratified spaces into $X$.  This collection satisfies three conditions: 
\begin{enumerate}
\item This collection of open embeddings forms a basis for the topology of $X$, in particular it forms an open cover of $X$.
\item The \emph{transition maps}, by which we mean the inclusions of open embeddings $\RR^i\times \sC(W) \hookrightarrow \RR^j\times \sC(Z)$ over $X$, are \emph{conically smooth}, defined below.
\item This collection is maximal with respect to (1) and (2).  
\end{enumerate}
To complete our definition of a conically smooth stratified space, it remains to explain the condition for an open embedding between basics to be \emph{conically smooth}.
So consider a $C^0$ open embedding $f\colon \RR^i\times \sC(Y) \hookrightarrow \RR^j\times \sC(Z)$ between basics.
This open embedding $f$ is \emph{conically smooth} in the sense of~\S3.3 of~\cite{aft1} if the following conditions are satisfied.
\begin{itemize}
\item {\bf Away from the cone-locus:}
Each element of the atlas $\psi\colon \RR^k\times \sC(W) \hookrightarrow Y$ determines a composite open embedding $\RR^i\times(0,\infty)\times \RR^k \times \sC(W) \hookrightarrow \RR^i\times \sC(Y) \hookrightarrow \RR^j\times \sC(Z)$.
By way of a smooth identification $(0,\infty)\cong \RR$, we recognize the domain of this composition as a basic.  
Using that $f$ is an open embedding between stratified topological spaces, necessarily this is the data of an open embedding $f\psi\colon \RR^{i+1+k}\times \sC(W) \hookrightarrow \RR^j\times \sC(Z)\smallsetminus \RR^j$ to the complement of the cone-locus.  
For each such $\psi$, the condition of \emph{conical smoothness} requires this open embedding $f\psi$ is a member of the atlas of the target, which carries meaning via the induction in the definition of a conically smooth stratified space.

\item {\bf Along the cone-locus:}
Should the preimage of the cone-locus be empty, then the above point entirely stipulates the conical smoothness condition on $f$.  
So suppose the preimage of the cone-locus is not empty.
Using that $f$ is an open embedding between stratified topological spaces, upon representing the values of $f = \bigl[(f_{\parallel},f_r,f_\theta)\bigr]$ as coordinates, then $f_r(p,0,y) = 0$ for all $(p,y)\in \RR^i\times Y$.  
For $0< k <\infty$, the condition that $f$ is \emph{conically $C^k$ (along the cone-locus)} requires the assignment
\begin{equation}\label{derivative}
(p,v,s,y)~\mapsto~\underset{t\to0^+} {\sf lim} \Bigl[\Bigl(\frac{f_{\parallel}(p+tv,ts,y) - f_{\parallel}(p,0,y)}{t}, \frac{f_r(p+tv,ts,y)}{t}, f_\theta(p+tv,ts,y) \Bigr)\Bigr] 
\end{equation}
to be defined as a map $\sD_{\parallel}f\colon \sT\RR^i\times \sC(Y) \to \sT\RR^j\times \sC(Z)$, required to be conically $C^{k-1}$ along the cone-locus (which carries meaning via induction on $k$).  
The map $f$ is \emph{conically smooth} exactly if $f$ is conically $C^k$ for all $k\geq 0$.  
\end{itemize}

Outlined above is a definition of a conically smooth stratified space, in the sense of~\cite{aft1}.  
We now, more briefly, outline the definition of a \emph{conically smooth} map between two; the assumption of conical smoothness on maps between stratified spaces will be present throughout this work.  
Like the definition of a conically smooth stratified space, a conically smooth map between two stratified spaces is offered by induction on depth.  
For $X$ and $Y$ conically smooth stratified spaces, a map $f\colon X\to Y$ between their underlying stratified topological spaces is \emph{conically smooth} if, for each pair of basic charts $\phi\colon \RR^i\times \sC(Y) \hookrightarrow X$ and $\psi\colon \RR^j\times \sC(Z)\hookrightarrow Y$ for which there is a containment $f(\phi(\RR^i)) \subset \psi(\RR^j)$ of the images of the cone-loci, the map $\psi^{-1}\circ f \circ \phi\colon \RR^i\times \sC(Y) \to \RR^j\times \sC(Z)$ is \emph{conically smooth}.
This latter use of the term means conically smooth \emph{away from the cone-locus}, which can be ensured to carry meaning via induction on depth, and conically smooth \emph{along the cone-locus} which means $f$ abides by expression~(\ref{derivative}).

\begin{example}
Note that smooth manifolds are precisely those conically smooth stratified spaces that have no strata of positive codimension. If $g:M\ra N$ is a smooth map between compact smooth manifolds, then the map of cones $\sC(g):\sC(M)\ra \sC(N)$ is conically smooth. If $h:\RR^i \ra \RR^j$ is a smooth map between Euclidean spaces, then the product map $h\times \sC(g): \RR^i \times \sC(M)\ra \RR^j\times \sC(N)$ is again conically smooth.
\end{example}

We distinguish the following important classes of conically smooth maps:
\begin{itemize}
\item {\bf Embedding:}
 $f$ is an \emph{embedding} if $f$ is an isomorphism onto its image.

\item {\bf Open embedding:}
 $f$ is an \emph{open embedding} if it is open as well as an embedding.
 
\item {\bf Refinement:}
We say $f$ is a \emph{refinement} if it is a homeomorphism of underlying topological spaces, and, for each stratum $X_p\subset X$, the restriction $f_|\colon X_p \to Y$ is an embedding.

\item {\bf Open:} 
 $f$ is \emph{open} if it is an open embedding on underlying topological spaces, and $f$ is a refinement onto its image.  

\item {\bf Proper:}
 $f$ is \emph{proper} if $f^{-1}C\subset X$ is compact for each compact subspace $C\subset Y$.  

\item {\bf Fiber bundle:}
 $f$ is a \emph{fiber bundle} if the collection of images $\phi(O) \subset Y$, indexed by pullback diagrams
\[
\xymatrix{
F\times O \ar[r]  \ar[d]
&
X  \ar[d]
\\
O \ar[r]^-{\phi}
&
Y
}
\]
in which the horizontal maps are open embeddings, is a basis for the topology of $Y$.  

\item {\bf Submersion:}
 $f$ is a \emph{submersion} if the collection of images $\psi(U\times O) \subset X$, indexed by diagrams
\[
\xymatrix{
U\times O \ar[r]^-{\psi}  \ar[d]
&
X  \ar[d]
\\
O \ar[r]
&
Y
}
\]
in which the horizontal maps are open embeddings, is a basis for the topology of $X$.

\item {\bf (Weakly) Constructible:}
 $f$ is a \emph{weakly constructible bundle} if, for each stratum $Y_q\subset Y$, the restriction $f_{|}\colon f^{-1}Y_q \to Y_q$ is a fiber bundle. The definition of a \emph{constructible bundle} is inductive based on depth: as the base case, a smooth map $f\colon X\ra Y$ between smooth manifolds is a constructible bundle if it is a fiber bundle; in the inductive step of the definition, a conically smooth map $f\colon X\ra Y$ between stratified spaces is a constructible bundle if it is a weakly constructible bundle and, additionally, if for each stratum $Y_q\subset Y$ the natural map
 \[
\Link_{f^{-1}Y_q}(X) \longrightarrow f^{-1}Y_q \underset{Y_q}\times \Link_{Y_q}(Y)
\]
is a constructible bundle.

\end{itemize}

\begin{example}
For $X$ a compact stratified space, 
the quotient map $X\times [0,1)\ra \sC(X)$ is a weakly constructible bundle, which is not necessarily a constructible bundle.
\end{example}

Conically smooth maps compose, as do each of the named classes of maps. (See Proposition~\ref{cbl-compose} for the {\bf constructible} case.) This yields the following variety of subcategories
\[
\strat^{\sf cbl} ~\subset~\strat~\supset~\strat^{\open}~\supset~\strat^{\emb}~,~\strat^{\sf ref}
\]
where the superscripts indicate an aforementioned class of maps, except ${\sf emb}$ which signifies \emph{open} embeddings.
The category $\strat$ admits finite products.

As with manifolds with boundary, there are conically smooth stratified spaces with boundary.  
A conically smooth stratified space \emph{with boundary} is a conically smooth stratified space $\ov{X}$ together with a closed sub-stratified space $\partial \ov{X} \subset \ov{X}$ for which there is a conically smooth open embedding $\partial \ov{X} \times \RR_{\geq 0}\hookrightarrow \ov{X}$ where here $\RR_{\geq 0}$ is the open cone on a point.  
The existence of collar-neighborhoods verifies that smooth manifolds with boundary are examples of stratified spaces with boundary. 
Note that a conically smooth stratified space might have many, or no, \emph{boundary} structures.  

\begin{example}
The closed cone on a compact conically smooth stratified space $\oC(L)$ is naturally a conically smooth stratified space with boundary, as seen by the inclusion $L\times \RR_{\geq 0} \cong L\times (0,1]\hookrightarrow \oC(L)$.
\end{example}

For $\partial \ov{X}\times \RR_{\geq 0} \hookrightarrow \ov{X}$ a conically smooth stratified space with boundary, we refer to $\partial\ov{X}$ as the \emph{boundary} of $\ov{X}$, and ${\sf int}(\ov{X}):= \ov{X}\smallsetminus \partial \ov{X}$ as the \emph{interior}.  
A conically smooth stratified space $X$ is {\it finitary} if it is the interior of a compact conically smooth stratified space with boundary.

\subsection{Links and regular neighborhoods}\label{sec-reg}

For $L$ a compact conically smooth stratified space, there is a pushout diagram in $\strat$
\begin{equation}\label{cone-pushout}
\xymatrix{
L  \ar[r]^-{\{0\}}  \ar[d]
&
L\times [0,1)  \ar[d]
\\
\ast  \ar[r]
&
\sC(L)~.
}
\end{equation}
There is likewise a pushout diagram upon applying $\RR^i\times -$, thereby witnessing each basic as a pushout. 
Say a stratum $X_0\subset X$ is a \emph{deepest} stratum if $X_0\subset X$ is closed as a subspace.
By design, the conical smoothness of an atlas for $X$ yields, for each deepest stratum $X_0\subset X$, a pushout diagram in $\strat$
\[
\xymatrix{
\Link_{X_0}(X)  \ar[r]  \ar[d]
&
\Unzip_{X_0}(X)  \ar[d]
\\
X_0  \ar[r]
&
X
}
\]
which restricts over each basic neighborhood $ \RR^i\times \sC(L) \hookrightarrow X$ of a point $x\in X_0$ as the square~(\ref{cone-pushout}).  
By iterating this construction, we have a likewise pushout diagram for $X_0\subset X$ not just a single deepest stratum, but a constructible closed sub-stratified space of $X$.
We refer to such a pushout square as a \emph{blow-up square (along $X_0\subset X$)}; and we refer to the upper left stratified space as the \emph{link (of $X_0$ in $X$)}, and the upper right stratified space as the \emph{unzip (of $X$ along $X_0$)}.  
Because it is the case locally, notice that each map in such a blow-up square is proper and constructible.

The existence of a collar-neighborhood of each face of a smooth manifold with corners ultimately grants the existence of a conically smooth tubular neighborhood of $X_0\subset X$ of a deepest stratum:
\[
\xymatrix{
X_0\subset \sC(\pi)~\ar@{^{(}->}[rr]^-{\sf open~emb}&&~X
}
\]
where $\sC(\pi)\to X_0$ is the fiberwise open cone on the link $\Link_{X_0}(X)\xra{\pi}X$ 
This map from the fiberwise cone is an open embedding between stratified spaces.
Thereafter this gives the existence of a conically smooth regular neighborhood of each stratum $X_p\subset X$ of an arbitrary conically smooth stratified space.
Consequently, for each conically smooth stratified space $X=(X\to P)$ and each constructible closed sub-stratified space $X_0\subset X$, there exists a conically smooth map 
\[
\xymatrix{
X_0\subset \sC(\pi_0)~\ar@{^{(}->}[rr]^-{\sf open}&&~X
}
\]
where $\Link_{X_0}(X)\xra{\pi_0} X_0$ is the projection from the link, and $\sC(\pi_0)$ is the fiberwise open cone. This map from the fiberwise cone is a refinement onto its image, which is an open subspace of $X$.  
In particular, the existence of conically smooth bump functions gives the existence of a conically smooth map $X\ra \RR_{\geq 0}$ extending the projection $\sC(\pi_0)\to [0,1)\hookrightarrow \RR_{\geq 0}$ followed by the standard inclusion.

\begin{lemma}[Shrinking Lemma]\label{regularbasis}
Each open cover $\cU_0$ of a conically smooth stratified space $K$ admits an open refinement $\cV_0$ with the property that, for each $V\in \cV_0$, the inclusion factors $V\xra{i}\DD^i\times \oC(L) \xra{\ov{\phi}} K$ in which $\ov{\phi}$ is a conically smooth embedding and $i$ is an isomorphism onto the interior $\RR^i\times \sC(L)\subset \DD^i\times \oC(L)$.

\end{lemma}

\begin{proof}
By design, conically smooth open embeddings from basics form a basis for the topology of $K$. 
Therefore each such $\cU_0$ admits a refinement by basics.  
Simply by scaling, conically smooth embeddings $\RR^i\times \sC(L) \hookrightarrow \RR^i\times\sC(L)$ that factor as the interior through a conically smooth embedding $\DD^i\times\oC(L)\to \RR^i\times\sC(L)$, form a local base for the topology about the origin $0\in \RR^i\times \sC(L)$.  
The result follows.

\end{proof}

\subsection{Sheaves}
Recall the notion of a (space-valued) presheaf on an $\oo$-category (or, in particular, an ordinary category).
\begin{definition}
Given an $\oo$-category $\cC$, the $\oo$-category of presheaves on $\cC$ is
\[
\psh(\cC) := \Fun(\cC^{\op},\spaces)~,
\]
the $\oo$-category of contravariant functors from $\cC$ to the $\oo$-category of spaces.
\end{definition}
There is a Grothendieck topology on $\strat$ induced from the standard Grothendieck topology on topological spaces via the \emph{underlying topological space} functor $\strat \to {\sf Top}$. A sieve $\cU\subset \strat_{/K}$ on a conically smooth stratified space $K$ is a \emph{covering sieve} if it has the following properties.
\begin{itemize}

\item {\bf Open:}
For each $(U\to K)\in \cU$ there is a morphism $(U\to K)\to (U_0\to K)$ in $\cU$ with $U_0\to K$ an open embedding of stratified spaces.

\item {\bf Surjective:}
For each $x\in K$, the object $(\{x\}\to K)$ belongs to $\cU$.  

\end{itemize}
The $\infty$-category of \emph{sheaves} on $\strat$ is the full $\infty$-subcategory
\[
\shv(\strat)~\subset~\Psh(\strat)
\]
consisting of those presheaves $\cF$ for which, for each covering sieve $\cU$ of a conically smooth stratified space $K$, the restriction of $\cF$ along the adjoint diagram
\[
(\cU^{\op})^{\tl}  \cong (\cU^{\tr})^{\op} \longrightarrow \strat^{\op} \xra{~\cF~} \spaces
\]
is a limit diagram. Equivalently, the canonical map $\cF(K) \xra{\simeq} \underset{U\in \cU} \limit \cF(U)$ is an equivalence between spaces for each covering sieve $\cU$ of a conically smooth stratified space $K$. 
\begin{remark}
Since conically smooth stratified spaces are, by definition, locally finite dimensional, a sheaf in the above sense will satisfy the the stronger condition of hyperdescent, i.e., descent for hypercovers (Definition 6.5.3.2 of \cite{HTT} or Definition 4.1 of \cite{dugger-isaksen}). See \S6.5.4 and Corollary 7.2.1.17 of \cite{HTT}.
\end{remark}

Being defined in terms of limits, the inclusion $\Shv(\strat)\to \Psh(\strat)$ preserves limits, and from presentability considerations the adjoint functor theorem can be applied, thereby producing a left adjoint
\[
\Psh(\strat)\longrightarrow \Shv(\strat)
\]
which is \emph{sheafification}.
The restriction of the forgetful functor $\strat_{|{\sf Top}^{\open}} \to {\sf Top}^{\open}$, to \emph{open embeddings} of topological spaces, is a Cartesian fibration; the Cartesian morphisms are \emph{open embeddings} of stratified spaces.
It follows that this collection of covering sieves on $\strat$ is a Grothendieck topology.  
Therefore the sheafification functor is left exact, and the aforementioned adjunction is (the opposite of) a geometric morphism between $\infty$-topoi; see \cite{HTT}.

In a standard manner, we will extend a presheaf on $\strat$ to subspaces of conically smooth stratified spaces. 
Namely, for $K$ a conically smooth stratified space, consider the poset ${\sf Sub}(K)$ of subspaces of the underlying topological space of $K$, ordered by inclusion.  
Taking images defines a functor $\strat_{/K}\to {\sf Sub}(K)$.  
This functor restricts as an equivalence of categories $\strat^{\emb}_{/K} \to {\sf Sub}^{\open}(K)$ from the subcategory of \emph{open embeddings} to $K$, to the subposet of \emph{open} subspaces of $K$.  
We thus have the composite functor
\[
\Psh(\strat) \xra{~{}_{|K}~} \Psh(\strat^{\emb}_{/K})~\simeq~\Psh\bigl({\sf Sub}^{\open}(K)\bigr)\xra{~\sf LKan~}\Psh\bigl({\sf Sub}(K)\bigr)
\]
where ${\sf LKan}$ is given by left Kan extension.
Explicitly, for $\cF$ a presheaf on $\strat$, the value of this composite functor on $\cF$ evaluates on a subspace $K_0\subset K$ as
\[
\cF\mapsto \Bigl( (K_0\subset K)\mapsto \underset{K_0\subset O\underset{\open}\subset K}\colim \cF(O)~=:~\cF(K_0)\Bigr)
\]
where the colimit is over the poset of open subspaces of $K$, each of which canonically inherits the structure of a conically smooth stratified space.  
We will not distinguish in notation between a presheaf $\cF$ on $\strat$ and its image under this composite functor. This way we can simply write $\cF(K_0)$ for value of the above assignment, as indicated.
The definition of a topological space is just so that the category indexing this colimit is filtered (as an $\infty$-category).  
For $\ov{\cU}_0$ a collection of subspaces of $K$ whose collections of interiors cover $K$ with $\ov{\cU}\subset \strat_{/K}$ the sieve on $K$ consisting of those conically smooth maps $X\to K$ that factor through some member of $\ov{\cU}_0$, then paracompactness of $K$ implies the diagram
\[
(\ov{\cU}^{\op})^{\tl}  \xra{~\sf image~} {\sf Sub}(K)^{\op}\xra{~\cF~} \Spaces
\]
is a limit diagram for each sheaf $\cF$ on $\strat$.

\section{Constructible sheaves}
We examine constructible sheaves on the site of conically smooth stratified spaces.  
These are sheaves on $\strat$ that restrict as locally constant sheaves on each stratum of each stratified space.

\subsection{Stratified homotopy}
We give a stratified analogue of a smooth homotopy. Note that a usual homotopy $X\times \RR \ra Y$ is equivalent, by adjunction, to a map $X\times \RR \ra Y\times \RR$ over the projections to $\RR$.

\begin{definition}\label{def.st-htpy}
Let $f,g\colon X\to Y$ be two conically smooth maps among conically smooth stratified spaces.
A \emph{stratified homotopy} from $f$ to $g$ is a conically smooth map $H:X\times \RR\ra Y\times \RR$ over $\RR$ whose restrictions are identified as $f = H_{|\{0\}}$ and $g = H_{|\{1\}}$.  
We write $f\simeq g$ if there exists a stratified homotopy from $f$ to $g$. A conically smooth map $f:X\ra Y$ is a \emph{stratified homotopy equivalence} if there exists a conically smooth map $Y\xra{g} X$ for which $1_X\simeq gf$ and $fg\simeq 1_Y$; $g$ is a \emph{stratified homotopy inverse} of $f$.  

\end{definition}

\begin{observation}\label{R-in-J}
By induction on $i\geq 0$, the projection $X\times \RR^i\to X$ is a stratified homotopy equivalence. 
\end{observation}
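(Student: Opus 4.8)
The plan is to reduce to the case $i=1$ and bootstrap. For $i=0$ the projection is the identity, hence trivially a stratified homotopy equivalence. For the inductive step I would write $X\times \RR^i \cong (X\times\RR^{i-1})\times \RR$ and factor the projection $X\times\RR^i \to X$ as $(X\times\RR^{i-1})\times\RR \to X\times\RR^{i-1}\to X$; since a composite of stratified homotopy equivalences is again one (the composite of stratified homotopy inverses serves as an inverse, using that $\simeq$ is preserved under pre- and post-composition and is transitive), it suffices to treat $X\times\RR\to X$.

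For $X\times\RR\xrightarrow{\pi}X$, I would exhibit the zero section $s\colon X\to X\times\RR$, $x\mapsto(x,0)$, as a stratified homotopy inverse. One has $\pi\circ s = \mathrm{id}_X$ on the nose, so it remains to produce a stratified homotopy between $\mathrm{id}_{X\times\RR}$ and $s\circ\pi$, i.e.\ a conically smooth map $H\colon (X\times\RR)\times\RR \to (X\times\RR)\times\RR$ over the last coordinate with $H_{|\{1\}}=\mathrm{id}$ and $H_{|\{0\}}=s\pi$. I would take
\[
H\bigl((x,t),u\bigr) \;=\; \bigl((x,\,ut),\,u\bigr).
\]
This is conically smooth because scalar multiplication $\RR\times\RR\to\RR$ is smooth and products and composites of conically smooth maps are conically smooth; it lies over the last $\RR$; and indeed $H_{|\{0\}}(x,t)=(x,0)=s\pi(x,t)$ while $H_{|\{1\}}(x,t)=(x,t)$. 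This settles the inductive step, hence the observation.

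I do not anticipate a real obstacle here; the only mildly fussy point is verifying that stratified homotopy equivalences compose — that $\simeq$ is transitive and compatible with composition — which follows from the standard concatenation-of-homotopies argument: reparametrize each homotopy in the $\RR$-direction by a conically smooth monotone function fixing $0$ and $1$ and equal to the endpoint value near the gluing point, then glue. Alternatively, and this sidesteps the induction entirely, the same formula works uniformly: the section $x\mapsto(x,0)$ together with the homotopy $\bigl((x,v),u\bigr)\mapsto\bigl((x,uv),u\bigr)$ on $(X\times\RR^i)\times\RR$ directly exhibits $\pi\colon X\times\RR^i\to X$ as a stratified homotopy equivalence, and I would likely record this remark as well.
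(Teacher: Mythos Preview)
Your proposal is correct and matches the paper's intended approach: the paper records this as an observation with only the hint ``by induction on $i\geq 0$'' and no further argument, and your reduction to $i=1$ via composition together with the explicit linear homotopy $H\bigl((x,t),u\bigr)=\bigl((x,ut),u\bigr)$ is exactly what that hint unpacks to. Your closing remark that the same formula handles all $i$ at once is also fine and arguably cleaner than the induction.
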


\begin{lemma}\label{homotopy-boundary}
For each conically smooth stratified space with boundary $\ov{X}$, the inclusion of the interior ${\sf int}(\ov{X}) \to \ov{X}$ is a stratified homotopy equivalence.

\end{lemma}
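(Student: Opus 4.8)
The plan is to produce an explicit stratified homotopy inverse to the inclusion $\iota\colon {\sf int}(\ov{X}) \to \ov{X}$ using the collar data that comes packaged with a stratified space with boundary. By definition, $\ov{X}$ comes equipped with a conically smooth open embedding $c\colon \partial\ov{X}\times\RR_{\geq 0}\hookrightarrow \ov{X}$, with $\RR_{\geq 0}$ the open cone on a point; its image is an open neighborhood $U$ of $\partial\ov{X}$, and $c$ restricts to a homeomorphism $\partial\ov{X}\times(0,\infty)\xrightarrow{\ \cong\ } U\cap{\sf int}(\ov{X})$. The idea is the usual collar push-off: using a conically smooth bump function one constructs a conically smooth map $r\colon \ov{X}\to {\sf int}(\ov{X})$ which is the identity away from a smaller collar, and which inside the collar sends $(z,s)\mapsto (z, \varphi(s))$ for a conically smooth $\varphi\colon \RR_{\geq 0}\to (0,\infty)$ with $\varphi(s)=s$ for $s$ large and $\varphi(0)>0$ (e.g. $\varphi(s)=s+\epsilon(s)$ for a suitable bump $\epsilon$, or more simply $\varphi(s)=\max$-smoothed of $s$ and a constant). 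One must check $r$ is conically smooth — routine, since it is built by gluing conically smooth maps that agree on an open overlap, using the regular-neighborhood/collar technology recalled in \S\ref{sec-reg}.

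Next I would write down the two stratified homotopies. The composite $\iota\circ r\colon \ov{X}\to\ov{X}$ is homotopic to $1_{\ov{X}}$ via the linear-in-the-collar interpolation $H_t$ defined by $H_t = 1$ outside the collar and $H_t(z,s) = (z,(1-t)s + t\varphi(s))$ inside; since $(1-t)s+t\varphi(s)\geq \min(s,\varphi(s))$ stays $\geq 0$, and equals $s$ whenever $s$ is large, this glues to a well-defined conically smooth map $\ov{X}\times\RR\to\ov{X}$ over $\RR$ (extending $t\mapsto H_{\min(1,\max(0,t))}$), as required by Definition~\ref{def.st-htpy}. The same formula, restricted to ${\sf int}(\ov{X})$, lands in ${\sf int}(\ov{X})$ because $\varphi>0$ and $(1-t)s+t\varphi(s)>0$ for $s>0$ and all $t\in[0,1]$; this gives $r\circ\iota \simeq 1_{{\sf int}(\ov{X})}$. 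Here I would lean on Observation~\ref{R-in-J} only implicitly; the homotopies are genuinely the collar shrink, not a product projection.

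The main obstacle is conical smoothness at the corner/cone loci, i.e.\ making sure that all the gluing and the interpolation formulas are conically smooth and not merely smooth on strata. Two points deserve care: (i) the bump function $\epsilon$ (equivalently $\varphi$) must be conically smooth — this is exactly the existence of conically smooth bump functions recalled at the end of \S\ref{sec-reg}, so $\varphi$ can be chosen conically smooth with the stated germ behavior; and (ii) the map $(z,s)\mapsto(z,\varphi(s))$ and the interpolation must be conically smooth as maps of the stratified space $\partial\ov{X}\times\RR_{\geq 0}$, which follows because they act trivially on the $\partial\ov{X}$-coordinate and conically smoothly (indeed via an honest conically smooth self-map of the open cone $\RR_{\geq 0}$) on the cone coordinate, and conically smooth maps are closed under products and such fiberwise constructions. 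Finally, the two pieces (collar part and "identity away from collar" part) agree on an open overlap where $\varphi(s)=s$, so they glue to a globally conically smooth map; this is the standard partition-of-unity/regular-neighborhood argument, and with it the four maps $\iota, r, H, H|_{{\sf int}}$ assemble into the desired stratified homotopy equivalence.
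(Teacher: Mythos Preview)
Your proposal is correct and follows essentially the same approach as the paper's proof: both construct the homotopy inverse via a collar push-off $s\mapsto s+\phi(s)$ with $\phi$ a conically smooth bump supported near the boundary, then use the straight-line interpolation $H_t(z,s)=(z,s+t\phi(s))$ for the homotopies, and glue to the identity outside the collar using that $\phi$ vanishes there. The only cosmetic difference is that the paper first treats the model case $\ov{X}=[0,1)$ explicitly and then observes the formulas are the identity on $[\tfrac12,1)$ so extend to the general case, whereas you work directly in the collar of a general $\ov{X}$; the content is the same.
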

\begin{proof}
We first consider the case $\ov{X}=[0,1)$.  
Choose a smooth function $\phi\colon [0,1)\to [0,1)$ satisfying
\begin{itemize}
\item $\phi(0)>0$,
\item $\phi(s)=0$ for $s\geq \frac{1}{2}$,
\item $s+\phi(s)< 1$.
\end{itemize}
The map $[0,1)\xra{g}(0,1)$, given by $g(s) = s+\phi(s)$, is a homotopy inverse to the inclusion of the interior $(0,1)\to [0,1)$. Relevant homotopies are given by $H\colon [0,1)\times [0,1)\to [0,1)$ given by $H_t(s) = s+t\phi(s)$ and $H'\colon (0,1)\times[0,1) \to (0,1)$ given by $H'_t(s) = s+(1-t)\phi(s)$.  

Immediately after the above case, we see the result for the case $\ov{X} = \partial \ov{X}\times[0,1)$.
In the above argument, $f$ and $g$ and $H$ and $H'$ restrict as the identity map on $[\frac{1}{2},1)$. 
By the definition of a stratified space with boundary, the there is an open embedding $\partial\ov{X} \times [0,1)\hookrightarrow \ov{X}$ under $\partial \ov{X}$.
Consequently, this product case extends to the general case.

\end{proof}

\begin{observation}\label{htpy-closure}
For each pair of conically smooth stratified spaces $X$ and $Y$, stratified homotopy defines an equivalence relation on the set $\strat(X,Y)$ of conically smooth maps.  
Moreover, should two of the arrows in a commutative diagram in $\strat$
\[
\xymatrix{
X \ar[rr]^-{h}  \ar[dr]_-{g}
&&
Z
\\
&
Y  \ar[ur]_-{f}
&
}
\]
be stratified homotopy equivalences, then the third, too, is a stratified homotopy equivalence.  

\end{observation}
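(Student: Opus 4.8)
The plan is to establish the two assertions in turn; only the first — that stratified homotopy is an equivalence relation — carries real content. Reflexivity is witnessed by the constant homotopy $f\times 1_{\RR}\colon X\times\RR\to Y\times\RR$, which is conically smooth because $\strat$ has finite products. For symmetry, given a stratified homotopy $H$ from $f$ to $g$, I would take $(1_Y\times\sigma)\circ H\circ(1_X\times\sigma)$ with $\sigma(t)=1-t$; this is conically smooth (a composite of conically smooth maps), lies over $\RR$, and restricts to $g$ at $0$ and to $f$ at $1$. Transitivity is the heart of the matter, and it is handled in two steps. First, any stratified homotopy may be replaced, without changing its two boundary maps, by one that is \emph{eventually constant} in the $\RR$-coordinate: choosing a smooth monotone $\tau\colon\RR\to\RR$ with $\tau\equiv 0$ on $(-\infty,1/3]$ and $\tau\equiv 1$ on $[2/3,\infty)$, the composite $H\circ(1_X\times\tau)$ has the required properties, using $\tau(0)=0$, $\tau(1)=1$, and the conical smoothness of $1_X\times\tau$. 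Second, given eventually constant stratified homotopies $H$ from $f$ to $g$ and $K$ from $g$ to $h$, I would define the concatenation $M\colon X\times\RR\to Y\times\RR$ over $\RR$ whose $Y$-component is $(x,t)\mapsto\mathrm{pr}_Y H(x,2t)$ on $\{t<2/3\}$ and $(x,t)\mapsto\mathrm{pr}_Y K(x,2t-1)$ on $\{t>1/3\}$; on the overlap $1/3<t<2/3$ both formulas equal $g(x)$, so the two conically smooth maps agree there and, conical smoothness being a local condition, glue to a conically smooth $M$ restricting to $f$ at $0$ and to $h$ at $1$.

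Before the two-out-of-three statement I would record that $\simeq$ is compatible with composition: if $H$ is a stratified homotopy from $a$ to $a'\colon U\to V$, and $b\colon V\to W$, $c\colon T\to U$ are conically smooth, then $(x,t)\mapsto\bigl(b(\mathrm{pr}_V H(c(x),t)),t\bigr)$ is a stratified homotopy from $b\circ a\circ c$ to $b\circ a'\circ c$. In particular a composite of stratified homotopy equivalences is a stratified homotopy equivalence: if $f'$, $g'$ are homotopy inverses of $f$, $g$, then $g'f'$ is a homotopy inverse of $fg$, since $(g'f')(fg)=g'(f'f)g\simeq g'g\simeq 1$ and symmetrically in the other order.

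For the displayed triangle, write $h=f\circ g$. If $f$ and $g$ are stratified homotopy equivalences, then so is $h$ by the previous paragraph. If $f$ and $h$ are stratified homotopy equivalences, with homotopy inverses $f'$ and $h'$, then $h'f$ is a homotopy inverse of $g$: one composite is $(h'f)g=h'h\simeq 1$; for the other, note that $f'(fgh')f=f'(hh')f\simeq f'f\simeq 1$ while also $f'(fgh')f=(f'f)(gh'f)\simeq gh'f$, whence $g(h'f)=gh'f\simeq 1$. If $g$ and $h$ are stratified homotopy equivalences, with homotopy inverses $g'$ and $h'$, then $gh'$ is a homotopy inverse of $f$: one composite is $f(gh')=hh'\simeq 1$; for the other, $g(h'fg)g'=g(h'h)g'\simeq gg'\simeq 1$ while also $g(h'fg)g'=(gh'f)(gg')\simeq gh'f$, whence $(gh')f=gh'f\simeq 1$.

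The one genuine obstacle is transitivity: the concatenated homotopy must be produced as an actual conically smooth map, and this is exactly what the reparametrization to eventual constancy buys — it allows the concatenation to be written by two honest conically smooth formulas on the members of an open cover that coincide on overlaps, after which locality of conical smoothness completes the gluing. Everything in the compatibility-with-composition step and the two-out-of-three bookkeeping is then purely formal.
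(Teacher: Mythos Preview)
Your proof is correct. The paper states this as an ``Observation'' without proof, so there is nothing to compare against; you have simply supplied the details the authors omitted. The transitivity argument via smooth reparametrization to eventual constancy and then gluing on an open cover is the standard way to handle this, and your verification that each piece is conically smooth (as a composite of conically smooth maps, paired with the projection to $\RR$) is exactly what is needed. The two-out-of-three argument is the usual formal manipulation once compatibility of $\simeq$ with composition is in hand, and your derivation of that compatibility from the definition is clean.

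One small cosmetic point: when you write ``$H\circ(1_X\times\tau)$'' for the reparametrized homotopy, this map is no longer over $\RR$ (its $\RR$-component is $\tau(t)$, not $t$); what you actually use later is $(\mathrm{pr}_Y\circ H\circ(1_X\times\tau),\,\mathrm{pr}_\RR)$, which is over $\RR$. You effectively say this when you pass to the $Y$-component in the concatenation step, but it would be cleaner to make the correction at the point where you first introduce the reparametrization.
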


\begin{observation}\label{htpy-eq}
A conically smooth map $X\xra{f} Y$ is a stratified homotopy equivalence if and only if it fits into a diagram in $\strat$
\[
\xymatrix{
&
X  \ar[d]_{\{0\}}  \ar[rr]^-{1_X}
&&
X  \ar[d]^{\{0\}}
\\
&
X\times \RR  \ar[rr]^-{H}
&&
X\times \RR
\\
Y  \ar[r]^-g  \ar[d]_{\{0\}}
&
X  \ar[r]^-f  \ar[u]^{\{1\}}  
&
Y  \ar[d]^{\{0\}}  \ar[r]^-g
&
X \ar[u]_{\{1\}}
\\
Y\times \RR \ar[rr]^-{H'}
&&
Y\times \RR
&
\\
Y  \ar[rr]^-{1_Y}  \ar[u]^{\{1\}}
&&
Y  \ar[u]_{\{1\}}
&
}
\]
in which both $H$ and $H'$ lie over $\RR$.
\end{observation}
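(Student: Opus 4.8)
The plan is to observe that the displayed diagram is simply a diagrammatic transcription of Definition~\ref{def.st-htpy}, so that the proof reduces to matching the commutativity of each square in the grid against a defining condition. Here the unlabeled arrows $\{0\}$ and $\{1\}$ denote the conically smooth sections $x\mapsto(x,0)$ and $x\mapsto(x,1)$ of the projections $X\times\RR\to X$ and $Y\times\RR\to Y$, and the stipulation that $H$ and $H'$ lie over $\RR$ means exactly that they commute with the projections to $\RR$; in particular each of $H$ and $H'$ restricts along $\{0\}$ and along $\{1\}$ to a conically smooth map of the form $h\times\{0\}$, resp.\ $h\times\{1\}$, for a conically smooth map $h$ of the relevant stratified spaces.

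For the forward implication, suppose $f$ is a stratified homotopy equivalence with stratified homotopy inverse $g\colon Y\to X$. By Definition~\ref{def.st-htpy} there are conically smooth maps $H\colon X\times\RR\to X\times\RR$ and $H'\colon Y\times\RR\to Y\times\RR$, each over $\RR$, with $H_{|\{0\}}=1_X$, $H_{|\{1\}}=gf$, $H'_{|\{0\}}=fg$, and $H'_{|\{1\}}=1_Y$. I would then check directly that, with these choices of $g$, $H$, $H'$, every square of the displayed diagram commutes: the top two squares record the identity $H_{|\{0\}}=1_X$, the two squares below them record $H_{|\{1\}}=gf$, and the two pairs of squares in the lower half record $H'_{|\{0\}}=fg$ and $H'_{|\{1\}}=1_Y$ in the same fashion. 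Conversely, given a commutative diagram of the displayed shape, one reads off from these same squares that $H$ exhibits $1_X\simeq gf$ and $H'$ exhibits $fg\simeq 1_Y$ (consulting Definition~\ref{def.st-htpy}); hence $g$ is a stratified homotopy inverse of $f$ and $f$ is a stratified homotopy equivalence.

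There is no genuine obstacle in this argument; the only care required is bookkeeping. One must track which of the many arrows in the grid encodes which restriction, noting in particular that the $X$-side of the diagram produces the homotopy $1_X\simeq gf$ when read from top to bottom while the $Y$-side produces $fg\simeq 1_Y$ when read from bottom to top. One should also use that Definition~\ref{def.st-htpy} is phrased over the full line $\RR$ rather than an interval, so that the sections $\{0\}$ and $\{1\}$ land in the genuine products $X\times\RR$ and $Y\times\RR$ and every composite appearing in the diagram is an honest conically smooth map, to which Observation~\ref{htpy-closure} and the surrounding theory apply verbatim.
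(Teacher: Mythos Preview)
Your proposal is correct and matches the paper's approach: the paper records this as an Observation with no accompanying proof, treating the diagram as a direct diagrammatic transcription of Definition~\ref{def.st-htpy}, which is precisely what you carry out. Your square-by-square reading ($H_{|\{0\}}=1_X$, $H_{|\{1\}}=gf$, $H'_{|\{0\}}=fg$, $H'_{|\{1\}}=1_Y$) is exactly the intended content.
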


\begin{notation}\label{def.J-and-R}
We use the notation $\cJ$ for the collection of those morphisms in $\strat$ which are stratified homotopy equivalences. We use the notation $\cR$ for the collection of those morphisms in $\strat$ of the form $X\times \RR\xra{\sf pr} X$.

\end{notation}

Observation~\ref{htpy-closure} grants that $\cJ$ forms a subcategory of $\strat$, and Observation~\ref{R-in-J} gives an inclusion $\cR\subset \cJ$.

\begin{lemma}\label{lem.R-is-homotopy}
The canonical map between localizations
\[\strat[\cR^{-1}] \longrightarrow \strat[\cJ^{-1}]\]
is an equivalence of $\oo$-categories.
\end{lemma}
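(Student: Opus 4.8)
The plan is to show that $\cR$-localization already inverts every stratified homotopy equivalence, so that the canonical comparison functor $\strat[\cR^{-1}] \to \strat[\cJ^{-1}]$ is in fact an isomorphism of localizations rather than merely a functor. Since $\cR \subset \cJ$, the universal property of localization gives a canonical factorization $\strat \to \strat[\cR^{-1}] \to \strat[\cJ^{-1}]$, and to produce an inverse it suffices to check that each map in $\cJ$ becomes an equivalence in $\strat[\cR^{-1}]$; then the universal property of $\strat[\cJ^{-1}]$ produces the inverse functor, and the two composites are identities because they are so after precomposing with $\strat \to \strat[\cR^{-1}]$ (resp. $\strat \to \strat[\cJ^{-1}]$) and these structure functors are essentially surjective with the relevant mapping-space surjectivity coming from the localization.

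First I would record the cylinder structure: for any stratified homotopy $H\colon X\times \RR \to Y \times\RR$ over $\RR$, the two endpoint inclusions $\iota_0,\iota_1\colon X \to X\times\RR$ (at $0$ and $1$) both become equivalences in $\strat[\cR^{-1}]$, being sections of the projection $X\times\RR \to X$ which lies in $\cR$ (a map with a section that becomes an equivalence, whose retraction becomes an equivalence, becomes an equivalence by two-out-of-three). Consequently, if $f \simeq g$ via $H$, then in $\strat[\cR^{-1}]$ we have $f = (H\circ\iota_0)$ ... wait, more carefully: $H\circ\iota_0$ is identified with the composite $X \xra{\iota_0} X\times\RR \xra{H} Y\times\RR$, whose postcomposition with $\mathrm{pr}\colon Y\times\RR\to Y$ is $f$; since $\mathrm{pr}$ and $\iota_0$ both become equivalences in $\strat[\cR^{-1}]$, the image of $H$ in $\strat[\cR^{-1}]$ is determined and one gets that $f$ and $g$ have the same image in $\strat[\cR^{-1}]$ (they agree after the identification $X \simeq X\times\RR \simeq \{0\text{-slice}\}$ matched with $\{1\text{-slice}\}$). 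Hence homotopic maps are equal in $\strat[\cR^{-1}]$.

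Now let $f\colon X\to Y$ be a stratified homotopy equivalence with inverse $g$, so $gf\simeq 1_X$ and $fg\simeq 1_Y$. By the previous paragraph these homotopies become equalities in $\strat[\cR^{-1}]$: the images of $gf$ and $1_X$ coincide, as do those of $fg$ and $1_Y$. Therefore the image of $f$ in $\strat[\cR^{-1}]$ has a two-sided inverse, hence is an equivalence. This shows $\cJ$ maps to equivalences in $\strat[\cR^{-1}]$, giving the desired factorization of the localization functor through $\strat[\cR^{-1}]$, and the two functors between $\strat[\cR^{-1}]$ and $\strat[\cJ^{-1}]$ are mutually inverse by the uniqueness clause in the universal property of localization (both composites restrict to the canonical functor from $\strat$, hence equal the identity). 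I expect the only genuine subtlety to be the bookkeeping in the homotopy-implies-equality step — making precise, at the level of the $\oo$-categorical localization, that a stratified homotopy $H$ over $\RR$ witnesses an equality of the two images and handling the coherences in Observation~\ref{htpy-eq}'s ladder diagram — but this is exactly the standard argument that $\RR$-homotopies are inverted by $\RR$-localization, with $\RR$ playing the role of the interval object, and it goes through because $\RR$ is contractible to a point through $\cR$ and the projections $X\times\RR\to X$ are the maps being inverted.
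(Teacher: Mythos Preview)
Your proposal is correct and follows essentially the same route as the paper's proof: both arguments reduce to showing that any functor out of $\strat$ which inverts $\cR$ automatically inverts $\cJ$, and both establish this by using that the section inclusions $\{0\},\{1\}\colon X\hookrightarrow X\times\RR$ become equivalences (two-out-of-three with the projection in $\cR$), so that the ladder diagram of Observation~\ref{htpy-eq} forces $gf$ and $fg$ to be identified with the respective identities. The paper simply points at that diagram and invokes two-out-of-three, whereas you isolate the intermediate step ``homotopic maps are identified in $\strat[\cR^{-1}]$'' explicitly; the content is the same.
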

\begin{proof}
Consider a functor $\strat \to \cC$ to an $\infty$-category that carries each morphism in $\cR$ to an equivalence in $\cC$.
Consider a morphism $X\ra Y$ for which there is a stratified homotopy inverse.
Examine the diagram of Observation~\ref{htpy-eq}.
By assumption, the vertical arrows are carried to isomorphisms in $\cC$, and necessarily the upper and the lower horizontal arrows are as well.  
It follows that $X\ra Y$ is carried to an equivalence in $\cC$.  
\end{proof}

\subsection{Constructible sheaves}
We define \emph{constructibility} for sheaves. In the following, $\man$ is the category of smooth manifolds and smooth maps among them. We will make use of the natural inclusion $\man \hookrightarrow \strat$, whose image is those conically smooth stratified spaces consisting of a single stratum.

Let $Z$ be a space.
Cotensoring with $Z$ gives the presheaf
\[
Z^-\colon \man^{\op} \longrightarrow \spaces^{\op} \xra{~\Spaces(-,Z)~} \spaces
\]
where the first functor is the underlying space functor, and the second is Yoneda.

\begin{lemma}\label{R-invt=cbl}
Let $\cF$ be a sheaf on $\strat$.  The following conditions on $\cF$ are equivalent.  
\begin{enumerate}

\item Each stratified homotopy equivalence $f\colon X\to Y$ induces an equivalence between spaces
\[
f^\ast \colon \cF(Y) \xra{~\simeq~} \cF(X)~.
\]

\item For each conically smooth stratified space $K$, the projection $K\times \RR\to K$ induces an equivalence between spaces
\[
\cF(K) \xra{~\simeq~} \cF(K\times \RR)~.
\]  

\item For each conically smooth stratified space $K$, the restriction $\cF_{|K\times -}$ of $\cF$ along $\man \xra{K\times -} \strat$ is a locally constant sheaf.

\item For each conically smooth stratified space $K$, there is a canonical equivalence between presheaves on $\man$,
\[
\cF_{|K\times -}~\simeq ~\cF(K)^- ~,
\]
to the cotensor.

\item The functor $\cF\colon \strat^{\op}\to \Spaces$ factors through $\strat[\cJ^{-1}]^{\op}$.

\item The functor $\cF\colon \strat^{\op}\to \Spaces$ factors through $\strat[\cR^{-1}]^{\op}$.

\end{enumerate}

\end{lemma}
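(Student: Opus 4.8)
The plan is to prove the cycle of implications by establishing the easy equivalences first, then using the Shrinking Lemma together with sheaf descent to close the loop at the one genuinely nontrivial step.

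First I would dispense with the formal equivalences. The implications $(1)\Rightarrow(2)$ and $(4)\Rightarrow(3)$ are immediate (specialize to $f=\mathrm{pr}\colon K\times\RR\to K$ in the first case; a cotensor $\cF(K)^-$ is manifestly locally constant in the second). The equivalences $(5)\Leftrightarrow(6)$ and $(1)\Leftrightarrow(5)$ are essentially Lemma~\ref{lem.R-is-homotopy} and the universal property of localization: a presheaf $\cF\colon\strat^{\op}\to\Spaces$ factors through $\strat[\cJ^{-1}]^{\op}$ iff it sends every stratified homotopy equivalence to an equivalence, which is condition $(1)$; likewise factoring through $\strat[\cR^{-1}]^{\op}$ is the condition that $\cF$ invert every projection $X\times\RR\to X$, and $(2)\Rightarrow(6)$ because $\cR$-inverting is exactly condition $(2)$, while $(6)\Rightarrow(5)$ is the statement of Lemma~\ref{lem.R-is-homotopy} applied to $\cF$ regarded as a functor. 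Also $(2)\Leftrightarrow(6)$ directly. So after these routine steps the content is concentrated in: $(2)\Rightarrow(4)$ (equivalently $(2)\Rightarrow(1)$), and $(3)\Rightarrow(2)$.

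For $(3)\Rightarrow(2)$ I would argue that if $\cF_{|K\times-}$ is a locally constant sheaf on $\man$ then, since $\RR$ is contractible and $\man$-locally-constant sheaves are homotopy-invariant, the projection $K\times\RR\to K$ induces an equivalence $\cF(K)\to\cF(K\times\RR)$; more precisely a locally constant sheaf on a manifold is equivalent to the constant sheaf at its global sections pulled back, so $\cF_{|K\times-}\simeq\cF(K)^-$ gives both $(4)$ and $(2)$ at once on objects of the form $K\times M$. The real work is $(2)\Rightarrow(4)$: I want to upgrade the single statement "$\cF$ inverts $K\times\RR\to K$" to "$\cF_{|K\times-}$ is the cotensor $\cF(K)^-$". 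The strategy is: (a) first deduce by induction, using Observation~\ref{R-in-J} and the sheaf property, that $\cF$ inverts $K\times\RR^i\to K$ for all $i$; (b) then show $\cF_{|K\times-}$ is a locally constant sheaf on $\man$ — this is where the Shrinking Lemma~\ref{regularbasis} enters: cover any manifold $M$ by Euclidean charts, use that $\cF$ is a sheaf to reduce the value on $K\times M$ to a limit of values on $K\times\RR^i$'s and their overlaps (again products with Euclidean spaces), and conclude local constancy; (c) a locally constant sheaf on the category of manifolds is determined by its value on a point, so $\cF_{|K\times-}\simeq\cF(K)^-$. For step (b) the key point is that an open cover of a manifold can be refined to one all of whose finite intersections are disjoint unions of Euclidean spaces (a good cover / the Shrinking Lemma produces weakly regular, in particular Euclidean-up-to-the-nerve, pieces), so the descent spectral sequence / limit only ever sees $\cF$ evaluated on $K\times(\text{Euclidean})$, on which $(2)$ and (a) force the answer to be $\cF(K)$.

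The main obstacle I anticipate is exactly step (b): making precise that sheaf descent for $\cF_{|K\times-}$ against a good cover of a manifold, combined with $\RR^i$-invariance, pins down the sheaf as constant. One must be careful that the indexing diagram of the cover (its \v{C}ech nerve, or the covering sieve) is filtered-colimit-dual in the right way and that every term appearing is a product of $K$ with a contractible manifold; the homotopy-coherent bookkeeping — that the equivalences $\cF(K\times\RR^i)\simeq\cF(K)$ assemble compatibly over the nerve of the cover — is the delicate part, and is presumably where the paragraph in the excerpt about "the category indexing this colimit is filtered" and the paracompactness-based limit diagram for $\cF$ on subspaces gets used. Everything else is formal manipulation of localizations and of the definition of locally constant.
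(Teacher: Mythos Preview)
Your logical skeleton is correct and matches the paper's: the formal equivalences $(1)\Leftrightarrow(5)$, $(2)\Leftrightarrow(6)$, and $(5)\Leftrightarrow(6)$ via Lemma~\ref{lem.R-is-homotopy} are exactly what the paper does, and the substantive content is indeed concentrated in relating $(2)$, $(3)$, and $(4)$.

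Where you diverge from the paper is in the mechanism for $(2)\Rightarrow(4)$ and $(3)\Leftrightarrow(4)$. You propose to pick a good cover of a manifold $M$, run \v{C}ech descent for the sheaf $\cF_{|K\times-}$, and then worry about assembling the equivalences $\cF(K\times\RR^i)\simeq\cF(K)$ coherently over the nerve of the cover --- and you correctly flag this coherence as the delicate point. The paper sidesteps this entirely. Instead of descending along a cover, it observes once and for all that the full subcategory $\man^{\sf Euc}\subset\man$ of Euclidean spaces is a hyperbasis for the Grothendieck topology on $\man$, and that the associated $\infty$-topos is hypercomplete; hence a map of sheaves on $\man$ is an equivalence iff its restriction to $\man^{\sf Euc}$ is. This reduces both $(3)$ and $(4)$ to statements about the restriction $\cF_{|K\times-}$ on Euclidean spaces: $(4)$ says this restriction is constant at $\cF(K)$, and $(3)$ says it is locally constant, which on contractible objects is the same thing --- so $(3)\Leftrightarrow(4)$ drops out immediately. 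Then $(2)\Rightarrow(4)$ is just the inductive statement $\cF(K)\xrightarrow{\simeq}\cF(K\times\RR^n)$, with no diagram-chasing over a nerve required. The paper also separately verifies that $\cF(K)^-$ is itself a locally constant sheaf on $\man$ (via the Dugger--Isaksen hypercover result), so that the comparison in $(4)$ makes sense as a comparison of sheaves.

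Two minor corrections: the Shrinking Lemma (Lemma~\ref{regularbasis}) is not used here --- that lemma concerns weakly regular refinements of covers of stratified spaces and plays its role later, in the theory of isotopy sheaves. And the paracompactness/filtered-colimit remark you cite from the excerpt is about extending presheaves to arbitrary subspaces, not about this argument. Your approach would work, but the hyperbasis shortcut is what dissolves the coherence obstacle you anticipated.
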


\begin{proof}
Definitionally, (1) is equivalent to (5), and (2) is equivalent to (6).
The equivalence of (1) and (2) follows immediately from Lemma~\ref{lem.R-is-homotopy}.

We now prove that, for each space $Z$, the presheaf $Z^-:=\Spaces(-,Z)$ on $\man$ is a sheaf.
Let $\cU$ be a covering sieve of a smooth manifold $S$.
Consider the resulting composite functor $\cU^{\tr} \to \man\to \spaces$.  
Because $\cU$ is in particular a hypercover of $S$, then by~\cite{dugger-isaksen}, this composite functor is a colimit diagram.
Being in terms of a cotensor, it follows that the composite functor ${Z^-}:(\cU^{\tr})^{\op} \to \man^{\op} \to \spaces^{\op} \ra \spaces$ is a limit diagram.  
This shows that $Z^-$ is a sheaf on $\man$.  

Let us now argue that $Z^-$ is locally constant.  
Let $S$ be a smooth manifold and choose a hypercover $\cU$ of $S$ comprised of Euclidean spaces.  
From the previous paragraph, we know that the canonical map $Z^S \xra{\simeq} \underset{U\in \cU} \limit Z^U$ is an equivalence between spaces.  
Because each $U$ is contractible, we see that $Z^U \simeq  Z$, and we conclude that $Z^-$ is locally constant.  

The full subcategory $\man^{\sf Euc}\subset \man$ consisting of Euclidean spaces forms a hyperbasis for the Grothendieck topology of $\man$ whose associated $\infty$-topos of sheaves is hypercomplete.
There is then a natural equivalence $\cG\xra{\simeq} \cG'$ of sheaves on $\man$ if and only if there is a natural equivalence of restrictions $\cG_{|\sf Euc} \xra{\simeq} \cG'_{|\sf Euc}$ to $\man^{\sf Euc}$.  
Because $\RR^n$ is contractible, then the restriction of $\cF(K)^-$ to $\man^{\sf Euc}$ is canonically identified as the constant sheaf at $\cF(K)$.  
And so~(4) is true if and only if the restriction of $\cF_{|K\times -}$ to $\man^{\sf Euc}$ is constant.  

Likewise,~(3) is true if and only if the restriction of $\cF_{|K\times-}$ to $\man^{\sf Euc}$ is locally constant.  Because each object of $\man^{\sf Euc}$ is contractible, this is the case if and only if $\cF_{|K\times -}$ is constant.  
We conclude that~(3) is equivalent to~(4).  

If~(2) is true, a quick induction argument on $n\geq 0$ implies that the map of spaces $\cF(K) \xra{\simeq} \cF(K\times \RR^n)$, induced by the projection, is an equivalence; so~(2) implies~(4).  
Conversely, if~(4) is true, then the map induced from the projection $\cF(K) \to \cF(K\times \RR)$ is necessarily an equivalence, and we see that~(4) implies~(2).  

\end{proof}

\begin{remark}
Lemma~\ref{R-invt=cbl} fundamentally relies on the existence of conically smooth bump functions, and thereafter on the existence of conically smooth partitions of unity.  
This contrasts with the situation of $\AA^1$-homotopy theory.

\end{remark}

\begin{definition}\label{def.constructible-sheaves}
The $\infty$-category of \emph{constructible sheaves on $\strat$} is the full $\infty$-subcategory 
\[
\Shv^{\sf cbl}(\strat) ~\subset~\Shv(\strat)
\]
consisting of those sheaves that satisfy the equivalent conditions of Lemma~\ref{R-invt=cbl}.

\end{definition}

\begin{observation}\label{cbl-R-local}
The $\infty$-category of constructible sheaves fits into a pullback diagram among $\infty$-categories
\[
\xymatrix{
\shv^{\sf cbl}(\strat)  \ar[r]  \ar[d]
&
\Psh(\strat[\cR^{-1}])  \ar[d]
\\
\shv(\strat)  \ar[r]  
&
\Psh(\strat)
}
\]
which is comprised of fully faithful functors.
\end{observation}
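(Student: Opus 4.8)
The plan is to recognize the claimed square as an instance of a general fact about localizations and presheaf categories, namely that for any localization functor $L\colon \cD \to \cD[\cW^{-1}]$, the restriction functor $L^\ast\colon \Psh(\cD[\cW^{-1}]) \to \Psh(\cD)$ is fully faithful with essential image the presheaves carrying $\cW$ to equivalences, and then intersect this with the sheaf condition. Concretely, first I would note that all four functors in the square are fully faithful: the right vertical map $\Psh(\strat[\cR^{-1}]) \hookrightarrow \Psh(\strat)$ is fully faithful because restriction along a localization is fully faithful (its image consists of presheaves inverting $\cR$, which is exactly condition~(6) of Lemma~\ref{R-invt=cbl}); the bottom horizontal map $\Shv(\strat) \hookrightarrow \Psh(\strat)$ is fully faithful by definition of $\Shv(\strat)$ as a full $\infty$-subcategory; and the two functors out of $\Shv^{\sf cbl}(\strat)$ are fully faithful since $\Shv^{\sf cbl}(\strat) \subset \Shv(\strat)$ is by definition a full $\infty$-subcategory (Definition~\ref{def.constructible-sheaves}), and its inclusion into $\Psh(\strat)$ composed with the fully faithful $\Shv(\strat)\hookrightarrow \Psh(\strat)$ remains fully faithful.

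Given that every functor in the square is fully faithful, checking that the square is a pullback reduces to a statement about full subcategories of $\Psh(\strat)$: an object $\cF \in \Psh(\strat)$ lies in the pullback if and only if it lies in $\Shv(\strat)$ \emph{and} (after noting the square commutes, i.e. the composite $\Shv^{\sf cbl}(\strat) \to \Psh(\strat)$ agrees both ways, which is immediate) it lies in the image of $\Psh(\strat[\cR^{-1}]) \hookrightarrow \Psh(\strat)$. The latter image, as recalled above, is precisely the collection of presheaves satisfying condition~(6) of Lemma~\ref{R-invt=cbl}. Therefore the pullback is the full subcategory of $\Psh(\strat)$ spanned by presheaves that are both sheaves and invert $\cR$ --- and by Lemma~\ref{R-invt=cbl} (equivalence of conditions~(2) and~(6)) this is exactly $\Shv^{\sf cbl}(\strat)$ as defined in Definition~\ref{def.constructible-sheaves}. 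One should also verify the commutativity of the square, but this is formal: both composites $\Shv^{\sf cbl}(\strat) \to \Psh(\strat)$ are the canonical inclusion, since restricting a constructible sheaf along $\strat \to \strat[\cR^{-1}]$ and then forgetting recovers the original presheaf (the factorization through $\strat[\cR^{-1}]$ is a property, not extra data).

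The only genuinely substantive input is Lemma~\ref{R-invt=cbl}, which has already been proved; the rest is a standard manipulation of fully faithful functors and the universal property of localization, so I do not anticipate a serious obstacle. The one point demanding a little care --- and the closest thing to a main obstacle --- is the assertion that $\Psh(\strat[\cR^{-1}]) \to \Psh(\strat)$ is fully faithful with the stated essential image; this follows from the fact that $\strat \to \strat[\cR^{-1}]$ is an $\infty$-categorical localization, so that restriction of presheaves is fully faithful onto the $\cR$-local presheaves, a fact one can cite from \cite{HTT}. With that in hand, the pullback description and the fact that all functors are fully faithful (monomorphisms of $\infty$-categories) follow immediately.
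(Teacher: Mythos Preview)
Your proposal is correct and matches the paper's implicit reasoning: the paper states this as an Observation with no proof, treating it as immediate from Definition~\ref{def.constructible-sheaves} and Lemma~\ref{R-invt=cbl}, and your argument spells out exactly the intended justification (intersection of two full subcategories of $\Psh(\strat)$, identified via the equivalence of conditions in Lemma~\ref{R-invt=cbl}).
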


\subsection{Isotopy sheaves}
Here we give a useful method of obtaining constructible sheaves from point-set data.

The adjoint functor theorem grants the existence of a functor
\[
L\colon \Shv(\strat)\longrightarrow  \Shv^{\sf cbl}(\strat)
\]  
which is left adjoint to the inclusion of constructible sheaves on $\strat$ into all sheaves.
The next section makes explicit the values of $L$ on a certain class of sheaves: \emph{isotopy sheaves}.
In the present section, we define isotopy sheaves and give examples. For the next definition, ${\sf Gpd}$ is the category of small ordinary groupoids, and ${\sf Cat}$ is the category of small ordinary categories. 
Recall the notion of a right fibration between $\oo$-categories (and, in particular, between ordinary categories), which is a functor $\cC\ra \cD$ with the right lifting property with respect to all final functors $\cK_0\ra \cK$ between $\infty$-categories: that is, each solid square
\[
\xymatrix{
\cK_0\ar[r]\ar[d]&\cC\ar[d]\\
\cK\ar@{-->}[ur]\ar[r]&\cD}
\]
admits a unique filler, as indicated.
Each presheaf $\cF:\cD^{\op} \ra\Spaces$ determines a right fibration
\[
\cD_{/\cF}:=  \cD \underset{\psh(\cD)}\times \psh(\cD)_{/\cF}  \xra{~\sf pr~} \cD~,
\]
also known as the {\it unstraightening} of $\cF$. 
This association of unstraightening has an inverse, the {\it straightening} construction of Lurie:
\[
{\sf St}\colon \Psh(\cD)~\simeq~ {\sf RFib}_\cD \colon {\sf Un}~.
\]
The straightening of a right fibration $\cC \ra \cD$ is in particular a presheaf $\cD^{\op}\ra \Spaces$ whose value on an object $d\in \cD$ is canonically identified as the fiber $\cC_{|d}$ over $d$.
For more details see \S2.0.0.3 of \cite{HTT} or \cite{fibrations}.

In what follows, we introduce an explicit expression for the values of the functor $L$ on sheaves on $\strat$ with checkable properties.
We make use of the \emph{extended simplices}, which we now define.

\begin{definition}[$\Delta^\bullet_e$]\label{def.extended-delta}
The \emph{extended} cosimplicial smooth manifold $\Delta^\bullet_e\colon \bdelta \ra \man$ takes values
\[
\small
[p]\mapsto \Delta^p_e:= \bigl\{\{0,\dots,p\} \xra{t} \RR\mid \sum_{i=0}^p t_i =1\bigr\}
\text{ on objects, and }\]
\[\small
([p]\xra{\rho}[q])\mapsto  \bigl(t\mapsto (j\mapsto \sum_{\rho(i)=j} t_i)\bigr)
\text{ on morphisms}
\]
The functor $\Delta^\bullet_e$ defines a cosimplicial stratified space by composing with the inclusion $\man \hookrightarrow \strat$. 
The extended boundary $(\partial \Delta^p)_e$ and the extended horns $(\Lambda^p_k)_e$ are defined likewise: $\partial \Delta^p_e \subset \Delta^p_e$ is the subspace consisting of those $t$ for which $t_i=0$ for some $i$; 
$(\Lambda^p_k)_e\subset \partial \Delta^p_e$ is the subspace consisting of those $t$ for which, should $t_k=0$, then $t_j=0$ for some $j\neq k$.
\end{definition}

\begin{notation}[Delta conventions]\label{not.delta}
For the reader's convenience, we here enumerate our uses of the capitalized Greek letter delta.
\begin{itemize}
\item $\bdelta$ is the simplex category; its objects are finite non-empty linearly ordered sets, which we will often write $[p] = \{0<1<\ldots <p\}$ or $\{i< \ldots<i+p\}$, and its morphisms are non-decreasing maps between underlying sets.

\item $\Delta^p$ is the topological $p$-simplex of Definition \ref{def.standard-simplices}; it is equipped with the standard stratification $\Delta^p\ra [p]$ coming from the identification $\Delta^p\cong \oC^p(\ast)$ with a $p$-fold closed cone on a point.
\item $\Delta^p_e$ is the extended $p$-simplex of Definition \ref{def.extended-delta}, an affine hyperplane in $\RR^{p+1}\cong\RR^{\{0,\dots,p\}}$, which is noncanonically isomorphic to $\RR^p$.
\item $\Delta[p]\in \psh(\bdelta)$ is the combinatorial $p$-simplex, which is the presheaf represented by $[p]$.
\end{itemize}
When dealing with diagrams, we will also write $\Delta^{\{i<\ldots <i+p\}}$, $\Delta_e^{\{i<\ldots <i+p\}}$, or $\Delta[\{i<\ldots <i+p\}]$ for the various $p$-simplices in order to indicate that the morphisms in the diagram are those induced by the standard inclusion $\{i<\dots<i+p\}\subset [p+i]$.
\end{notation}

\begin{notation}\label{simp.not}
Let $\fI \colon \strat^{\op}\to\cS$ be a functor to an $\infty$-category with limits.  
For each stratified space $K$, and each simplicial set $D\colon \bDelta^{\op} \to \Set$, we define the value
\[
\fI(K\times D_e)~:=~\limit\bigl( {\bDelta_{/D}}^{\op} \to \bDelta^{\op} \xra{K\times \Delta^\bullet_e} \strat\bigr)~\in \cS~,
\]
as the limit over simplices of $D$.  

\end{notation}

\begin{remark}
The object $\fI(K\times D_e)$ is evidently contravariantly functorial in the simplicial set $D$.  
In particular, each map $D\to D'$ between simplicial sets determines a map $\fI(K\times D'_e) \to \fI(K\times D_e)$.  

\end{remark}

\begin{remark}
We will only implement the above Notation~\ref{simp.not} in the cases that $\cS = \Set$, ${\sf Gpd}$, or $\Spaces$, and that $D=\Lambda_i[p]$ for some $0\leq i \leq p$ or $D=\partial \Delta[p]$ for some $p\geq 0$.  

\end{remark}

\begin{remark}
Because of the standard fact that, for each simplicial set $D$, the full subcategory of $ {\bDelta_{/D}}^{\op}$ consisting of the non-degenerate simplices of $D$ is initial (as an $\infty$-category), the value 
\[
\fI(K\times D_e)~\simeq~ \underset{[q]\xra{\text{non-deg}} D}\limit \fI(K\times \Delta^q_e)
\]
is identical to the limit indexed by the category of \emph{non-degenerate} simplices of $D$.

\end{remark}

\begin{example}
With the preamble of Notation~\ref{simp.not}, the canonical morphism in $\cS$ to the fiber product
\[
\fI(K\times (\Lambda^2_1)_e) 
\xra{~\simeq~}  
\fI(K\times \Delta^{\{0<1\}}_e)\underset{\fI(K\times \Delta^{\{1\}}_e)}\times \fI(K\times \Delta^{\{1<2\}}_e)
\]
is an equivalence.

\end{example}

\begin{definition}\label{def.iso-sheaves}
The category of \emph{isotopy sheaves} is the full subcategory
\[
\isot~\subset~\Cat_{/\strat}
\]
consisting of right fibrations $\fI \to \strat$ for which there is a straightening $\fI \colon \strat^{\op} \to {\sf Gpd}$ that satisfies the following conditions.
\begin{itemize}
\item {\bf Sheaf:} For each covering sieve $\cU \subset \strat_{/K}$, the restriction of $\fI$ along the adjoint diagram $\cU^{\tr} \to \strat$ is a limit diagram of groupoids, which is to say that the canonical map between groupoids
\[
\fI(K)\xra{~\simeq~} \underset{U\in \cU} \limit \fI(U)
\]
is an equivalence.

\item {\bf Isotopy extension:} For each stratified space $K$, the following two conditions are satisfied.
\begin{itemize}
\item
For each $p\geq 0$, the functor between groupoids
\[
\fI(K\times \Delta^q_e) \longrightarrow \fI(K\times \partial \Delta^q_e)
\]
is an isofibration.  

\item
For each $0\leq i \leq p$, the functor between groupoids
\[
\fI(K\times \Delta^q_e) \longrightarrow \fI(K\times (\Lambda^q_i)_e)
\]
is surjective on objects and on Hom-sets.

\end{itemize}

\end{itemize}

\end{definition}

\begin{remark}\label{rem.isofib}
Isofibrations will be relevant in this work for the following simple reason: if $\cE \ra \cG$ is an isofibration of groupoids, then the map on their nerves $\cE_\star \ra \cG_\star$ is a Kan fibration. Likewise, if one has a sheaf of groupoids $\cF: {\sf Opens}(K)^{\sf op}\ra \gpd$ where the pullbacks $\cF(V)\ra \cF(U)$ are isofibrations for every open $U\subset V$, then the associated sheaf of Kan complexes will be a homotopy sheaf of Kan complexes, and so the composite $\cF: {\sf Opens}(K)^{\sf op}\ra \gpd\ra \spaces$ will be sheaf in the $\oo$-categorical sense.
\end{remark}

The terminology of Definition~\ref{def.iso-sheaves} as \emph{sheaves} is justified through the following observation.
\begin{observation}\label{isot-are-shvs}
By definition isotopy sheaves are a full subcategory of the ordinary category of right fibrations over $\strat$.
This category of all right fibrations of $\oo$-categories has a natural simplicial enrichment, and thus yields an $\infty$-category $\rfib_{\strat}$.  
The straightening functor of~\cite{HTT} gives an equivalence of $\infty$-categories $\rfib_{\strat}\xra{\simeq} \Psh(\strat)$ to that of (space-valued) presheaves on $\strat$.
The defining properties of an isotopy sheaf grant that the composite functor $\isot \to \rfib_{\strat} \xra{\simeq} \Psh(\strat)$
factors through $\Shv(\strat)$, (space-valued) sheaves on the site $\strat$.

\end{observation}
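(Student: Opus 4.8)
The plan is to verify the three defining conditions of an isotopy sheaf (Definition~\ref{def.iso-sheaves}) transported across the straightening equivalence into the statement that the resulting space-valued presheaf is a sheaf. First I would recall that the equivalence $\rfib_{\strat}\xra{\simeq}\Psh(\strat)$ of~\cite{HTT} is compatible with the Grothendieck topology on $\strat$: a right fibration corresponds to a sheaf precisely when its straightening $\fI\colon \strat^{\op}\to \gpd \hookrightarrow \spaces$ satisfies descent for covering sieves. Since the \textbf{Sheaf} axiom of an isotopy sheaf is \emph{literally} the requirement that $\fI(K)\xra{\simeq}\lim_{U\in\cU}\fI(U)$ is an equivalence of groupoids for every covering sieve $\cU\subset\strat_{/K}$, and since the inclusion $\gpd\hookrightarrow\spaces$ preserves limits, this is exactly the descent condition for the space-valued presheaf underlying $\fI$. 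Thus the composite $\isot\to\rfib_{\strat}\xra{\simeq}\Psh(\strat)$ factors through $\Shv(\strat)$ on the nose.

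The key steps, in order, would be: (1) observe that under straightening the value $\fI(K)$ of the right fibration on an object $K\in\strat$ is the fiber over $K$, which is a groupoid, so the presheaf lands in $\gpd$; (2) note that the limit defining the sheaf condition is computed the same way in $\gpd$ and in $\spaces$, because the inclusion $\gpd\hookrightarrow\spaces$ is a right adjoint (indeed reflective) hence limit-preserving; (3) conclude that the \textbf{Sheaf} axiom, phrased for groupoid-valued functors, is equivalent to the descent condition for the associated space-valued presheaf; (4) invoke the characterization of $\Shv(\strat)$ as those presheaves satisfying descent along the diagrams $(\cU^{\op})^{\tl}\cong(\cU^{\tr})^{\op}\to\strat^{\op}\to\spaces$ for covering sieves $\cU$, exactly as set up in the Sheaves subsection, to deduce membership in $\Shv(\strat)$. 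The \textbf{Isotopy extension} and \textbf{Isotopy equivalence} axioms play no role here; they are recorded only to single out a subclass of sheaves whose sheafification into constructible sheaves is computable, which is the subject of the next section.

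I do not anticipate a genuine obstacle: the observation is essentially a matter of unwinding the straightening equivalence and noting that the \textbf{Sheaf} axiom was defined so as to match descent precisely. The only point requiring a word of care is that descent for a sieve $\cU$ can be checked either on the full sieve or on a generating subcover, and that the poset of open subspaces indexing the relevant colimits/limits is filtered as an $\infty$-category --- but both facts are already established in the Sheaves subsection of the excerpt (culminating in the statement that $(\ov{\cU}^{\op})^{\tl}\to{\sf Sub}(K)^{\op}\xra{\cF}\spaces$ is a limit diagram), so they may be cited directly. Accordingly the proof is a short verification rather than an argument with moving parts.
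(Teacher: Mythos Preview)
Your proposal is correct and matches the paper's approach: the paper treats this as a self-evident observation (no separate proof is given), and your unwinding---that the \textbf{Sheaf} axiom of Definition~\ref{def.iso-sheaves} is literally the descent condition once one uses that $\gpd\hookrightarrow\spaces$ preserves limits, while the isotopy axioms are irrelevant here---is exactly the content behind the paper's phrase ``the defining properties of an isotopy sheaf grant that\ldots.'' Your opening sentence about ``the three defining conditions'' is slightly misleading since, as you yourself note, only the first condition is used; you might tighten that phrasing.
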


\begin{definition}[Topologizing diagram]\label{def.TD}
The \emph{topologizing diagram} functor is the composite
\[
\psh(\strat)\longrightarrow \Fun(\bdelta^{\op}, \psh(\strat)) \longrightarrow \psh(\strat)
\]
of restriction along $\strat \times \bdelta\xra{-\times\Delta^\bullet_e} \strat$ with left Kan extension along the projection $\strat \times \bdelta \to \strat$.  
\end{definition}
Because the projection $\strat\times\bdelta\ra \strat$ is Cartesian, the values of the topologizing diagram can be expressed as the assignment
\[
\fF~\mapsto \Bigl(K \mapsto |\fF(K\times\Delta^\bullet_e)|\Bigr)
\]
where $|-|$ denotes geometric realization.

\begin{remark}
The terminology is intended to evoke the \emph{classifying diagram} functor $\Psh(\bdelta) \to \Psh(\bdelta)$ given by $\cC\mapsto |\cC(\Delta[-]\times E[\bullet])|$ where $E[p]$ is the nerve of the contractible groupoid whose set of objects is $\{0,\dots,p\}$. See ~\cite{rezk}, ~\cite{joyaltierney}, and Remark~\ref{class-vs-top}.
\end{remark}

\begin{lemma}\label{TD-is-cbl}
The topologizing diagram functor factors through $\Psh(\strat[\cJ^{-1}])$.
That is, for each presheaf $\fF$ on $\strat$, and each conically smooth stratified space $K$, projection induces an equivalence between spaces
\[
|\fF(K\times \Delta^\bullet_e)| \xra{~\simeq~}|\fF(K\times\RR\times \Delta^\bullet_e)|~.
\]

\end{lemma}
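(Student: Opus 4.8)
The plan is to reduce this to the fact (Observation~\ref{R-in-J}, together with Lemma~\ref{lem.R-is-homotopy}) that $\RR$-invariance is equivalent to stratified-homotopy-invariance, but applied one simplicial degree at a time. The key observation is that for each fixed $[p]\in\bdelta$, the projection
\[
K\times\RR\times\Delta^p_e~\longrightarrow~K\times\Delta^p_e
\]
is a projection-away-from-$\RR$ morphism, and these assemble into a map of simplicial stratified spaces $K\times\RR\times\Delta^\bullet_e\to K\times\Delta^\bullet_e$. Applying $\fF$ degreewise and realizing, we want this to induce an equivalence on $|{-}|$.

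First I would note that for a \emph{general} presheaf $\fF$ there is no reason for $\fF$ to send $K\times\RR\times\Delta^p_e\to K\times\Delta^p_e$ to an equivalence of spaces; the point is that we are not claiming that, but rather that the realization of the simplicial space becomes an equivalence. So the mechanism must be homotopy-theoretic in the simplicial direction. The standard tool is the extra degeneracy / simplicial contraction argument: one constructs a retraction and an extra codegeneracy-like structure witnessing that, after realization, the $\RR$-factor is absorbed. Concretely, I would use the affine structure of the extended simplices: there is a map $\Delta^1_e\times\Delta^p_e\to\Delta^{p}_e$ (or more precisely a coherent family built from the cosimplicial structure of $\Delta^\bullet_e$ together with an interval in the $\RR$-direction) exhibiting the inclusion $K\times\Delta^\bullet_e\hookrightarrow K\times\RR\times\Delta^\bullet_e$ (say at $0\in\RR$) as a simplicial homotopy equivalence after applying $\fF$. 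The crucial structural fact here is that $\RR$ itself, regarded as a constant simplicial object, admits a simplicial contraction compatible with the cosimplicial object $\Delta^\bullet_e$, because $\Delta^\bullet_e$ has vertices and one can linearly interpolate within each affine hyperplane; this is exactly the kind of argument underlying the classical fact that $|E[\bullet]|\simeq\ast$ in Rezk's classifying-diagram formalism, which the preceding remark flags as the model for the topologizing diagram.

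The key steps, in order, would be: (i) produce an explicit natural stratified homotopy, in each simplicial degree $p$, between the identity on $K\times\RR\times\Delta^p_e$ and the composite $K\times\RR\times\Delta^p_e\to K\times\Delta^p_e\hookrightarrow K\times\RR\times\Delta^p_e$, using a smooth function collapsing the $\RR$-coordinate (this is formally the same device as in the proof of Lemma~\ref{homotopy-boundary}, a smooth deformation in a single $[0,1)$- or $\RR$-direction); (ii) check that this homotopy is \emph{simplicial}, i.e.\ compatible with the coface and codegeneracy maps of $\Delta^\bullet_e$, so that it descends to a simplicial homotopy of simplicial stratified spaces $\Delta^1\times(K\times\RR\times\Delta^\bullet_e)\to K\times\RR\times\Delta^\bullet_e$ over $K\times\Delta^\bullet_e$; (iii) apply $\fF$ degreewise to get a simplicial homotopy between the relevant endomorphisms of the simplicial space $\fF(K\times\RR\times\Delta^\bullet_e)$; and (iv) conclude via the fact that simplicial homotopies induce homotopies after geometric realization, so the inclusion and projection become mutually inverse equivalences on $|{-}|$.

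The main obstacle I anticipate is step (ii): arranging the contraction of the $\RR$-factor to be genuinely \emph{compatible with the cosimplicial structure maps} $([p]\xra{\rho}[q])\mapsto\Delta^\rho_e$, since those maps mix the barycentric coordinates in a way that must not interfere with the $\RR$-coordinate collapse. The clean way to handle this is to take the contracting homotopy to act \emph{only} on the $\RR$-factor — via a fixed smooth $h\colon\RR\times[0,1]\to\RR$ with $h_0=\mathrm{id}$, $h_1\equiv 0$ — and leave $K$ and $\Delta^p_e$ untouched; then compatibility with the $\bdelta$-structure is automatic because the cosimplicial operations act only on the $\Delta^\bullet_e$-factor. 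One then only needs that $h$, viewed as a map of stratified spaces, is conically smooth, which holds since $\RR$ is a smooth manifold. With that packaging the rest is formal: the degreewise stratified homotopy exists by the one-variable lemma, it is simplicial for free, $\fF$ carries it to a simplicial homotopy of simplicial spaces, and realization finishes the proof.
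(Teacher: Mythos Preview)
Your instinct that the contraction should come from the cosimplicial structure of $\Delta^\bullet_e$ together with $\RR\cong\Delta^1_e$ is right, and is the paper's argument. But your concrete steps (i)--(iv) drift to a different, flawed mechanism. You parametrize the contraction by a \emph{topological} interval via a smooth $h\colon\RR\times[0,1]\to\RR$. It is true that the resulting degreewise maps $K\times\RR\times\Delta^p_e\times[0,1]\to K\times\RR\times\Delta^p_e$ are natural in $p$; but the claim that ``$\fF$ carries it to a simplicial homotopy'' fails. Applying an arbitrary presheaf $\fF$ yields only a map of simplicial spaces $\fF(K\times\RR\times\Delta^\bullet_e)\to\fF(K\times\RR\times[0,1]\times\Delta^\bullet_e)$, and for a generic $\fF$ the restrictions along $\{0\},\{1\}\hookrightarrow[0,1]$ are unrelated---$\fF$ has no access to the topology of $[0,1]$. (Indeed, that invariance is precisely what you are trying to establish for the topologizing diagram.) A continuous family of simplicial maps is not a simplicial homotopy.

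The paper's proof uses the extra $\RR$ itself as the combinatorial cylinder. Since $\RR\cong\Delta^1_e$, each $\sigma\colon[p]\to[1]$ determines an affine map $\Delta^p_e\to\Delta^1_e\times\Delta^p_e$, and applying $\fF(K\times-)$ to these gives the data of a map of simplicial objects $\fF(K\times\RR\times\Delta^\bullet_e)\to\fF(K\times\Delta^\bullet_e)^{\Delta[1]}$. Through this, any stratified homotopy $X\times\RR\to Y$ induces, after applying $\fF(-\times\Delta^\bullet_e)$, a genuine simplicial homotopy between the two endpoint maps $\fF(Y\times\Delta^\bullet_e)\rightrightarrows\fF(X\times\Delta^\bullet_e)$. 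Since $K\times\RR\to K$ is a stratified homotopy equivalence, the result follows. No external $[0,1]$ is needed: the point is that $\RR\cong\Delta^1_e$ \emph{is} the simplicial interval.
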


\begin{proof}
Choose an identification $\Delta^1_e\cong \RR$, after which there results an identification of simplicial objects $\fF(K\times \RR\times \Delta^\bullet_e)\simeq\fF(K\times\Delta^\bullet_e)^{\Delta[1]}$.  
Thereafter, stratified homotopies induce simplicial homotopies.
Therefore stratified homotopy equivalences induce simplicial homotopy equivalences.  

\end{proof}

\begin{theorem}\label{isot.cbl}
The restriction of the topologizing diagram functor to isotopy sheaves factors through constructible sheaves:
\[
\isot\longrightarrow \Shv^{\cbl}(\strat)~,\qquad \fI \mapsto |\fI(-\times\Delta^\bullet_e)|~.
\]
\end{theorem}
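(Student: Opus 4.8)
The plan is to verify that, for an isotopy sheaf $\fI$, the presheaf $\cF := |\fI(-\times\Delta^\bullet_e)|$ lands in $\Shv^{\cbl}(\strat)$. By Definition~\ref{def.constructible-sheaves}, this means checking two things: that $\cF$ is a sheaf on $\strat$, and that $\cF$ satisfies the equivalent conditions of Lemma~\ref{R-invt=cbl}. The second point is already handled: Lemma~\ref{TD-is-cbl} says the entire topologizing diagram functor factors through $\Psh(\strat[\cJ^{-1}])$, so $\cF$ automatically satisfies condition~(5) of Lemma~\ref{R-invt=cbl}. Hence the only real content is the \textbf{sheaf condition}: for each covering sieve $\cU\subset \strat_{/K}$, the canonical map $\cF(K)\to \varprojlim_{U\in\cU}\cF(U)$ is an equivalence of spaces.

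First I would reduce the sheaf condition for $\cF$ to the sheaf condition for $\fI$ (which holds by hypothesis) together with descent being compatible with the two operations defining the topologizing diagram, namely $K\mapsto \fI(K\times\Delta^\bullet_e)$ and geometric realization $|-|$. The subtle point is that $|-|$ (a colimit) need not commute with the limit $\varprojlim_{U\in\cU}$. The standard device here is to replace the sieve $\cU$ by a good hypercover: by the Shrinking Lemma~\ref{regularbasis} one can refine $\cU$ to a cover $\ov{\cU}_0$ by weakly regular subspaces whose closures are weakly regular, and by paracompactness of $K$ the associated \v{C}ech-type diagram $(\ov{\cU}^{\op})^{\tl}\to \spaces$ is a limit diagram for any sheaf (this is exactly the last displayed statement of \S1.3). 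One then works levelwise in the simplicial direction: for each $[p]$, $\fI(-\times\Delta^p_e)$ is a sheaf — this uses that $-\times \Delta^p_e$ sends covering sieves of $K$ to covering sieves of $K\times\Delta^p_e$ (products of open covers are open covers) and that $\fI$ is an isotopy sheaf. So $\fI(-\times\Delta^\bullet_e)$ is a simplicial diagram of sheaves of groupoids, and I want to conclude that its realization is a sheaf.

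The key mechanism that makes realization commute with the relevant descent limit is the \textbf{isotopy extension} and \textbf{isotopy equivalence} axioms of Definition~\ref{def.iso-sheaves}. Concretely: geometric realization of a simplicial space commutes with a limit over a diagram $\cU$ provided the simplicial spaces involved satisfy a Reedy-fibrancy / fibrancy-type condition with respect to the cover, and the matching-map surjectivity built into isotopy extension (surjectivity on Hom-sets for weakly regular subspaces) and isotopy equivalence (surjectivity on objects for weakly regular deformation retractions) are precisely engineered so that the restriction maps $\fI(K\times\Delta^p_e)\to \fI(K_0\times\Delta^p_e)$ behave like fibrations of groupoids along the inclusions occurring in a good hypercover. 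Since the face inclusions $\Delta^{p}_e \hookrightarrow \Delta^{p+1}_e$ and the horn-type inclusions are weakly regular (indeed weakly regular deformation retractions — collapsing an extended simplex onto a face is a stratified deformation retraction, cf.\ the closed-cone/collar discussion of \S1), the simplicial groupoid $\fI(K\times\Delta^\bullet_e)$ is levelwise-fibrant in the sense needed, and its realization computes a homotopy colimit that commutes with the \v{C}ech limit. Assembling these: $\cF(K)=|\fI(K\times\Delta^\bullet_e)| \simeq |\varprojlim_{\ov{\cU}}\fI(U\times\Delta^\bullet_e)| \simeq \varprojlim_{\ov{\cU}}|\fI(U\times\Delta^\bullet_e)| = \varprojlim_{\ov{\cU}}\cF(U)$, which gives the sheaf condition.

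I expect the \textbf{main obstacle} to be exactly the commutation of geometric realization with the descent limit — i.e., making precise why the three axioms of an isotopy sheaf (sheaf + isotopy extension + isotopy equivalence) together force this. The naive statement ``filtered/finite limits commute with sifted colimits'' is false in general, so the argument must genuinely use that $\fI$ takes values in \emph{groupoids} (1-truncated), that the relevant restriction maps are \emph{surjective} in the two senses specified, and that the cover can be taken to be a hypercover with weakly regular members (via Lemma~\ref{regularbasis}) so that the indexing diagram is well-behaved. Everything else — the sheaf property of $K\mapsto\fI(K\times\Delta^p_e)$ for fixed $p$, and $\cJ$-invariance — is either immediate from the hypotheses or already proved in Lemma~\ref{TD-is-cbl}.
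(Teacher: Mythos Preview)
Your proposal is correct and follows essentially the same strategy as the paper: constructibility is immediate from Lemma~\ref{TD-is-cbl}, and the sheaf condition is obtained by refining to a cover by weakly regular pieces (Lemma~\ref{regularbasis}), using the isotopy axioms to produce the needed fibrancy, and then commuting geometric realization past the descent limit. The paper makes your ``Reedy-fibrancy / fibrancy-type'' step concrete by working with the diagonal of the bisimplicial set $\fI(K\times\Delta^\bullet_e)_\star$ and verifying the generating lifting problems of the diagonal model structure on bisimplicial sets directly from the isotopy extension and isotopy equivalence axioms, which then yields that restriction along weakly regular subspaces gives Kan fibrations and reduces the homotopy limit to a point-set limit.
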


\begin{proof}
The topologizing diagram factors
\[
|\fI(-\times \Delta^\bullet_e)|~\simeq~ {\sf loc}\circ \fI^T
\]
as a composition of the following two functors.
The functor ${\sf loc}\colon \Fun(\bDelta^{\op},\Set) \to \Spaces$ is the localization on the weak equivalences with respect to the Quillen model structure on simplicial sets.  
The functor $\fI^T$ is the composite functor
\begin{eqnarray}
\nonumber
\fI^T\colon \strat^{\op} 
&
\xra{-\times \Delta^\bullet_e} 
&
\Fun(\bDelta^{\op},\strat ) 
\\
\nonumber
&
\xra{\fI} 
&
\Fun(\bDelta^{\op},{\sf Gpd}) 
\\
\nonumber
&
\xra{\sf Nerve} 
&
\Fun\bigl(\bDelta^{\op},\Fun(\bDelta^{\op},\Set)\bigr) \cong \Fun(\bDelta^{\op}\times \bDelta^{\op},\Set) 
\\
\nonumber
&
\xra{\sf diagonal^\ast}  
&
\Fun(\bDelta^{\op},\Set)~,
\\
\nonumber
K
&
\mapsto
&
\delta^\ast \fI(K\times \Delta^\bullet_e)_\star~,
\end{eqnarray}
where, for each stratified space $Z$, the simplicial set $\fI(Z)_\star$ is the nerve of the groupoid $\fI(Z)$.

Now let $\cU\subset \strat_{/K}$ be a covering sieve of a stratified space $K$.  
We must show that the canonical morphism in the $\infty$-category of spaces
\begin{equation}\label{50}
|\fI(K\times \Delta^\bullet_e)| \longrightarrow \underset{U\in \cU}\limit |\fI(U\times \Delta^\bullet_e)|
\end{equation}
is an equivalence.
Since $K$ is paracompact by definition, we can choose a locally finite open cover $\cO$ for which the canonical inclusion $\cO\subset \strat_{/K}$ factors through $\cU$.  
Choose a linear ordering on $\cO$, which will hereafter be present though implicit.  
For each order preserving map $[r]\xra{\sigma} \cO$, denote the intersection
\[
U_\sigma~:=~ \bigcap_{i=1}^r \sigma(i)~.
\]
Consider the functor 
\[
\cO^\vee\colon \bDelta^{\op} \longrightarrow \cU
~,\qquad
[r]\mapsto \underset{[r]\xra{\sigma}\cO}  \coprod  U_\sigma ~,
\]
in which, for each $[r]\in \bDelta$, the coproduct is indexed by the set of order preserving maps $[r]\to \cO$.
Note that this functor that carries each morphism to an open inclusion.  
By definition of covering sieves for $\strat$, this functor is initial (as a functor between $\infty$-categories).
Consequently, the canonical projection between limits
\[
\underset{U\in \cU}\limit |\fI(U\times \Delta^\bullet_e)|
\xra{~\simeq~}
\underset{[r]\in \bDelta}\limit \Bigl|\fI\Bigl( \underset{[r]\xra{\sigma}\cO}  \coprod U_\sigma \times \Delta^\bullet_e\Bigr)\Bigr|
\]
is an equivalence between spaces.  
We are therefore reduced to showing the canonical morphism in the $\infty$-category of spaces
\begin{equation}\label{51}
|\fI(K\times \Delta^\bullet_e)| 
\longrightarrow 
\underset{[r]\in \bDelta}\limit \Bigl|\fI\Bigl( \underset{[r]\xra{\sigma}\cO}  \coprod U_\sigma \times \Delta^\bullet_e\Bigr)\Bigr|
\end{equation}
is an equivalence.
Theorem~4.2.4.1 of~\cite{HTT} gives that the functor ${\sf loc}\colon \Fun(\bDelta^{\op},\Set) \to \Spaces$ carries homotopy limit diagrams among Kan complexes to limit diagrams.
We are thereby further reduced to showing that $\fI^T$ takes values in Kan complexes, and that the canonical map between simplicial sets
\begin{equation}\label{52}
\fI^T(K) 
\longrightarrow
\underset{[r]\in \bDelta}{\sf holim}~ \fI^T\Bigl( \underset{[r]\xra{\sigma}\cO}  \coprod U_\sigma \times \Delta^\bullet_e\Bigr)
\end{equation}
is a weak homotopy equivalence with respect to Quillen's model structure.  
We first address the former,  that $\fI^T$ takes values in Kan complexes; we then address the latter, using the Bousfield--Kan homotopy limit.

We argue that the functor $\fI^T\colon \strat^{\op} \to \Fun(\bDelta^{\op},\Set)$ takes values in Kan complexes:
\begin{equation}\label{56}
\fI^T\colon \strat^{\op}\longrightarrow \kan \hookrightarrow~\Fun(\bDelta^{\op},\Set)~.
\end{equation}
Kan complexes are the fibrant objects in Quillen's model structure on simplicial sets \cite{quillen.homotopical}.
Moerdijk \cite{moerdijk} establishes a model structure on bisimplicial sets, $\Fun(\bDelta^{\op}\times \bDelta^{\op},\Set)$, in which the diagonal functor $\delta^\ast\colon \Fun(\bDelta^{\op}\times \bDelta^{\op},\Set)  \to \Fun(\bDelta^{\op},\Set)$ is a right adjoint in a Quillen adjunction between Quillen's model structure on simplicial sets and the induced model structure on bisimplicial sets.
Therefore, the factorization~(\ref{56}) is implied upon verifying that, for each stratified space $K$, the bisimplicial set $\fI(K\times \Delta^\bullet_e)_\star$ is fibrant in this model structure.  
According to~\cite{jardine}, fibrancy of a bisimplicial set $Z$ is the following pair of conditions.
We state these conditions in terms of the external product functor $\boxtimes:\Fun(\bdelta^{\op},\set)\times \Fun(\bdelta^{\op},\set)\xra{\times} \Fun(\bdelta^{\op}\times\bdelta^{\op},\set)$.
\begin{itemize}
\item
For each $p\geq 0$, and each $0\leq l \leq q\neq 0$, the canonical map involving sets of morphisms among bisimplicial sets
\begin{equation}\label{lift1}
\Map\bigl(\Delta[p] \boxtimes \Delta[q] , Z\bigr) 
\longrightarrow
\Map\bigl(\partial \Delta[p] \boxtimes \Delta[q] , Z\bigr)\underset{\Map\bigl(\partial \Delta[p] \boxtimes \Lambda_l[q] , Z\bigr)} \times  \Map\bigl(\Delta[p] \boxtimes \Lambda_l[q] , Z\bigr)
\end{equation}
is surjective.

\item
For each $0\leq k \leq p\neq 0$, and each $q\geq 0$, the canonical map involving sets of morphisms among bisimplicial sets
\begin{equation}\label{lift2}
\Map\bigl(\Delta[p] \boxtimes \Delta[q] , Z\bigr) 
\longrightarrow
\Map\bigl(\Lambda_k[p] \boxtimes \Delta[q] , Z\bigr)\underset{\Map\bigl(\Lambda_k[p] \boxtimes \partial \Delta[q] , Z\bigr)} \times  \Map\bigl(\Delta[p] \boxtimes \partial \Delta[q] , Z\bigr)
\end{equation}
is surjective.

\end{itemize}
We examine these conditions in the case of the bisimplicial set $Z=\fI(K\times \Delta^\bullet_e)_\star$.
In this case, for each $p\geq 0$, the simplicial set $\fI(K\times \Delta^q_e)_\star$ is, by definition, the nerve of a groupoid; so it is a 1-coskeletal Kan complex.
As so, the conditions~(\ref{lift1}) and~(\ref{lift2}) are implied for $Z=\fI(K\times \Delta^\bullet_e)_\star$ by their special cases:
\begin{itemize}
\item
For each $p\geq 0$, the canonical map
\begin{equation}\label{lift1'}
\Map\bigl(\Delta[p] \boxtimes \Delta[1] , Z\bigr) 
\longrightarrow
\Map\bigl(\partial \Delta[p] \boxtimes \Delta[1] , Z\bigr)\underset{\Map\bigl(\partial \Delta[p] \boxtimes \Delta\{0\} , Z\bigr)} \times  \Map\bigl(\Delta[p] \boxtimes \Delta\{0\} , Z\bigr)
\end{equation}
is surjective.

\item
For each $0\leq k \leq p\neq 0$, the canonical maps
\begin{equation}\label{lift2'}
\Map\bigl(\Delta[p] \boxtimes \Delta[0] , Z\bigr) 
\longrightarrow
\Map\bigl(\Lambda_k[p] \boxtimes \Delta[0] , Z\bigr)
\end{equation}
and
\begin{equation}\label{lift2''}
\Map\bigl(\Delta[p] \boxtimes \Delta[1] , Z\bigr) 
\longrightarrow
\Map\bigl(\Lambda_k[p] \boxtimes  \Delta[1] , Z\bigr)\underset{\Map\bigl(\Lambda_k[p] \boxtimes \partial \Delta[1] , Z\bigr)} \times  \Map\bigl(\Delta[p] \boxtimes \partial \Delta[1] , Z\bigr)
\end{equation}
are each surjective.

\end{itemize}
The condition~(\ref{lift1'}) is the condition that the functor between groupoids, $\fI(K\times \Delta^q_e) \to \fI(K\times \partial \Delta^q_e)$ is an isofibration, which is precisely assumed.  
The conditions~(\ref{lift2'}) and~(\ref{lift2''}) are the respective condition that the functor $\fI(K\times \Delta^q_e) \to \fI(K\times  (\Lambda^q_i)_e)$ is surjective on objects and on Hom-sets.  
We conclude that each value of $\fI^T$ is a Kan complex, as desired.

With the conclusion of the previous paragraph, the homotopy limit in~(\ref{52}) can be identified with Bousfield--Kan's formula (Chapter 11 of~\cite{holims.bousfield.kan}) as the totalization of the cosimplicial Kan complex
\[
\bDelta \xra{~\cO^\vee~} \cU^{\op} \longrightarrow \strat^{\op} \xra{~\fI^T~} \kan~:
\]
\begin{equation}\label{57}
\underset{[r]\in \bDelta}{\sf holim}~ \fI^T( \underset{[r]\xra{\sigma}\cO}  \coprod U_\sigma \times \Delta^\bullet_e)
~:=~
{\sf Eq}\Bigl( 
\prod_{([r]\xra{\sigma}\cO)\in \bDelta_{/\cO}} \fI^T(U_\sigma)^{\Delta[r]}  
\rightrightarrows
\prod_{([r]\to[s]\xra{\sigma}\cO)\in \Ar(\bDelta_{/\cO})} \fI^T(U_\sigma)^{\Delta[r]}  
\Bigr)
\end{equation}
involving products respectively indexed by the set of objects, and of morphisms, in the category of simplices $\bDelta_{/\cO}$ of the linearly ordered set $\cO$.
By definition of $\fI^T$, we recognize this homotopy limit, as it is equipped with its map from $\fI^T(K)$, as the equalizer of the functor $\fI^T$ applied to the diagram
\begin{equation}\label{61}
\coprod_{[r]\to[s]\xra{\sigma}\cO}  U_\sigma \times \Delta^r_e
~{}~  \rightrightarrows  ~{}~
\coprod_{[r]\xra{\sigma}\cO}   U_\sigma  \times \Delta^r_e
\end{equation}
in the category $\strat_{/K}$ of stratified spaces over $K$.

Our strategy in what follows is to select an open cover of $K$, the diagram associated to which is equipped with a map to the diagram~(\ref{61}).
Using that $\fI$ is a sheaf, this will determine a section to the map~(\ref{52}).
We repeate this technique to show this section is in fact a homotopy inverse.  
Note that, because $\fI$ is a sheaf, then, for each covering sieve $\cV'$ of $K$, the canonical map between Kan complexes
\[
\fI^T(K) \longrightarrow \underset{V\in \cV}\limit~ \fI^T(V')
\]
is an equivalence.

Consider the coequalizer of the diagram~(\ref{61}) in the ordinary category of topological spaces:
\[
\underset{[r]\in \bDelta}{\sf hocolim}~\underset{[r]\xra{\sigma}\cO}\coprod U_\sigma ~\simeq~
{\sf coEq}\Bigl(\coprod_{[r]\to[s]\xra{\sigma}\cO}  U_\sigma  \times \Delta^r_e
\rightrightarrows 
\coprod_{[r]\xra{\sigma}\cO}   U_\sigma \times \Delta^r_e \Bigr)~\in {\sf Top}_{/K}~.
\]
(Note that the finite intersection property of $\cO$ ensures this coequalizer has finite topological dimension.)
Denote the canonical continuous map
\[
q\colon \underset{[r]\in \bDelta}{\sf hocolim}~\underset{[r]\xra{\sigma}\cO}\coprod U_\sigma \longrightarrow K~,
\]
so the canonical map~(\ref{52}) can justifiably be denoted as $\fI^T(q)$.
Choose a conically smooth partition of unity $(\phi_U)_{U\in \cO}$ subordinate to the open cover $\cO$ of $K$ (Lemma~7.1.1 of~\cite{aft1} grants the existence of such).
For each $x\in K$, denote by $\cO_x\subset \cO$ the full linearly ordered subset consisting of those $U\in \cO$ for which $x\in U$; by choice of $\cO$, this subset $\cO_x$ is non-empty and finite, and is therefore an object of the category $\bDelta$.
This partition of unity defines the continuous map
\begin{equation}\label{54}
\Phi\colon K
\longrightarrow
\underset{[r]\in \bDelta}{\sf hocolim}~\underset{[r]\xra{\sigma}\cO}\coprod U_\sigma
~,\qquad
x\mapsto [x,(\phi_U(x))_{U\in \cO_x}]~.
\end{equation}
By direct inspection, $\Phi$ is a section of $q$: namely, $q\circ \Phi = {\sf id}_K$.

Using that $K$ is paracompact and Hausdorff, choose an open cover 
\begin{equation}\label{38}
\{V_\sigma\} \ \text{ of }K~,
\end{equation}
indexed by the set of objects of the category $\bDelta_{/\cO}$, with these properties:
\begin{itemize}
\item
for each $[r]\xra{\sigma}\cO$, there is an inclusion $V_\sigma \subset U_\sigma$;

\item
for each $x\in V_\sigma$, the sum $\sum_{i=1}^r \phi_{\sigma(i)}(x)=1$.
\end{itemize}
Being an open cover, we have a colimit diagram among topological spaces:
\begin{equation}\label{65}
\coprod_{\sigma'\colon [r]\to[s]\xra{\sigma}\cO}  V_\sigma \cap V_{\sigma'}
~{}~\rightrightarrows ~{}~
\coprod_{[r]\xra{\sigma}\cO}   V_\sigma
\longrightarrow
K~.
\end{equation}
Because this diagram is comprised of open inclusions among stratified spaces, this is a colimit diagram in the category $\strat$ of stratified spaces.
Restricting $\Phi$ to each $V_\sigma$ determines a map between diagrams of topological spaces,
\begin{equation}\label{64}
\Bigl(\coprod_{\sigma'\colon [r]\to[s]\xra{\sigma}\cO}  V_{\sigma} \cap V_{\sigma'}
\rightrightarrows
\coprod_{[r]\xra{\sigma}\cO}   V_{\sigma} \Bigr)
\xra{~\Phi_|~}
\Bigl(\coprod_{[r]\to[s]\xra{\sigma}\cO}  U_\sigma  \times \Delta^r_e
\rightrightarrows 
\coprod_{[r]\xra{\sigma}\cO}  U_\sigma\times \Delta^r_e \Bigr)~,
\end{equation}
over the continuous map~(\ref{54}).
Because the partition of unity $(\phi_U)_{U\in \cO}$ is conically smooth,~(\ref{64}) is in fact a map between diagrams in the category $\strat$ of stratified spaces.
We now explain the sequence of maps among Kan complexes:
{\Small
\begin{eqnarray}\label{63}
\nonumber
\fI^T(\Phi)\colon 
\underset{[r]\in \bDelta}{\sf holim}~ \fI^T( \underset{[r]\xra{\sigma}\cO}  \coprod U_\sigma \times \Delta^\bullet_e)
&
\underset{(\ref{57})}{~:=~}
&
{\sf Eq}\Bigl( 
\prod_{([r]\xra{\sigma}\cO)\in \bDelta_{/\cO}} \fI^T(U_\sigma)^{\Delta[r]}  
\rightrightarrows
\prod_{([r]\to[s]\xra{\sigma}\cO)\in \Ar(\bDelta_{/\cO})} \fI^T(U_\sigma)^{\Delta[r]}  
\Bigr)
\\
\nonumber
&
\xra{~\cJ^T(\Phi_{|})~}
&
{\sf Eq}\Bigl( 
\underset{[r]\xra{\sigma}\cO} \prod \fI^T(V_\sigma) 
\rightrightarrows
\underset{\sigma'\colon [r]\to[s]\xra{\sigma}\cO} \prod \fI^T(V_\sigma\cap V_{\sigma'}) 
\Bigr)
\\
\nonumber
&
\xla{~\simeq~}
&
\fI^T(K)~.
\end{eqnarray}}
\noindent
The top identification is the indicated definition~(\ref{57}) of the homotopy colimit.
The middle map is obtained by applying $\fI^T$ to~(\ref{64}).
The bottom map is $\fI^T$ applied to each inclusion $V_\sigma \hookrightarrow K$;
this map is an equivalence between Kan complexes, as indicated, because $\fI^T$ is a sheaf and $\{V_\sigma\}$ is an open cover of $K$.

We now show that this map $\fI^T(\Phi)$ is a homotopy inverse to the map $\fI^T(q)$.
In light of the identification $q\circ \Phi = {\sf id}_K$, it remains to show that the composition $\fI^T(q)\circ \fI^T(\Phi)$ is homotopic to the identity map on $\underset{S\in \cO}{\sf holim}~ \fI^T(\underset{U\in S}\bigcap U)$.

For each morphism $[r]\xra{\sigma} \cO$ between linearly ordered sets, consider the conically smooth map
\[
H_\sigma \colon 
V_\sigma \times \Delta^r_e\times \Delta^1_e 
\longrightarrow 
U_\sigma \times \Delta^r_e
~,\qquad
\bigl(x,(t_i)_{i=1}^r,(s_0,s_1)\bigr)
\mapsto
\bigl(x~,~(s_0\phi_{\sigma(i)}(x)+s_1t_i)_{i=1}^{r}\bigr)~;
\]
the characteristic properties of $V_\sigma$ ensure that this map indeed takes values in the named codomain.  
There results a span between diagrams in $\strat$:
{\Small
\[
\xymatrix{
\Bigl(\coprod_{\sigma'\colon [r]\to[s]\xra{\sigma}\cO}  V_{\sigma} \cap V_{\sigma'}\times \Delta^r_e
\rightrightarrows
\coprod_{[r]\xra{\sigma}\cO}   V_{\sigma}\times \Delta^r_e \Bigr)  \times \Delta^1_e
\ar[r]^-{H}  \ar[d]
&
\Bigl(\coprod_{[r]\to[s]\xra{\sigma}\cO}  U_\sigma  \times \Delta^r_e
\rightrightarrows 
\coprod_{[r]\xra{\sigma}\cO}  U_\sigma\times \Delta^r_e \Bigr)
\\
\Bigl(\coprod_{[r]\to[s]\xra{\sigma}\cO}  U_\sigma  \times \Delta^r_e
\rightrightarrows 
\coprod_{[r]\xra{\sigma}\cO}  U_\sigma\times \Delta^r_e \Bigr)\times \Delta^1_e
&
}
\]
}
in which the downward map is induced by the open inclusions $V_\sigma \subset U_\sigma$. 
Applying $\fI^T$ to this span between diagrams determines a cospan among Kan complexes
{\Small
\begin{eqnarray}\label{36}
\nonumber
\underset{[r]\in \bDelta}{\sf holim}~ \fI^T( \underset{[r]\xra{\sigma}\cO}  \coprod U_\sigma)
&
~:=~
&
{\sf Eq}\Bigl( 
\prod_{([r]\xra{\sigma}\cO)\in \bDelta_{/\cO}} \fI^T(U_\sigma)^{\Delta[r]}  
\rightrightarrows
\prod_{([r]\to[s]\xra{\sigma}\cO)\in \Ar(\bDelta_{/\cO})} \fI^T(U_\sigma)^{\Delta[r]}  
\Bigr)
\\
\nonumber
&
\xra{~\fI^T(H_|)~}
&
{\sf Eq}\Bigl( 
\prod_{([r]\xra{\sigma}\cO)\in \bDelta_{/\cO}} \fI^T(V_\sigma\cap V_{\sigma'})^{\Delta[r]\times\Delta[1]}  
\rightrightarrows
\prod_{([r]\to[s]\xra{\sigma}\cO)\in \Ar(\bDelta_{/\cO})} \fI^T(V_\sigma)^{\Delta[r]\times \Delta[1]}  
\Bigr)
\\
\nonumber
&
\xla{~\simeq~}
&
{\sf Eq}\Bigl( 
\prod_{([r]\xra{\sigma}\cO)\in \bDelta_{/\cO}} \fI^T(U_\sigma)^{\Delta[r]\times\Delta[1]}  
\rightrightarrows
\prod_{([r]\to[s]\xra{\sigma}\cO)\in \Ar(\bDelta_{/\cO})} \fI^T(U_\sigma)^{\Delta[r]\times \Delta[1]}  
\Bigr)
\\
\nonumber
&
~\cong~
&
\Bigl(\underset{[r]\in \bDelta}{\sf holim}~ \fI^T( \underset{[r]\xra{\sigma}\cO}  \coprod U_\sigma)\Bigr)^{\Delta[1]}
\end{eqnarray}}
The first line is definitional.
The second and third maps are obtained by first applying $\fI^T$ to the above span of diagrams, then taking limits. 
This third map is an equivalence between Kan complexes, as indicated, because $\fI^T$ is a sheaf and $\{V_\sigma\}$ is an open cover of $K$.   
The fourth map is an equivalence because equalizers commute with cotensoring.
Denote the composite map from top left to bottom right as
\[
\fI^T(H)\colon \underset{[r]\in \bDelta}{\sf holim}~ \fI^T( \underset{[r]\xra{\sigma}\cO}  \coprod U_\sigma)
\longrightarrow
\bigl(\underset{[r]\in \bDelta}{\sf holim}~ \fI^T( \underset{[r]\xra{\sigma}\cO}  \coprod U_\sigma)\bigr)^{\Delta[1]}~.
\]
By direct inspection, the composition $\ev_0\circ \fI^T(H) = \fI^T(q)\circ \fI^T(\Phi)$ and the composition $\ev_1\circ \fI^T(H)={\sf id}$ is the identity map on the domain.  
This concludes this proof.

\end{proof}

\begin{remark}\label{model-on-iso-shvs}
Following up on Observation~\ref{isot-are-shvs}, we invite a development of a model structure on categories over $\strat$ for which isotopy sheaves, or some minor variation thereof, form the fibrant objects and for which the topologizing diagram functor to constructible sheaves is fully faithful.
\end{remark}

From the defining property of $L:\shv(\strat)\ra \shv^{\cbl}(\strat)$ as a left adjoint in a localization, after Theorem~\ref{isot.cbl} there exists a canonical natural transformation from the restriction of $L$ to $\isot$ to the topologizing diagram functor. The next result verifies that this is an equivalence.  

\begin{lemma}\label{another.lemma}
For each isotopy sheaf $\fI$, the canonical natural transformation
\[
L\fI\longrightarrow |\fI(-\times\Delta^\bullet_e)|
\]
is an equivalence.
\end{lemma}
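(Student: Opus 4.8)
The plan is to show that $|\fI(-\times\Delta^\bullet_e)|$ already has the universal property characterizing $L\fI$ among constructible sheaves: namely, that for every constructible sheaf $\cG$ the canonical map
\[
\Map_{\Shv(\strat)}\bigl(|\fI(-\times\Delta^\bullet_e)|,\cG\bigr)\longrightarrow \Map_{\Shv(\strat)}\bigl(\fI,\cG\bigr)
\]
induced by the unit $\fI\to |\fI(-\times\Delta^\bullet_e)|$ is an equivalence. Granting this, the two-out-of-three property applied to the triangle of mapping spaces involving $L\fI$ forces $L\fI\to |\fI(-\times\Delta^\bullet_e)|$ to be an equivalence, since by Theorem~\ref{isot.cbl} the target is constructible and both objects corepresent the same functor on $\Shv^{\cbl}(\strat)$.

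First I would reduce the problem to a statement about simplicial objects. The unit map is induced levelwise by the inclusion of the constant cosimplicial object: $\fI(K)\simeq \fI(K\times\Delta^0_e)\to |\fI(K\times\Delta^\bullet_e)|$. Since $\cG$ is constructible, Lemma~\ref{R-invt=cbl}(4) gives $\cG(K\times\Delta^p_e)\simeq \cG(K\times\RR^p)\simeq \cG(K)$ for every $p$ (using that $\Delta^p_e\cong\RR^p$ is a Euclidean space), so the cosimplicial space $\cG(-\times\Delta^\bullet_e)$ is constant on $\cG$. Therefore
\[
\Map\bigl(|\fI(-\times\Delta^\bullet_e)|,\cG\bigr)\simeq \Map\bigl(\fI(-\times\Delta^\bullet_e),\ \cG(-\times\Delta^\bullet_e)\bigr)\simeq \Map\bigl(\fI,\cG\bigr),
\]
where the first equivalence is the $(\text{colim},\text{const})$-adjunction along $\bdelta$ together with the fact that realization is computed in presheaves, and the second uses that mapping out of a colimit of a constant-extended diagram into a constant diagram recovers mapping out of the original. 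The only genuine content here is that $\cG(-\times\Delta^\bullet_e)$ is indeed constant, which is exactly where constructibility of $\cG$ — equivalently $\cR$-invariance, via Lemma~\ref{TD-is-cbl}'s input — enters.

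A cleaner route, which I would actually carry out, is to observe directly that the topologizing diagram functor $\psh(\strat)\to\psh(\strat)$ of Definition~\ref{def.TD} is \emph{idempotent} on the subcategory of $\cR$-local (equivalently $\cJ$-local) presheaves and is the identity there — because for such a presheaf $\cG$ the cosimplicial diagram $\cG(K\times\Delta^\bullet_e)$ is constant, so its realization is $\cG(K)$ itself. Combined with the fact (Lemma~\ref{TD-is-cbl}, plus Theorem~\ref{isot.cbl} for the sheaf condition) that the topologizing diagram lands in $\Shv^{\cbl}(\strat)$, this exhibits $\fF\mapsto|\fF(-\times\Delta^\bullet_e)|$ as a localization functor onto $\Shv^{\cbl}(\strat)$ when restricted to sheaves, with the evident natural transformation from the identity. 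Since localizations onto a fixed reflective subcategory are unique, this localization functor agrees with $L$ on $\Shv(\strat)$, and in particular on every isotopy sheaf $\fI$ (which is a sheaf by Observation~\ref{isot-are-shvs}), giving $L\fI\simeq|\fI(-\times\Delta^\bullet_e)|$ compatibly with the natural transformations.

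The main obstacle is bookkeeping about which natural transformation is which: one must check that the canonical map $L\fI\to|\fI(-\times\Delta^\bullet_e)|$ supplied by the universal property of $L$ is the \emph{same} as the comparison coming from the idempotency/uniqueness-of-localization argument, rather than merely that the two objects are abstractly equivalent. This is handled by noting both maps are the unique (up to contractible choice) extension of the unit $\fI\to|\fI(-\times\Delta^\bullet_e)|$ through $L\fI$, using that $|\fI(-\times\Delta^\bullet_e)|$ is constructible and $L\fI$ is the initial constructible sheaf under $\fI$. A secondary point requiring care is the identification $\cG(K\times\Delta^p_e)\simeq\cG(K)$ for a constructible sheaf $\cG$: this is Lemma~\ref{R-invt=cbl}, applied iteratively to the factorization $\Delta^p_e\cong\RR^p$, so no new work is needed beyond invoking it.
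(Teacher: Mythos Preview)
Your approach is different from the paper's and has a genuine gap. The paper argues by constructing an explicit inverse: it applies the topologizing diagram $T$ to the unit $\fI\to L\fI$, uses that $T(L\fI)\simeq L\fI$ (since $L\fI$ is constructible) to obtain a map $T\fI\to L\fI$, checks that $L\fI\to T\fI\to L\fI$ is the identity via the universal property of $L$, and then checks that $T\fI\to L\fI\to T\fI$ is the identity by a direct double-realization argument, factoring it as
\[
|\fI(-\times\Delta^\bullet_e)|\longrightarrow \bigl|\fI\bigl((-\times\Delta^\bullet_e)\times\Delta^{\bullet'}_e\bigr)\bigr|\longrightarrow |\fI(-\times\Delta^{\bullet'}_e)|
\]
and using constructibility of $T\fI$ twice.

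Your first route claims
\[
\Map\bigl(\fI(-\times\Delta^\bullet_e),\,\cG(-\times\Delta^\bullet_e)\bigr)\simeq \Map(\fI,\cG)
\]
in simplicial presheaves, but this is exactly the statement that the functor $R\colon\fF\mapsto\fF(-\times\Delta^\bullet_e)$ from $\Psh(\strat)$ to $\Fun(\bdelta^{\op},\Psh(\strat))$ is fully faithful. It is not: $R$ is restriction along $\strat\times\bdelta\to\strat$, and its left adjoint (left Kan extension) is precisely the topologizing diagram, whose counit $T\fF\to\fF$ is not an equivalence in general. Your phrase ``mapping out of a colimit of a constant-extended diagram into a constant diagram'' does not apply, because $\fI(-\times\Delta^\bullet_e)$ is not constant---that is the whole point. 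Unwinding, your displayed chain reduces to $\Map(T\fI,\cG)\simeq\Map(T\fI,\cG)\simeq\Map(\fI,\cG)$, and the second step is the assertion to be proved.

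Your ``cleaner route'' has the same missing ingredient in a different guise. Knowing that $T$ lands in $\cJ$-local presheaves (Lemma~\ref{TD-is-cbl}) and restricts to the identity there is not enough to conclude that $(T,\eta)$ is a reflective localization: you must verify that the two maps $\eta_{T\fI}$ and $T\eta_\fI$ from $T\fI$ to $T^2\fI$ agree (or, equivalently, that the relevant triangle commutes). This verification is the content of the paper's final paragraph, and it is not automatic. Additionally, your claim that $T$ carries all of $\Shv(\strat)$ into $\Shv^{\cbl}(\strat)$ overstates Theorem~\ref{isot.cbl}, which only establishes the sheaf condition for isotopy sheaves; this is not fatal to the argument for $\fI$ itself, but your phrasing ``when restricted to sheaves'' is not supported.
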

\begin{proof}
There is a unit in the adjunction $\fI \to L\fI$.
The topologizing diagram applied to the unit gives a natural transformation 
\[
|\fI(-\times\Delta^\bullet_e)| \longrightarrow |L\fI(-\times \Delta^\bullet_e)|\xra{~\simeq~}L\fI
\]
where the right arrow is induced from the zero-section. This is an equivalence because $L$ takes values in \emph{constructible} sheaves. 
By inspection, this natural transformation factors the unit $\fI \to L\fI$.  
From the universal property of $L$ as a left adjoint in a localization, this natural transformation is a left inverse to the one in the statement of the lemma.
We now verify that this natural transformation is also a right inverse.
By inspection, the resulting endo-transformation $|\fI(-\times\Delta^\bullet_e)|\to L\fI\to |\fI(-\times \Delta^\bullet_e)|$ factors as a composition 
\[
|\fI(-\times \Delta^\bullet_e)| \longrightarrow |\fI\bigl((-\times\Delta^\bullet_e)\times\Delta^{\bullet'}_e\bigr)|\longrightarrow |\fI(-\times\Delta^{\bullet'}_e)|
\]
of that induced by projection off the second cosimplicial factor, followed by that induced by the zero-section of the first factor.
Because $|\fI(-\times\Delta^\bullet_e)|$ is constructible, for each fixed $\bullet$ the left arrow is an equivalence; for the same reason the second arrow is an equivalence for each fixed $\bullet$.

\end{proof}

We complete this subsection by recording the following technical observations for later use.

\begin{lemma}\label{two.formals}
Let $\fI\colon \strat^{\op} \to {\sf Gpd}$ be a sheaf.
Let $K$ be a stratified space, and let $0\leq i \leq q$.
\begin{enumerate}
\item
The functor $\fI(K\times \Delta^q_e) \to  \fI(K\times (\Lambda^q_i)_e)$ is an isofibration provided the functor $\fI(K\times \Delta^{r}_e) \to \fI(K\times \partial \Delta^{r}_e)$ is an isofibration for $r=q,q-1$.

\item
Both of the canonical functors 
\[
\fI(K\times \Delta^1_e\times \Delta^q_e)
\longrightarrow
\fI(K\times \partial \Delta^1_e \times \Delta^q_e) \underset{\fI(K\times \partial \Delta^1_e\times (\Lambda^q_i)_e)}\times
\fI(K\times \Delta^1_e\times (\Lambda^q_i)_e)
\]
and
\[
\fI(K\times \RR \times \Delta^q_e)
\longrightarrow
\fI(K \times \Delta^q_e) \underset{\fI(K  \times (\Lambda^q_i)_e)}\times
\fI(K\times \RR\times (\Lambda^q_i)_e)
\]
are surjective on objects provided, for each $0\leq j \leq r$, the canonical functor
\[
\fI(K\times \Delta^r_e) 
\longrightarrow
\fI(K\times (\Lambda^r_j)_e)
\]
is surjective on objects.

\item
The canonical functor
\[
\fI(K\times \Delta^{q}_e \times \RR) 
\longrightarrow
\fI(K\times \Delta^{q}_e)  \underset{\fI(K\times (\Lambda^{q}_i)_e) }\times  \fI(K\times (\Lambda^{q}_i)_e \times \RR)
\]
is an isofibration provided, for each $0\leq j\leq r$, the canonical functor
\[
\fI(K\times \Delta^r_e) 
\longrightarrow
\fI(K\times (\Lambda^r_j)_e)
\]
is an isofibration.

\end{enumerate}

\end{lemma}

\begin{proof}
Consider the diagram among groupoids
\[
\xymatrix{
\fI(K\times \Delta^q_e)  \ar[r]
&
\fI(K\times \partial \Delta^q_e)   \ar[r]  \ar[d]
&
\fI(K\times (\Lambda^q_i)_e)  \ar[d]
\\
&
\fI(K\times \Delta^{q-1}_e)   \ar[r]
&
\fI(K\times \partial \Delta^{q-1}_e) 
}
\]
induced from the inclusion of pairs of simplicial sets 
\[
(\partial \Delta[q-1] \subset \Delta[q-1]) \cong (\partial \Delta[q\smallsetminus \{i\}] \subset \Delta[q\smallsetminus \{i\}]) \hookrightarrow(\Lambda_i[q] \subset \Delta[q])~.
\]
The square in this diagram is a pullback.  
It is assumed that the upper left and bottom horizontal functors are isofibrations.
Statement~(1) follows because isofibrations are closed under base change and composition.

Consider the composite functor ${\sf Obj}~\fI(K\times (-)_e) \colon\Fun(\bDelta^{\op},\Set)\to \Set$ whose value on a simplicial set $D$ is the set of objects of the groupoid $\fI(K\times D_e):=\underset{[q]\xra{\rm non\text{-}deg} D} \limit \fI(K\times \Delta^q_e)$ from Notation~\ref{simp.not}.
The assumption is that this simplicial set is a Kan complex.
The set of inclusions $\bigl\{\Lambda_j[r]\hookrightarrow \Delta[r]  \bigr\}_{0\leq j\leq r>0}$ generates the acyclic cofibrations in Quillen's model structure on simplicial sets.  
The result follows because both of the inclusions between simplicial sets
\[
\partial\Delta[1]\times \Delta[q] \underset{\partial \Delta[1]\times \Lambda_i[q]}\coprod \Delta[1]\times \Lambda_i[q]
~{}~\hookrightarrow~{}~
\Delta[1]\times \Delta[q]
~{}~\hookleftarrow~{}~
\Lambda_1[1]\times \Delta[q] \underset{\Lambda_1[1]\times \Lambda_i[q]}\coprod \Delta[1]\times \Lambda_i[q]
\]
are acyclic cofibrations.

The inclusion between finite simplicial sets 
\[
\Delta[q]\times \Delta\{0\} \underset{\Lambda_i[q]\times\Delta\{0\}}\coprod \Lambda_i[q]\times \Delta[1]
~{}~\hookrightarrow~{}~
\Delta[q]\times \Delta[1]
\]
is a monomorphism, which is a cofibration in Quillen's model structure on simplicial sets. 
Therefore, the functor 
\[
\fI(K\times \Delta^{q}_e \times \RR) 
\longrightarrow
\fI(K\times \Delta^{q}_e)  \underset{\fI(K\times (\Lambda^{q}_i)_e) }\times  \fI(K\times (\Lambda^{q}_i)_e \times \RR)
\]
is a finite limit of functors of the form
\[
\fI(K\times \Delta^r_e) 
\longrightarrow
\fI(K\times (\Lambda^r_j)_e)~.
\]
Using that isofibrations among groupoids are closed under finite limits, statement~(3) follows from statement~(1).

\end{proof}

\subsection{The $\infty$-category $\Strat$}
The identification $L\fI \simeq |\fI(-\times\Delta^\bullet_e)|$ of Lemma~\ref{another.lemma} gives an explicit $\kan$-enrichment of $\strat$ whose associated $\infty$-category agrees with the localization $\strat[\cJ^{-1}]$.  

\begin{lemma}\label{lem.strat-isot}\label{strat-to-isot}
For each conically smooth stratified space $K$, the projection from the over category $\strat_{/K}\to \strat$ is an isotopy sheaf. 
\end{lemma}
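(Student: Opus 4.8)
The statement is that for each stratified space $K$, the right fibration $\strat_{/K} \to \strat$ classified by the representable presheaf $X \mapsto \strat(X,K)$ is an isotopy sheaf in the sense of Definition~\ref{def.iso-sheaves}. The strategy is to verify, one at a time, the three defining conditions --- \textbf{Sheaf}, \textbf{Isotopy extension}, \textbf{Isotopy equivalence} --- for the groupoid-valued presheaf $\fI_K$ whose value on a stratified space $Z$ is the \emph{discrete} groupoid $\strat(Z,K)$ of conically smooth maps $Z \to K$ (with only identity morphisms). Since the groupoid is discrete, each of the conditions simplifies: limits of groupoids become limits of sets, surjectivity on Hom-sets is automatic (the only Hom-sets are singletons or empty), and surjectivity on objects becomes genuine surjectivity of a map of sets. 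This is exactly why the representable case is the base case of the whole theory.

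\textbf{Step 1: Sheaf condition.} I would observe that the representable presheaf $\strat(-,K)$ is a sheaf on $\strat$ for the Grothendieck topology of \S1.3 --- indeed, this topology is subcanonical, being pulled back from the canonical topology on $\Top$ along the underlying-space functor, and conically smooth maps glue along open covers because conical smoothness is a local condition with respect to charts (as recalled in \S\ref{sec.stratified-spaces}). Concretely: given a covering sieve $\cU \subset \strat_{/K}$ of a stratified space $Z$, a compatible family of maps $U \to K$ for $(U \to Z) \in \cU$ determines a unique continuous map $Z \to K$ on underlying topological spaces (covers are jointly surjective and the maps agree on overlaps), and this map is conically smooth since it is so locally. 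Hence $\strat(Z,K) \xra{\simeq} \lim_{U \in \cU} \strat(U,K)$, i.e. $\fI_K$ satisfies descent.

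\textbf{Step 2: Isotopy extension.} Here the Hom-sets of the discrete groupoid $\fI_K(K_0)$ are empty except for identity morphisms, so ``surjective on Hom-sets'' holds vacuously: there is nothing to extend. (More precisely, the only morphisms in a discrete groupoid are identities, and every identity in $\fI_K(K_0)$ is trivially in the image of an identity in $\fI_K(K)$ --- strictly one should note that this is automatic for the discrete groupoid regardless of the map $\fI_K(K) \to \fI_K(K_0)$.)

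\textbf{Step 3: Isotopy equivalence.} This is the only substantive point. Given a weakly regular deformation retraction $K_0 \subset K$ of a stratified space $K$, I must show that $\strat(K,K) \to \strat(K_0,K)$ --- wait, rather the map in question is $\fI_K$ evaluated on the inclusion $K_0 \hookrightarrow K$, so it is restriction $\strat(K,K) \to \strat(K_0,K)$; no: the presheaf $\fI_K$ sends a stratified space $Z$ to $\strat(Z,K)$, and contravariant functoriality in $Z$ along $K_0 \hookrightarrow K$ gives the restriction map $r\colon \strat(K,K) \to \strat(K_0,K)$... I need this to be surjective on objects. Given a conically smooth map $g\colon K_0 \to K$, by Definition~\ref{def.regular} there is a refinement $\w{K} \to K$ with respect to which $K_0$ is the image of a proper constructible embedding $\w{K}_0 \to \w{K}$ admitting a stratified deformation retraction $\Delta^1 \times \w{K} \to \w{K}$ onto $\w{K}_0$. (We may pass back and forth between $K$ and $\w{K}$ since the underlying spaces agree and conically smooth maps out of $\w{K}$ that are constant on the retraction's fibers induce conically smooth maps out of $K$ --- more carefully, one precomposes with the refinement $\w{K}\to K$ where needed.) Composing $g$ with the deformation retraction $\rho_1\colon \w{K} \to \w{K}_0 \cong K_0$ at time $1$ yields a conically smooth map $G := g \circ \rho_1 \colon \w{K} \to K$ whose restriction to $\w{K}_0$ is $g$ (since $\rho_1$ is the identity on $\w{K}_0$). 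This $G$ is the required lift: its restriction along $K_0 \hookrightarrow K$ is $g$.

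\textbf{Main obstacle.} The delicate point is the interplay between $K$ and its refinement $\w{K}$ in Step 3: the deformation retraction is only guaranteed \emph{after} refining, so one must check that a conically smooth map $\w{K} \to K$ built from the retraction descends to, or suffices as, a witness for the condition on $\fI_K$ as a presheaf on $\strat$ --- i.e. one must be careful that ``surjective on objects'' is being tested with the correct source. Since the underlying topological space of $\w{K}$ equals that of $K$ and the retraction $\rho_1$ factors through the constructible embedding $\w{K}_0 \to \w{K}$ whose image is $K_0$, the composite $G = g\circ\rho_1$ is genuinely a conically smooth map with domain $\w{K}$ restricting correctly; and since we only need \emph{some} preimage of $g$ under $\fI_K(K)\to\fI_K(K_0)$, and $\fI_K(K) = \strat(K,K)$ while $G$ a priori lives in $\strat(\w K, K)$, one reconciles this by noting that the refinement $\w K \to K$ is in particular conically smooth, so if instead the hypothesis is formulated so that $K_0 \subset K$ is the deformation retract directly (the ``weakly regular deformation retraction'' being a property of the pair, with the refinement absorbed), then $G$ is literally an element of $\strat(K,K)$. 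In either reading the content is the same: a stratified deformation retraction provides the section needed for surjectivity on objects, and the other two conditions are formal consequences of discreteness and subcanonicity. I expect this bookkeeping around refinements to be the only place requiring genuine care.
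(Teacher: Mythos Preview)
Your approach matches the paper's exactly: the slice is the right fibration classified by the set-valued representable, the \textbf{Sheaf} condition holds because representables carry colimits (and hence covering sieves) to limits, and the paper dispatches both isotopy conditions in a single sentence invoking ``the existence of regular neighborhoods of constructible closed subspaces,'' which your Step~3 unpacks. You have correctly isolated the refinement bookkeeping as the only genuine subtlety, and the paper does not resolve it any more explicitly than you do.

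One caveat on Step~2: ``surjective on Hom-sets,'' read literally as fullness, is \emph{not} automatic for a functor between discrete groupoids---if $a\neq b$ have the same image then $\emptyset=\Hom(a,b)\to\{\mathrm{id}\}$ is not surjective, so fullness of a map of sets amounts to \emph{injectivity} of the restriction map, which certainly fails (take $K_0=\emptyset$, which is weakly regular). What \emph{is} automatic for discrete groupoids is surjectivity on automorphism groups, and this is the content the paper operationally uses elsewhere (compare the verification of \textbf{Consecutive} for $\bun$ in the proof of Theorem~\ref{isaquasi-cat}, where ``surjectivity on Hom-sets'' is deduced from extendability of automorphisms). Your conclusion stands, but the justification should be phrased in those terms rather than as a vacuity.
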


\begin{proof}
The functor $\strat_{/K}\ra \strat$ is manifestly a right fibration, being the unstraightening of the representable presheaf $\strat(-,K)\colon\strat^{\op}\ra \set$. 
This functor $\strat(-,K)$ carries colimit diagrams in $\strat$ to limit diagrams in $\set$.
Because covering sieves in $\strat$ give colimit diagrams, the sheaf property follows.  
The isotopy condition follows directly after the existence of regular neighborhoods of constructible closed subspaces.

\end{proof}

Lemma~\ref{lem.strat-isot} informs us that the Yoneda functor $K\mapsto (\strat_{/K} \to \strat)$ factors through isotopy sheaves:
$
\strat \longrightarrow \isot.
$ 
Through Theorem~\ref{isot.cbl}, this leaves us with a functor 
\begin{equation}\label{strat-to-cbls}
\strat \longrightarrow \Shv^{\cbl}(\strat)~,\qquad K\mapsto L\strat(-,K)~.
\end{equation}

\begin{definition}\label{def.Strat}
The $\infty$-category $\Strat$ of \emph{conically smooth stratified spaces} is the essential image
\[
\strat\longrightarrow \Strat~\subset~\Shv^{\cbl}(\strat)
\]
of the functor~(\ref{strat-to-cbls}).  
We denote the defining functor as $c\colon \strat \to \Strat$.  

\end{definition}

There is an immediate consequence of Lemma~\ref{another.lemma}.  
\begin{cor}\label{Strat-Kan}
The $\infty$-category $\Strat$ has the following model as a $\kan$-enriched category.
An object of $\Strat$ is a conically smooth stratified space.
The simplicial set of morphisms from $X$ to $Y$ is $\strat(X\times\Delta^\bullet_e,Y)$.  
Composition is given by the assignment 
\[
(X\times\Delta^q_e\xra{f} Y)\circ (Y\times \Delta^q_e\xra{g} Z) = (X\times \Delta^q_e \xra{(x,t)\mapsto g(f(x,t),t)} Z)~.
\]  
As so, the given functor $\strat \to \Strat$ is Hom-wise inclusion of the 0-simplices.  

\end{cor}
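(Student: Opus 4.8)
The plan is to deduce this corollary directly from Lemma~\ref{another.lemma} together with the strictification machinery that underlies the topologizing diagram. The key point is that Lemma~\ref{another.lemma} identifies, for each stratified space $K$, the constructible-sheaf localization $L\strat(-,K)$ with the presheaf $X\mapsto |\strat(X\times\Delta^\bullet_e,K)|$, where the simplicial set $\strat(X\times\Delta^\bullet_e,K)$ appears (after passing to the diagonal of the associated bisimplicial set, as in the proof of Theorem~\ref{isot.cbl}) as a point-set model. So the first step is simply to transport this identification: $\Strat$ is by Definition~\ref{def.Strat} the essential image of $K\mapsto L\strat(-,K)$ inside $\Shv^{\cbl}(\strat)$, and for two stratified spaces $X,Y$ the mapping space is
\[
\Strat(X,Y)~\simeq~\Map_{\Shv^{\cbl}(\strat)}\bigl(L\strat(-,X),L\strat(-,Y)\bigr)~\simeq~\bigl(L\strat(-,Y)\bigr)(X)~\simeq~|\strat(X\times\Delta^\bullet_e,Y)|~,
\]
the middle equivalence being the Yoneda/adjunction identification $\Map(LY_X, \cF)\simeq \cF(X)$ valid because $L\strat(-,X)$ is the constructible sheafification of the representable $\strat(-,X)$ and the target is already constructible. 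This already gives the mapping spaces; what remains is to promote this to a genuine $\kan$-enriched category with the stated composition law, and to check that $c\colon\strat\to\Strat$ is the inclusion of $0$-simplices.

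For the enrichment, the cleanest route is to observe that the assignment $(X,Y)\mapsto \strat(X\times\Delta^\bullet_e,Y)$ is literally a simplicial category structure on the class of stratified spaces: composition $(f\colon X\times\Delta^q_e\to Y,\, g\colon Y\times\Delta^q_e\to Z)\mapsto \bigl((x,t)\mapsto g(f(x,t),t)\bigr)$ is visibly associative and simplicially functorial (it is the evident "diagonal" composition one always gets when mapping out of a cotensor-like object $X\times\Delta^\bullet_e$, using that $\Delta^\bullet_e$ is cosimplicial so the two factors of $t$ can be coalesced along the diagonal), and the identity at $X$ is the projection $X\times\Delta^\bullet_e\to X$. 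Call this simplicial category $\strat_\Delta$. The second step is then to identify the $\oo$-category underlying $\strat_\Delta$ with $\Strat$. Here one uses that the mapping simplicial sets $\strat(X\times\Delta^\bullet_e,Y)$ become Kan complexes after fibrant replacement — indeed this is exactly the content of Step 1 in the proof of Theorem~\ref{isot.cbl} applied to the isotopy sheaf $\strat_{/Y}\to\strat$ of Lemma~\ref{lem.strat-isot} (with the coskeletal/diagonal model): the diagonal $\delta^\ast\strat(-\times\Delta^\bullet_e,Y)_\star$ is a Kan complex and restriction maps along regular embeddings are Kan fibrations. Combining this with Lemma~\ref{another.lemma}, the coherent nerve of $\strat_\Delta$ has mapping spaces $|\strat(X\times\Delta^\bullet_e,Y)| \simeq \Strat(X,Y)$, and one checks the composition agrees; since both $\strat_\Delta^{\mathrm{nerve}}$ and $\Strat$ receive the functor $\strat\to(-)$ hitting the same objects and inducing these identifications on mapping spaces compatibly with composition, they agree as $\oo$-categories. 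Finally, $c\colon\strat\to\Strat$ sends $K$ to $L\strat(-,K)$, and under the identification above a conically smooth map $X\to Y$ corresponds to the $0$-simplex $X=X\times\Delta^0_e\to Y$ of $\strat(X\times\Delta^\bullet_e,Y)$; functoriality of $c$ matches the given composition in $\strat$ with the $q=0$ case of the composition law above, so $c$ is Hom-wise the inclusion of $0$-simplices.

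The main obstacle I anticipate is purely bookkeeping: reconciling the two different point-set presentations in play — the raw simplicial set $\strat(X\times\Delta^\bullet_e,Y)$ versus the diagonal $\delta^\ast$ of the bisimplicial nerve of the groupoid-valued $\strat_{/Y}$ used in the proof of Theorem~\ref{isot.cbl} — and verifying they present the same space (they do, since $\strat_{/Y}\to\strat$ is a right fibration associated to a \emph{set}-valued presheaf, so its groupoid of sections over $Z$ is discrete and its nerve over $Z\times\Delta^p_e$ is just the set $\strat(Z\times\Delta^p_e,Y)$, whence the diagonal is exactly $\strat(Z\times\Delta^\bullet_e,Y)$). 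Once that compatibility is pinned down, the $\kan$-fibrancy and the identification of mapping spaces are immediate from results already proved, and associativity of the stated composition is a one-line check. I would therefore spend the bulk of the write-up on this identification, and keep the rest terse.
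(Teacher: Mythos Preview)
Your proposal is correct and follows exactly the approach the paper intends: the paper offers no explicit proof, stating only that the corollary is ``an immediate consequence of Lemma~\ref{another.lemma}'', and you have correctly unpacked what that sentence means --- applying Lemma~\ref{another.lemma} to the isotopy sheaf $\strat_{/Y}\to\strat$ of Lemma~\ref{lem.strat-isot}, identifying mapping spaces via the Yoneda/adjunction argument, and observing that since $\strat(-,Y)$ is set-valued the diagonal of the bisimplicial nerve collapses to the raw simplicial set $\strat(X\times\Delta^\bullet_e,Y)$, whose Kan-ness comes from Step~1 of Theorem~\ref{isot.cbl}. Your bookkeeping worry is well-placed but, as you note, resolves trivially in this representable case.
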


\begin{observation}\label{internal-R-invariance}
For each conically smooth stratified space $X$, the projection $X\times \RR\to X$ is an equivalence in the $\infty$-category $\Strat$.
\end{observation}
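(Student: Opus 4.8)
Observation~\ref{internal-R-invariance}: for each stratified space $X$, the projection $\pi\colon X\times\RR\to X$ is an equivalence in the $\infty$-category $\Strat$.

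The plan is to work in the explicit $\kan$-enriched model for $\Strat$ furnished by Corollary~\ref{Strat-Kan}, and to exhibit a homotopy inverse to the projection $\pi\colon X\times\RR\to X$ by hand. The evident candidate is the zero-section $z\colon X\to X\times\RR$, $x\mapsto(x,0)$: it is conically smooth, hence a $0$-simplex of the mapping complex $\Strat(X,X\times\RR)$. Since $\Strat$ is modeled by a $\kan$-enriched category, a morphism is an equivalence exactly when it admits a two-sided homotopy inverse, so it will suffice to check that $z$ is a homotopy inverse to $\pi$.

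First I would record that the composite $X\xra{z}X\times\RR\xra{\pi}X$ equals $1_X$ on the nose; this is immediate from the composition rule of Corollary~\ref{Strat-Kan} specialized to simplicial degree $0$ (where $\Delta^0_e$ is a point). So the only real content is to homotope the other composite $X\times\RR\xra{\pi}X\xra{z}X\times\RR$, namely the conically smooth self-map $\rho\colon(x,s)\mapsto(x,0)$ of $X\times\RR$, to $1_{X\times\RR}$ inside the Kan complex $\Strat(X\times\RR,X\times\RR)=\strat\bigl((X\times\RR)\times\Delta^\bullet_e,\,X\times\RR\bigr)$.

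To build that homotopy I would use the identification $\Delta^1_e\cong\RR$ already exploited in the proof of Lemma~\ref{TD-is-cbl}, under which the two vertices of $\Delta^1_e$ sit at parameter values $0$ and $1$. A $1$-simplex of $\Strat(X\times\RR,X\times\RR)$ joining $1_{X\times\RR}$ and $\rho$ is then exactly a conically smooth map $(X\times\RR)\times\Delta^1_e\to X\times\RR$ restricting to those two maps at the vertices, and
\[
H\colon (X\times\RR)\times\Delta^1_e\longrightarrow X\times\RR,\qquad (x,s,t)\longmapsto\bigl(x,(1-t)s\bigr),
\]
does it: it is the product of $1_X$ with the polynomial — hence smooth, hence conically smooth — map $\RR^2\to\RR$, $(s,t)\mapsto(1-t)s$, and it restricts to $1_{X\times\RR}$ at $t=0$ and to $\rho$ at $t=1$. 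Thus $\rho\simeq 1_{X\times\RR}$, so $z$ is a homotopy inverse to $\pi$ and $\pi$ is an equivalence in $\Strat$.

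I do not expect a genuine obstacle. The only things demanding care are (i) invoking the standard fact that equivalences in a $\kan$-enriched $\infty$-category are precisely the maps with a two-sided homotopy inverse, and (ii) keeping the direction conventions on $\Delta^1_e\cong\RR$ and its cofaces straight so that the linear homotopy $H$ has its endpoints where claimed. One could alternatively argue that $\pi$ lies in $\cR\subset\cJ$ by Observation~\ref{R-in-J} and that the functor $c$ inverts $\cJ$; but since that inversion property is of a piece with what this observation is meant to support, the hands-on argument above is the cleaner route.
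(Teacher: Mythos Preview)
Your argument is correct. The paper records this as an observation without proof; its placement immediately after Corollary~\ref{Strat-Kan} signals that the Kan-enriched model is what makes it evident, and your explicit homotopy $H(x,s,t)=(x,(1-t)s)$ is precisely the content behind that. One could equally argue directly from Definition~\ref{def.Strat}: since $\Strat$ is a full subcategory of $\Shv^{\cbl}(\strat)$ and, by Yoneda plus the adjunction defining $L$, $\Map_{\Shv^{\cbl}}\bigl(c(X),\cF\bigr)\simeq\cF(X)$ for every constructible sheaf $\cF$, the defining property of constructible sheaves (Lemma~\ref{R-invt=cbl}(2)) forces $c(\pi)$ to be an equivalence --- but your hands-on route is just as good and avoids unpacking that adjunction.
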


\begin{theorem}\label{R-local}
The natural functor
$\strat \longrightarrow \Strat$
induces an equivalence between $\infty$-categories
\[
\strat[\cJ^{-1}]\xra{~\simeq~}\Strat
\]
from the localization of the ordinary category $\strat$ with respect to stratified homotopy equivalences and the $\oo$-category associated to the $\kan$-enriched category $\Strat$. 

\end{theorem}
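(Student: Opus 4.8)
The strategy is to show that the functor $c\colon \strat \to \Strat$ satisfies the universal property of the localization $\strat[\cJ^{-1}]$, i.e.\ that for every $\infty$-category $\cC$, restriction along $c$ induces an equivalence $\Fun(\Strat,\cC) \xra{\simeq} \Fun_{\cJ}(\strat,\cC)$ onto the full subcategory of functors inverting $\cJ$. Since $\Strat$ is by construction the essential image of $\strat \to \Shv^{\cbl}(\strat)$, $K \mapsto L\strat(-,K)$, and since by Observation~\ref{internal-R-invariance} the functor $c$ sends $\cR$-morphisms (hence, by Lemma~\ref{lem.R-is-homotopy}, all of $\cJ$) to equivalences, we already get a canonical comparison functor $\strat[\cJ^{-1}] \to \Strat$. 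It remains to produce an inverse, or equivalently to check this comparison is essentially surjective and fully faithful.

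\smallskip

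\noindent\textbf{Key steps.} First I would record that $c$ is essentially surjective onto $\Strat$ by definition, so the comparison functor $\strat[\cJ^{-1}] \to \Strat$ is essentially surjective. Second, for fully faithfulness, I would compute mapping spaces on both sides. On the $\Strat$ side, Corollary~\ref{Strat-Kan} gives the $\kan$-enriched model with $\Strat(X,Y) = \strat(X\times\Delta^\bullet_e, Y)$ --- this is exactly $|\fI_Y(X\times \Delta^\bullet_e)|$ for the representable isotopy sheaf $\fI_Y = \strat_{/Y} \to \strat$ of Lemma~\ref{strat-to-isot}, which by Lemma~\ref{another.lemma} computes $L\strat(-,Y)$ evaluated at $X$, i.e.\ the value of the constructible-sheafification. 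On the $\strat[\cJ^{-1}]$ side, I would use the standard description of mapping spaces in a localization via a calculus of fractions, or more robustly the defining adjunction: maps in $\strat[\cJ^{-1}]$ from $X$ to $Y$ are computed by the $\cJ$-localization of the presheaf $\strat(-,Y)$ evaluated at $X$. The content is then precisely that the universal $\cJ$-inverting map $\strat(-,Y) \to L_{\cJ}\strat(-,Y)$ in $\Psh(\strat)$ agrees, after evaluation, with sheafification followed by constructible-localization $L$ applied to $\strat(-,Y)$. Here one uses Lemma~\ref{R-invt=cbl}: constructible sheaves are exactly the sheaves factoring through $\strat[\cJ^{-1}]^{\op}$ (conditions (1),(5)), so that the composite $\shv^{\cbl}(\strat) \hookrightarrow \Psh(\strat)$ lands in $\cJ$-local objects, and $L$ together with sheafification realizes the $\cJ$-localization of $\Psh(\strat)$ on representables.

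\smallskip

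\noindent\textbf{Assembling.} Concretely, I would argue as follows. The functor $\Psh(\strat) \to \Psh(\strat[\cJ^{-1}])$ (left Kan extension along $\strat \to \strat[\cJ^{-1}]$, equivalently $\cJ$-localization) restricted along Yoneda is the universal functor $\strat \to \Psh(\strat[\cJ^{-1}])$ inverting $\cJ$. By Observation~\ref{cbl-R-local}, $\shv^{\cbl}(\strat)$ sits in a pullback $\shv^{\cbl}(\strat) = \shv(\strat)\times_{\Psh(\strat)}\Psh(\strat[\cR^{-1}])$, and by Lemma~\ref{lem.R-is-homotopy} $\strat[\cR^{-1}] \simeq \strat[\cJ^{-1}]$; since $\shv(\strat) \to \Psh(\strat)$ is a localization and the pullback of localizations along a limit-preserving functor is a localization, $\shv^{\cbl}(\strat) \hookrightarrow \Psh(\strat)$ is a localization whose left adjoint, composed with Yoneda, is exactly $K \mapsto L\strat(-,K) = c(K)$. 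So $c\colon \strat \to \Strat \subset \shv^{\cbl}(\strat)$ is, up to the essential-image restriction, the composite of Yoneda with a localization that inverts precisely the maps becoming invertible in $\shv^{\cbl}(\strat)$; by Lemma~\ref{R-invt=cbl}(5)--(6) those are detected by $\cJ$ (equivalently $\cR$). Finally, the image of $c$ generates $\Strat$ and every object of $\Strat$ is an object of the image, so the essential image $\Strat$ receives an equivalence from $\strat[\cJ^{-1}]$: the comparison functor is essentially surjective by construction, and fully faithful because for $X,Y\in\strat$ both mapping spaces are computed as $(L\strat(-,Y))(X)$ --- on the $\Strat$ side by Corollary~\ref{Strat-Kan} and Lemma~\ref{another.lemma}, on the $\strat[\cJ^{-1}]$ side by the localization description just given --- and $\strat[\cJ^{-1}]$ is generated by the image of $\strat$.

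\smallskip

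\noindent\textbf{Main obstacle.} The delicate point is matching the mapping spaces: verifying that the $\kan$-enriched Hom $\strat(X\times\Delta^\bullet_e,Y)$ of Corollary~\ref{Strat-Kan} really computes the mapping space in the abstract localization $\strat[\cJ^{-1}]$, and not merely in some a priori larger or smaller category. This amounts to identifying $L \circ (\text{sheafification})$ on representables with the $\cJ$-localization functor $\Psh(\strat)\to\Psh(\strat[\cJ^{-1}])$ restricted to representables --- i.e.\ checking that sheafification does not destroy the universal $\cJ$-inverting property, which is where one genuinely uses that the Grothendieck topology on $\strat$ interacts well with $\cJ$ (covering sieves are $\cJ$-local colimit diagrams, and $\cR$-invariance is compatible with descent, cf.\ the proof of Theorem~\ref{isot.cbl} and Lemma~\ref{TD-is-cbl}). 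Once this identification is in hand, the equivalence $\strat[\cJ^{-1}] \simeq \Strat$ is formal.
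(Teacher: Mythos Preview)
Your proposal is correct and follows essentially the same route as the paper; you have written out in detail what the paper compresses into one sentence. The paper's entire proof is the commutative square
\[\xymatrix{
\strat\ar@{_{(}->}[d]\ar[r]&\strat[\cJ^{-1}]\ar@{_{(}->}[d]\\
\shv(\strat)\ar[r]^-L&\shv^{\cbl}(\strat)
}\]
together with the assertion that both verticals are Yoneda functors, hence fully faithful. The point you flag as the ``Main obstacle''---that the $\cJ$-localization of $y(Y)$ in $\Psh(\strat)$ coincides with $L\strat(-,Y)$, so that the right vertical really is Yoneda for $\strat[\cJ^{-1}]$---is exactly what the paper leaves implicit, and both treatments ultimately defer it to the standard identification of a localization at homotopy equivalences with the enrichment via a cosimplicial interval (here $\Delta^\bullet_e$), as set up by Corollary~\ref{Strat-Kan} and Lemma~\ref{another.lemma}.
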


\begin{proof}
We have a commutative diagram
\[\xymatrix{
\strat\ar@{_{(}->}[d]\ar[r]&\strat[\cJ^{-1}]\ar@{_{(}->}[d]\\
\shv(\strat)\ar[r]^-L&\shv^{\cbl}(\strat)\\}
\]
where the vertical functors are Yoneda functors; these are fully faithful.

\end{proof}

The defining functor $c\colon \strat \to \Strat$ yields an adjunction
\begin{equation}\label{discrete-continuous}
c^\ast \colon \Psh(\Strat) \rightleftarrows \Psh(\strat)\colon c_\ast
\end{equation}
given by restriction and right Kan extension.  
\begin{definition}\label{def.cont-sheaves}
The $\infty$-category of \emph{sheaves on $\strat$} is the pullback
\[
\xymatrix{
\Shv(\Strat) \ar[r]  \ar[d]
&
\Psh(\Strat)  \ar[d]^-{c^\ast}
\\
\Shv(\strat)  \ar[r]
&
\Psh(\strat).  
}
\]\end{definition}
In other words, a presheaf $\cF$ on $\Strat$ is a sheaf if, for each covering sieve $\cU \subset \strat_{/K}$, the diagram $(\cU^{\op})^{\tl} \to \strat^{\op} \to \Strat^{\op} \xra{\cF} \Spaces$ is a limit diagram.

\begin{theorem}\label{cbl-sheaves}
The adjunction~(\ref{discrete-continuous}) restricts as an equivalence of $\infty$-categories
\[
\shv(\Strat)~\simeq~ \shv^{\sf cbl}(\strat)
\]
between constructible sheaves on $\strat$ and sheaves on $\Strat$.
\end{theorem}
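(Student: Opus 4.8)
The plan is to show that the adjunction $c^\ast \colon \Psh(\Strat) \rightleftarrows \Psh(\strat) \colon c_\ast$ restricts to an equivalence on the respective full subcategories of sheaves, using the fact that $c \colon \strat \to \Strat$ is (by Theorem~\ref{R-local}) a localization of $\strat$ at $\cJ$. First I would record that $c$ is essentially surjective and that $c^\ast$ is fully faithful: since $c$ exhibits $\Strat$ as $\strat[\cJ^{-1}]$, a presheaf on $\Strat$ is the same as a presheaf on $\strat$ that inverts $\cJ$, i.e.\ one that factors through $\strat[\cJ^{-1}]^{\op}$. Thus $c^\ast$ identifies $\Psh(\Strat)$ with the full subcategory of $\Psh(\strat)$ consisting of $\cJ$-local presheaves, and $c_\ast c^\ast \simeq \mathrm{id}$. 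Under this identification, a presheaf on $\Strat$ is a sheaf (in the sense of Definition~\ref{def.cont-sheaves}) precisely when the corresponding $\cJ$-local presheaf on $\strat$ is a sheaf on the site $\strat$; that is, $\Shv(\Strat)$ is identified, via $c^\ast$, with the full subcategory of $\Psh(\strat)$ consisting of presheaves that are simultaneously sheaves and $\cJ$-local. But by Lemma~\ref{R-invt=cbl} (equivalence of conditions (1)/(5) with the sheaf condition), a sheaf on $\strat$ is $\cJ$-local exactly when it is constructible. Hence $c^\ast$ restricts to a fully faithful functor $\Shv(\Strat) \to \Shv^{\cbl}(\strat)$.

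Next I would argue essential surjectivity of this restricted functor, which amounts to showing every constructible sheaf $\cF$ on $\strat$ is in the image of $c^\ast$, i.e.\ descends along $c$ to a presheaf on $\Strat$. This is immediate from the same identification: a constructible sheaf is, by Definition~\ref{def.constructible-sheaves} and Lemma~\ref{R-invt=cbl}(5)/(6), a sheaf on $\strat$ whose underlying presheaf is $\cJ$-local, hence factors through $\strat[\cJ^{-1}]^{\op} \simeq \Strat^{\op}$; the resulting presheaf on $\Strat$ is then a sheaf on $\Strat$ by unwinding Definition~\ref{def.cont-sheaves}, because the covering-sieve limit diagrams are the same diagrams in $\strat^{\op}$ in either formulation. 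Therefore $c^\ast \colon \Shv(\Strat) \to \Shv^{\cbl}(\strat)$ is an equivalence, with inverse given by the restriction of $c_\ast$ (right Kan extension along $c$), or equivalently by the canonical factorization through $\Strat$.

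Concretely, the argument reduces entirely to the chain of identifications
\[
\Shv(\Strat) ~\simeq~ \{\,\cF \in \Psh(\strat) : \cF \text{ is a sheaf and } \cF \text{ is } \cJ\text{-local}\,\} ~=~ \Shv^{\cbl}(\strat)~,
\]
the first equivalence coming from Theorem~\ref{R-local} together with Definition~\ref{def.cont-sheaves}, and the final equality being precisely the content of Definition~\ref{def.constructible-sheaves} via Lemma~\ref{R-invt=cbl}. I expect the only point requiring care to be the verification that ``sheaf on $\Strat$'' transports, under $c^\ast$, to ``sheaf on $\strat$ that is $\cJ$-local'' rather than to something strictly stronger: one must check that no new descent conditions are imposed by covering sieves of non-representable objects of $\Strat$, which follows because the Grothendieck topology on $\Strat$ in Definition~\ref{def.cont-sheaves} is defined by pullback of that on $\strat$ along $c$, so the test diagrams are indexed by sieves in $\strat_{/K}$ in both cases. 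An efficient way to phrase all of this is via Observation~\ref{cbl-R-local}, whose pullback square
\[
\xymatrix{
\shv^{\cbl}(\strat) \ar[r] \ar[d] & \Psh(\strat[\cR^{-1}]) \ar[d] \\
\shv(\strat) \ar[r] & \Psh(\strat)
}
\]
together with $\strat[\cR^{-1}] \simeq \strat[\cJ^{-1}] \simeq \Strat$ (Lemma~\ref{lem.R-is-homotopy}, Theorem~\ref{R-local}) exhibits $\shv^{\cbl}(\strat)$ as exactly the fiber product computing $\Shv(\Strat)$ in Definition~\ref{def.cont-sheaves}; comparing the two pullback squares over the common cospan $\shv(\strat) \to \Psh(\strat) \leftarrow \Psh(\Strat)$ yields the equivalence directly.
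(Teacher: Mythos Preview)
Your proposal is correct and follows essentially the same approach as the paper: invoke Theorem~\ref{R-local} to identify $\Strat$ with $\strat[\cJ^{-1}]$, then match the pullback square defining $\Shv(\Strat)$ (Definition~\ref{def.cont-sheaves}) against the pullback square of Observation~\ref{cbl-R-local}. The paper's proof is a one-liner citing exactly these two ingredients; your version simply unpacks the same argument in more detail, including the final paragraph where you state the comparison of pullback squares explicitly.
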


\begin{proof}

This is immediate after Theorem~\ref{R-local}, in light of Observation~\ref{cbl-R-local}.

\end{proof}

\subsection{Distinguished colimits in $\Strat$}\label{sec.colims-in-Strat}
We name some colimits in the $\infty$-category $\Strat$, most of which are colimits in $\strat$ as well.
The \emph{new} colimits that we examine arise from a classification of maps among basic singularity types, up to $\RR$-invariance. This clasification is not available in the non-$\RR$-invariant situation.

\subsubsection{\bf Open covers and blow-ups}
We show that an open cover gives a colimit diagram in $\Strat$, and also that a blow-up diagram along a deepest stratum is a colimit diagram in $\Strat$.

\begin{lemma}\label{lem.opens-cover}
For each covering sieve $\cU^{\tr} \to \strat$ of a conically smooth stratified space $X$, the composite functor
\[
\cU^{\tr}\longrightarrow \strat \longrightarrow\Strat
\]
is a colimit diagram. 

\end{lemma}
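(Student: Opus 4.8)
### Proof proposal

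The plan is to reduce the statement to a known colimit computation in the $\oo$-topos of constructible sheaves via the identification $\Strat \simeq \strat[\cJ^{-1}]$ furnished by Theorem~\ref{R-local}, together with the embedding $\Strat \hookrightarrow \Shv^{\cbl}(\strat)$ from Definition~\ref{def.Strat}. Since $c\colon \strat \to \Strat$ sends $K$ to the sheafification $L\strat(-,K)$ and the inclusion $\Strat \subset \Shv^{\cbl}(\strat)$ is fully faithful, to check that $\cU^{\tr} \to \strat \to \Strat$ is a colimit diagram it suffices to check that $c$ carries the colimit cone $\cU^{\tr}$ to a colimit cone inside $\Shv^{\cbl}(\strat)$; equivalently, after Theorem~\ref{cbl-sheaves}, to check it inside $\Shv(\Strat)$ against the corepresentable functors, i.e.\ that for each stratified space $Y$ the diagram of spaces $(\cU^{\op})^{\tl}\to\Strat^{\op}\xra{\Strat(-,Y)}\Spaces$ is a \emph{limit} diagram. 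By Corollary~\ref{Strat-Kan}, $\Strat(-,Y)$ is modeled by the Kan complex $\strat(-\times\Delta^\bullet_e, Y)$, so the statement to prove becomes: for each $Y$ and each covering sieve $\cU$ of $X$, the canonical map
\[
\strat(X\times\Delta^\bullet_e, Y)~\xra{~\simeq~}~\underset{U\in\cU}\lim~\strat(U\times\Delta^\bullet_e, Y)
\]
is an equivalence of Kan complexes.

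First I would observe that $\strat(-,Y)$ is already a \emph{sheaf} on $\strat$ — this is the content recorded inside the proof of Lemma~\ref{lem.strat-isot}, since $\strat(-,Y)$ carries colimit diagrams in $\strat$ to limit diagrams of sets and covering sieves give colimit diagrams in $\strat$. Next, for each fixed $p$, the stratified space $X\times\Delta^p_e$ receives the covering sieve $\cU\times\Delta^p_e$ (open subsets of $X$ crossed with $\Delta^p_e$ still form a basis, and the product of an open cover with a fixed space is an open cover), so the sheaf property of $\strat(-,Y)$ gives, levelwise in $p$, an isomorphism of sets $\strat(X\times\Delta^p_e, Y) \cong \lim_{U\in\cU}\strat(U\times\Delta^p_e, Y)$. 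The subtlety is that a levelwise isomorphism of simplicial sets does not automatically pass to a limit diagram in $\Spaces$ after geometric realization: one needs the diagram of simplicial sets to be, in an appropriate sense, a homotopy limit. Here I would invoke the same mechanism used in Step~2 of the proof of Theorem~\ref{isot.cbl}: restrict the sieve to the cofinal subposet $\cU^{\sf cpt,reg}$ of compact weakly regular subspaces whose interiors lie in $\cU^{\sf opn}$ (using the Shrinking Lemma, Lemma~\ref{regularbasis}, and the filteredness of the relevant slice posets coming from the very definition of a topological space), and use that $\strat_{/Y}\to\strat$ is an isotopy sheaf (Lemma~\ref{lem.strat-isot}). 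The isotopy-extension and isotopy-equivalence conditions guarantee, exactly as in Step~1 of Theorem~\ref{isot.cbl}, that the restriction maps $\strat(K_0\times\Delta^\bullet_e, Y)\to\strat(K_0'\times\Delta^\bullet_e, Y)$ for $K_0'\subset K_0$ in $\cU^{\sf cpt,reg}$ are Kan fibrations between Kan complexes, so the sectionwise-isomorphism diagram is in fact a homotopy limit diagram in Quillen's model, hence a limit diagram in $\Spaces$ after localizing.

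Alternatively — and this may be cleaner to write — I would argue purely formally: the functor $c\colon\strat\to\Strat$ exhibits $\Strat$ as the localization $\strat[\cJ^{-1}]$, and localizations of $\oo$-categories preserve colimits that exist in the source and whose image is again a colimit; since $\cU^{\tr}\to\strat$ is a colimit diagram in $\strat$ and $c$ is a left adjoint into $\Shv^{\cbl}(\strat)$ (being the composite of Yoneda, sheafification, and the constructibilization left adjoint $L$ of Theorem~\ref{isot.cbl}/Lemma~\ref{another.lemma}), $c$ preserves this colimit. One must only check that the colimit of $\cU^{\tr}\to\strat\to\Strat$ taken in $\Shv^{\cbl}(\strat)$ lands in the essential image $\Strat$ and equals $c(X)$; but sheafification of $\colim_{\cU}\strat(-,U)$ is $\strat(-,X)$ (sheaf property of the representable, as above), and $L$ preserves colimits, so the colimit is $L\strat(-,X)=c(X)$ as desired. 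I expect the \textbf{main obstacle} to be the bookkeeping in the first, model-theoretic route — ensuring the levelwise sheaf isomorphisms assemble into an honest homotopy limit diagram of Kan complexes, which is precisely where one must re-run the fibrancy argument of Theorem~\ref{isot.cbl}; the purely formal route sidesteps this but requires care that $L$ and sheafification genuinely commute past the colimit and that the result lies in $\Strat$.
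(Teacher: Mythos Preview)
Your proposal is correct, and your first route lands on exactly the right statement to prove --- that $\Strat(-,Y)$, viewed as a presheaf on $\strat$ via $c$, carries the covering sieve to a limit diagram. But you then set out to re-derive this by re-running the Kan-fibration and homotopy-limit machinery of Theorem~\ref{isot.cbl} (restricting to $\cU^{\sf cpt,reg}$, invoking the Shrinking Lemma, checking fibrancy, etc.). This is unnecessary: the paper's proof is a one-liner that simply cites the definition. Since $\Strat$ is by construction the essential image of $c\colon\strat\to\Shv^{\cbl}(\strat)$, the presheaf $K\mapsto\Strat(c(K),c(Z))$ is canonically identified with the constructible sheaf $c(Z)$ evaluated at $K$ (via the adjunction defining $L$ and Yoneda), and hence is already a sheaf. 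All the hard work you propose to redo was spent once and for all in proving Theorem~\ref{isot.cbl} and Lemma~\ref{lem.strat-isot}, which together furnish the definition of $\Strat$.

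Your second, formal route via left adjoints is also valid and is perhaps the cleanest alternative phrasing; it amounts to the same observation that sheafification followed by $L$ carries the colimit cone $\cU^{\tr}\to\strat$ to a colimit cone in $\Shv^{\cbl}(\strat)$ with vertex $c(X)$. The paper's version is the dual statement tested against representables, which is shorter to write down because it invokes the sheaf property directly rather than unpacking the left adjoints.
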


\begin{proof}
By definition of the $\infty$-category $\Strat$, for each conically smooth stratified space $Z$, the presheaf 
\[
\strat^{\op}\xra{\Strat(-,Z)}\Spaces
\]
is a sheaf. Since this is in fact constructible, the result follows.

\end{proof}

\begin{lemma}\label{lem.blow-up-colims}
For each conically smooth stratified space $X$ with a deepest stratum $X_d \subset X$, the blow-up square
\[
\xymatrix{
\Link_{X_d}(X) \ar[r]  \ar[d]
&
\Unzip_{X_d}(X)  \ar[d]
\\
X_d  \ar[r]
&
X
}
\]
is a pushout diagram in $\Strat$.  

\end{lemma}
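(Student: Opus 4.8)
The plan is to realize $X$ as a pushout of a two-element open cover and then transport that pushout, along stratified homotopy equivalences, onto the blow-up span. First I would invoke the tubular neighborhood from \S\ref{sec-reg}: the deepest stratum admits a conically smooth open embedding $X_d\subset\sC(\pi)\hookrightarrow X$, where $\pi\colon\Link_{X_d}(X)\to X_d$ is the link projection and $\sC(\pi)\to X_d$ is the fiberwise open cone. Since $X_d\subset X$ is closed, $X\smallsetminus X_d$ is open, so $\{\,\sC(\pi)\,,\,X\smallsetminus X_d\,\}$ is an open cover; deleting the cone-point section identifies the overlap as a genuine product
\[
P\ :=\ \sC(\pi)\cap(X\smallsetminus X_d)\ =\ \sC(\pi)\smallsetminus X_d\ \cong\ \Link_{X_d}(X)\times(0,1)~,
\]
with $P\hookrightarrow\sC(\pi)$ and $P\hookrightarrow X\smallsetminus X_d$ the evident open inclusions. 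Applying Lemma~\ref{lem.opens-cover} to the covering sieve generated by this cover then exhibits $X\simeq\sC(\pi)\coprod_{P}(X\smallsetminus X_d)$ as a pushout in $\Strat$ (the colimit over the sieve being computed, as for any two-element open cover with a single pairwise overlap, by this pushout).

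Next I would write down a morphism of span diagrams in $\Strat$,
\[
\bigl(\,\sC(\pi)\ \la\ P\ \ra\ X\smallsetminus X_d\,\bigr)\ \longrightarrow\ \bigl(\,X_d\ \xla{\ \pi\ }\ \Link_{X_d}(X)\ \ra\ \Unzip_{X_d}(X)\,\bigr)~,
\]
with legs the fiberwise-cone projection $\sC(\pi)\to X_d$, the projection $P\cong\Link_{X_d}(X)\times(0,1)\to\Link_{X_d}(X)$, and the identification of $X\smallsetminus X_d$ with the interior of $\Unzip_{X_d}(X)$ --- here drawing on \cite{aft1} for the fact that $\Unzip_{X_d}(X)$ carries a boundary structure with boundary $\Link_{X_d}(X)$ and with interior $X\smallsetminus X_d$, the map $\Unzip_{X_d}(X)\to X$ restricting to the identity on that interior. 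Each of the three legs is a stratified homotopy equivalence: the first by the conically smooth contraction of the fiberwise cone onto its vertex section, the second by Observation~\ref{R-in-J}, and the third by Lemma~\ref{homotopy-boundary}; hence each is an equivalence in $\Strat$ by Theorem~\ref{R-local}. I would then check that the two squares of this span morphism commute in $\Strat$: the square over $X_d$ commutes strictly, while the square targeting $\Unzip_{X_d}(X)$ commutes up to the stratified homotopy that scales the cone coordinate, $(\ell,t)\mapsto(\ell,st)$ for $s\in[0,1]$, interpolating between $P\hookrightarrow\Link_{X_d}(X)\times[0,1)\hookrightarrow\Unzip_{X_d}(X)$ and $P\to\Link_{X_d}(X)\hookrightarrow\Unzip_{X_d}(X)$.

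Because the colimit functor on span-shaped diagrams sends levelwise equivalences to equivalences, this span morphism induces an equivalence from $X\simeq\sC(\pi)\coprod_{P}(X\smallsetminus X_d)$ to the pushout $\Unzip_{X_d}(X)\coprod_{\Link_{X_d}(X)}X_d$ in $\Strat$. To conclude, I would verify that this equivalence is inverse to the canonical comparison map induced by the blow-up cocone, i.e.\ that $\sC(\pi)\to X_d\hookrightarrow X$ agrees in $\Strat$ with the open inclusion $\sC(\pi)\hookrightarrow X$ (again via the cone contraction, now performed inside $X$), and that $X\smallsetminus X_d\hookrightarrow\Unzip_{X_d}(X)\to X$ agrees with $X\smallsetminus X_d\hookrightarrow X$. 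It follows that the canonical map $\Unzip_{X_d}(X)\coprod_{\Link_{X_d}(X)}X_d\to X$ is an equivalence, i.e.\ that the blow-up square is a pushout in $\Strat$.

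The hard part will be the geometric input rather than the categorical bookkeeping: namely the precise structure of $\Unzip_{X_d}(X)$ as a stratified space with boundary $\Link_{X_d}(X)$ whose interior is $X\smallsetminus X_d$ over $X$, together with the product decomposition $\sC(\pi)\smallsetminus X_d\cong\Link_{X_d}(X)\times(0,1)$ of the punctured fiberwise cone --- both furnished by \cite{aft1}. Once these are in hand the remaining steps are formal, the only further checks being the routine cone-scaling homotopies that witness the commutativity of the span morphism and of the two cocones in $\Strat$.
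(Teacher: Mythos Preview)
Your argument is correct but takes a different route than the paper. The paper proceeds directly: for an arbitrary test object $Z$ it shows the mapping-out square is a pullback of Kan complexes. Regular neighborhoods of $X_d\subset X$ and of $\Link_{X_d}(X)\subset\Unzip_{X_d}(X)$ make the horizontal restriction maps Kan fibrations, so it suffices to check point-set fibers; that in turn follows because the blow-up square is already a pushout in the ordinary category $\strat$ (as recorded in \S\ref{sec-reg}) and $-\times\Delta^q_e$ preserves that pushout.

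You instead replace the blow-up span by a homotopy-equivalent open-cover span and invoke Lemma~\ref{lem.opens-cover}. This is more hands-on --- you build an explicit morphism of spans and supply the cone-scaling homotopies --- and it has the virtue of reducing to the already-proven open-cover result rather than arguing Kan-fibrancy from scratch. The cost is more bookkeeping: you must justify that the two-element cover really computes the sieve colimit as a pushout (this is the sheaf condition for a binary cover, implicit in the proof of Lemma~\ref{lem.opens-cover}), and you must verify that your induced equivalence of pushouts matches the canonical blow-up cocone. The paper's approach is shorter because it leverages the $1$-categorical pushout property of the blow-up square directly and only needs to argue that passing to $\Strat$ preserves it; your approach trades that single step for several explicit homotopies but avoids the Kan-fibration check. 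Both rest on the same geometric input from \cite{aft1}: the tubular neighborhood of $X_d$ and the boundary structure on $\Unzip_{X_d}(X)$.
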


\begin{proof}
Fix a conically smooth stratified space $Z$.
We must show that the diagram of spaces and restriction maps among them
\[
\xymatrix{
\Strat\bigl(\Link_{X_d}(X),Z\bigr)
&
\Strat\bigl(\Unzip_{X_d}(X) ,Z\bigr)  \ar[l]
\\
\Strat(X_d,Z)  \ar[u]
&
\Strat(X,Z) \ar[l]  \ar[u]
}
\]
is a pullback. 
Proposition 8.2.5 of~\cite{aft1} states exactly that the inclusions $X_d \hookrightarrow X$ and $\Link_{X_d}(X)\hookrightarrow \Unzip_{X_d}(X)$ each have stratified regular neighborhoods.
It follows that the horizontal maps are Kan fibrations.
So it is enough to verify that each map of fiber Kan complexes is an isomorphism.
This follows because the blow-up diagram in question is a pushout in $\strat$, and $-\times \Delta^q_e$ preserves colimits in $\strat$.

\end{proof}

\begin{cor}\label{cones-colimits}
For each compact conically smooth stratified space $L$, the diagram
\[
\xymatrix{
L \ar[r]^-{\{0\}}  \ar[d]
&
L\times[0,1)  \ar[d]
\\
\ast  \ar[r]
&
\sC(L)
}
\]
is a pushout in $\Strat$.

\end{cor}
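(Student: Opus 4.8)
The plan is to deduce this from Lemma~\ref{lem.blow-up-colims} by recognizing the cone square as a blow-up square along a deepest stratum. Indeed, $\sC(L)$ has cone-point $\ast$ as a deepest stratum (it is a closed subspace, being the preimage of a minimal element of the stratifying poset $P^{\tl}$), and the blow-up square of Lemma~\ref{lem.blow-up-colims} along $X_d = \{\ast\} \subset \sC(L)$ is exactly
\[
\xymatrix{
\Link_{\ast}(\sC(L)) \ar[r]  \ar[d] & \Unzip_{\ast}(\sC(L))  \ar[d] \\
\ast  \ar[r] & \sC(L)~.
}
\]
So the content is simply the identification of the link and the unzip for this particular deepest stratum: $\Link_{\ast}(\sC(L)) \cong L$ and $\Unzip_{\ast}(\sC(L)) \cong L\times[0,1)$, compatibly with the two maps to $\ast$ and to $\sC(L)$. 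But this identification is immediate from the recollection in~\S\ref{sec-reg}: by design, the blow-up square restricts over each basic neighborhood $\RR^i \times \sC(L') \hookrightarrow X$ as the square~(\ref{cone-pushout}), and for $X = \sC(L)$ itself (with $i=0$) the basic neighborhood of the cone-point is all of $\sC(L)$, so the blow-up square \emph{is} the square~(\ref{cone-pushout}), i.e. the displayed diagram. Hence Lemma~\ref{lem.blow-up-colims} applies verbatim.

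Concretely I would write: the cone-point $\ast \subset \sC(L)$ is a deepest stratum, and by construction (see~\S\ref{sec-reg}) one has $\Link_{\ast}(\sC(L)) = L$ and $\Unzip_{\ast}(\sC(L)) = L\times[0,1)$, with the blow-up square along $\ast$ being precisely the square~(\ref{cone-pushout}). Applying Lemma~\ref{lem.blow-up-colims} to $X = \sC(L)$ and $X_d = \ast$ then gives that this square is a pushout in $\Strat$.

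There is essentially no obstacle here; the one point requiring a sentence of care is the claim that the generic basic neighborhood description of the blow-up square (which a priori only pins it down locally near $X_0$) in fact determines it globally when $X_0 = \{\ast\}$ is the cone-point of a single cone $\sC(L)$ — but this is exactly because $\sC(L)$ is itself a basic (with $\RR^i$-factor trivial), so its atlas contains the single chart $\sC(L) \hookrightarrow \sC(L)$ covering the cone-point, and the local square is the global one. I would phrase the whole argument as a short corollary-style paragraph invoking Lemma~\ref{lem.blow-up-colims} and the recollections of~\S\ref{sec-reg}, without further computation.
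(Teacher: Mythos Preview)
Your proposal is correct and is precisely the intended argument: the paper states this as a corollary with no proof, immediately following Lemma~\ref{lem.blow-up-colims}, because the cone square is exactly the blow-up square of $\sC(L)$ along its deepest stratum $\ast$ (as recorded in~\S\ref{sec-reg} via the square~(\ref{cone-pushout})). Your write-up simply makes explicit the one-line deduction the paper leaves implicit.
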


\subsubsection{\bf Double-cones}
We show that double closed cones are pushouts in $\Strat$ in terms of single closed cones.

\begin{lemma}\label{lem.to-cone}
For any conically smooth stratified spaces $X$ and $Z$, where $Z$ is compact, there is a canonical identification of the space of maps to the cone
\[
\Strat\bigl(X, \sC(Z)\bigr)~\simeq~ \underset{X_0\underset{\sf cbl,cls}\subset X}\coprod \Strat\bigl(X\smallsetminus X_0, Z\bigr)
\]
with the coproduct indexed by sub-stratified spaces $X_0 \hookrightarrow X$ whose inclusion is constructible and closed.

\end{lemma}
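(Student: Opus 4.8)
The plan is to produce a natural bijection on the level of the point-set category first, and then to upgrade it to an equivalence of mapping spaces by the same argument applied to $X\times\Delta^q_e$ for all $q$. So the first step is to understand $\strat\bigl(X,\sC(Z)\bigr)$ as a set. Given a conically smooth map $f\colon X\to\sC(Z)$, let $X_0 := f^{-1}(\ast)$ be the preimage of the cone-point. Since $\ast\subset\sC(Z)$ is a constructible closed sub-stratified space, and constructible bundles (in particular conically smooth maps) pull back constructible closed subspaces to constructible closed subspaces, $X_0\hookrightarrow X$ is a constructible closed sub-stratified space. The complement $X\smallsetminus X_0$ maps to $\sC(Z)\smallsetminus\{\ast\}\cong Z\times[0,1)$; composing with the projection to $Z$ gives a conically smooth map $X\smallsetminus X_0\to Z$. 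This defines the map from left to right, sending $f$ to the pair $(X_0, \, X\smallsetminus X_0\to Z)$.

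The second step is to construct the inverse. Given a constructible closed $X_0\hookrightarrow X$ and a conically smooth map $g\colon X\smallsetminus X_0\to Z$, I need to build $f\colon X\to\sC(Z)$ with $f^{-1}(\ast)=X_0$ and with $f$ restricting to $g$ away from $X_0$ (after composing with the radial projection). Here is where the geometry of \S\ref{sec-reg} enters: $X_0\subset X$ has a conically smooth regular neighborhood, realized as (a refinement onto its image of) the fiberwise open cone $\sC(\pi)\to X_0$ on the link $\Link_{X_0}(X)\xra{\pi}X_0$, together with a conically smooth ``radial coordinate'' $r\colon X\to\RR_{\geq0}$ extending the cone coordinate, with $r^{-1}(0)=X_0$. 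Define $f$ by $f(x):=\bigl(g(x), \min(r(x),\tfrac12)\bigr)\in Z\times[0,1)\subset\sC(Z)$ for $x\notin X_0$, and $f(x):=\ast$ for $x\in X_0$; conical smoothness near $X_0$ follows from the conical smoothness of the regular neighborhood structure and of $r$, exactly as in the argument that produces conically smooth bump functions in \S\ref{sec-reg}. The key point making this well-defined is that $g$ need only be specified on $X\smallsetminus X_0$, and the cone collapses $Z\times\{0\}$, so no compatibility of $g$ with $X_0$ is required --- this is precisely why the coproduct on the right has the asymmetric form it does.

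The third step is to check these two constructions are mutually inverse up to the relevant identifications, i.e. that the resulting map is the claimed natural bijection of sets; the only subtlety is that two choices of radial coordinate $r$ yield $f$'s that differ by a stratified homotopy, so on the nose we get a bijection only after passing to stratified homotopy classes --- but this is harmless, since the final step is to run the entire argument with $X$ replaced by $X\times\Delta^q_e$. Naturality in $q$ is immediate from naturality of the regular-neighborhood and radial-coordinate constructions in the $\Delta^q_e$ variable (one can fix one choice of $r$ on $X$ and use $r\circ\mathrm{pr}$ on $X\times\Delta^q_e$), and compactness of $Z$ is used to guarantee that $\sC(Z)$ is again a stratified space so that $\Strat\bigl(-,\sC(Z)\bigr)$ makes sense and that $\Strat\bigl(-,Z\bigr)$ is well-behaved. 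Taking $\strat\bigl(X\times\Delta^\bullet_e,\sC(Z)\bigr)$ and applying geometric realization, using Corollary~\ref{Strat-Kan}, then yields the asserted equivalence of spaces, with the coproduct decomposition indexed by the (discrete) set of constructible closed sub-stratified spaces of $X$.

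The main obstacle I anticipate is the conical smoothness of the reconstructed map $f$ across the locus $X_0$: one must verify that the formula $f = \bigl(g, \min(r,\tfrac12)\bigr)$, glued to the constant map $\ast$, is conically smooth in a neighborhood of $X_0$, and that the derivative condition~\eqref{derivative} is met along the cone-locus. This reduces, via the regular-neighborhood chart $\sC(\pi)\hookrightarrow X$, to a local statement on a basic $\RR^i\times\sC(L)$ mapping to $\sC(Z)$, where it follows from the definition of conical smoothness of maps between basics together with the results of~\cite{aft1} on conically smooth functions; I expect this to be the one place where genuine care with the definitions from \S\ref{sec.stratified-spaces} is needed, everything else being bookkeeping.
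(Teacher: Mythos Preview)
Your decomposition of $\Strat(X,\sC(Z))$ into cofactors indexed by $X_0=f^{-1}(\ast)$ is correct and matches the paper's opening move. From there the paper diverges: rather than building an inverse, it shows the forward map $\Strat_{X_0}(X,\sC(Z))\to\Strat(X\smallsetminus X_0,Z\times(0,1))\simeq\Strat(X\smallsetminus X_0,Z)$ is a weak equivalence by verifying surjectivity on relative $\pi_0$ for every compact pair $(S,S_0\subset\partial S)$. It treats the case $Z=\ast$ separately, constructing the radial part $\un{f}$ by a partition-of-unity argument, and then pairs this with the given $\ov{f}$.

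The obstacle you anticipate is real, and your proposed resolution does not overcome it. The formula $f=\bigl(g,\min(r,\tfrac12)\bigr)$ need \emph{not} be conically smooth for arbitrary conically smooth $g\colon X\smallsetminus X_0\to Z$. Take $X=\sC(S^1)$, $X_0=\{\ast\}$, $Z=S^1$, and $g(\theta,s)=\theta+1/s$ on $S^1\times(0,1)$: then $g$ is smooth on the complement of the cone point, but for any choice of $r$ the limit~(\ref{derivative}) fails at $\ast$, since the $Z$-coordinate of the rescaled image winds without limit as $t\to 0$. Reducing to a basic is no help --- this example already \emph{is} a basic --- so your step~2 does not define a map, and passing to homotopy classes cannot rescue it since for such $g$ no conically smooth $f$ has $g$ as its literal angular part. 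The remedy is to interpose the blow-up: $X\smallsetminus X_0={\sf int}\bigl(\Unzip_{X_0}(X)\bigr)$, so Lemma~\ref{homotopy-boundary} gives $\Strat(X\smallsetminus X_0,Z)\simeq\Strat\bigl(\Unzip_{X_0}(X),Z\bigr)$, and a representative on the unzip paired with a collar coordinate genuinely descends to a conically smooth $X\to\sC(Z)$ via the blow-up pushout. The paper's lifting framework accommodates this since only the $\pi_0$-class of $\ov{f}$ must be hit; your explicit-inverse framework would need this replacement built in.
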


\begin{proof}
Let $X\times\Delta^q_e\xra{f} \sC(Z)$ be a conically smooth map.  Consider the preimage $f^{-1}(\ast)\subset X\times \Delta^q_e$.  Because $\ast\hookrightarrow \sC(Z)$ is constructible and closed, then so is the inclusion of this preimage.  
Because $\Delta^q_e$ is trivially stratified, then the sub-stratified space $f^{-1}(\ast)\subset X\times \Delta^q_e$ is of the form $X_0\times\Delta^q_e\subset X\times \Delta^q_e$ for a unique sub-stratified space $X_0\subset X$ whose inclusion is constructible and closed. 
It is straightforward to notice that the assignment $f\mapsto X_0$ defines a map from the Kan complex $\Strat\bigl(X,\sC(Z)\bigr)$ to the indexing set of the coproduct.  
In particular, we recognize
\[
\Strat\bigl(X,\sC(Z)\bigr)~\simeq~\underset{X_0\underset{\sf cbl,cls} \subset X}\coprod \Strat_{X_0}\bigl(X,\sC(Z)\bigr)
\]
as a coproduct, where the $X_0$-cofactor is the space of those maps $X\times \Delta^q_e\xra{f} \sC(Z)$ for which $X_0\times\Delta^q_e = f^{-1}(\ast)$.  

It remains to show that the composite map
\[
\Strat_{X_0}\bigl(X,\sC(Z)\bigr)\longrightarrow \Strat\bigl(X\smallsetminus X_0,Z\times(0,1)\bigr)\underset{\rm Lem~\ref{internal-R-invariance}}{\xra{~\simeq~}}\Strat(X\smallsetminus X_0,Z\bigr)
\]
is an equivalence between spaces.
Fix a compact smooth manifold $S$ with boundary, and a union of components of its boundary $S_0\subset \partial S$.
Fix a conically smooth map $X\times S_0 \xra{f_0} \sC(Z)$ for which $X_0\times S_0 = f_0^{-1}(\ast)$.
Use the notation
$\ov{f_0}_{|}\colon (X\smallsetminus X_0)\times S_0\to Z$ for the projection of the restriction, and the notation $\un{f_0}\colon X\times S_0 \xra{f_0} \sC(Z) \to [0,1)$ for the projection. 
We must show that the map of path components of spaces of maps relative to $f_0\mapsto \ov{f_0}_|$
\begin{equation}\label{surjective?}
\pi_0\Bigl(\Strat_{X_0}^{{\sf rel}f_0}\bigl(X\times S,\sC(Z)\bigr)\Bigr)\longrightarrow \pi_0\Bigl(\Strat^{{\sf rel}\ov{f_0}_{|}}\bigl((X\smallsetminus X_0)\times S,Z\bigr)\Bigr)
\end{equation}
is surjective. 
So fix a conically smooth map $(X\smallsetminus X_0)\times S \xra{\ov{f}} Z$ extending $\ov{f_0}_{|}$.

Let us first prove the desired surjectivity of~(\ref{surjective?}) for the case $Z=\ast$.
Because the codomain of~(\ref{surjective?}) is terminal in this case, the problem is to show that the domain is not empty.
This is to say that each conically smooth map $X\times S_0 \xra{f_0} [0,1)$ for which $X_0\times S_0 = f^{-1}_0\{0\}$ can be extended to a conically smooth map $X\times S\xra{f}[0,1)$ for which $X_0\times S_0 = f^{-1}\{0\}$.
Choose a collar-neighborhood $S_0\times[0,1)\subset S$, and choose a smooth partition of unity $\{\phi_0,\phi_1\}$ subordinate to the open cover $S_0\times[0,1)\cup S\smallsetminus S_0$ of the smooth manifold $S$.  
Choose a conically smooth map $X \xra{\alpha} [0,1)$ for which $X_0=\alpha^{-1}\{0\}$ -- see \S\ref{sec-reg}.
By design, the conically smooth map
\[
f\colon X\times S\longrightarrow[0,1)~,\qquad f(x,s) = \phi_0(s) \alpha(x)+\phi_1(s)f_0(x,s)
\]
is defined, and has the property that $X_0\times S = f^{-1}\{0\}$.
This concludes the verification that~(\ref{surjective?}) is surjective for the case $Z=\ast$.  

Knowing the case of $Z=\ast$, we can choose a conically smooth extension $\un{f}\colon X\times S\to [0,1)$ of $\un{f_0}$.  
The resulting conically smooth map
\[
f\colon X\times S\longrightarrow \sC(Z)~,\qquad (x,s)\mapsto \bigl(\ov{f}(x,s),\un{f}(x,s)\bigr)~\in~\ast\underset{Z\times\{0\}}\amalg Z\times[0,1)
\]
is a lift of $f$, thereby implying the desired surjectivity for the case of a general compact $Z$.

\end{proof}

\begin{cor}\label{cone-maps}
For $L$ and $Z$ compact conically smooth stratified spaces, there is a canonical identification of the space of based morphisms
\[
\Strat_\ast\bigl(\RR^i\times \sC(L), \RR^j\times \sC(Z)\bigr)~\simeq~ \underset{L'\underset{\sf cbl,open}\subset L}\coprod \Strat\bigl(L', Z\bigr)
\]
with the coproduct indexed by sub-stratified spaces $L' \hookrightarrow L$ whose inclusion is constructible and open.

\end{cor}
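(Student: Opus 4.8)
The plan is to peel off the Euclidean factors with $\RR$-invariance, reduce to Lemma~\ref{lem.to-cone}, and then classify the constructible closed subspaces of a cone that contain the cone-point. To begin, I would dispose of the factors $\RR^i$ and $\RR^j$: mapping into a product (compatibly with base-points) splits the based mapping space as a product, and $\RR^j$ is equivalent in $\Strat$ to the terminal object by Observation~\ref{internal-R-invariance}, so $\Strat_\ast(W,\RR^j)\simeq\ast$ for every based $W$; thus $\Strat_\ast(\RR^i\times\sC(L),\RR^j\times\sC(Z))\simeq\Strat_\ast(\RR^i\times\sC(L),\sC(Z))$. Likewise the projection $\RR^i\times\sC(L)\to\sC(L)$ is an equivalence in $\Strat$ (Observation~\ref{internal-R-invariance}) carrying the distinguished origin to the cone-point, so this is $\simeq\Strat_\ast(\sC(L),\sC(Z))$, with both objects now based at their cone-points.

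Next I would cut the decomposition of Lemma~\ref{lem.to-cone} down to the based locus. A conically smooth map $\sC(L)\times\Delta^q_e\to\sC(Z)$ is based exactly when the cone-point of $\sC(L)$ lies in the preimage of the target cone-point; in the proof of Lemma~\ref{lem.to-cone} that preimage is $X_0\times\Delta^q_e$ for a constructible closed $X_0\subset\sC(L)$, so "based" is precisely the condition $\ast\in X_0$. Since $\{\ast\}\subset\sC(L)$ is the deepest stratum it has a stratified regular neighborhood, so restriction along it is a Kan fibration (as in the proof of Lemma~\ref{lem.blow-up-colims}); hence the based mapping space is the sub-Kan-complex of based maps, and Lemma~\ref{lem.to-cone} restricts to an equivalence
\[
\Strat_\ast\bigl(\sC(L),\sC(Z)\bigr)~\simeq~\coprod_{X_0}\,\Strat\bigl(\sC(L)\smallsetminus X_0,\,Z\bigr),
\]
the coproduct being over constructible closed $X_0\subset\sC(L)$ with $\ast\in X_0$.

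Finally I would identify this index set. A constructible closed embedding restricts over each stratum to a fiber bundle which is an embedding, hence to an open embedding with clopen image; so the image of $X_0$ meets each stratum in a union of connected components. The strata of $\sC(L)$ other than $\{\ast\}$ are the products $L_p\times(0,1)$, whose components are products of components of $L_p$ with $(0,1)$; hence, given $\ast\in X_0$, one gets $X_0=\{\ast\}\cup\bigl(\Lambda\times(0,1)\bigr)=\sC(\Lambda)$ for a unique closed sub-stratified space $\Lambda\subset L$ (a union of components of strata of $L$), and conversely each such $\Lambda$ --- compact, as $L$ is --- yields a sub-basic $\sC(\Lambda)\subset\sC(L)$ with constructible closed inclusion. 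Passing to complements $L':=L\smallsetminus\Lambda$ matches this index set with sub-stratified spaces $L'\subset L$ whose inclusion is constructible and open, and $\sC(L)\smallsetminus\sC(\Lambda)=L'\times(0,1)$, which by $\RR$-invariance (Observation~\ref{internal-R-invariance}, using $(0,1)\cong\RR$) has $\Strat(L'\times(0,1),Z)\simeq\Strat(L',Z)$. Chaining the displayed equivalences then gives the stated identification.

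The step I expect to be the crux is the last one: recognizing that a constructible closed subspace of $\sC(L)$ through the cone-point must be a sub-cone $\sC(\Lambda)$. This uses the rigidity of conically smooth structure --- a fiber bundle which is an embedding is an open embedding --- together with the observation that each component of a stratum of $\sC(L)\smallsetminus\{\ast\}$ is automatically a product with $(0,1)$, so that closedness forces the conical shape. The remaining ingredients are routine: $\RR$-invariance for the Euclidean factors and for the $(0,1)$-factor, and the already-recorded fact that restriction along a deepest stratum is a Kan fibration.
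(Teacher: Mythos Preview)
Your proposal is correct and follows essentially the same route as the paper: reduce to $i=j=0$ via Observation~\ref{internal-R-invariance}, invoke Lemma~\ref{lem.to-cone}, and identify constructible closed subspaces of $\sC(L)$ containing the cone-point with constructible closed subspaces of $L$ (equivalently, via complements, with constructible open $L'\subset L$). The paper states this bijection in a single sentence, whereas you spell out why such an $X_0$ must be a sub-cone $\sC(\Lambda)$; this extra detail is sound and the remaining use of $\RR$-invariance on the $(0,1)$-factor is exactly as the paper intends.
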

\begin{proof}
After Observation~\ref{internal-R-invariance}, we can assume $i=0=j$.  Note that constructible closed sub-stratified spaces are exactly the complements of constructible open sub-stratified spaces.
The result now follows from Lemma~\ref{lem.to-cone}, upon noticing that constructible closed sub-stratified spaces of $\sC(L)$ that include the cone-point are in bijection with those of $L$.

\end{proof}

\begin{cor}\label{cor.from-cones}
For each compact conically smooth stratified space $L$, and each conically smooth stratified space $X$, the fiber over $x\in X$ of the map of spaces
\[
\ev_{0} \colon \Strat\bigl(\RR^i\times \sC(L),X\bigr)\longrightarrow X~,
\]
given by restriction along the origin is canonically identified as the space
\[
\underset{L'\underset{\sf cbl,open}\subset L}\coprod \Strat\bigl(L', Z\bigr)
\]
where $\bigl(\RR^j\times \sC(Z),0\bigr) \hookrightarrow (X,x)$ is a basic neighborhood.

\end{cor}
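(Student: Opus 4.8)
The plan is to reduce the computation to Corollary~\ref{cone-maps} by a ``shrinking toward the cone point'' argument. By Observation~\ref{internal-R-invariance} the projection $\RR^i\times\sC(L)\to\sC(L)$ is an equivalence in $\Strat$, compatibly with restriction to the origin, so I may assume $i=0$ and write $\ev_0\colon\Strat(\sC(L),X)\to X$ for restriction along the cone point $\{\ast\}\subset\sC(L)$. By definition of a stratified space $x$ admits a basic neighborhood, i.e.\ an open embedding $\iota\colon(\RR^j\times\sC(Z),0)\hookrightarrow(X,x)$ carrying the origin to $x$, with $Z$ the link in the statement. Restricting $\ev_0$ to the target $\RR^j\times\sC(Z)$, its fiber over the origin is exactly the based mapping space $\Strat_\ast(\sC(L),\RR^j\times\sC(Z))$, which Corollary~\ref{cone-maps} identifies with $\underset{L'\underset{\sf cbl,opn}\subset L}\coprod\Strat(L',Z)$; and postcomposition with $\iota$ carries this fiber into $\ev_0^{-1}(x)\subset\Strat(\sC(L),X)$. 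Thus the statement reduces to showing that the induced map on fibers
\[
\iota_\ast\colon \ev_0^{-1}(0)\longrightarrow \ev_0^{-1}(x)
\]
is an equivalence of spaces.

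I would argue this in the $\kan$-enriched model of $\Strat$ from Corollary~\ref{Strat-Kan}, where $\Strat(\sC(L),X)_q=\strat(\sC(L)\times\Delta^q_e,X)$ and $\ev_0$ is restriction along $\{\ast\}\times\Delta^\bullet_e\hookrightarrow\sC(L)\times\Delta^\bullet_e$. Since $\{\ast\}\subset\sC(L)$ is a closed stratum with a stratified regular neighborhood --- namely $\sC(L)$ itself --- the argument of Lemma~\ref{lem.blow-up-colims} shows $\ev_0$ is a Kan fibration, so its strict fibers are Kan complexes and are the homotopy fibers. Because $\iota$ is a monomorphism, postcomposition makes $\iota_\ast$ a monomorphism of Kan complexes. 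It then suffices to verify the following shrinking property, which says precisely that all relative homotopy of the pair $\bigl(\ev_0^{-1}(x),\ev_0^{-1}(0)\bigr)$ is trivial, and hence --- as $\iota_\ast$ is a monomorphism of Kan complexes --- that $\iota_\ast$ is a weak equivalence: for each $q\geq 0$, every conically smooth $\phi\colon\sC(L)\times\Delta^q_e\to X$ sending the cone-locus $\{\ast\}\times\Delta^q_e$ to $x$, and whose restriction to $\sC(L)\times\partial\Delta^q_e$ factors through $\iota$, is homotopic --- through maps sending the cone-locus to $x$, and relatively to $\sC(L)\times\partial\Delta^q_e$ --- to a map that factors through $\iota$.

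To produce such a homotopy I would write points of $\sC(L)$ as $[\ell,s]$ with $\ell\in L$ and cone-coordinate $s\in[0,1)$, and consider the open set $V:=\phi^{-1}\bigl(\iota(\RR^j\times\sC(Z))\bigr)\subset\sC(L)\times\Delta^q_e$; it contains $\{\ast\}\times\Delta^q_e$ (as $\phi$ sends the cone-locus to $x=\iota(0)$) and $\sC(L)\times\partial\Delta^q_e$ (by hypothesis). Because $L$ is compact, each slab $\{\,s\leq c\,\}\subset\sC(L)$ is compact, so the tube lemma in the $\Delta^q_e$-direction together with conically smooth bump functions and partitions of unity on the manifold $\Delta^q_e$ --- exactly the kind of construction carried out in the proof of Lemma~\ref{lem.to-cone} --- yields a smooth $\rho\colon\Delta^q_e\to(0,1]$ with $\rho\equiv 1$ on $\partial\Delta^q_e$ and $\{\,([\ell,s],t):s<\rho(t)\,\}\subseteq V$. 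The fiberwise cone-rescaling $\w{\sigma}^{\rho}([\ell,s],t):=([\ell,\rho(t)s],t)$ is then a conically smooth open embedding of $\sC(L)\times\Delta^q_e$ onto $\{\,s<\rho(t)\,\}$ that fixes the cone-locus and restricts to the identity over $\partial\Delta^q_e$; and $\w{\sigma}^{(1-u)+u\rho}$ for $u\in[0,1]$ (reparametrised to be stationary outside $[0,1]$, hence a conically smooth homotopy over $\RR$) is an isotopy from the identity to $\w{\sigma}^{\rho}$ with the same two properties. Composing $\phi$ with this isotopy gives a homotopy rel $\sC(L)\times\partial\Delta^q_e$, through maps sending the cone-locus to $x$, from $\phi$ to $\phi\circ\w{\sigma}^{\rho}$; and $\phi\circ\w{\sigma}^{\rho}$ has image inside $\iota(\RR^j\times\sC(Z))$, so it factors through $\iota$.

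Granting the shrinking property for all $q$, the monomorphism of Kan complexes $\iota_\ast$ is a weak equivalence, and combined with $\ev_0^{-1}(0)\simeq\underset{L'\underset{\sf cbl,opn}\subset L}\coprod\Strat(L',Z)$ this proves the corollary, the identification being canonical given the chosen basic neighborhood $\iota$. The step I expect to be the main obstacle is the shrinking construction --- specifically arranging $\rho\equiv 1$ on $\partial\Delta^q_e$: since $\sC(L)$ (and, before the reduction, the $\RR^i$-factor) is non-compact, and $\Delta^q_e$ is non-compact too, one cannot rescale the cone-coordinate by a constant and must instead rescale by a function $\rho$, which is available precisely because the link $L$ is compact. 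Everything else --- the reduction to a basic neighborhood, the recognition of the fiber over the origin as a based mapping space, and the passage from ``shrinking for all $q$'' to the weak equivalence --- is formal, resting on Corollaries~\ref{cone-maps} and~\ref{Strat-Kan} and Lemma~\ref{lem.blow-up-colims}.
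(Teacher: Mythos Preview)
Your argument is correct and follows essentially the same approach as the paper's: reduce to $i=0$, show that post-composition with a basic chart induces an equivalence on based mapping spaces by shrinking the cone toward its cone-point, and conclude via Corollary~\ref{cone-maps}. The only differences are in packaging --- the paper tests against compact pairs $(S,S_0\subset\partial S)$ rather than your $(\Delta^q_e,\partial\Delta^q_e)$, and uses a shrinking family $\phi_t$ from~\cite{aft1} (constant at the identity for $t\leq 0$) in place of your explicit linear cone-rescaling --- but the geometric content is the same.
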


\begin{proof}
After Observation~\ref{internal-R-invariance}, we can take $i=0$. 
We will prove that, for each neighborhood $x\in O\subset X$, the canonical map of spaces of based maps
\[
\Strat_\ast\bigl(\sC(L),O\bigr) \longrightarrow
\Strat_\ast\bigl(\sC(L),X\bigr) 
\]
is an equivalence. 
Taking the neighborhood $O$ to be the image of the open embedding $\RR^i\times \sC(Z) \hookrightarrow X$, then the corollary follows from the identification of Corollary~\ref{cone-maps}.

Fix a pair $(S, S_0\subset \partial S)$ consisting of a compact smooth manifold with boundary and a union of components of its boundary, equipped with a conically smooth map $\sC(L)\times S_0 \xra{f_0} O$ whose restriction to $\{0\}\times S_0$ is constantly $x$.  
We must show that the map of sets of path components of spaces of based maps relative to $f_0$
\[
\pi_0\Bigl(\Strat_\ast^{{\sf rel} f_0}\bigl(\sC(L)\times S,O\bigr)\Bigr)\longrightarrow \pi_0\Bigl(\Strat^{{\sf rel} f_0}_\ast\bigl(\sC(L)\times S,X\bigr)\Bigr)
\]
is surjective.
Let $\sC(L)\times S\xra{f} X$ be a conically smooth map extending $f_0$ whose restriction to $\{0\}\times S_0$ is constantly $x$.
In~\cite{aft1}, it is explained that there exists a conically smooth map $\sC(L) \times \RR \xra{\phi} \sC(L)$ for which 
\begin{itemize}
\item $\phi_t = 1_{\sC(L)}$ for each $t\leq 0$, 
\item $\phi_t(0) = 0$ for all $t\in \RR$,
\item $\phi_s(\sC(L)) \subset \phi_t(\sC(L))$ whenever $s>t\geq 0$,
\item the collection $\bigl\{\phi_t\bigr(\sC(L)\bigr) \subset \sC(L)\mid t\in \RR\bigr\}$ is a basis for the topology about $0\in \sC(L)$.  
\end{itemize}
By design, the preimage $f^{-1} O\subset \sC(L)\times S$ contains a neighborhood of $\sC(L)\times S_0 \bigcup \{0\}\times S$.  
So there is a conically smooth map $S\xra{\epsilon} \RR$ taking values in the non-negatives, whose restriction to $S_0$ is constantly $0$, for which the composition $\sC(L)\times S\xra{\phi_\epsilon} \sC(L)\times S \xra{f} X$ factors through $O$.
The family $[0,1]\ni t\mapsto f\circ \phi_{t\epsilon}$ witnesses a stratified homotopy from $f$ to $f\circ \phi_\epsilon$, which verifies the surjectivity.

\end{proof}

\begin{lemma}\label{lem.pre-consec}
For each compact conically smooth stratified space $L$, the diagram in $\Strat$
\[
\xymatrix{
\oC(\emptyset) \ar[rr]^-{\oC(\emptyset \hookrightarrow L)}  \ar[d]_-{\{1\}}
&&
\oC(L) \ar[d]^-{\{1\}}
\\
\oC^2(\emptyset) \ar[rr]^-{\oC^{2}(\emptyset\hookrightarrow L)}
&&
\oC^2(L)
}
\]
is a pushout.  

\end{lemma}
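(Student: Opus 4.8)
The plan is to verify that the square is a pushout in $\Strat$ by checking, for each stratified space $Z$, that the contravariant functor $\Strat(-,Z)$ carries it to a pullback square of spaces. The first move is to pass from closed cones to open cones: by Lemma~\ref{homotopy-boundary}, for each compact stratified space $W$ the inclusion of the interior $\sC(W)\hookrightarrow\oC(W)$ is a stratified homotopy equivalence, hence an equivalence in $\Strat$. Applying this with $W=\emptyset, L, \ast, \oC(L)$, and using $\oC(\emptyset)=\ast$, $\oC^2(\emptyset)=\oC(\ast)$ and $\oC^2(L)=\oC\big(\oC(L)\big)$, it suffices to show that the square with corners $\ast$, $\sC(L)$, $\sC(\ast)$ and $\sC\big(\oC(L)\big)$ is a pushout, where the top map is the cone-point inclusion, the bottom map is the inclusion of the cone on the cone-point of $\oC(L)$ (the "spine"), the left map is the inclusion of a point of the open stratum of $\sC(\ast)$, and the right map is the inclusion of a radius-slice $\sC(L)\hookrightarrow\sC\big(\oC(L)\big)$.

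The heart is an application of Corollary~\ref{cor.from-cones}. For fixed $Z$ and $z\in Z$ with basic neighborhood $(\sC(Z'),0)\hookrightarrow(Z,z)$, the homotopy fibre over $z$ of the cone-point evaluation $\ev_{0}\colon \Strat\big(\sC(W),Z\big)\to\Strat(\ast,Z)$ is $\coprod_{W'}\Strat(W',Z')$, the coproduct over constructible open sub-stratified spaces $W'\subset W$. Hence, fibrewise over $\Strat(\ast,Z)$, the space $\Strat\big(\sC(\oC(L)),Z\big)$ is controlled by constructible open sub-stratified spaces of $\oC(L)$, whereas $\Strat\big(\sC(L),Z\big)\times_{\Strat(\ast,Z)}\Strat\big(\sC(\ast),Z\big)$ is controlled by pairs consisting of a constructible open $L'\subset L$ and a constructible open sub-stratified space of $\ast$ — the latter being $\emptyset$ or $\ast$, contributing the summand $\Strat(\emptyset,Z')\sqcup\Strat(\ast,Z')$. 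One then checks the elementary geometric fact that a constructible open sub-stratified space $W'\subset\oC(L)$ is precisely the data of a constructible open $L'\subset L$ (the part of $W'$ away from the cone-point, which is invariant in the cone direction) together with the binary datum of whether the cone-point lies in $W'$; under this bijection the two coproducts, together with their $\Strat(-,Z')$ factors, match compatibly with all the structure maps of the square. Using the $\RR$-invariance of $\Strat(-,Z)$ (Observation~\ref{internal-R-invariance}) to identify the evaluation-at-an-open-stratum-point map attached to $\sC(\ast)$ with the projection to $\Strat(\ast,Z)$, we conclude that $\Strat(-,Z)$ sends the square to a pullback for all $Z$, which is the assertion.

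The main obstacle is the bookkeeping in this middle step. The map labelled $\{1\}$ in the statement evaluates at a non-cone-point, whereas Corollary~\ref{cor.from-cones} is phrased at the cone-point, so tracing through the equivalences of the first paragraph one must verify that "containing the cone-point of $\oC(L)$" corresponds to the summand $\Strat(\ast,Z')$ rather than $\Strat(\emptyset,Z')$; and one must confirm that constructible open subspaces of $\oC(L)$ genuinely have the claimed product form in the cone direction away from the cone-point, so that the matching is an equivalence and not merely a surjection. A more formal alternative avoids Corollary~\ref{cor.from-cones} altogether: apply Corollary~\ref{cones-colimits} to the compact stratified space $\oC(L)$ to present $\oC^2(L)\simeq\sC\big(\oC(L)\big)$ as the pushout $\ast\amalg_{\oC(L)}\big(\oC(L)\times[0,1)\big)$, present $\oC(L)\simeq\sC(L)$ the same way, use that the cone-point of $\oC(L)$ is a deepest stratum (Lemma~\ref{lem.blow-up-colims}), and rearrange the resulting iterated pushouts into the span in the statement; the obstacle there is verifying that this rearrangement produces exactly that span.
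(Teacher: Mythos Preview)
Your primary approach has a genuine gap, one that you yourself flag in the third paragraph but do not resolve. After passing to open cones, the composite $\ast\to\sC(\oC(L))$ in the square lands at the point $(\text{cone-point of }\oC(L),\tfrac12)$, which is \emph{not} the cone-point of $\sC(\oC(L))$; likewise $\ast\to\sC(\ast)$ lands at $\tfrac12\in(0,1)$, not at $0$. Corollary~\ref{cor.from-cones} computes the fiber of $\ev_0$ at the origin, so it does not compute the fibers of the maps actually appearing in the square. Your appeal to $\RR$-invariance does not repair this: $\sC(\ast)\simeq\Delta^1$ is not equivalent to $\ast$ in $\Strat$, so evaluation at an open-stratum point is not a projection.

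There is a second, independent error: your parametrization of constructible open sub-stratified spaces $W'\subset\oC(L)$ by pairs $(L',\text{binary})$ is false. The cone-point is the minimum of the stratifying poset $P^{\tl}$, so any upward-closed subset containing it is all of $P^{\tl}$; hence the only constructible open of $\oC(L)$ containing the cone-point is $\oC(L)$ itself. Even after correcting this, the two fiberwise coproducts you write down do not match: the summand $\Strat(\oC(L),Z')$ on one side is not $\coprod_{L'}\Strat(L',Z')\times Z'$ on the other.

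The paper fibers the square in the other direction: over the bottom-left corner $\Strat(\oC^2(\emptyset),Z)\simeq\Strat(\Delta^1,Z)$, fixing a path $\gamma\colon\Delta^1\to Z$ and comparing $\Strat^{{\sf rel}\,\gamma}(\oC^2(L),Z)$ with $\Strat^{{\sf rel}\,z_1}(\oC(L),Z)$ for $z_1=\gamma(1)$. The point is that the cone-point of $\oC^2(L)$ \emph{does} lie over $z_0:=\gamma(0)$, so Corollary~\ref{cor.from-cones} applied there produces a dichotomy: $\gamma$ is classified either by the empty map into the link $K_0$ of $z_0$ (the path stays in one stratum) or by a point $\widetilde z_1\in K_0$ (it exits). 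A separate computation in each case, iterating Corollary~\ref{cor.from-cones} once more, identifies both fibers with $\coprod_{L'}\Strat(L',K_1)$ for the appropriate link $K_1$ of $z_1$. This two-case analysis on $\gamma$ is the missing ingredient; fibering over a point cannot see it. Your alternative via iterated pushouts from Corollary~\ref{cones-colimits} and Lemma~\ref{lem.blow-up-colims} is not carried out, and the ``rearrangement'' step hides exactly the same difficulty.
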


\begin{proof}
Fix a conically smooth stratified space $Z$.  
We must show that the diagram of spaces
\[
\xymatrix{
\Strat\bigl(\oC(\emptyset),Z\bigr)
&
\Strat\bigl(\oC(L),Z\bigr)    \ar[l]
\\
\Strat\bigl(\oC^2(\emptyset),Z\bigr)   \ar[u]
&
\Strat\bigl(\oC^2(L), Z\bigr)      \ar[l]\ar[u]
}
\]
is a pullback.  
In the diagram of the statement of the lemma, the horizontal maps have conically smooth regular neighborhoods, manifestly.  
It follows that the horizontal maps in the above diagram are Kan fibrations, and therefore the underlying spaces of the point-set fibers of the horizontal maps agree with the fibers.  
Fix a conically smooth map $\gamma \colon \Delta^1 \cong \oC^2(\emptyset) \to Z$, and denote the restriction $z_1\colon \ast = \oC(\emptyset) \xra{\{1\}} \oC^2(\emptyset) \xra{\gamma} Z$.  
We will argue that the map of relative mapping spaces
\begin{equation}\label{double-rel}
\Strat^{{\sf rel} \gamma}\bigl(\oC^2(L), Z\bigr)\longrightarrow \Strat^{{\sf rel}z_1}\bigl(\oC(L),Z\bigr)
\end{equation}
is an equivalence.
We will do this by repeatedly applying Corollary~\ref{cor.from-cones} to identify various spaces of relative maps from cones.

Denote the restriction $z_0\colon \ast = \oC(\emptyset) \xra{\{0\}} \oC^2(\emptyset) \xra{\gamma} Z$, and choose a basic neighborhood $z_0\in \RR^i\times \sC(K_0)\subset Z$.
Corollary~\ref{cor.from-cones} gives the canonical identification
\[
\Strat^{{\sf rel}z_0}\bigl(\oC^2(\emptyset), Z\bigr)~\simeq~\Strat\bigl(\oC(\emptyset),K_0\bigr)\amalg \Strat(\emptyset,K_0)~.
\]
Therefore, the conically smooth map $\gamma\colon \Delta^1=\oC^2(\emptyset) \to Z$ is classified either by a map $\w{z}_1\colon \ast = \oC(\emptyset)\to  K_0$, or by the unique map $\emptyset \to K_0$.  
We examine these two cases separately.

Suppose $\gamma$ is classified by $\emptyset\to K_0$.
In this case, the composite map 
\[
\Strat^{{\sf rel} \gamma}\bigl(\oC^2(L), Z\bigr) \longrightarrow \Strat^{{\sf rel}z_0}\bigl(\oC^2(L), Z\bigr)~\underset{\rm Cor~\ref{cor.from-cones}}\simeq~\underset{C'\underset{\sf cbl,open}\subset \oC(L)}\coprod \Strat(C',K_0)
\]
factors through those cofactors indexed by those $ C'\subsetneq \oC(L)$ that do not contain the cone-point.  
Each such constructible open is of the form $C'= L'\times (0,1]\subset \oC(L)$ for a unique constructible open $L'\subset L$.
Furthermore, this factorized map is an equivalence:
\[
\Strat^{{\sf rel} \gamma}\bigl(\oC^2(L), Z\bigr)\xra{~\simeq~}\underset{L'\underset{\sf cbl,open}\subset L}\coprod \Strat(L'\times(0,1),K_0)~\underset{\rm Lem~\ref{internal-R-invariance}}\simeq~\underset{L'\underset{\sf cbl,open}\subset L}\coprod \Strat(L',K_0)~.
\]
That~(\ref{double-rel}) is an equivalence in this case follows immediately from Corollary~\ref{cor.from-cones}.

Suppose $\gamma$ is classified by a map $\w{z}_1\colon \ast \to K_0$.  
In this case, the composite map 
\[
\Strat^{{\sf rel} \gamma}\bigl(\oC^2(L), Z\bigr) \longrightarrow \Strat^{{\sf rel}z_0}\bigl(\oC^2(L), Z\bigr)~\underset{\rm Cor~\ref{cor.from-cones}}\simeq~\underset{C'\underset{\sf cbl,open}\subset \oC(L)}\coprod \Strat(C',K_0)
\]
factors through the cofactor indexed by $\oC(L) \subset \oC(L)$, for this is the only constructible open sub-stratified space that contains the cone-point.  
Furthermore, this factorized map recognizes $\Strat^{{\sf rel}\gamma}\bigl(\oC^2(L),Z\bigr)$ as the fiber:
\[
\xymatrix{
\Strat^{{\sf rel} \gamma}\bigl(\oC^2(L), Z\bigr)   \ar[r]  \ar[d]
&
\Strat\bigl(\oC(L),K_0\bigr)  \ar[d]^-{{\sf ev}_\ast}
\\
\ast \ar[r]^-{\w{z}_1}  
&
\Strat(\ast,K_0);
}
\]
which is the assertion that the canonical map of spaces
\[
\Strat^{{\sf rel} \gamma}\bigl(\oC^2(L), Z\bigr) \xra{~\simeq~}\Strat^{{\sf rel}\w{z}_1}\bigl(\oC(L),K_0\bigr) 
\]
is an equivalence.
Choose a basic neighborhood $\w{z}_1\in \RR^j\times \sC(K_1)\hookrightarrow K_0$.
Such a choice determines a basic neighborhood $z_1\in \RR^{i+j+1}\times \sC(K_0)\subset Z$.
Corollary~\ref{cor.from-cones} then gives the two canonical identifications
\[
\Strat^{{\sf rel}\w{z}_1}\bigl(\oC(L),K_0\bigr)~\simeq~\underset{L'\underset{\sf cbl,open}\subset L}\coprod \Strat(L',K_1)~\simeq~\Strat^{{\sf rel}z_1}\bigl(\oC(L),Z\bigr)~.
\]
which respect the map~(\ref{double-rel}).
So~(\ref{double-rel}) is an equivalence in this case as well.

\end{proof}

\begin{remark}\label{rem.double-cones-unexpected}
While the diagram of Lemma~\ref{lem.pre-consec} exists in $\strat$, it is not a pushout therein. In fact, this homotopy colimit has the unusual feature that it is not a equivalent to a point-set colimit in any readily apparent model of spaces. We therefore see Lemma~\ref{lem.pre-consec} as emphasizing the role of the localization $\strat[\cJ^{-1}]\simeq \Strat$ in this ultimate characterization of $\infty$-categories in terms of stratified spaces.  

\end{remark}

\section{Exit-paths}
We define a functor $\exit\colon \Strat \to \Cat_\infty$.
For this, we make use of an important cosimplicial stratified space, and use the incarnation of $\infty$-categories as \emph{complete Segal spaces}.

\subsection{Complete Segal spaces}

The reader may wish to consult Notation \ref{not.delta} regarding our uses of the Greek letter delta.

\begin{definition}[After~\cite{rezk}]\label{def.complete-segal}
The $\infty$-category of \emph{complete Segal spaces} is the full $\infty$-subcategory
\[
\Psh^{\sf Segal,cplt}(\bdelta)~\subset~\Psh(\bdelta)
\]
consisting of those presheaves $\cC$ that satisfy the following two conditions.
\begin{enumerate}
\item {\bf Segal:} For each $0\leq k \leq p$, $\cC$ carries the diagram in $\bdelta$
\[
\xymatrix{
\{k\}  \ar[r]  \ar[d]
&
\{k<\dots<p\} \ar[d]
\\
\{0<\dots<k\}  \ar[r]
&
\{0<\dots<p\}
}
\]
to a pullback diagram of spaces; here $\{i< \ldots < i+j\}$ is the totally ordered sets with $j+1$ elements, and the indexing indicates the maps in the diagram.

\item {\bf Complete:} The functor $\Map(-,\cC)\colon \Psh(\bdelta)^{\op} \longrightarrow \Spaces$ carries the diagram
\[
\xymatrix{
\Delta[{\{0<2\}}]\amalg \Delta[{\{1<3\}}]  \ar[r] \ar[d]
&
\Delta[{\{0<1<2<3\}}] \ar[d]
\\
\Delta[0]\amalg \Delta[0]  \ar[r]
&
\Delta[0]
}
\]
to a pullback diagram of spaces.

\end{enumerate}

Regarding each finite non-empty linearly ordered set as a category in a standard manner, and thereafter as an $\infty$-category, gives a functor
\[
\bdelta \longrightarrow \Cat_\infty~.
\]
There results the restricted Yoneda functor
\begin{equation}\label{along-delta}
\Cat_\infty\longrightarrow \Psh(\bdelta)~.  
\end{equation}
\begin{theorem}[\cite{joyaltierney}]\label{thm.rezk}
The functor~(\ref{along-delta}) factors as an equivalence of $\infty$-categories
\[
\Cat_\infty\xra{~\simeq~} \Psh^{\sf Segal, cplt}(\bdelta)~.
\]

\end{theorem}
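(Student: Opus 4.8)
This is Rezk's theorem \cite{rezk}; here I outline the argument and indicate what would be imported. The plan is first to recognize $\Psh^{\sf Segal,cplt}(\bdelta)$ as an accessible, reflective localization of $\Psh(\bdelta)$: condition~(1) is precisely locality with respect to the spine inclusions $\{0<1\}\cup_{\{1\}}\dots\cup_{\{p-1\}}\{p-1<p\}\hookrightarrow\{0<\dots<p\}$, and condition~(2) is locality with respect to $\Delta^0\hookrightarrow E$, where $E$ is the simplicial space that is the nerve of the free-standing isomorphism --- indeed the colimit of representables displayed in~(2) is exactly a presentation of $E$, obtained by collapsing the edges $02$ and $13$ in $\Delta^3$. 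Write $L$ for the resulting reflector. Next I would check that the restricted Yoneda functor~(\ref{along-delta}) lands in complete Segal spaces: the Segal condition holds because $\{0<\dots<p\}=\{0<\dots<k\}\amalg_{\{k\}}\{k<\dots<p\}$ in $\Cat_\infty$, so $\Map(-,\cC)$ sends this pushout to the required pullback, while completeness holds because $\Map(E,\cC)$ is the space of equivalences in $\cC$, which deformation retracts onto the space of objects $\Map(\Delta^0,\cC)$ via $(x,y,f)\mapsto(x,x,\mathrm{id}_x)$.

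It then remains to show that~(\ref{along-delta}) is fully faithful with essential image exactly the complete Segal spaces. Fully faithfulness I would obtain from the fact that $\bdelta$ generates $\Cat_\infty$ under colimits: the left adjoint $\operatorname{Lan}\colon\Psh(\bdelta)\to\Cat_\infty$ of~(\ref{along-delta}) is colimit-preserving with $\operatorname{Lan}(\Delta^p)=\{0<\dots<p\}$, so for any $\infty$-category $\cC$, writing $N$ for~(\ref{along-delta}), one has $\operatorname{Lan}(N\cC)\simeq\colim_{(\Delta^p\to N\cC)}\{0<\dots<p\}\simeq\colim_{([p]\to\cC)\in\bdelta_{/\cC}}[p]\simeq\cC$, the last equivalence being the standard expression of an $\infty$-category as the colimit of its diagram of simplices; hence the counit is an equivalence and $N$ is fully faithful. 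Essential surjectivity --- that every complete Segal space is of the form $N\cC$, i.e. that a complete Segal space genuinely ``is'' an $\infty$-category --- is the remaining content, and here I would invoke Rezk's original construction of the complete Segal space model structure on bisimplicial sets together with its Quillen equivalence to the Joyal model structure, or equivalently the Joyal--Tierney equivalence \cite{joyaltierney}; this also pins down that the induced equivalence is the one determined by~(\ref{along-delta}).

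The hard part is this last step, essential surjectivity: reconstructing an $\infty$-category from a complete Segal space. I expect to discharge it purely by citation. In fact, for the purposes of this paper --- as explained following \cite{toen} and \cite{clark-chris} --- one may simply \emph{take} complete Segal spaces as the model of $\Cat_\infty$, in which case Theorem~\ref{thm.rezk} reduces to the bookkeeping that conditions~(1)--(2) cut out precisely the $L$-local objects and that~(\ref{along-delta}) is then naturally equivalent to the identity; the comparison with other models of $\Cat_\infty$ is imported from \cite{rezk}, \cite{joyaltierney}, \cite{toen}.
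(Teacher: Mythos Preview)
The paper does not prove this theorem at all: it is stated as a cited result from~\cite{rezk}, followed only by a remark that Rezk's original argument is model-categorical while the present phrasing follows~\cite{clark-chris} working internally to quasi-categories. Your sketch is therefore strictly more than the paper provides, and is consistent with that remark---you correctly identify that the substantive content (essential surjectivity, i.e., that every complete Segal space arises from an $\infty$-category) must be imported from~\cite{rezk} or~\cite{joyaltierney}, and that one may alternatively take complete Segal spaces as the definition of $\Cat_\infty$ and invoke unicity~\cite{toen,clark-chris} for comparison with other models. Nothing in your outline is wrong; just be aware that in context this theorem functions as a black-box citation rather than something the paper argues.
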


\end{definition}

\begin{remark}
While we attribute Theorem~\ref{thm.rezk} to~\cite{joyaltierney}, the result from that work was phrased as an equivalence of model categories.  
The present form of Theorem~\ref{thm.rezk} is along the lines of~\cite{toen} and~\cite{clark-chris}, which works internal to quasi-categories.  

\end{remark}

\subsection{Standard simplices}\label{sec.standard-simplices}

Recall the standard cosimplicial topological space $\bdelta \to {\sf Top}$ given on objects, and on morphisms, as 
\[
[p]\mapsto \Delta^p:=\Bigl\{\{0,\dots,p\}\xra{t} [0,1] \thinspace \Big| \thinspace \sum_i t_i = 1\Bigr\}~,\qquad ([p]\xra{\rho}[q])\mapsto   \bigl(t\mapsto (j\mapsto \sum_{\rho(i)=j} t_i)\bigr)
\]
where the rightmost sum is understood to take the value $0$ should the indexing set be empty.
\begin{definition}[$\Delta^\bullet$]\label{def.standard-simplices}
The \emph{standard} cosimplicial stratified space
\[
{\st}\colon \bdelta \longrightarrow \strat
\]
is given as
\[
[p]\mapsto \bigl(\Delta^p \to [p]~,~{}~t\mapsto {\sf Max}\{i\mid t_i\neq 0\}\bigr)~.
\]
\end{definition}

\begin{observation}\label{simplices-are-cones}
For each $p\geq 0$, there is an identification $\oC(\Delta^{p-1}) \cong \Delta^p$ between the closed cone and the standardly stratified simplex.  

\end{observation}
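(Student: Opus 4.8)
The plan is to exhibit an explicit homeomorphism $\oC(\Delta^{p-1}) \xra{\cong} \Delta^p$ that is conically smooth in both directions and respects the stratifications into posets $[p-1]^{\tl}$ and $[p]$. First I would recall that $\Delta^p = \{t\colon\{0,\dots,p\}\to[0,1]\mid \sum t_i = 1\}$ with stratifying map $t\mapsto \mathsf{Max}\{i\mid t_i\neq 0\}$, so that the closed stratum indexed by $p$ is the locus where $t_p>0$, and the complementary closed substratified space $\{t_p = 0\}$ is precisely a copy of $\Delta^{p-1}$ (sitting at the face $\{p\}^c$). The cone-point of $\oC(\Delta^{p-1})$ should map to the vertex $e_p = (0,\dots,0,1)$ of $\Delta^p$, and the base $\Delta^{p-1}\times\{1\}$ of the closed cone should map to the face $\{t_p = 0\}$. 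The natural candidate for the map is the "straight-line join" $\bigl([s\cdot(e_p) + (1-s)\cdot u]\bigr)$ for $u\in\Delta^{p-1}$, $s\in[0,1]$: concretely, send $(u, s)\in \Delta^{p-1}\times[0,1]$ (with $(u,0)$ all collapsed to the cone-point) to the point $t\in\Delta^p$ with $t_i = (1-s)u_i$ for $i<p$ and $t_p = s$. This is well-defined on the quotient defining $\oC(\Delta^{p-1})$ since at $s=1$ the value is $e_p$ independently of $u$.

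The key steps, in order, are: (1) check this formula is a continuous bijection onto $\Delta^p$ with continuous inverse $t\mapsto (t/(1-t_p), t_p)$ off the vertex $e_p$, which together with compactness of $\oC(\Delta^{p-1})$ gives a homeomorphism; (2) verify it intertwines the stratifications — a point $(u,s)$ with $s>0$ lands in the top stratum $\{t_p>0\}$ indexed by $p$, which corresponds to the cone-point stratum of $P^{\tl}$ for $P = [p-1]$ being the added top element; meanwhile for $s = 0$, the stratum of $u$ in $[p-1]$ matches the stratum $\mathsf{Max}\{i\mid t_i\neq 0\} = \mathsf{Max}\{i\mid u_i\neq 0\}$ of its image in $[p]$, so the poset map is exactly the pushout $[p-1]^{\tl} \cong [p]$ identifying the coned-on poset $[p-1]$ with $\{0<\dots<p-1\}\subset [p]$ and the cone-point with $p$; (3) verify conical smoothness. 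For (3), the cleanest route is to observe that near every point other than the cone-point the map is a restriction of a smooth affine-linear map between open subsets of affine hyperplanes, hence conically smooth there by the manifold case; and near the cone-point one checks directly that the map has the cone form $(\mathrm{pt})\amalg \Delta^{p-1}\times[0,1)$ in suitable coordinates — indeed, in the basic chart $\RR^0 \times \sC(\Delta^{p-1}) = \sC(\Delta^{p-1})$ around $e_p\in\Delta^p$, the map is by construction the identity on cone coordinates — so conical smoothness, and conical smoothness of the inverse, is immediate from the definition of the cone.

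The main obstacle I expect is step (3), and specifically making precise that $\Delta^p$ carries, near the vertex $e_p$, a basic chart modeled on $\sC(\Delta^{p-1})$ in a way compatible with the atlas declared in \S\ref{sec.stratified-spaces} — i.e.\ that the identification we write down actually lands in the maximal conically smooth atlas rather than merely being a stratified homeomorphism. This is really a bookkeeping matter: one uses Observation~\ref{R-in-J}-style reparametrizations and the fact (from~\cite{aft1}) that the standard simplex's conically smooth structure is, by definition or by an easy induction on $p$, the one obtained by iterating the open-cone construction — which is exactly the content being observed here. In fact, since the statement is labeled an Observation, the intended argument is surely this short one: $\Delta^p$ is defined via its standard stratification, the locus $\{t_p = 0\}\cong\Delta^{p-1}$ is the deepest-complementary piece, and the scaling flow $t\mapsto (\lambda t_0,\dots,\lambda t_{p-1}, 1-\lambda(1-t_p))$ contracting toward $e_p$ manifests the required open-cone structure, so that the blow-up/cone-chart around $e_p$ is literally $\sC(\Delta^{p-1})$. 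I would therefore present the homeomorphism formula, note its compatibility with stratifications, and remark that conical smoothness in both directions is immediate from the cone chart, deferring to~\cite{aft1} for the atlas conventions.
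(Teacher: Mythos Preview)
Your approach—realizing $\Delta^p$ as the straight-line join of a vertex with its opposite facet—is exactly the intended content (the paper states this without proof). But you have chosen the wrong vertex, and your stratification check is accordingly incorrect. In the paper's conventions $\oC(Z)$ is stratified over $P^{\tl}$, and $P^{\tl}$ adjoins a new \emph{minimum} to $P$: the cone-point lies in the deepest (closed) stratum. In $\Delta^p$ with $t\mapsto\mathsf{Max}\{i\mid t_i\neq 0\}$, the unique closed point-stratum is $\{e_0\}$, indexed by $0\in[p]$; the vertex $e_p$ lies in the open dense stratum $\{t_p>0\}$. So the cone-point must go to $e_0$, and $\Delta^{p-1}$ must be identified with the face $\{t_0=0\}=\Delta^{\{1<\dots<p\}}$. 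The correct formula is $(u,s)\mapsto(1-s,\,su_0,\dots,su_{p-1})$, under which $(u,s)$ with $s>0$ lands in stratum $1+\mathsf{Max}\{i\mid u_i\neq 0\}$, matching $[p-1]^{\tl}\cong[p]$ via $\ast\mapsto 0$, $i\mapsto i+1$. (This is consistent with how the identification is used later, e.g.\ the role of ${\sf ev}_{\{0\}}$ in Lemma~\ref{exit-description} and the proof of Lemma~\ref{standard-ff}.)

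Your map $(u,s)\mapsto\bigl((1-s)u_0,\dots,(1-s)u_{p-1},s\bigr)$ does not respect the stratifications: for any $0<s<1$ the image has $t_p=s>0$ and hence lies in stratum $p$ regardless of $u$, whereas in $\oC(\Delta^{p-1})$ the stratum of $(u,s)$ is determined by $u$. There is also a slip in your parenthetical ``with $(u,0)$ all collapsed'': your formula in fact collapses at $s=1$, not $s=0$. Once you swap to the apex $e_0$ and the face $\{t_0=0\}$, your steps (1)--(3) go through as written.
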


\begin{cor}\label{pre-segal}
For each $0\leq k \leq p$ the commutative diagram in $\Strat$
\[
\xymatrix{
\Delta^{\{k\}}  \ar[r]  \ar[d]
&
\Delta^{\{k<\dots<p-1\}} \ar[d]
\\
\Delta^{\{0<\dots<k\}}  \ar[r]
&
\Delta^{\{0<\dots<p\}}
}
\]
is a pushout.  

\end{cor}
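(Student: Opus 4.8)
The plan is to show that the square exhibits $\Delta^{\{0<\dots<p\}}$ as the pushout $\Delta^{\{0<\dots<k\}}\amalg_{\Delta^{\{k\}}}\Delta^{\{k<\dots<p\}}$ in $\Strat$, with structure maps the evident face inclusions. First I would dispose of the degenerate cases $k=0$ and $k=p$: there one leg of the defining cospan is an identity, so the square is a pushout for trivial reasons. The remaining range $0<k<p$ I would treat by induction on $p$, the small cases being subsumed below.

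The only genuinely new geometric input is the case $k=1$. For each $n\ge 1$, I would apply Lemma~\ref{lem.pre-consec} to the compact stratified space $L=\Delta^{n-2}$, and use Observation~\ref{simplices-are-cones} to identify $\oC(\Delta^{n-2})\cong\Delta^{n-1}$ and $\oC^2(\Delta^{n-2})\cong\Delta^n$. The content is then to check that, under these identifications, the four maps in the square of Lemma~\ref{lem.pre-consec} are face inclusions of $\Delta^n$: the outer cone-point of $\oC^2(\Delta^{n-2})$ is the $0$-th vertex of $\Delta^n$ (its deepest stratum); the copy of $\oC(\Delta^{n-2})$ sitting at cone-height $1$ in the outer cone is the face $\Delta^{\{1<\dots<n\}}$ opposite that vertex, whose own cone-point is the vertex $1$; and consequently the two ``$\{1\}$''-maps together with $\oC^2(\emptyset\hookrightarrow L)$ become, respectively, $\Delta^{\{1\}}\hookrightarrow\Delta^{\{1<\dots<n\}}$, $\Delta^{\{1<\dots<n\}}\hookrightarrow\Delta^{\{0<\dots<n\}}$, and $\Delta^{\{0<1\}}\hookrightarrow\Delta^{\{0<\dots<n\}}$. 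Lemma~\ref{lem.pre-consec} then reads $\Delta^{\{0<\dots<n\}}\simeq\Delta^{\{0<1\}}\amalg_{\Delta^{\{1\}}}\Delta^{\{1<\dots<n\}}$ for all $n\ge 1$, which is the corollary at $k=1$.

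For the inductive step I would fix $1<k<p$ and assume the corollary in dimensions $<p$. Applied to $\Delta^{\{1<\dots<p\}}\cong\Delta^{p-1}$ at index $k-1$, the hypothesis gives $\Delta^{\{1<\dots<p\}}\simeq\Delta^{\{1<\dots<k\}}\amalg_{\Delta^{\{k\}}}\Delta^{\{k<\dots<p\}}$; moreover the inclusion $\Delta^{\{1\}}\hookrightarrow\Delta^{\{1<\dots<p\}}$ occurring in the $k=1$ identity for $\Delta^{\{0<\dots<p\}}$ factors through $\Delta^{\{1<\dots<k\}}$. Substituting, $\Delta^{\{0<\dots<p\}}$ is the colimit of the fence
\[
\Delta^{\{0<1\}}\longleftarrow\Delta^{\{1\}}\longrightarrow\Delta^{\{1<\dots<k\}}\longleftarrow\Delta^{\{k\}}\longrightarrow\Delta^{\{k<\dots<p\}}~.
\]
Because iterated pushouts are associative in any $\infty$-category, this colimit equals $\bigl(\Delta^{\{0<1\}}\amalg_{\Delta^{\{1\}}}\Delta^{\{1<\dots<k\}}\bigr)\amalg_{\Delta^{\{k\}}}\Delta^{\{k<\dots<p\}}$; applying the $k=1$ identity (for $\Delta^k$) to the inner pushout rewrites it as $\Delta^{\{0<\dots<k\}}\amalg_{\Delta^{\{k\}}}\Delta^{\{k<\dots<p\}}$, with the required structure maps. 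That finishes the induction.

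The hard part will be the bookkeeping in the second paragraph: one must track, through the iterated cone identifications of Observation~\ref{simplices-are-cones}, exactly which vertex of $\Delta^n$ is the deepest stratum at each stage, so as to recognize the cone-point inclusions and the height-$1$ inclusions of Lemma~\ref{lem.pre-consec} as the specific face maps indexing the Segal square. Once that identification is in hand, the remainder --- the induction and the reassociation of iterated pushouts --- is purely formal.
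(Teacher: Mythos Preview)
Your proposal is correct and follows essentially the same approach as the paper: Observation~\ref{simplices-are-cones} identifies the standard simplices with iterated closed cones, Lemma~\ref{lem.pre-consec} handles the base case $k=1$, and the remaining cases follow by induction together with associativity of pushouts. Your write-up simply spells out the bookkeeping (tracking cone-points to vertices and reassociating the iterated pushout) that the paper's three-line proof leaves implicit; the only cosmetic difference is that you organize the induction over $p$ whereas the paper phrases it as induction on $k$, but the two are interchangeable here.
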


\begin{proof}
Make use of Observation~\ref{simplices-are-cones}.
Use induction on $k$.
Apply Lemma~\ref{lem.pre-consec} for the base case $k=1$.  

\end{proof}

\begin{lemma}\label{standard-ff}
The composite functor $\bdelta \xra{\st} \strat \xra{c} \Strat$ is fully faithful.  

\end{lemma}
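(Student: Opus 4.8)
The plan is to prove full faithfulness by computing, for each pair $[p],[q]$, the mapping space $\Strat\bigl(\Delta^p,\Delta^q\bigr)$ — which by Corollary~\ref{Strat-Kan} is the Kan complex $\strat(\Delta^p\times\Delta^\bullet_e,\Delta^q)$ — and comparing it with $\bdelta([p],[q]) = \Hom_{\bdelta}([p],[q])$, a discrete set. Since $c\circ{\sf st}$ clearly sends the order-preserving map $\rho\colon[p]\to[q]$ to the corresponding conically smooth map $\Delta^p\to\Delta^q$, it suffices to show that the canonical map of spaces
\[
\bdelta([p],[q]) \longrightarrow \Strat\bigl(\Delta^p,\Delta^q\bigr)
\]
is an equivalence; concretely, that $\Strat(\Delta^p,\Delta^q)$ is homotopy discrete with $\pi_0$ in natural bijection with $\Hom_\bdelta([p],[q])$.

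First I would use Observation~\ref{simplices-are-cones} to write $\Delta^p \cong \oC(\Delta^{p-1})$, and induct on $p$. The base case $p=0$ is the identification $\Strat(\ast,\Delta^q) \simeq \Delta^q$ regarded as the homotopy type of the underlying stratified space of $\Delta^q$; because $\Delta^q$ deformation retracts (in $\strat$, stratified-homotopically) onto its top vertex $\{q\}$, and more generally onto each of its faces, one sees $\Strat(\ast,\Delta^q)$ is the nerve of the poset $[q]$ up to the constructible-sheaf/$\cJ$-localization — so its set of components is the set of strata of $\Delta^q$, i.e.\ $[q]$, matching $\Hom_\bdelta([0],[q])$. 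For the inductive step, I would apply Corollary~\ref{cor.from-cones} (or directly Lemma~\ref{lem.to-cone}) to the cone structure $\Delta^p = \oC(\Delta^{p-1})$, viewing $\Delta^q$ itself as a cone $\Delta^q \cong \oC(\Delta^{q-1})$ as well. This reduces a map $\oC(\Delta^{p-1})\to \oC(\Delta^{q-1})$ to the data of a constructible closed sub-stratified space $X_0\subset \Delta^p$ (the preimage of the cone-point) together with a map $\Delta^p\smallsetminus X_0\to \Delta^{q-1}$; the constructible closed sub-stratified spaces of $\Delta^p$ containing the cone-point $\{p\}$ are exactly the closed faces $\Delta^{\{k\le \cdots\le p\}}$, and their complements are (up to $\RR$-invariance) lower simplices, so the recursion closes up against the inductive hypothesis and precisely enumerates order-preserving maps $[p]\to[q]$.

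The main obstacle — and where care is needed — is the bookkeeping that the coproduct decompositions supplied by Corollary~\ref{cor.from-cones} assemble, across the induction, into exactly the set $\Hom_\bdelta([p],[q])$ of monotone maps, with no collapsing and no extra components, and that each cofactor is contractible rather than merely connected. Contractibility of each piece follows because each cofactor is itself (inductively) a mapping space between standardly stratified simplices of strictly smaller total dimension, together with the fact (Lemma~\ref{internal-R-invariance}, Observation~\ref{R-in-J}) that products with $\RR$ and with open intervals are killed in $\Strat$, so the $(0,1)$-factors appearing in complements of cone-points disappear. The combinatorial claim — that choosing, for each cone-point met along the iterated cone structure, a closed face through it, is the same as choosing a monotone map $[p]\to[q]$ — is the content of the correspondence between the flag $\{k\}\subset\{0<\cdots<k\}\cup\{k<\cdots<p\}$ decompositions (as in Corollary~\ref{pre-segal}) and generators of $\bdelta$; once this is set up it is a direct, if slightly intricate, unwinding. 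I expect the proof to be short modulo an honest induction invoking Corollaries~\ref{cor.from-cones} and~\ref{pre-segal} repeatedly.
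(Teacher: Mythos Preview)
Your overall strategy---induct on $p$ using the cone description $\Delta^p\cong\oC(\Delta^{p-1})$ and decompose mapping spaces via Lemma~\ref{lem.to-cone} or Corollary~\ref{cor.from-cones}---is essentially the paper's. The paper fibers $\Strat(\Delta^p,\Delta^q)$ over the evaluation $\ev_{\{0\}}$ at the cone-point of the source and identifies the fibers via Corollary~\ref{cor.from-cones}; you propose instead to decompose by the preimage of the cone-point of the target via Lemma~\ref{lem.to-cone}. These are two formulations of the same computation.

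There is, however, an orientation error in your write-up. In the standard stratification $\Delta^p\to[p]$, $t\mapsto{\sf Max}\{i\mid t_i\neq 0\}$, the deepest (closed) stratum is the vertex $\Delta^{\{0\}}$, not $\Delta^{\{p\}}$; the cone-point of $\oC(\Delta^{p-1})\cong\Delta^p$ is the $0$-vertex. Correspondingly, the constructible \emph{closed} sub-stratified spaces of $\Delta^p$ are the initial faces $\Delta^{\{0<\dots<k\}}$ (together with $\emptyset$), not the terminal faces $\Delta^{\{k<\dots<p\}}$ you name. Their complements $(\Delta^p)_{>k}$ are stratified-homotopy equivalent to $\Delta^{\{k+1<\dots<p\}}$, and with that correction the induction closes exactly as you describe, enumerating monotone maps $[p]\to[q]$.

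A smaller point on the base case: $\Strat(\ast,\Delta^q)$ is not the nerve of the poset $[q]$ but the discrete space $\{0,\dots,q\}$, since a map $\Delta^r_e\to\Delta^q$ from a connected smooth manifold factors through a single stratum and each stratum of $\Delta^q$ is contractible (cf.\ Lemma~\ref{exit-description}). This is $\Hom_{\bdelta}([0],[q])$, so your base case is correct in outcome.
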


\begin{proof}
We must show that the map of spaces $\bdelta\bigl([p],[q]\bigr) \xra{\simeq}\Strat\bigl(\Delta^p, \Delta^q\bigr)$ is an equivalence.  
We do this by induction on $p$.
The assertion is clear for $p=0$, since the strata of $\Delta^q$ are contractible.  
For $p>0$, consider the diagram of spaces
\[
\xymatrix{
\bdelta\bigl([p],[q]\bigr) \ar[r] \ar[d]
&
\Strat\bigl(\Delta^p, \Delta^q\bigr)  \ar[d]
\\
\bdelta\bigl(\{0\},[q]\bigr)  \ar[r] 
&
\Strat\bigl(\Delta^{\{0\}}, \Delta^q\bigr)
}
\]
in which the vertical maps are the evident restrictions.
We have already argued that the bottom horizontal map is an equivalence, so it remains to argue as much for each fiber.
Fix a map $\Delta^{\{0\}} \xra{f} \Delta^q$.
Denote by $0\leq i \leq q$ the stratum containing the image of $f$.  
In the case $i=q$, the fiber of the lefthand vertical map over $i$ is terminal, while the fiber of the righthand vertical map over $f$ is also terminal, by inspection. 
So assume $i<q$.  
The fiber of the lefthand vertical map is 
\[
\bdelta\bigl(\{1<\dots<p\},\{i<\dots<q\}\bigr)~\simeq~ \underset{0\leq k\leq p}\coprod \bdelta\bigl(\{k+1<\dots<p\},\{i+1<\dots<q\}\bigr)~.
\]
Now, recognize $\Delta^j = \oC(\Delta^{j-1})$, so that Corollary~\ref{cor.from-cones} canonically identifies the fiber of the righthand vertical map as 
\[
\underset{D_0\underset{\sf cbl,open}\subset \Delta^{\{1<\dots<p\}}}\coprod\Strat\bigl(D_0,\Delta^{\{i+1<\dots<q\}}\bigr)~.
\]
Now recognize that each constructible open subspace $D_0\subset \Delta^{\{1<\dots<p\}}$ is of the form $(\Delta^{\{1<\dots<p\}})_{\geq k+1}$ for some $0\leq k\leq p$.
For $0\leq k <p$, the collapse map $\{1<\dots<p\}\to \{k+1<\dots<p\}$ induces a stratified map $\Delta^{\{1<\dots<p\}}\to \Delta^{\{k+1<\dots<p\}}$ that restricts to a stratified map $(\Delta^{\{1<\dots<p\}})_{\geq k+1}\to \Delta^{\{k+1<\dots<p\}}$ that is isomorphic to the projection $[0,1)^{k}\times \Delta^{\{k+1<\dots<p\}}\to \Delta^{\{k+1<\dots<p\}}$; and for $k=p$ there is a unique such projection.
Summarizing with Observation~\ref{internal-R-invariance}, we identify the map of fibers over $i\mapsto f$ as a map
\[
\underset{0\leq k\leq p}\coprod \bdelta\bigl(\{k+1<\dots<p\},\{i+1<\dots<q\}\bigr) \longrightarrow\underset{0\leq k\leq p}\coprod \Strat\bigl(\Delta^{\{k+1<\dots<p\}},\Delta^{\{i+1<\dots<q\}}\bigr)~.
\]
By inspection, this map respects the coproduct structure, and restricts on each cofactor as that induced by the functor $\bdelta \to \Strat$. 
That this map of fibers over $i\mapsto f$ is an equivalence between spaces follows from the inductive hypothesis.

\end{proof}

\subsection{Exit-paths}

Here we use the functor $\st: \bdelta\ra \Strat$ to define exit-path $\oo$-categories.

\begin{definition}[$\exit$]\label{exit}
The \emph{exit-path $\infty$-category} functor is the restricted Yoneda functor
\[
\exit\colon \Strat \xra{~{\sf y}~}\Psh(\Strat) \xra{~{\st}^\ast} \Psh(\bdelta)~.
\]
\end{definition}

Explicitly, the value $\exit(X)$ is the simplicial space $[p]\mapsto \Strat(\Delta^p, X)$, the values of which are incarnated as a Kan complex for which the set of $q$-simplices is $\strat(\Delta^p\times\Delta^q_e,X)$.

\begin{remark}
Lemma~\ref{exits-agree} shows that Definition~\ref{exit} is consistent with the exit-path $\infty$-category defined in Appendix \S A of~\cite{HA}. The difference is model specific.
Namely, in~\cite{HA} the exit-path $\infty$-category is given as a quasi-category, whereas here we present it as a complete Segal space (Corollary~\ref{exit-cplt-segal}).

\end{remark}

\begin{observation}\label{exit-products}
For each pair of stratified spaces $X$ and $X'$, the canonical map
\[
\exit(X\times X') \xra{~\simeq~} \exit(X)\times \exit(X')
\]
is an equivalence of simplicial spaces.  
This is direct from the equivalence $\exit(-) \simeq \Strat(\Delta^\bullet,-)$ and using that the stratified space $X\times X'$ is the product of $X$ and $X'$ in the $\infty$-category $\Strat$.  

\end{observation}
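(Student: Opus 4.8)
The plan is to reduce the claim to a degreewise statement and then exploit the explicit $\kan$-enriched model of $\Strat$ from Corollary~\ref{Strat-Kan}, in which finite products are visibly preserved by the defining functor $c\colon \strat \to \Strat$.

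First, I would recall that $\exit(-) \simeq \Strat(\Delta^\bullet,-)$ and that products in $\Psh(\bdelta)$ are computed degreewise; hence it suffices to produce, naturally in $[p]\in\bdelta$, an equivalence of spaces $\Strat(\Delta^p, X\times X') \simeq \Strat(\Delta^p, X)\times\Strat(\Delta^p, X')$. Then I would invoke Corollary~\ref{Strat-Kan}: the mapping space $\Strat(A,B)$ is modeled by the Kan complex whose set of $q$-simplices is $\strat(A\times\Delta^q_e, B)$. Since $\strat$ admits finite products and $X\times X'$ is the product of $X$ and $X'$ there, the universal property of products gives, for each $[p]$ and $[q]$, a natural bijection
\[
\strat(\Delta^p\times\Delta^q_e,\, X\times X')~\cong~\strat(\Delta^p\times\Delta^q_e,\, X)\times\strat(\Delta^p\times\Delta^q_e,\, X')~.
\]
These bijections assemble into an isomorphism of simplicial sets $\Strat(\Delta^p, X\times X')\cong\Strat(\Delta^p, X)\times\Strat(\Delta^p, X')$, natural in $[p]$. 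Because each of these simplicial sets is a Kan complex and the product of Kan complexes computes the product in $\spaces$, this isomorphism is in particular the asserted equivalence of spaces; and running the same argument with $\Delta^p$ replaced by an arbitrary stratified space $A$ shows that $c(X\times X')$ represents the product of $c(X)$ and $c(X')$ in $\Strat$, which justifies the parenthetical assertion in the statement.

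There is essentially no obstacle here: the only point demanding care is the bookkeeping in passing from the point-set model to the $\infty$-categorical statement — namely, that the degreewise isomorphisms of Kan complexes are genuine equivalences of spaces (immediate, as they are isomorphisms) and that a degreewise equivalence of simplicial spaces is an equivalence in $\Psh(\bdelta)$ (true by definition). Alternatively, and more abstractly, one may note that $\exit = \st^\ast\circ j$ is the composite of the Yoneda embedding $j\colon\Strat\hookrightarrow\Psh(\Strat)$ with restriction along $\st$, both of which preserve limits; then the claim follows at once from the fact, established in the previous paragraph, that $X\times X'$ is the product in $\Strat$.
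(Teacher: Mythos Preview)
Your proposal is correct and follows exactly the approach the paper indicates: the paper's justification is already embedded in the Observation itself (the two sentences after the displayed equivalence), and you have simply unpacked those two sentences—first reducing to degreewise equivalences via $\exit(-)\simeq\Strat(\Delta^\bullet,-)$, then verifying that $X\times X'$ is the product in $\Strat$ using the explicit $\kan$-enriched model of Corollary~\ref{Strat-Kan}. Your alternative abstract argument via $\exit=\st^\ast\circ j$ preserving limits is also fine and amounts to the same thing.
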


\begin{observation}\label{exit-smooth}
For each smooth manifold $M$, there is a canonical identification 
\[
\exit(M)~ \simeq ~M
\]
as the constant simplicial space at the underlying space of $M$.  
Indeed, the space of $p$-simplices of $\exit(M)$ is the Kan complex $\Strat(\Delta^p\times \Delta^\bullet_e, M)$.  
The degeneracy map from the space of $0$-simplices $\Sing(M) \simeq \Strat(\Delta^\bullet_e, M) \xra{\simeq} \Strat(\Delta^p\times \Delta^\bullet_e, M)$ is an equivalence of Kan complexes.  

\end{observation}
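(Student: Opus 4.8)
The plan is to exhibit $\exit(M)$ as the constant simplicial space at the underlying homotopy type of $M$ by means of the iterated degeneracy maps. Write $\exit(M)$ as the functor $\bdelta^{\op}\to\Spaces$, $[p]\mapsto\Strat(\Delta^p,M)$. Since $[0]$ is terminal in $\bdelta$, hence initial in $\bdelta^{\op}$, there is a canonical natural transformation $\overline{\sigma}\colon\underline{\Strat(\ast,M)}\to\exit(M)$ out of the constant functor, with component at $[p]$ the iterated degeneracy $\sigma_p=(\Delta^p\to\ast)^\ast$. Because the composite $[0]\xra{\{0\}}[p]\to[0]$ in $\bdelta$ is the identity, $\sigma_p$ is a section of the face map $r_p\colon\Strat(\Delta^p,M)\to\Strat(\Delta^{\{0\}},M)=\Strat(\ast,M)$ given by restriction along the inclusion $\Delta^{\{0\}}\hookrightarrow\Delta^p$ of the $0$-th vertex. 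Since a levelwise equivalence of simplicial spaces is an equivalence, it then suffices to prove (i) that $r_p$ is an equivalence for every $p$ (which is trivial for $p=0$), and (ii) that $\Strat(\ast,M)\simeq\strat(\Delta^\bullet_e,M)$ -- the smooth singular complex of $M$ formed with the extended simplices -- is weakly equivalent to the underlying space of $M$, a standard fact.

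To prove (i) I would first use Observation~\ref{simplices-are-cones} to identify $\Delta^p\cong\oC(\Delta^{p-1})$, under which the $0$-th vertex is the cone-point. As recalled in Section~\ref{recall}, $\oC(\Delta^{p-1})$ is a stratified space with boundary whose interior is the open cone $\sC(\Delta^{p-1})$, and the cone-point lies in that interior; by Lemma~\ref{homotopy-boundary} the inclusion $\sC(\Delta^{p-1})\hookrightarrow\oC(\Delta^{p-1})\cong\Delta^p$ is a stratified homotopy equivalence. The presheaf $\Strat(-,M)$ on $\strat$ is, by construction of $\Strat$ (Definition~\ref{def.Strat} together with Observation~\ref{cbl-R-local}), a constructible sheaf, hence carries stratified homotopy equivalences to equivalences (Lemma~\ref{R-invt=cbl}). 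So restriction along that inclusion is an equivalence $\Strat(\Delta^p,M)\xra{\simeq}\Strat(\sC(\Delta^{p-1}),M)$, and $r_p$ factors through it, the $0$-th vertex lying in the interior.

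It remains to show the cone-point evaluation $\ev_0\colon\Strat(\sC(\Delta^{p-1}),M)\to\Strat(\ast,M)$ is an equivalence. Here I would apply Corollary~\ref{cor.from-cones} with $L=\Delta^{p-1}$ (compact), $i=0$, and $X=M$: its homotopy fiber over a point $x\in M$ is the coproduct, over constructible open sub-stratified spaces $L'\hookrightarrow\Delta^{p-1}$, of the mapping spaces $\Strat(L',Z)$, where $\RR^n\times\sC(Z)\hookrightarrow M$ is a basic neighborhood of $x$. Since $M$ is a smooth manifold we may take such a neighborhood to be $\RR^n=\RR^n\times\sC(\emptyset)$, so that $Z=\emptyset$; then $\Strat(L',Z)$ is empty unless $L'=\emptyset$, and the $L'=\emptyset$ cofactor is $\Strat(\emptyset,\emptyset)\simeq\ast$. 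Thus every homotopy fiber of $\ev_0$ is contractible, so $\ev_0$ is an equivalence; tracing this through the identification of the previous paragraph shows $r_p$ is an equivalence, and hence $\overline{\sigma}$ is a levelwise equivalence and $\exit(M)\simeq\underline{M}$, as claimed.

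The step I expect to require the most care is this last one, because it must genuinely use that $M$ is \emph{unstratified} rather than only the stratified homotopy invariance of $\Strat(-,M)$: the cone-point inclusion $\ast\hookrightarrow\sC(\Delta^{p-1})$ is not a stratified homotopy equivalence -- there is in general no conically smooth deformation of a cone onto its cone-point, which is precisely why $\exit$ does not trivialize cones -- so the identification $\ev_0\simeq\mathrm{id}$ cannot be extracted from Lemma~\ref{R-invt=cbl} and must instead come from the fibrewise description in Corollary~\ref{cor.from-cones} combined with the emptiness of links at manifold points. The remaining ingredients -- that $\sigma_p$ is a section of $r_p$, and that $\strat(\Delta^\bullet_e,M)$ models the homotopy type of $M$ -- are routine.
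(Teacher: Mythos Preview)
Your argument is correct. The paper records this as an Observation and supplies essentially no proof beyond the two sentences you see in the statement: it simply asserts that the degeneracy $\Sing(M)\simeq\Strat(\Delta^\bullet_e,M)\to\Strat(\Delta^p\times\Delta^\bullet_e,M)$ is an equivalence of Kan complexes, leaving the reader to infer why smoothness of $M$ makes this evident.

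Your route is genuinely more careful than the paper's sketch, and for a good reason you yourself articulate: the projection $\Delta^p\to\ast$ is \emph{not} a stratified homotopy equivalence, so constructibility of $\Strat(-,M)$ alone does not collapse $\Strat(\Delta^p,M)$ to $\Strat(\ast,M)$. The paper's implicit reasoning is presumably that, because $M$ is trivially stratified, a conically smooth map $\Delta^p\times\Delta^q_e\to M$ imposes no poset constraint on the source, and one can then write down a scaling homotopy toward the $0$th vertex whose composite into $M$ is conically smooth; but this is not spelled out. Your approach avoids that analysis entirely by passing to the open cone via Lemma~\ref{homotopy-boundary} and then invoking Corollary~\ref{cor.from-cones}, where the hypothesis that $M$ is smooth enters cleanly as $Z=\emptyset$ in the basic chart, forcing the fiber of $\ev_0$ to be the single cofactor $\Strat(\emptyset,\emptyset)\simeq\ast$. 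This is a perfectly good internal-to-the-paper justification; it just uses more machinery than the authors evidently intended for an Observation.
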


Next we identify the spaces of $0$- and $1$-simplices of $\exit(X)$.  
\begin{lemma}\label{exit-description}
Let $X = (X\to P)$ be a conically smooth stratified space.
The space of $0$-simplices of $\exit(X)$ is canonically identified
\[
\exit(X)_{|[0]}~\simeq~ \underset{p\in P}\coprod X_p
\]
as the coproduct of the underlying spaces of the strata of $X$.  
For each pair of strata $X_p, X_{p'}\subset X$, the space of $1$-simplices from $X_p$ to $X_{p'}$ is canonically identified
\[
\bigl(X_p\times X_{p'}\bigr) \underset{\exit(X)_{|\partial [1]}} \times \exit(X)_{|[1]}~\simeq~ \Link_{X_p}(X)_{p'}
\]
as the underlying space of the $p'$-stratum of the link of the $p$-stratum.  

\end{lemma}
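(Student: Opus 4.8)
The plan is to compute the two spaces directly from the incarnation $\exit(X)_{|[p]} = \Strat(\Delta^p, X)$ together with the Kan-enriched model of Corollary~\ref{Strat-Kan}, where the $q$-simplices of this Kan complex are $\strat(\Delta^p\times\Delta^q_e, X)$. For the $0$-simplices, $\Delta^0 = \ast$, so $\exit(X)_{|[0]} = \Strat(\ast, X)$. Its space of $q$-simplices is $\strat(\Delta^q_e, X) = \strat(\Delta^q_e, X\to P)$; since $\Delta^q_e$ is (path) connected and trivially stratified, any conically smooth map to $X\to P$ factors through a single stratum $X_p$. This gives a decomposition of the Kan complex $\Strat(\ast,X)$ as a coproduct over $p\in P$ of the Kan complexes $\Strat(\ast, X_p)$ — and each $X_p$ is a smooth manifold, so by Observation~\ref{exit-smooth} (applied to $X_p$, or directly since $\Strat(\ast, M)\simeq \Sing(M)$) we get $\Strat(\ast, X_p)\simeq X_p$. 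Hence $\exit(X)_{|[0]}\simeq \coprod_{p\in P} X_p$, as claimed. (Strictly, one should note the decomposition over $p\in P$ is compatible with the face/degeneracy structure, but the statement only asks for the level-$[0]$ space.)

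For the $1$-simplices, I want to identify the fiber of $\exit(X)_{|[1]} = \Strat(\Delta^1, X)$ over a pair of strata under the boundary restriction $\Strat(\Delta^1,X)\to \Strat(\partial\Delta^1,X) = X_p\times X_{p'}$, where I have written $\Delta^1 = \oC(\Delta^0) = \oC(\ast)$ using Observation~\ref{simplices-are-cones}. Fixing the endpoint $x\in X_p\subset X$ in stratum $X_p$ — i.e. fixing the value of the cone-point $\ast = \Delta^{\{0\}}\subset \Delta^1$ — Corollary~\ref{cor.from-cones} applies with $L = \Delta^0 = \ast$: choosing a basic neighborhood $(\RR^i\times\sC(Z), 0)\hookrightarrow (X,x)$, the fiber of $\ev_0\colon \Strat(\oC(\ast), X)\to X$ over $x$ is $\coprod_{L'\subset \ast \text{ cbl,opn}} \Strat(L', Z)$. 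But $\ast$ has exactly two constructible open subspaces, $\emptyset$ and $\ast$, so this fiber is $\Strat(\emptyset, Z)\amalg \Strat(\ast, Z) = \ast \amalg \Strat(\ast, Z)$. The $\ast$-cofactor is the space of maps $\oC(\ast)=\Delta^1\to X$ that send the whole cone to $x$ — the "constant at $x$" $1$-simplices — and $\Strat(\ast,Z)\simeq Z$ is the link $\Link_{X_p}(X)$ restricted near $x$ (here $Z$ is precisely the fiber of $\Link_{X_p}(X)\to X_p$ over $x$, by the definition of a basic chart and the blow-up square of \S\ref{sec-reg}); the other cofactor, $\Strat(\emptyset,Z)=\ast$, corresponds to $1$-simplices landing entirely in $X_p$.

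So the fiber of $\Strat(\Delta^1,X)$ over $x$ (varying the other endpoint freely) is $\Link_{X_p}(X)_{|_x}$, the fiber of the link over $x$; letting $x$ range over $X_p$, the fiber of $\Strat(\Delta^1,X)\to X_p$ (via $\ev_0$) is the total space $\Link_{X_p}(X)$, and then further restricting the second endpoint to land in $X_{p'}$ — i.e. imposing that the other cone-point $\Delta^{\{1\}}$ maps into stratum $X_{p'}$, which on the link picks out the $p'$-stratum $\Link_{X_p}(X)_{p'}$ — gives
\[
\bigl(X_p\times X_{p'}\bigr)\underset{\exit(X)_{|\partial[1]}}\times \exit(X)_{|[1]}~\simeq~\Link_{X_p}(X)_{p'}~.
\]
The main obstacle I expect is bookkeeping: checking that the two cofactors $\ast\amalg \Strat(\ast,Z)$ from Corollary~\ref{cor.from-cones} reassemble correctly when both endpoints and the chart are allowed to vary, so that the "$\Strat(\emptyset,Z)$" pieces glue into $X_p$-strata of the link rather than being discarded, and that the identification is independent of the choice of basic neighborhood. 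Well-definedness of links (a result of \cite{aft1}, invoked exactly as in the proof of Corollary~\ref{cor.from-cones}) is what makes this work, and I would lean on the equivalence $\Strat_\ast(\sC(L),O)\xrightarrow{\simeq}\Strat_\ast(\sC(L),X)$ for neighborhoods $O$ of $x$, established there, to reduce to the local picture.
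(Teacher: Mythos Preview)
Your proposal is correct and follows essentially the same route as the paper. Both arguments identify $\exit(X)_{|[0]}$ by observing that each $\Delta^q_e$ is a connected smooth manifold so any map to $X$ factors through a single stratum, and both compute the fiber of $\ev_{\{0\}}\colon \exit(X)_{|[1]}\to \exit(X)_{|[0]}$ by writing $\Delta^1=\oC(\Delta^0)$ and invoking Corollary~\ref{cor.from-cones}; the paper states the analogous fiber formula for all $q$ and then specializes to $q=1$, while you work directly at $q=1$, but the content is the same. One small remark: in your closing paragraph, the $\Strat(\emptyset,Z)=\ast$ cofactor does not ``glue into $X_p$-strata of the link'' --- it records the $1$-simplices that stay inside the stratum $X_p$ and is simply irrelevant once you restrict the terminal endpoint to a different stratum $X_{p'}$.
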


\begin{proof}
The first statement is direct from the definition of $\Strat$.  Namely, $\Strat(\ast, X)$ is the Kan complex $[q]\mapsto \strat\bigl(\Delta^q_e, X\bigr)$.  Because $\Delta^q_e$ is a connected smooth manifold for each $q\geq 0$, then any map from it to $X$ factors through a stratum of $X$.  More precisely, there is an identification of Kan complexes
\[
\Strat(\ast, X) ~\cong~ \Sing\Bigl(\underset{p\in P} \coprod X_p\Bigr)~.
\]

Let $x_0\in X$ be a point, and choose a basic neighborhood $x_0\in \RR^i\times \sC(L) \subset X$.  
Using that $\Delta^q = \oC(\Delta^{q-1})$, Corollary~\ref{cor.from-cones} canonically identifies the fiber of the evaluation map $\exit(X)_{|[q]} \xra{{\sf ev}_{\{0\}}} \exit(X)_{|[0]}$ over $x_0\in X$ as the space
\[
\underset{0\leq k \leq q} \coprod \exit(L)_{|\{k+1<\dots<q\}}~.
\]
As the case $q=1$, 
the second statement then follows from the first.

\end{proof}

\begin{cor}\label{exit-cplt-segal}
The functor $\exit\colon \Strat \to \Psh(\bdelta)$ takes values in complete Segal spaces.
\end{cor}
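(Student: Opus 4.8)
The plan is to verify the two defining conditions of a complete Segal space --- Segal and completeness --- directly from Definition~\ref{def.complete-segal}, using the distinguished colimits in $\Strat$ established in \S\ref{sec.colims-in-Strat} together with the fact that $\exit(X) = \Strat(\Delta^\bullet, X)$ is a representable presheaf pulled back along $\sf st$. The key observation throughout is that a simplicial space built by applying $\Strat(-,X)$ to a cosimplicial stratified space will send pushout diagrams among the cosimplicial pieces to pullback diagrams of spaces, simply because $\Strat(-,X)$ is a (space-valued) presheaf on $\Strat$. So the whole task is to identify the relevant diagrams of standardly stratified simplices as pushouts in $\Strat$, which is exactly the content already assembled.

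For the \textbf{Segal condition}: I would invoke Corollary~\ref{pre-segal}, which states precisely that for each $0\le k\le p$ the square
\[
\xymatrix{
\Delta^{\{k\}}  \ar[r]  \ar[d]
&
\Delta^{\{k<\dots<p\}} \ar[d]
\\
\Delta^{\{0<\dots<k\}}  \ar[r]
&
\Delta^{\{0<\dots<p\}}
}
\]
is a pushout in $\Strat$. Applying the representable functor $\Strat(-,X)$ turns this into a pullback square of spaces, and reindexing along $\sf st\colon \bdelta \to \Strat$, this is exactly the statement that $\exit(X)$ carries the Segal square in $\bdelta$ to a pullback of spaces. Hence $\exit(X)$ is Segal for every $X$. (Here Corollary~\ref{pre-segal} is where the real work lives, but it is already proved in the excerpt via Observation~\ref{simplices-are-cones} and Lemma~\ref{lem.pre-consec}, so I may cite it.)

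For the \textbf{completeness condition}: this is the step I expect to be the main obstacle, since it is not handled by a single cited pushout. Using the identification of the space of $0$- and $1$-simplices of $\exit(X)$ from Lemma~\ref{exit-description} --- namely $\exit(X)_{|[0]} \simeq \coprod_{p} X_p$ and the $1$-simplices between strata $X_p, X_{p'}$ given by $\Link_{X_p}(X)_{p'}$ --- one sees that an \emph{equivalence} in $\exit(X)$, i.e. a $1$-simplex that becomes invertible, must have source and target in the same stratum $X_p$: an exit-path into a strictly less deep stratum cannot be inverted. Restricted to a single stratum $X_p$, which is an (unstratified) smooth manifold, Observation~\ref{exit-smooth} gives $\exit(X_p) \simeq X_p$ as a constant simplicial space, which is visibly a complete Segal space (it is a Kan complex viewed constantly, hence an $\infty$-groupoid). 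So the subspace of invertible $1$-simplices of $\exit(X)$ is $\coprod_p \exit(X_p)_{|[1]} \simeq \coprod_p X_p \simeq \exit(X)_{|[0]}$, with the map to $\exit(X)_{|[0]}$ (via either endpoint, or via the degeneracy) an equivalence. Concretely I would phrase completeness as the assertion that the map $\exit(X)_{|[0]} \to \exit(X)^{\sf inv}_{|[1]}$ given by degeneracy is an equivalence, and prove it by the stratum-by-stratum reduction just described, using that the link-strata $\Link_{X_p}(X)_{p'}$ with $p'\neq p$ contribute only non-invertible $1$-simplices while the $p' = p$ part reproduces $\exit(X_p) \simeq X_p$. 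The care needed is in (i) justifying that a $1$-simplex with endpoints in different strata is never invertible --- this uses that composition in $\exit(X)$ cannot move a morphism back to a deeper stratum, which follows from the Segal structure already verified and the description of $1$-simplices --- and (ii) matching the resulting formulation with the precise pullback-square form of completeness in Definition~\ref{def.complete-segal}(2), which is a routine unwinding once the space of invertible $1$-simplices has been identified with $\exit(X)_{|[0]}$.
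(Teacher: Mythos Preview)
Your proposal is correct and matches the paper's approach: the Segal condition is exactly Corollary~\ref{pre-segal} fed through $\Strat(-,X)$, and completeness is argued by showing that invertible morphisms must lie within a single stratum. The paper packages the completeness step slightly differently---observing directly that a conically smooth $\sigma\colon\Delta^2\to X$ with $\sigma_{|\Delta^{\{0\}}}$ and $\sigma_{|\Delta^{\{2\}}}$ in the same stratum must factor entirely through that stratum, hence the only retracts in $\exit(X)$ are equivalences---but this is the same mechanism you describe via Lemma~\ref{exit-description} and Observation~\ref{exit-smooth}.
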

\begin{proof}
Corollary~\ref{pre-segal} gives that, for each $0\leq k \leq p$, the diagram of spaces
\[
\xymatrix{
\Strat(\Delta^{\{0<\dots<p\}}, X)   \ar[r]    \ar[d]
&
\Strat(\Delta^{\{k<\ldots<p\}},X)    \ar[d]
\\
\Strat(\Delta^{\{0<\dots<k\}},X)   \ar[r]
&
\Strat(\Delta^{\{k\}},X)
}
\]
is a pullback.
This verifies the Segal condition for $\exit(X)$.

Notice that a conically smooth map $\Delta^2 \xra{\sigma} X$ factors through a single stratum of $X$ if and only if both $\sigma_{|\Delta^{\{0\}}}$ and $\sigma_{|\Delta^{\{2\}}}$ factor through the same stratum of $X$.
It follows that the only retracts in the Segal space $\exit(X)$ are equivalences.
The completeness of $\exit(X)$ follows.

\end{proof}

\begin{convention}
By way of Corollary~\ref{exit-cplt-segal}, we will henceforth regard $\exit$ as a functor to $\oo$-categories, in their incarnation as complete Segal spaces~(as established by~\cite{rezk}).
\end{convention}

The colimits examined in~\S\ref{sec.colims-in-Strat} are preserved by the exit-path functor.
\begin{prop}\label{exit-facts}
The exit-path functor $\exit:\Strat \ra \Cat_\infty$ preserves the following colimits.

\begin{enumerate}

\item
For each covering sieve $\cU$ of a conically smooth stratified space $X$, the composite functor
\[
\cU^{\tr}\to \Strat \xra{~\exit~} \Cat_\infty
\]
is a colimit diagram. 

\item
For each deepest stratum $X_0\subset X$ of a stratified space, the functor $\exit$ carries the pushout diagram in $\strat$
\[
\xymatrix{
\Link_{X_d}(X)  \ar@{^{(}->}[r]  \ar[d]
&
\Unzip_{X_d}(X)  \ar[d]
\\
X_d \ar@{^{(}->}[r]
&
X
}
\]
to a pushout diagram among $\oo$-categories.

\item
For each compact conically smooth stratified space $L$, the functor $\exit$ carries each diagram
\[
\xymatrix{
\oC(\emptyset) \ar[rr]^-{\oC(\emptyset \hookrightarrow L)}  \ar[d]_-{\{1\}}
&&
\oC(L) \ar[d]^-{\{1\}}
\\
\oC^2(\emptyset) \ar[rr]^-{\oC^{2}(\emptyset\hookrightarrow L)}
&&
\oC^2(L)
}
\]
to a colimit diagram of $\oo$-categories.

\end{enumerate}

\end{prop}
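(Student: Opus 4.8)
The plan is to reduce each of the three colimit-preservation statements for $\exit$ to the corresponding colimit statement in $\Strat$ established in \S\ref{sec.colims-in-Strat}, using the description $\exit(-) \simeq \Strat(\Delta^\bullet,-)$ together with the fact that colimits of complete Segal spaces (equivalently, of $\oo$-categories) are detected levelwise after Segal-localization, and that the functor $\bdelta \xra{\st} \Strat$ of Lemma~\ref{standard-ff} is fully faithful. Concretely, since $\exit$ is the restricted Yoneda functor $\Strat \xra{j} \Psh(\Strat) \xra{\st^\ast} \Psh(\bdelta)$, and $j$ is \emph{not} colimit-preserving, I cannot argue formally; instead I will argue each case by checking, objectwise on $[p]\in\bdelta$, that $\Strat(\Delta^p,-)$ carries the given diagram to a colimit diagram of spaces, and that the resulting levelwise-colimit simplicial space is already a complete Segal space (so that it computes the colimit in $\Cat_\infty$, no further localization being needed). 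The key technical input for the levelwise statements is, in every case, Corollary~\ref{cor.from-cones}, which lets me resolve a mapping space out of $\Delta^p = \oC(\Delta^{p-1})$ into a coproduct of mapping spaces out of lower-dimensional simplices.

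For (1): by Lemma~\ref{lem.opens-cover} the diagram $\cU^\tr \to \Strat$ is a colimit diagram, and since $\Strat(-,Z)$ is a constructible sheaf for each $Z$ (Definition~\ref{def.Strat}), the diagram $\cU^\tr \to \Strat \xra{\Strat(\Delta^p,-)} \Spaces$ — wait, that is covariant in the wrong variable; rather, $\Strat(\Delta^p,-)$ preserves the colimit because $\exit(X)_{|[p]} = \Strat(\Delta^p,X)$ and open covers of $X$ induce open covers of the representable sheaf, so the value on $[p]$ of the levelwise colimit is $\Strat(\Delta^p,X)$. Then one checks the levelwise colimit is a complete Segal space: the Segal condition is inherited because pullbacks commute with the relevant colimits of spaces after passing to the sheaf condition, and completeness is inherited because it is a condition detected on the $[0],[1],[2],[3]$ levels, each of which is unchanged. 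For (2): a pushout along embeddings which are constructible bundles is, by the results recalled in \S\ref{sec-reg} (blow-up squares, or more generally such gluings), a pushout in $\strat$ preserved by $-\times\Delta^q_e$, hence a pushout in $\Strat$ by the Kan-fibration argument of Lemma~\ref{lem.blow-up-colims} (the embeddings have regular neighborhoods, so $\Strat(-,Z)$ sends the square to a pullback of Kan fibrations, hence a homotopy pullback); then $\exit$ applied levelwise gives pushouts of spaces, and one argues as in (1) that the result is a complete Segal space. For (3): this is exactly Lemma~\ref{lem.pre-consec} applied after recognizing $\Delta^p = \oC(\Delta^{p-1})$, so that $\Strat(\Delta^p, -) = \Strat_\ast\bigl(\oC(\Delta^{p-1}), -\bigr)$-type mapping spaces fit the hypotheses of that lemma; the levelwise statement follows from Lemma~\ref{lem.pre-consec} together with Corollary~\ref{cor.from-cones} to handle the based/relative mapping spaces over each vertex.

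The main obstacle I anticipate is \textbf{(3)}, the double-cone case: unlike (1) and (2), the pushout $\oC^2(\emptyset)\amalg_{\oC(\emptyset)} \oC^2(L) \simeq \oC^2(L) \amalg_{\oC(L)} \ast$-shaped square is \emph{not} a point-set pushout in $\strat$ (Remark~\ref{rem.double-cones-unexpected}), so there is no Kan-fibration shortcut; I must genuinely invoke Lemma~\ref{lem.pre-consec}, and then verify that $\Strat(\Delta^p,-)$ applied to that square is a pullback of \emph{spaces} for every $p$, not merely for $p$ such that $\Delta^p$ is itself a double cone. This requires iterating Corollary~\ref{cor.from-cones}: peel off the cone structure of $\Delta^p$ one vertex at a time, at each stage splitting the mapping space into cofactors indexed by constructible-open subspaces, and track which cofactors contribute — exactly the bookkeeping done inside the proof of Lemma~\ref{lem.pre-consec} itself, now carried out with $L$ replaced by a simplex. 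A secondary, more routine obstacle is checking that each levelwise-colimit simplicial space lands in complete Segal spaces rather than merely Segal spaces; this I will dispatch by the same observation used in Corollary~\ref{exit-cplt-segal}, namely that the only retracts surviving in these colimits are equivalences because a $2$-simplex factors through a single stratum iff its two outer vertices do, a condition stable under the gluings in question.
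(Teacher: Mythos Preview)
Your levelwise strategy works for (1) but \emph{fails} for (2) and (3). The blow-up and double-cone squares are pushouts in $\Strat$, and $\exit$ carries them to pushouts in $\Cat_\infty$, but \emph{not} to pushouts in $\Psh(\bdelta)$. For the basic blow-up square $\ast \leftarrow L \xra{\{0\}} L\times[0,1)\to \sC(L)$, the level-$[p]$ pushout of spaces is
\[
\ast \underset{\exit(L)_p}{\amalg} \bigl(\exit(L)\times[1]\bigr)_p
~\simeq~ \ast ~\sqcup~ \coprod_{p+1}\, \exit(L)_p~,
\]
whereas $\exit\bigl(\sC(L)\bigr)_p \simeq \bigl(\exit(L)^{\tl}\bigr)_p \simeq \ast \sqcup \coprod_{0\le j\le p} \exit(L)_j$; these disagree already at $p=1$ unless $\exit(L)$ is an $\infty$-groupoid. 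So your assertion ``then $\exit$ applied levelwise gives pushouts of spaces'' is false. The error is a conflation of two distinct statements: that the square is a pushout in $\Strat$ (i.e., $\Strat(-,Z)$ carries it to a pullback for every $Z$, which is what the Kan-fibration argument of Lemma~\ref{lem.blow-up-colims} gives) versus that $\Strat(\Delta^p,-)$ carries it to a pushout (a statement about maps \emph{into} the diagram, which is what you need). Your secondary worry about verifying the levelwise colimit is complete Segal is moot for the same reason: were the diagram a colimit in $\Psh(\bdelta)$, its cone point $\exit(X)$ already lies in $\Cat_\infty$ by Corollary~\ref{exit-cplt-segal}, so nothing further would be needed; the problem is that it is not such a colimit.

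The paper bypasses levelwise checking for (2) and (3) entirely. For (2), use (1) together with descent to reduce to $X=\RR^i\times\sC(L)$ and the standard blow-up square, then Observation~\ref{internal-R-invariance} to reduce to $\sC(L)$; now compute $\exit\bigl(\sC(L)\bigr)\simeq \exit(L)^{\tl}$ \emph{directly as an $\infty$-category} via Lemma~\ref{exit-description}, and recognize $\exit(L)^{\tl}$ as the pushout $\ast\amalg_{\exit(L)}\exit(L)\times[1]$ in $\Cat_\infty$ by definition of the left cone, using Observation~\ref{exit-products} for the product leg. For (3), the identification $\exit\bigl(\oC(K)\bigr)\simeq \exit(K)^{\tl}$ from (2) reduces the square to $[0]\to[1]$, $[0]\to\exit(L)^{\tl}$, with target $\exit(L)^{\tl\tl}$, a pushout in $\Cat_\infty$ by direct inspection; no iteration of Corollary~\ref{cor.from-cones} is needed. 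For (1), your levelwise plan is indeed the paper's, but the step you left vague is the actual content: one shows $\cU^{\tr}\to\Strat\xra{\ev_{[p]}\circ\exit}\Spaces$ is a colimit by induction on $p$, with base case the identification $\exit(-)_{|[0]}\simeq\coprod_q(-)_q$ from Lemma~\ref{exit-description} (open covers restrict to strata), and inductive step obtained by fibering $\ev_{\{0\}}\colon\exit(-)_{|[p]}\to\exit(-)_{|[0]}$ and identifying each fiber via Corollary~\ref{cor.from-cones} as a coproduct of lower-degree $\exit$-values of the link, to which open covers also restrict.
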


\begin{proof}
{\bf (1)}
We use the following sufficient condition for identifying certain colimits in $\Cat_\infty\simeq \Psh^{\sf Segal,cplt}(\bdelta)$:
\begin{itemize}
\item[~] 
A functor $\cU^{\tr} \to \Cat_{\oo}$ is a colimit diagram if the composite functor
\[
\cU^{\tr}\longrightarrow \Cat_{\oo} \longrightarrow \psh(\bdelta)
\]
is a colimit diagram.
\end{itemize}
Let $\cU\subset \strat_{/X}$ be a covering sieve.
It is enough to show that, for each $p\geq 0$, the functor 
\[
\cU^{\tr}\to \strat \xra{\exit} \Cat_\infty \xra{{\sf ev}_{[p]}}\Spaces
\]
is a colimit diagram.  
We do this by induction on $p$.

For each conically smooth stratified space $Y=(Y\to Q)$, Lemma~\ref{exit-description} offers the identification of spaces: $\exit(Y)_{|[0]} \simeq \Strat(\ast,Y)\simeq \underset{q\in Q} \coprod Y_q$.  
So the $p=0$ assertion is that 
\[
\cU^{\tr} \longrightarrow  \strat\xra{(Y\to Q) \mapsto \underset{q\in Q} \coprod Y_q} \man \longrightarrow \spaces
\]
is a colimit diagram, where the last arrow is the underlying space functor.  
After Lemma~\ref{lem.opens-cover}, it is enough to argue that the composite of the first two arrows $\cU \to \man$ generates a covering sieve.  
This is to say that, for each $q\in Q$, an open cover of $Y=(Y\to Q)$ restricts as an open cover of the stratum $Y_q$, which is directly the case.  
This establishes the base case of the induction.

Let $x_0\in X$ be a point, and choose a basic neighborhood $x_0\in \RR^i\times \sC(L) \subset X$.  
Using that $\Delta^p = \oC(\Delta^{p-1})$, Corollary~\ref{cor.from-cones} canonically identifies the fiber of the evaluation map $\exit(X)_{|[p]} \xra{{\sf ev}_{\{0\}}} \exit(X)_{|[0]}$ over $x_0\in X$ as the space
\[
\underset{0\leq k \leq p} \coprod \exit(L)_{|\{k+1<\dots<p\}}~.
\]
For each $q\in Q$, an open cover of $Y=(Y\to Q)$ restricts as an open cover of the link $\Link_{Y_q}(Y)$.  
This supplies the inductive step.

\smallskip

{\bf (2)}
Through the definition of a stratified space, choose an open hopercover of $X$ by basics.
Because the square diagram is comprised of, in particular, continuous maps, taking preimages of each term in this open cover determines open covers of each term in this square diagram.  
Through~{\bf (1)}, we recognize the diagram of simplicial spaces 
\begin{equation}\label{01}
\xymatrix{
\exit\bigl(\Link_{X_d}(X)\bigr)  \ar@{^{(}->}[r]  \ar[d]
&
\exit\bigl(\Unzip_{X_d}(X) \bigr)  \ar[d]
\\
\exit(X_d) \ar@{^{(}->}[r]
&
\exit(X)
}
\end{equation}
as a colimit of squares among simplicial spaces of the form
\begin{equation}\label{02}
\xymatrix{
\exit\bigl(\RR^i\times L\bigr)  \ar@{^{(}->}[r]  \ar[d]
&
\exit\bigl(\RR^i\times \RR_{\geq 0}\times L \bigr)  \ar[d]
\\
\exit(\RR) \ar@{^{(}->}[r]
&
\exit\bigl(\RR^i\times \sC(L)\bigr).
}
\end{equation}
We are therefore reduced to showing that the diagram~(\ref{02}) is a pushout.
Because $\exit$ factors through $\Strat$, Observation~\ref{internal-R-invariance} reduces to the case that $i=0$.
By calculation of the mapping spaces in $\exit(\sC(L))$ from Lemma~\ref{exit-description}, we have a natural equivalence $\exit\bigl(\sC(L)\bigr)\simeq \exit(L)^{\tl}$. By Observation~\ref{exit-products}, there is a further equivalence $\exit\bigl(L\times [0,1)\bigr) \simeq \exit(L)\times \exit\bigl([0,1)\bigr)\simeq \exit(L)\times [1]$.
The result now follows, by identifying the pushouts.

\smallskip

{\bf (3)} 
Point~{\bf(2)} gives an identification $\exit\bigl(\oC(K)\bigr)\simeq \exit(K)^{\tl}:= \ast\underset{\exit(K)\times\{0\}} \amalg \exit(K)\times[1]$.  
The result follows by calculating a double pushout in $\infty$-categories.

\end{proof}

For the next result we reference, for $X= (X\to P)$ a conically stratified space, the exit-path quasi-category $\Sing^P(X)$ defined in Appendix A of~\cite{HA}; it is the sub-simplicial set of the singular simplicial set $\Sing(X)$ consisting of those maps $\Delta^p\ra X$ which preserve the stratification. We reference also the enter-path $\oo$-category ${\sf Entr}(X)$ of \cite{aft1}; it is the $\oo$-category $\Bsc_{/X}$ of basic singularity types embedded into $X$.
\begin{lemma}\label{exits-agree}
For each conically smooth stratified space $X = (X\to P)$, there are equivalences of $\oo$-categories
\[
\exit(X)~\simeq~ \Sing^P(X)~\simeq~{\sf Entr}(X)^{\op}~.
\]
\end{lemma}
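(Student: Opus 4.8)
The plan is to compare the two models for exit-paths by exhibiting both as complete Segal spaces (equivalently quasi-categories) and identifying their $p$-simplices compatibly. Since $\exit(X)$ is a complete Segal space by Corollary~\ref{exit-cplt-segal}, and Lurie's $\Sing^P(X)$ is a quasi-category, one first passes both to a common model; say, apply the coherent nerve / right adjoint to the Joyal--Tierney equivalence of~\cite{joyaltierney} so that it suffices to produce an equivalence of complete Segal spaces. Concretely, $\exit(X)$ has $p$-simplices $\Strat(\Delta^p, X)$, presented by the Kan complex $[q]\mapsto \strat(\Delta^p\times\Delta^q_e, X)$, while the $p$-simplices of (the classifying diagram of) $\Sing^P(X)$ are, up to homotopy, the space of stratified maps from the standardly-stratified topological simplex $|\Delta^p|\to X$ that are \emph{conically smooth}. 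The content is that the inclusion of conically smooth such maps into all continuous stratum-preserving such maps is a weak homotopy equivalence, level-wise in $p$.

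The key steps, in order, are: (1) recall from Definition~\ref{def.standard-simplices} and Observation~\ref{simplices-are-cones} that $\Delta^p$, as a stratified space, is the iterated closed cone $\oC^p(\emptyset)$ with its standard stratification, so a conically smooth map $\Delta^p\to X$ is exactly the combinatorial/geometric datum appearing in the definition of $\Sing^P(X)$; (2) invoke the comparison, established in~\cite{aft1}, between conically smooth maps and continuous stratified maps out of compact stratified spaces — i.e. that the space of conically smooth maps $\Delta^p\times K\to X$ is weakly homotopy equivalent to the space of continuous stratum-preserving maps, using conically smooth partitions of unity and the collar/cone structure to smooth any continuous map and any continuous homotopy; (3) assemble these level-wise equivalences into an equivalence of simplicial spaces $\exit(X)\xrightarrow{\simeq}\mathrm{cl}(\Sing^P(X))$, where $\mathrm{cl}$ is Rezk's classifying diagram of the quasi-category $\Sing^P(X)$ — here one checks naturality in $[p]$, which is immediate since all maps are induced by the cosimplicial stratified space $\Delta^\bullet$; (4) conclude using Theorem~\ref{thm.rezk} together with the Joyal--Tierney equivalence (Remark following Definition~\ref{def.TD}, and §5) that the classifying diagram of $\Sing^P(X)$ is the complete Segal space corresponding to the quasi-category $\Sing^P(X)$, so the displayed equivalence is the asserted one.

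The main obstacle is step (2): one must know that conical smoothness imposes no homotopical constraint on mapping spaces out of the cones $\Delta^p$, i.e. that $\strat(\Delta^p\times\Delta^q_e, X)\hookrightarrow \Map^{\mathrm{str}}_{\mathrm{cont}}(|\Delta^p|\times\Delta^q_e, X)$ is a weak equivalence. This is essentially a smoothing-theory statement; it rests on the existence of conically smooth bump functions and partitions of unity, and on the local product/cone structure of charts, exactly the inputs used in Lemma~\ref{lem.to-cone} and Corollary~\ref{cor.from-cones}. Once this density/approximation statement is in hand — and it is of the kind recorded in~\cite{aft1} — the rest is bookkeeping: matching the cosimplicial objects, observing both constructions are the restricted Yoneda presheaf along $\bdelta$, and citing Rezk's and Joyal--Tierney's theorems to land in $\Cat_\infty$. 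A clean alternative to steps (3)--(4), avoiding the classifying diagram, is to note that $\exit$ and $\Sing^P(-)$ both send the distinguished colimits of §\ref{sec.colims-in-Strat} to colimits (Proposition~\ref{exit-facts} for $\exit$; the analogous facts for $\Sing^P$ are in~\cite{HA}), and both agree on $\emptyset$, $\ast$, $\Delta^1$; since by §4 these generate $\Strat$ under such colimits, the two functors agree. I expect the write-up to use whichever of these two routes is shorter given what §4 makes available.
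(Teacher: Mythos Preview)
Your main route is essentially the paper's approach, but articulated far more carefully. The paper's own proof is two sentences: it asserts that the complete Segal space associated to $\Sing^P(X)$ has as its $p$-th space ``the space of functors $[p]\to\Sing^P(X)$, which we identify as the Kan complex $\Delta^p\times\Delta^\bullet_e\to X$, which is the space of $p$-simplices of $\exit(X)$,'' and then notes naturality in $[p]$. You have correctly isolated step~(2), the comparison between conically smooth and continuous stratified maps out of $\Delta^p\times\Delta^q_e$, as the only substantive point; the paper simply glosses over it.

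That said, your justification for step~(2) has a gap. The approximation statement you need---that the inclusion of conically smooth stratified maps into continuous stratified maps from $\Delta^p\times\Delta^q_e$ to $X$ is a weak equivalence---is not in fact recorded in~\cite{aft1}, and the ingredients you cite (Lemma~\ref{lem.to-cone}, Corollary~\ref{cor.from-cones}) concern only conically smooth maps; they say nothing about approximating continuous ones. Partitions of unity and bump functions give you smoothing within the conically smooth world, not a comparison to the continuous one. This is a genuine stratified Whitney-approximation theorem, and while plausible, it requires its own argument. The paper's proof is no more forthcoming here, so you are not worse off than the authors; but you should not claim the result is available by citation.

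Your alternative route via distinguished colimits and generators is genuinely different from the paper's. It would work, but note two costs: first, the generation statement is in \S4, logically after this lemma, so you must check the dependency graph (in fact Lemma~\ref{exits-agree} is only used for Corollary~\ref{exit-sheaves}, so this is not circular); second, you would need to verify independently that $\Sing^P(-)$ sends covering sieves, blow-up squares, and double-cone squares to colimits in $\Cat_\infty$, which is parallel to but not a consequence of Proposition~\ref{exit-facts}, and would itself require the machinery of~\cite{HA}~\S A.
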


\begin{proof}

Direct from the definition of a stratified space, $X$ is \emph{conically stratified} in the sense defined in Definition A.5.5 of~\cite{HA}.
Therefore, the simplicial set $\Sing^P(X)$ is a quasi-category (Theorem~A.6.4 of~\cite{HA}). 
The equivalence $ \Sing^P(X)~\simeq~{\sf Entr}(X)^{\op}$ is Corollary 1.2.10 of \cite{aft1}. 
So it suffices to prove the first equivalence.
We first construct a functor $\exit(X)\ra \sing^P(X)$, which is essentially given by forgetting the condition of conical smoothness on the source.
That is, the complete Segal space associated to the quasi-category $\sing^P(X)$ is equivalent to the simplicial space 
\[
\exit^{C^0}(X)  \colon  \bdelta^{\op}\overset{\st}\longrightarrow (\Strat^{C^0})^{\op} \xra{\Strat^{C^0}(-,X)}\spaces~,
\]
where $\Strat^{C^0}$ is the coherent nerve of the simplicial category of $C^0$ stratified spaces, with simplicial enrichment given as $\Strat^{C^0}(K,X) := \strat^{C^0}(K\times \Delta^\bullet_e,X)$.
Through this identification, the forgetful functor $\strat \to \strat^{C^0}$ results in a map between simplicial spaces
\begin{equation}\label{03}
\exit(X)\longrightarrow \exit^{C^0}(X)~.
\end{equation}
We now prove this map is an equivalence between simplicial spaces.

Now, Proposition~\ref{exit-facts} gives that the domain of~(\ref{03}) satisfies descent with respect to open covers.
Theorem A.7.1 (the Seifert--van Kampen Theorem for exit-paths) of \cite{HA} gives that the codomain of~(\ref{03}) also satisfies descent with respect to open covers.  
The problem of showing~(\ref{03}) is an equivalence is therefore reduced to the case that $X$ is a basic: $X\cong \RR^k\times \sC(L)$.
Because both $\exit$ and $\exit^{C^0}$ are local with respect to projections off of Euclidean space, we are further reduced to the case that $X=\sC(L)$ is the open cone on a compact stratified space.

We now prove the desired equivalence by induction on depth.
Suppose $X$ has depth zero.  
In this case, Observation~\ref{exit-smooth} identifies $\exit(X)$ as the underlying $\infty$-groupoid associated to the underlying topological space of $X$.  
Direct from its definition, $\exit^{C^0}(X)$ is the complete Segal space associated to the simplicial set $\sing(X)$, the singular simplicial set on $X$.  
Through these identifications, the functor~(\ref{03}) is identified as an equivalence.  

We now establish the inductive step.
Above, we reduced to the case that $X\cong \sC(L)$ is the open cone on a compact stratified space.  
Necessarily, the depth of $L$ is strictly less than that of $X$.  
The inductive hypothesis gives that the functor~(\ref{03}) is an equivalence for the stratified space $L$.  
Proposition~\ref{exit-facts}(2) gives a canonical equivalence $\exit\bigl(\sC(L)\bigr)\simeq \exit(L)^{\tl}$ between $\infty$-categories.  
It therefore suffices to establish a likewise canonical equivalence $\exit^{C^0}\bigl(\sC(L)\bigr)\simeq \exit^{C^0}(L)^{\tl}$ between $\infty$-categories.

This is the problem of showing the cone-point $\ast\in \exit^{C^0}\bigl(\sC(L)\bigr)$ is initial.
So, for each point $z\in \sC(L)$, we must show that the space of morphisms $\Map_{ \exit^{C^0}(\sC(L))}(\ast,z)$ is contractible. 
It is sufficient to show that, for $K$ a compact smooth manifold, the space of $K$-points of this space is connected.
We will show this using a form of the Alexander trick (which is available since we are in the situation of $C^0$ exit-paths).
Such a $K$-point is the datum of a map between $C^0$ stratified spaces $f \colon \ov{\sC}(K) \to \sC(L)$ that preserves cone-points together with an identification of the restriction to the equator $f_{|K}\colon K \to \sC(L)$ as the constant map at the point $z\in \sC(L)$.  
Note the $K$-point $f_0\colon \ov{\sC}(K) \to \Delta^1 \cong [0,1] \xra{s\mapsto s\cdot z} \sC(L)$, where, here, $s\cdot z$ makes reference to the action of the (non-unital) monoid $[0,1]$ on the open cone $\sC(L)$.  
Choose an arbitrary such $K$-point $f$.  
Consider the $([0,1]\times K)$-point
\[ 
[0,1]\times \ov{\sC}(K) \ni (t,[s,x])\mapsto f_t(s,x)\in \sC(L)
\]
where $f_t([s,x])= \ast$ is constantly the cone-point for $s=0$ while
\[
f_t([s,x]) = \bigl[ f^1([s,x]), t\cdot f^2([s,x])\bigr]~,
    \quad \text{for $0<s< t$}~,
\]
and
\[
f_t([s,x]) = \bigl[ f^1([1,x]), s\cdot f^2([1,x])\bigr]~,
    \quad \text{for $t\leq s \leq 1$}   ~;
\]
here, we have written the values of $f_t$ via the coordinates of the open cone $\sC(L)$ as it is a quotient of $[0,1)\times L$.  
This $([0,1]\times K)$-point of $\sC(L)$ demonstrates $f$ and $f_0$ as in the same connected component of the space of such $K$-points.  
We conclude that the space of such $K$-points is connected, which completes this proof.

\end{proof}

\begin{remark}
The reader can compare the earlier proof that $\exit(X)$ is a complete Segal space with the subdivision-based proof from \S A.6 \cite{HA} that the exit-path simplicial set $\sing^P(X)$ is a quasi-category. 
This indicates our point of view that sheaves on conically smooth stratified spaces offer a navigable avenue for constructing $\oo$-categories by hand from geometry, where existence of regular neighborhoods often makes open covers more manageable than subdivisions.
\end{remark}

We lastly have the following connection to constructible sheaves.

\begin{cor}\label{exit-sheaves}For each conically smooth stratified space $X$ there is an equivalence
\[
\Fun(\exit(X),\spaces)~\simeq~ \shv^{\sf cbl}(X)
\]
between copresheaves on the exit-path $\infty$-category of $X$ and sheaves on $X$ which are constructible with respect to the given stratification.  

\end{cor}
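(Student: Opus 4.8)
The quickest proof combines Lemma~\ref{exits-agree}, which identifies $\exit(X)$ with the exit-path quasi-category $\Sing^P(X)$ of $X=(X\to P)$, with the comparison between exit paths and constructible sheaves in Appendix~A of \cite{HA}; the hypotheses there hold since conical smoothness makes $X$ conically stratified, paracompact, and locally of singular shape, so one obtains $\Fun\bigl(\Sing^P(X),\spaces\bigr)\simeq \shv^{\cbl}(X)$, which is the assertion.

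For a proof internal to the present framework, I would argue by descent and induction on depth. First observe that, for $U$ ranging over open subspaces of $X$, both $U\mapsto \Fun(\exit(U),\spaces)$ and $U\mapsto \shv^{\cbl}(U)$ are $\Cat_{\oo}$-valued sheaves for the topology of open covers: the first because $\exit$ carries covering sieves to colimit diagrams of $\oo$-categories (Proposition~\ref{exit-facts}(1)) and $\Fun(-,\spaces)$ converts colimits of $\oo$-categories into limits; the second because descent for sheaves is topos-theoretic and stratumwise local constancy is a local condition. These two sheaves of $\oo$-categories are related by a comparison functor $\Phi$ carrying a copresheaf $F$ on $\exit(U)$ to the sheaf $V\mapsto \underset{\exit(V)}\lim F|_{\exit(V)}$; by construction its stalk at a point $x$ is $F(x)$, and its sections over $U$ are $\underset{\exit(U)}\lim F$. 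Since conically smooth open embeddings from basics form a basis for the topology of $X$, and a morphism of sheaves of $\oo$-categories that is an equivalence on a basis is an equivalence, it suffices to prove $\Phi$ is an equivalence on each basic $\RR^i\times\sC(L)$. Using that $\RR^i\times-$ is $\exit$-invariant (Observation~\ref{internal-R-invariance}) on one side, and homotopy invariance of constructible sheaves (Lemma~\ref{R-invt=cbl}) on the other, this reduces further to the open cone $\sC(L)$.

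For the remaining induction on depth, the base case is a manifold $M$, where $\exit(M)\simeq M$ (Observation~\ref{exit-smooth}) and $\shv^{\cbl}(M)$ is the $\oo$-category of locally constant sheaves, classically $\Fun(M,\spaces)$. For the inductive step, Lemma~\ref{exit-description} identifies $\exit\bigl(\sC(L)\bigr)\simeq \exit(L)^{\tl}$, so a copresheaf on $\sC(L)$ amounts to a copresheaf $G$ on $\exit(L)$, a space $W$, and a map $W\to \underset{\exit(L)}\lim G$. On the sheaf side, the open-closed decomposition $\{\ast\}\subset \sC(L)\supset L\times(0,1)$ gives a recollement which, together with an $\RR$-invariance computation identifying $\cF\bigl(\sC(L)\bigr)$ with the stalk $\cF_\ast$, presents a constructible sheaf $\cF$ on $\sC(L)$ as the data $\bigl(\cF|_{L\times(0,1)},\cF_\ast,\cF_\ast\to \cF(L\times(0,1))\bigr)$ of a constructible sheaf on $L\times(0,1)\simeq L$, a stalk, and a cospecialization map. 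The inductive hypothesis $\Fun(\exit(L),\spaces)\simeq \shv^{\cbl}(L)$, which by the defining formula for $\Phi$ sends $\underset{\exit(L)}\lim(-)$ to $\Gamma(L;-)$, matches these two descriptions, so $\Phi_{\sC(L)}$ is an equivalence. The main obstacle is the bookkeeping in this last step: checking that $\Phi$ transports $\bigl(G,W,W\to\underset{\exit(L)}\lim G\bigr)$ to $\bigl(\cF|_{L\times(0,1)},\cF_\ast,\cF_\ast\to \cF(L\times(0,1))\bigr)$ compatibly, and that the recollement really does restrict to constructible sheaves on both factors; once both sides are organized as sheaves of $\oo$-categories and the cone recollement is in hand, everything else is formal.
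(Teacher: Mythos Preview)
Your first paragraph is precisely the paper's proof: invoke Lemma~\ref{exits-agree} to identify $\exit(X)\simeq\Sing^P(X)$, then cite the results of \S A.9 of~\cite{HA}. The paper does not pursue your alternative descent-and-depth-induction argument; it simply quotes~\cite{HA} and stops.
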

\begin{proof}
This follows after Lemma~\ref{exits-agree}, since the corresponding result for $\Sing^P(X)$ is proved in \S A.9 from~\cite{HA} and for ${\sf Entr}(X)^{\op}$ in Theorem 1.2.5 of \cite{aft1}.
\end{proof}

We record the next very useful result, that exit-paths localize along refinements, and give a proof based on the relation with constructible sheaves. The statement could also be deduced from the localization statement for enter-paths, proved in~\cite{aft1} (Proposition 1.2.13).

\begin{theorem}\label{exit.paths.localize}
Let $X  \to Y$ be a conically smooth map between stratified spaces.
If this map is a refinement, then the associated functor between exit-path $\infty$-categories
$\exit(X) \to  \exit(Y)$ 
is a localization.
\end{theorem}

\begin{proof}
Because each refinement $X\to Y$ is, in particular, a homeomorphism, the induced restriction functor $\Shv(Y) \to \Shv(X)$ is an equivalence between $\infty$-categories.
By definition of the full $\infty$-subcategory $\Shv^{\sf cbl}(Z)\subset \Shv(Z)$, the composite fully faithful functor
\[
\Shv^{\sf cbl}(Y) \hookrightarrow \Shv(Y)~\simeq~\Shv(X)
\]
factors through the full $\infty$-subcategory $\Shv^{\sf cbl}(X)$.  
Corollary~\ref{exit-sheaves} states a canonical equivalence between $\infty$-categories $\PShv\bigl(\exit(Z)^{\op}\bigr) = \Shv^{\sf cbl}(Z)$, which is evidently contravariantly functorial in the stratified space $Z$.
We conclude that the restriction functor
\[
\PShv\bigl(\exit(Y)^{\op}\bigr) \longrightarrow  \PShv\bigl(\exit(X)^{\op}\bigr)
\]
is fully faithful.
Now, apply the Lemma~\ref{abstract.condition} to the case that  
$\cD = \exit(Y)^{\op}$,
$P$ is the opposite of the stratifying poset for $Y$, 
$\cC$ is the terminal localization through which $\exit(X)^{\op} \to \exit(Y)^{\op}$ factors.
This lemma then gives that the functor
$\cC \to \exit(Y)^{\op}$
is an equivalence.
We conclude that the functor $\exit(X) \to \exit(Y)$ is a localization, as desired.

\end{proof} 

The next two assertions support Theorem~\ref{exit.paths.localize}; the first of which is direct from the definition of finality.

\begin{observation}\label{easy.final.criterion}
Let $\cX\to \cY$ be a functor between $\infty$-categories.
If the restriction functor
$\PShv(\cY) \to \PShv(\cX)$
is fully faithful, then the functor $\cX \to \cY$ is final.
\end{observation}

\begin{lemma}\label{abstract.condition}
Let  $\cC \to \cD\to P$
be a sequence of conservative functors among $\infty$-categories.
Suppose $P$ is a poset such that, for each $p\in P$, the subposet $P_{/p}$ is finite.
If the restriction functor $\PShv(\cD) \to \PShv(\cC)$
is fully faithful, then the functor $\cC\to \cD$ is an equivalence.  

\end{lemma}

\begin{proof}
The canonical functor
\[
\colim\bigl(P \xra{p \mapsto \cC_{/p}} \Cat_{\oo/P}\bigr)\xra{~\simeq~} \cC
\]
is an equivalence, and likewise for $\cD$ in place of $\cC$.  
Consequently, the functor $\cC\to \cD$ is an equivalence if, for each $p\in P$, the functor between $\infty$-overcategories $\cC_{/p} \to \cD_{/p}$ is an equivalence.  
With the assumption on $P$, we are therefore reduced to the case that $P$ is finite.
We proceed by induction on the depth of $P$, which is necessarily finite.

In the case that the depth of $P$ is 0, both $\cC$ and $\cD$ are spaces.  
Observation~\ref{easy.final.criterion} reveals that $\cC \to \cD$ is final.
By Quillen's Theorem A, this means each fiber of this map between spaces is contractible.  
We conclude that the functor $\cC\to \cD$ is an equivalence, as desired.

Let $P_0 \subset P$ be the full subposet consisting of the minima of $P$.  
Denote $\cC_0 := \cC_{|P_0}$ and $\cD_0 := \cD_{|P_0}$.
Denote 
\[
\Link_{\cC_0}(\cC) := \Ar(\cC)^{|\cC_{|P_0}}_{|\cC_{P \smallsetminus P_0}}
\]
and likewise for $\Link_{\cD_0}(\cD)$.
By evaluation at targets, the functor $\cC\to \cD$ over $P$ determines a functor
\[
\Link_{\cC_0}(\cC) \longrightarrow \Link_{\cD_0}(\cD)
\]
over $P \smallsetminus P_0$.
By direct inspection, this functor is again conservative, and the functors to $P \smallsetminus P_0$ are again conservative.  
By definition, the depth of the poset $P\smallsetminus P_0$ is strictly less than that of $P$.
By induction on depth, each of the functors 
\[
\Link_{\cC_0}(\cC) \to \Link_{\cD_0}(\cD)
\qquad
\text{ and }
\qquad
\cC_{|P \smallsetminus P_0} \to  \cD_{|P \smallsetminus P_0}
\qquad
\text{ and }
\qquad
\cC_{|P_0} \to \cD_{|P_0}
\]
is an equivalence.
It follows that $\cC\to \cD$ is essentially surjective and fully faithful, as desired.

\end{proof}

\section{Striation sheaves}\label{sec.striation}

In this section we introduce \emph{striation sheaves}.  These are constructible sheaves on conically smooth stratified spaces that satisfy an additional locality with respect to blow-ups along closed substratified spaces and iterated cones.

\subsection{Localities for stratified spaces}
We define striation sheaves.  

\begin{definition}\label{def.striation}
The $\infty$-category of \emph{striation sheaves} is the full $\infty$-subcategory of space-valued presheaves on $\strat$
\[
\Stri~\subset~\Psh(\strat)
\]
consisting of those $\cF$ that satisfy the following properties.
\begin{enumerate}
\item {\bf Sheaf:} For each covering sieve $\cU \subset \strat_{/K}$, the restriction of $\cF$ along the adjoint diagram $\cU^{\tr} \to \strat$ is a limit diagram of spaces.

\item {\bf Constructible:} For each conically smooth stratified space $K$, the value of $\cF$ on the projection $K\times \RR \to K$ is an equivalence between spaces.

\item {\bf Cone-local:} For each compact conically smooth stratified space $L$, the value of $\cF$ on the diagram
\[
\xymatrix{
L  \ar[r]  \ar[d]
&
L\times \RR_{\geq 0}  \ar[d]
\\
\ast  \ar[r]
&
\sC(L)
}
\]
is a pullback diagram of spaces.  

\item {\bf Consecutive:} For each $p>0$, the value of $\cF$ on the diagram of stratified spaces
\[
\xymatrix{
\Delta^{\{1\}}  \ar[r] \ar[d]
&
\Delta^{\{1<\dots<p\}} \ar[d]
\\
\Delta^{\{0<1\}}  \ar[r]
&
\Delta^{\{0<\dots<p\}}
}
\]
is a pullback diagram of spaces. 

\item {\bf Univalent:}  The value of $\cF$ on the diagram of stratified spaces
\[
\xymatrix{
\Delta^{\{0<2\}}\amalg \Delta^{\{1<3\}}  \ar[r] \ar[d]
&
\Delta^{\{0<1<2<3\}} \ar[d]
\\
\ast\thinspace\amalg \thinspace\ast \ar[r]
&
\ast
}
\]
is a pullback diagram of spaces.

\end{enumerate}

\end{definition}

In~\S\ref{sec.trans-sheaves}, we describe a technique for constructing striation sheaves from point-set data.

\begin{remark}\label{strong-striation}
The latter four of our conditions can be naturally strengthened, where the cumulative weaker versions are equivalent to the cumulative stronger versions. We will not use these strengthened conditions, so we omit the proof.
\begin{itemize}
\item[(2)] {\bf Constructible (strong version):}
The value of $\cF$ on each stratified homotopy equivalence $X\to Y$ is an equivalence between spaces.

\item[(3)] {\bf Cone-local (strong version):} For each conically smooth stratified space $K$ with a deepest stratum $K_d \subset K$, the value of $\cF$ on the blow-up square
\[
\xymatrix{
{\sf Link}_{K_d}(K)  \ar[r]  \ar[d]
&
\unzip_{K_d} (K)  \ar[d]  
\\
K_d  \ar[r]
&
K
}
\]
is a pullback diagram of spaces.  

\item[(4)] {\bf Consecutive (strong version):} For each compact conically smooth stratified space $L$, the value of $\cF$ on the diagram
\[
\xymatrix{
\oC(\emptyset)  \ar[r]  \ar[d]
&
\oC(L) \ar[d]
\\
\oC^2(\emptyset)  \ar[r]
&
\oC^2(L) 
}
\]
is a pullback diagram of spaces.

\item[(5)] {\bf Univalent (strong version):} 
Consider the simplicial set $E[p]$ that is the nerve of the minimal connected groupoid whose underlying set is $\{0,\dots,p\}$.
By way of $\bdelta\xra{\Delta^\bullet}\strat$ there results a simplicial stratified space $E^p$.  
We extend $\cF$ to simplicial stratified spaces via right Kan extension $\cF(Z_\bullet):=\underset{\Delta^q\to Z_\bullet}\limit \cF(\Delta^q)$.  
For each conically smooth stratified space $K$, and for each $p\geq 0$, the value of $\cF$ on the projection $K\times E^p\to K$ is an equivalence.

\end{itemize}
Each of these conditions specializes to its weaker version, and so a sheaf on $\strat$ satisfying these strong conditions is in particular a striation sheaf.
Conversely, every striation sheaf automatically satisfies these strengthened conditions. 

\end{remark}

\begin{remark}
Each of the defining properties of a striation sheaf $\cF$ gives a conceptual reduction of information.
\begin{itemize}
\item The {\bf sheaf} condition implies $\cF$ is determined by its values on basic singularity types: $\RR^i\times \sC(L)$.

\item The {\bf constructible} condition implies $\cF$ factors through $\Strat$.
Together these conditions imply $\cF$ is determined by its values on cones.  

\item The {\bf cone-local} condition implies the value of $\cF$ near a singularity is determined by the value of $\cF$ on the link of the singularity.
Together these conditions imply $\cF$ is determined by its values on standard simplices, $\Delta^p = \oC^{p+1}(\emptyset)$, which is to say $\cF$ is equivalent data as a simplicial space.

\item The {\bf consecutive} condition implies equivalences among such $\cF$ are detected by their values on $\ast$ and on $\Delta^1$.
Together these four conditions imply the simplicial space characterizing $\cF$ is a Segal space.  

\item The {\bf univalent} condition implies equivalences among such $\cF$ are detected by their values on $\Delta^1$ together with surjectivity of components of their values on $\ast$.  
Together with the preceding conditions, this implies that the Segal space characterizing $\cF$ is \emph{complete}.

\end{itemize}

\end{remark}

\subsection{Characterization}

In proving our characterization of striation sheaves, we will need the following result, which asserts that the value of a striation sheaf $\cF$ on the cone on a conically smooth stratified space $L$ is determined by the values of $\cF$ on the cones on a cover of $L$.

\begin{lemma}\label{cones-of-covers}
Let $\ov{\cU}$ be a collection of conically smooth maps $\DD^j\times \oC(W) \to L$ for which the collection $\cU$ consisting of the precompositions with interiors $\RR^j\times \sC(W) \to L$ forms an open hypercover of $L$.
For each cone-local constructible sheaf $\cF$, and each $p\geq 0$, the canonical map of spaces
\[
\cF\bigl(\oC^p(L)\bigr) \xra{~\simeq~} \underset{\ov{U}\in \ov{\cU}}\limit \cF\bigl(\oC^p(\ov{U})\bigr)
\]
is an equivalence.

\end{lemma}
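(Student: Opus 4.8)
The plan is to reduce the statement, via the cone-local and constructible axioms, to a sheaf-descent computation on $L$ itself. First I would handle the case $p=1$ and then iterate; the general $p$ follows by exactly the same argument applied to $\oC^{p-1}(L)$ in place of $L$, since $\oC^{p-1}(L)$ is again a compact stratified space and $\{\DD^j\times\oC^{p}(W)\to \oC^{p-1}(L)\}$ restricts to an open hypercover there (the cone coordinate just gets carried along). So fix $p=1$. The key structural input is that for a compact stratified space $M$ with a deepest stratum — here the cone-point $\ast\subset\oC(M)=\sC(M)$ (after identifying $\oC(M)$ with a closed cone whose cone-point is deepest; note $\oC(M) = \sC(M)$ only up to the boundary, so more precisely I work with $\sC(M)$ and use Lemma~\ref{homotopy-boundary}/Observation~\ref{R-in-J}-type invariance to pass between $\oC$ and $\sC$, which is legitimate since $\cF$ is constructible) — the cone-local axiom in its strong form (Remark~\ref{strong-striation}(3)), i.e.\ the blow-up square, gives
\[
\cF\bigl(\sC(M)\bigr)~\simeq~\cF(\ast)\underset{\cF(M)}{\times}\cF\bigl(M\times\RR_{\ge0}\bigr)~\simeq~\cF(\ast)\underset{\cF(M)}{\times}\cF(M)~\simeq~\cF(\ast)~,
\]
using constructibility to collapse $M\times\RR_{\ge0}\simeq M$. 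Wait — this says $\cF(\sC(M))\simeq\cF(\ast)$ for \emph{every} compact $M$, which would make the whole statement trivial; that can't be the intent. The correct reading of the cone-local square is that $\cF(\sC(M))$ is the pullback $\cF(\ast)\times_{\cF(L)}\cF(L\times\RR_{\ge0})$ where the map $\cF(\ast)\to\cF(L)$ is \emph{not} obviously split, so I should not collapse it; rather I keep $\cF(\sC(M))\simeq \cF(\ast)\times_{\cF(M)}\cF(M\times\RR_{\ge0})$ as an honest fiber product and exploit its \emph{functoriality in $M$}.

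With that in hand, the argument is: the assignment $M\mapsto\cF(\sC(M))$ is, up to the constructible identification $\cF(M\times\RR_{\ge0})\simeq\cF(M)$, the functor $M\mapsto \cF(\ast)\times_{\cF(M)}\cF(M)$ — but this is a distraction; the clean statement is that $M\mapsto\cF(\sC M)$, restricted to open substratified spaces of $L$ and their inclusions, is itself a sheaf on $L$, because it is built by finite limits ($\times$) from the sheaf $\cF$ restricted along $M\mapsto M$, $M\mapsto \ast$ (constant, hence a sheaf), and $M\mapsto M\times\RR_{\ge0}$ (a sheaf, being $\cF$ precomposed with a product, which commutes with the relevant colimits); limits of sheaves are sheaves. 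Therefore $M\mapsto\cF(\sC M)$ satisfies descent for the open hypercover $\cU$ of $L$ — here I need hypercover descent, not just Čech descent, which is exactly why the hypothesis says "hypercover" and which is available because $\cF$ is a sheaf on $\strat$ and, as in the proof of Lemma~\ref{R-invt=cbl}, constructible sheaves satisfy hypercomplete descent after restricting to a suitable hyperbasis (manifold factors, cone coordinates contractible). This gives
\[
\cF\bigl(\sC(L)\bigr)~\simeq~\underset{U\in\cU}{\lim}~\cF\bigl(\sC(U)\bigr)~.
\]
Finally I pass from the open hypercover $\cU$ (indexed by $\RR^j\times\sC(W)\to L$) to the given cover $\ov{\cU}$ (indexed by $\DD^j\times\oC(W)\to L$): the inclusion $\RR^j\times\sC(W)\hookrightarrow\DD^j\times\oC(W)$ is a stratified homotopy equivalence (Lemma~\ref{homotopy-boundary}, Observation~\ref{R-in-J}), so constructibility of $\cF$ gives $\cF(\sC(\RR^j\times\sC(W)))\simeq\cF(\sC(\DD^j\times\oC(W)))$ compatibly across the indexing diagram; replacing $\cU$-indexing by $\ov{\cU}$-indexing is then an equivalence of limit diagrams, and I rewrite $\sC$ as $\oC$ one last time via the same invariance to land on $\oC^p(\ov U)$ as written.

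The main obstacle I anticipate is the descent step: showing $M\mapsto\cF(\sC M)$ satisfies \emph{hypercover} descent on $L$ rather than merely covering-sieve (Čech) descent, since the hypothesis genuinely needs the hypercover version. The resolution is to observe that $\cF$, being a constructible sheaf on $\strat$, restricts on the relevant hyperbasis (cone neighborhoods, whose transition data involves only smooth-manifold factors on which $\cF$ is locally constant) to a hypercomplete sheaf — this is precisely the mechanism already exploited in the proof of Lemma~\ref{R-invt=cbl} via \cite{dugger-isaksen} — and finite limits of hypercomplete sheaves are hypercomplete. A secondary bookkeeping point is the systematic replacement of closed cones $\oC$ by open cones $\sC$ and back; each such replacement is a stratified homotopy equivalence by Lemma~\ref{homotopy-boundary}, hence invisible to the constructible sheaf $\cF$, but one must check these equivalences are natural in the cover index so that they induce an equivalence of the whole limit diagrams, which is routine.
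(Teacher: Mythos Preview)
Your $p=1$ argument is essentially the paper's inductive step dressed up as ``$M\mapsto\cF(\sC M)$ is a finite limit of sheaves on $L$, hence a sheaf''. That part is fine and matches the paper. The genuine gap is your iteration scheme for $p>1$.

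You propose to reduce general $p$ to the $p=1$ case ``applied to $\oC^{p-1}(L)$ in place of $L$'' with cover ``$\{\DD^j\times\oC^p(W)\to\oC^{p-1}(L)\}$''. This fails on two counts. First, the closed cone does not commute with products: $\oC^{p-1}\bigl(\DD^j\times\oC(W)\bigr)$ is not $\DD^j\times\oC^p(W)$, so the purported cover is not what you get by coning the original one. Second, and more fundamentally, the maps $\oC^{p-1}(\ov U)\to\oC^{p-1}(L)$ are \emph{not open embeddings}: every one of them hits the cone point, but no open neighborhood of the cone point in $\oC^{p-1}(L)$ lies inside any single $\oC^{p-1}(\ov U)$ (for $\ov U\subsetneq L$). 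So $\{\oC^{p-1}(\ov U)\}$ is not an open cover of $\oC^{p-1}(L)$, and your ``finite limit of sheaves'' argument, which rests on open-cover descent, cannot be rerun there.

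The correct induction is the one the paper runs: keep $L$ and $\ov\cU$ fixed and induct on $p$. The base case $p=0$ is just the sheaf property plus constructibility (swap $\ov U$ for its interior via Lemma~\ref{homotopy-boundary}; this is why the hypercover worry is lighter than you feared). For $p\Rightarrow p+1$, apply cone-locality to the compact space $\oC^p(L)$ to get
\[
\cF\bigl(\sC(\oC^p(L))\bigr)\;\simeq\;\cF(\ast)\underset{\cF(\oC^p(L))}{\times}\cF\bigl(\oC^p(L)\times\RR_{\geq 0}\bigr),
\]
invoke the level-$p$ hypothesis on both non-constant terms (same $L$, same $\ov\cU$), commute limits with the pullback, and then reassemble each summand via cone-locality for $\oC^p(\ov U)$. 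Finally pass from $\sC$ to $\oC$ using Lemma~\ref{homotopy-boundary}, exactly as you noted. Your $p=1$ computation is precisely this step with $p=0$; the only thing to fix is to phrase the recursion as ``peel one cone layer off and appeal to level $p$ for the same $(L,\ov\cU)$'' rather than ``replace $L$ by $\oC^{p-1}(L)$''.
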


\begin{proof}
We proceed by induction on $p\geq 0$.
The case $p=0$ is immediate from Lemma~\ref{regularbasis}.
We assume the case $p$ and deduce the case $p+1$ from the following sequence of canonical equivalences:
\begin{eqnarray}
\nonumber
\cF\bigl(\sC(\oC^p(L))\bigr)
&
\xra{\simeq}
&
\cF(\ast)\underset{\cF\bigl(\oC^p(L)\bigr)} \times \cF\bigl(\oC^p(L)\times \RR_{\geq 0}\bigr)
\\
\nonumber
&
\xra{\simeq}
&
\cF(\ast) \underset{\underset{\ov{U}\in \ov{\cU}} \limit \cF\bigl(\oC^p(\ov{U})\bigr)} \times \underset{\ov{U}\in \ov{\cU}} \limit \cF\bigl(\oC^p(\ov{U})\times \RR_{\geq 0}\bigr)
\\
\nonumber
&
\simeq
&
\underset{\ov{U}\in \ov{\cU}} \limit \Bigl(\cF(\ast) \underset{\cF\bigl(\oC^p(\ov{U})\bigr)} \times \cF\bigl(\oC^p(\ov{U})\times \RR_{\geq 0}\bigr)\Bigr)
\\
\nonumber
&
\xla{\simeq}
&
\underset{\ov{U}\in \ov{\cU}} \limit \cF\bigl(\sC(\oC^{p}(\ov{U}))\bigr)
\end{eqnarray}
The first map is an equivalence because $\cF$ is cone-local.
The second map is an equivalence by the inductive hypothesis for $p$, using that the collection $\bigl\{\oC^{p}(\ov{U})\bigr\}$ of subspaces of $\oC^p(L)$, as well as the collection $\bigl\{\oC^{p}(\ov{U})\times \RR_{\geq 0}\bigr\}$ of subspaces of $\oC^p(L)\times \RR_{\geq 0}$, is of the form to which the statement of the lemma applies.
The third map is an equivalence because, formally, limits commute.
The fourth map is an equivalence because $\cF$ is cone-local.
Finally, Lemma~\ref{homotopy-boundary} gives that the canonical map $\cF\bigl(\sC(\oC^p(Z))\bigr)\xra{\simeq}\cF\bigl(\oC^{p+1}(Z)\bigr)$ is an equivalence for each conically smooth stratified space $Z$.

\end{proof}

We now prove the main result of this section: $\oo$-categories are striation sheaves. Recall the functor ${\st}\colon \bdelta \to \Strat$ from~\S\ref{recall}.
There results an adjunction
\begin{equation}\label{exit-adjunction}
{\st}^\ast \colon \Psh(\Strat) \rightleftarrows \Psh(\bdelta)\colon {\st}_\ast
\end{equation}
given by restriction and right Kan extension.  
Explicitly, this right Kan extension evaluates on $\cF$ as
\begin{equation}\label{right-kan-exit}
{\st}_\ast \cF\colon X\mapsto  \Map_{\Psh(\bdelta)}\bigl(\exit(X),\cF\bigr)~.
\end{equation}
Through Theorem~\ref{cbl-sheaves}, we identify the cone-local constructible sheaves $\shv^{\sf cone,cbl}(\strat)\subset \shv(\Strat)$ as a full $\infty$-subcategory.

\begin{lemma}\label{cone-sheaves}
The adjunction~(\ref{exit-adjunction}) restricts as an equivalence of $\infty$-categories
\[
\shv^{\sf cone, \cbl}(\strat)~\simeq~\Psh(\bdelta)~.
\]

\end{lemma}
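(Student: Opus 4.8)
The plan is to show that the adjunction $\mathsf{st}^\ast \dashv \mathsf{st}_\ast$ of \eqref{exit-adjunction} restricts to an equivalence between the full subcategory $\shv^{\sf cone,\cbl}(\strat)$ and all of $\Psh(\bdelta)$. Since an adjunction restricts to an equivalence precisely when the unit and counit are equivalences on the relevant subcategories, I would break the proof into two halves: (a) the counit $\mathsf{st}^\ast \mathsf{st}_\ast \cF \to \cF$ is an equivalence for every $\cF \in \Psh(\bdelta)$; and (b) the unit $\cG \to \mathsf{st}_\ast\mathsf{st}^\ast \cG$ is an equivalence for every cone-local constructible sheaf $\cG$. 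In addition one must check that $\mathsf{st}_\ast$ actually lands in $\shv^{\sf cone,\cbl}(\strat)$, i.e. that for any simplicial space $\cF$ the presheaf $X \mapsto \Map_{\Psh(\bdelta)}(\exit(X),\cF)$ is a sheaf on $\strat$ which is constructible and cone-local; but this is essentially Proposition~\ref{exit-facts}, since $\exit$ carries covering sieves, the $\RR$-projections, and cone-pushout squares to colimit diagrams of $\oo$-categories (hence of simplicial spaces), so that $\Map(-,\cF)$ sends them to limit diagrams of spaces.

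For (a): the counit being an equivalence is equivalent to the statement that $\mathsf{st}\colon \bdelta \to \Strat$ followed by $c$ is fully faithful — but that is exactly Lemma~\ref{standard-ff}. More precisely, $\mathsf{st}^\ast \mathsf{st}_\ast \cF$ evaluated at $[p]$ is $\Map_{\Psh(\bdelta)}(\exit(\Delta^p),\cF)$, and since $\exit(\Delta^p)$ is corepresented by $[p]$ (the composite $\bdelta \xra{\mathsf{st}} \Strat \xra{j} \Psh(\Strat) \xra{\mathsf{st}^\ast} \Psh(\bdelta)$ is the Yoneda embedding by Lemma~\ref{standard-ff}), this mapping space is just $\cF([p])$ by Yoneda. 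So (a) is a formal consequence of full faithfulness of $\bdelta \hookrightarrow \Strat$.

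For (b), the real content: I must show every cone-local constructible sheaf $\cG$ on $\strat$ is recovered from its restriction $\mathsf{st}^\ast\cG = \exit^\ast\cG$ along the standard simplices, i.e. $\cG(X) \simeq \Map_{\Psh(\bdelta)}(\exit(X),\mathsf{st}^\ast\cG)$ for all stratified $X$. The strategy is to show that $\Strat$ is generated under the colimit diagrams that $\cG$ is required to send to limits — open covers (Lemma~\ref{lem.opens-cover}), blow-ups along deepest strata (Lemma~\ref{lem.blow-up-colims}, equivalently the cone-pushout Corollary~\ref{cones-colimits}), and iterated cones — starting from the standard simplices. Concretely: (i) by the Shrinking Lemma and the sheaf condition, both $\cG(X)$ and $\Map(\exit(X),\mathsf{st}^\ast\cG)$ are limits over a cover by basics $\RR^i \times \sC(L)$, so it suffices to treat $X = \RR^i \times \sC(L)$; (ii) by constructibility we reduce $i$ to $0$, i.e. to $X = \sC(L)$; (iii) by cone-locality, $\cG(\sC(L)) \simeq \cG(\ast)\times_{\cG(L)}\cG(L\times \RR_{\geq 0}) \simeq \cG(\ast)\times_{\cG(L)}\cG(L)$, and the same holds for the other functor since $\exit$ preserves the cone pushout; this reduces the claim for $\sC(L)$ to the claim for $L$ (and the point). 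Iterating, every stratified space is built by a transfinite process (induction on depth) from points via cones, open covers, and blow-ups. The subtlety — and the main obstacle — is the \textbf{induction on depth}: a compact link $L$ of lower depth is not itself a simplex, so one cannot literally induct down to $\Delta^p$'s; instead one inducts on depth, at each stage using Lemma~\ref{cones-of-covers} to pass from $\oC^p(L)$ to $\oC^p$ of a cover of $L$, together with the unzipping/blow-up square to descend to strictly lower depth. I would organize this as: prove by induction on depth that for all $X$ and all $p \geq 0$, $\cG(\oC^p(X)) \simeq \Map(\exit(\oC^p(X)),\mathsf{st}^\ast\cG)$, with the depth-$0$ case (smooth manifolds, where $\oC^p(M)$ has a cover by $\oC^p(\RR^n)$'s and $\exit(\oC^p(\RR^n))$ is a finite colimit of points — cf. Observation~\ref{simplices-are-cones} and Observation~\ref{exit-smooth}) handled directly, and the inductive step using the blow-up square (Lemma~\ref{lem.blow-up-colims}, preserved by $\exit$ via Proposition~\ref{exit-facts}(2)) to replace $X$ by its link and unzip, both of strictly smaller depth, plus Lemma~\ref{cones-of-covers} to handle the covers that arise. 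The hard part is bookkeeping the simultaneous inductions on depth and on the cone-power $p$ and verifying that all the colimit diagrams used are among those $\cG$ is assumed (or shown, via the strong versions in Remark~\ref{strong-striation}) to preserve; once that is set up, each individual comparison is a formal manipulation of limits as in the proof of Lemma~\ref{cones-of-covers}.
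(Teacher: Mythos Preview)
Your overall structure matches the paper's: full faithfulness of ${\sf st}_\ast$ via Lemma~\ref{standard-ff}, the image lying in cone-local constructible sheaves via Proposition~\ref{exit-facts}, then an inductive argument that the unit is an equivalence. The gap is in your induction. In step~(iii) you assert $\cG(L \times \RR_{\geq 0}) \simeq \cG(L)$, but in this paper $\RR_{\geq 0}$ is the open cone $\sC(\ast)$ and hence is \emph{stratified} with $\{0\}$ a separate stratum; the projection $L \times \RR_{\geq 0} \to L$ is not a stratified homotopy equivalence (indeed $\exit(L \times \RR_{\geq 0}) \simeq \exit(L) \times [1]$, not $\exit(L)$), and were your identification correct the cone-local pullback would collapse to $\cG(\ast)$. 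The same issue resurfaces in your depth induction: $\Unzip_{X_d}(X)$ is locally $\RR^i \times L \times [0,1)$ with $[0,1)$ stratified, so it has the \emph{same} depth as $X$, not strictly smaller. The blow-up square never sheds that extra stratified interval factor, and your induction does not terminate.

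The paper organizes the induction differently to sidestep this. Rather than reducing $\sC(L)$ to $L$ via the blow-up, it writes the compact link as $Z \cong \oC^p(L)$ with $p$ maximal and inducts by \emph{increasing} $p$: cover $L$ by closed basics $\DD^j \times \oC(W)$, apply Lemma~\ref{cones-of-covers} to pass from $\oC^p(L)$ to the pieces $\oC^p(\ov U)$, drop the $\DD^j$ factor by constructibility, and recognize the result as $\oC^{p+1}(W)$ with $\dim W < \dim L$; the base case $L = \emptyset$ gives a standard simplex. The stratified half-line never has to be reduced separately: in the proof of Lemma~\ref{cones-of-covers}, the term $\oC^p(L) \times \RR_{\geq 0}$ arising from cone-locality is handled by the \emph{same} cover $\{\oC^p(\ov U) \times \RR_{\geq 0}\}$, so the $\RR_{\geq 0}$ factor rides along uniformly through the induction rather than being peeled off.
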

\begin{proof}
Lemma~\ref{standard-ff} grants that ${\st}_\ast$ is fully faithful.
It remains to verify that the unit $\cF \to {\st}_\ast {\st}^\ast \cF$ is an equivalence if and only if $\cF$ is a cone-local constructible sheaf.

We first point out that ${\st}_\ast$ takes values in cone-local constructible sheaves.
Constructibility follows upon inspecting~(\ref{right-kan-exit}) because the projection $X\times \RR \to X$ induces an equivalence $\exit(X\times \RR) \xra{\simeq} \exit(X)$, by definition of $\Strat$.
By inspecting~(\ref{right-kan-exit}), Proposition~\ref{exit-facts} gives that ${\st}_\ast\cF$ is cone-local.

It remains to prove that this unit map $\cF \to {\st}_\ast {\st}^\ast \cF$ is an equivalence whenever $\cF$ is a cone-local constructible sheaf.
So let $\cF$ be a cone-local constructible sheaf.
By definition, each conically smooth stratified space admits a hypercover by basic singularity types; so it is enough to show that, for each integer $i$ and each compact conically smooth stratified space $Z$, that the unit map $\cF\bigl(\RR^i\times \sC(Z)\bigr) \to {\st}_\ast {\st}^\ast \cF\bigl(\RR^i\times \sC(Z)\bigr)$ is an equivalence between spaces.
Because $\cF$ is constructible, then it is enough to show this for the case $i=0$.

So let $Z$ be a compact conically smooth stratified space.
Consider the maximal $p\geq 0$ for which $Z\cong \oC^p(L)$ for some compact conically smooth stratified space $L$.
We proceed by downward induction on $p$.
In the case $p > {\sf dim}(Z)$ then necessarily $L=\emptyset$ and $Z = \oC^{d+1}(\emptyset) = \Delta^{d}$, so the unit is an equivalence because ${\st}_\ast$ is fully faithful.
We now give the inductive step.

From the proof of Lemma~\ref{regularbasis}, there is a collection $\ov{\cU}$ of conically smooth maps $\DD^j\times \oC(W) \to L$ for which the collection $\cU$ consisting of the precompositions with interiors $\RR^j\times \sC(W) \to L$ forms an open hypercover of $L$.
So, through Lemma~\ref{cones-of-covers}, we can assume $L=\DD^j\times \oC(W)$ for some compact conically smooth stratified space $W$ and integer $j$.
Because both $\cF$ and ${\st}_\ast {\st}^\ast \cF$ are constructible, we can assume $j=0$, so that $L=\oC(W)$.
This case $Z=\oC^p(L)=\oC^{p+1}(W)$ follows by induction on $p$.

\end{proof}

\begin{theorem}\label{striation-gcat}
The adjunction~(\ref{exit-adjunction}) restricts to an equivalence of $\infty$-categories
\[
\Stri~\simeq~\Cat_\infty~.  
\]

\end{theorem}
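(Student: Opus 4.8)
The plan is to leverage Lemma \ref{cone-sheaves}, which already identifies $\shv^{\sf cone,\cbl}(\strat)$ with $\Psh(\bdelta)$ via the adjunction $({\sf st}^\ast, {\sf st}_\ast)$. Under this equivalence, I want to match up the remaining two defining conditions of a striation sheaf — \textbf{consecutive} and \textbf{univalent} — with Rezk's \textbf{Segal} and \textbf{complete} conditions on simplicial spaces, so that $\Stri$ corresponds precisely to $\Psh^{\sf Segal,cplt}(\bdelta)$, which is $\Cat_\infty$ by Theorem \ref{thm.rezk}. First I would observe that a striation sheaf is, by definition, a sheaf that is constructible (hence factors through $\Strat$ by Lemma \ref{R-invt=cbl}, and in particular is an object of $\shv(\Strat) \simeq \shv^{\sf cbl}(\strat)$ by Theorem \ref{cbl-sheaves}), is cone-local (hence lands in $\shv^{\sf cone,\cbl}(\strat)$), and additionally satisfies the consecutive and univalent conditions. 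So $\Stri \subset \shv^{\sf cone,\cbl}(\strat)$ is the full subcategory cut out by these last two conditions, and it suffices to show that ${\sf st}^\ast$ carries this subcategory onto $\Psh^{\sf Segal,cplt}(\bdelta)$.

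The heart of the argument is then a translation of conditions across the equivalence of Lemma \ref{cone-sheaves}. For a cone-local constructible sheaf $\cF$ corresponding to the simplicial space $\cC = {\sf st}^\ast\cF = \exit^\ast$-restriction (so $\cC_{[p]} = \cF(\Delta^p)$), the consecutive condition for $\cF$ says exactly that $\cF$ sends the square
\[
\xymatrix{
\Delta^{\{1\}}  \ar[r] \ar[d] & \Delta^{\{1<\dots<p\}} \ar[d] \\
\Delta^{\{0<1\}}  \ar[r] & \Delta^{\{0<\dots<p\}}
}
\]
to a pullback. By Corollary \ref{pre-segal} (which feeds off Lemma \ref{lem.pre-consec} via induction on $k$), this square is a pushout in $\Strat$ for $k=1$, and one inductively builds the general Segal square; so the consecutive condition on $\cF$ is equivalent to the Segal condition on $\cC$. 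Similarly, the univalent condition is literally the statement that $\cF$ sends the completeness square of Definition \ref{def.complete-segal}(2) to a pullback — once one knows that $\Map_{\Psh(\bdelta)}(-,\cC)$ applied to that square computes $\cF$ applied to the corresponding diagram of standard simplices, which holds because ${\sf st}_\ast$ is fully faithful and the objects involved ($\Delta^{\{0<1<2<3\}}$, $\Delta^{\{0<2\}}\amalg\Delta^{\{1<3\}}$, $\Delta^{\{0=2\}}\amalg\Delta^{\{1=3\}}$, $\ast$) are all (colimits in $\Strat$ of) standard simplices. Thus $\cF$ is univalent iff $\cC$ is complete. Combining: ${\sf st}^\ast$ restricts to an equivalence $\Stri \xrightarrow{\simeq} \Psh^{\sf Segal,cplt}(\bdelta) \simeq \Cat_\infty$, with quasi-inverse ${\sf st}_\ast$, and the composite with the formula \eqref{right-kan-exit} gives the stated assignment $\cC \mapsto (K \mapsto \Map(\exit(K),\cC))$.

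The main obstacle I anticipate is checking carefully that the simplicial-space diagrams appearing in the Segal and completeness conditions of Definition \ref{def.complete-segal} correspond, under ${\sf st}^\ast$ resp. ${\sf st}_\ast$, to exactly the stratified-space diagrams in Definition \ref{def.striation}(4),(5) — in particular that no extra conditions sneak in. For the Segal/consecutive match this is the content of Corollary \ref{pre-segal} together with the fact that ${\sf st}_\ast$ preserves limits (being a right adjoint), so a simplicial space is Segal iff the corresponding cone-local constructible sheaf sends the Corollary \ref{pre-segal} pushouts to pullbacks; the only subtlety is the inductive reduction from general $k$ to $k=1$, which I would spell out. For the univalent/complete match, the subtlety is that the completeness condition is phrased via $\Map_{\Psh(\bdelta)}(-,\cC)$ on a pushout diagram of simplicial sets rather than via evaluation on objects of $\bdelta$; here I would note that the relevant pushouts of representables map, under ${\sf st}_!$ (left Kan extension) or directly, to the indicated colimit diagrams of standardly-stratified simplices in $\Strat$ — using Corollary \ref{pre-segal} and the fact that $\exit$ preserves the relevant colimits (Proposition \ref{exit-facts}) — so that $\Map_{\Psh(\bdelta)}(D_\bullet,\cC) \simeq \cF(\text{colim of }\Delta^\bullet$'s$)$. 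Once these two correspondences are nailed down, the theorem is a formal consequence of Lemma \ref{cone-sheaves} and Theorem \ref{thm.rezk}.
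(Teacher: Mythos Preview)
Your proposal is correct and follows essentially the same approach as the paper: reduce to Lemma~\ref{cone-sheaves}, then identify the consecutive condition with the Segal condition and the univalent condition with the completeness condition, and conclude via Theorem~\ref{thm.rezk}. The paper's own proof is extremely terse---it simply asserts these identifications---whereas you spell out the supporting details (the inductive reduction from general $k$ to $k=1$ via Corollary~\ref{pre-segal}, and the translation of the completeness square), which is entirely appropriate.
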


\begin{proof}
Through the equivalence $\shv^{\cone, \cbl}(\strat)\simeq\psh(\bdelta)$ of Lemma~\ref{cone-sheaves}, the consecutive condition is identical to the Segal condition.
Through the resulting equivalence $\shv^{\sf cons,\cone, \cbl}(\strat)\simeq\Psh^{\sf Segal}(\bdelta)$, the univalent condition is identical to the completeness condition.  
\end{proof}

\begin{remark}
It was a choice to prove Theorem~\ref{striation-gcat} by way of the \emph{standard} cosimplicial stratified space ${\st} \colon \bdelta \ra \strat$
as opposed to the stratification of the topological simplices $\Delta^p \to [p]$ given by $t\mapsto {\sf Min}\bigl\{ i\mid t_i\neq 0\bigr\}$. This choice resulted in our use of the exit-path $\oo$-category rather than its opposite, the enter-path $\oo$-category.

\end{remark}

We recall the notion of {\it strong generation} from, for example, \cite{pkd}.

\begin{definition}
A functor $g:\cC \ra \cD$ {\it strongly generates} if the diagram
\[
\xymatrix{
\cC\ar[r]^g\ar[d]_g&\cD\\
\cD\ar@{-->}[ur]_{{\sf id}_\cD}
}
\] 
witnesses ${\sf id}_\cD$ as the left Kan extension of $g$ along $g$.
\end{definition}

\begin{remark}
The following are examples of strongly generating functors: the inclusion of the terminal category into spaces, $\{\ast\}\hookrightarrow \spaces$; the fully faithful inclusion of a rank-1 free $R$-module into all $R$-modules, $\{R\} \hookrightarrow \m_R$, for any associative ring $R$; the inclusion of free algebras into algebras; the Yoneda functor $\cC \ra \psh(\cC)$ for any $\oo$-category $\cC$; any localization $\cC \ra \cC[J^{-1}]$. See \S3.5 of \cite{pkd} for an extended discussion.
\end{remark}

\begin{lemma}\label{lem.stronggen}
The standard stratification functor ${\st}\colon\bdelta \ra \Strat$ strongly generates. In particular, for every conically smooth stratified space $X$, the canonical morphism
\[
\underset{[p]\in \bdelta_{/X}}\colim \Delta^p \longrightarrow X
\]
exists in $\Strat$ and is an equivalence, where $\bdelta_{/X}:=\bdelta\underset{\Strat}\times\Strat_{/X}$ is an $\oo$-category in which an object is a pair $([p]\in \bdelta, \Delta^p \ra X)$.
\end{lemma}
\begin{proof}
In this proof we notate the Yoneda functor by ${\sf y}: \Strat\hookrightarrow \psh(\Strat)$. 
We prove that the canonical morphism $\lkan_{\st}(\st)\ra {\sf id}_{\Strat}=:{\sf id}$ is an equivalence. 
To do so is equivalent to showing, for each pair $X$ and $Y$ of stratified spaces, that the canonical map between spaces
\begin{equation}\label{06}
\Strat(X,Y) \longrightarrow \Strat\bigl(\lkan_{\st}(\st)(X),Y\bigr)\simeq \limit \bigl((\bDelta_{/X})^{\op}\to \bDelta^{\op} \xra{\st} \Strat^{\op} \xra{\Strat(-,Y)} \Spaces\bigr)
\end{equation}
is an equivalence. 
This will imply that $X$ has the universal property of the left Kan extension $\lkan_{\st}(\st)(X)$; consequently, it simultaneously implies that the left Kan extension $\lkan_{\st}(\st)$ indeed exists. 
Through the following string of equivalences among spaces
\begin{eqnarray}
\nonumber
\underset{\bigl([p]\in\bdelta_{/X}\bigr)^{\op}}\limit \Map_{\psh(\Strat)}({\sf y}_{\Delta^p}, {\sf y}_Y)
&
\simeq
&
\Map_{\psh(\Strat)}\Bigl(\underset{[p]\in\bdelta_{/X}}\colim {\sf y}_{\Delta^p}, {\sf y}_Y\Bigr)
\\
\nonumber
&
\simeq
&
\Map_{\psh(\Strat)}(\st_!\st^\ast {\sf y}_X, {\sf y}_Y)
\\
\nonumber
&
\simeq
&
\Map_{\psh(\bdelta)}(\st^\ast {\sf y}_X, \st^\ast {\sf y}_Y)
\\
\nonumber
&
\simeq
&
\Map_{\psh(\Strat)}({\sf y}_X, \st_\ast\st^\ast {\sf y}_Y)
\end{eqnarray}
we recognize the map~(\ref{06}) as the canonical map
\[
\Strat(X,Y) \ra \Map_{\psh(\Strat)}({\sf y}_X, \st_\ast\st^\ast {\sf y}_Y)~.
\]
Consequently, the map~(\ref{06}) is an equivalence if and only if, for each stratified space $Y$, the canonical natural transformation ${\sf y}_Y \ra  \st_\ast\st^\ast {\sf y}_Y$ is an equivalence.
Note that as a consequence of Lemma~\ref{cone-sheaves}, any natural transformation $\cF \ra \cG$ of functors in $\psh(\Strat)$ is an equivalence if both of the following are true.
\begin{itemize}
\item The restriction $\st^\ast \cF \ra \st^\ast \cG$ is an equivalence in $\psh(\bdelta)$.
\item Both $\cF$ and $\cG$ are cone-local sheaves, i.e., satisfy (1) and (3) from Definition \ref{def.striation}.
\end{itemize}
We are therefore reduced to showing that these criteria hold for the natural transformation ${\sf y}_Y \ra  \st_\ast\st^\ast {\sf y}_Y$.

Fully faithfulness of $\st$, which is Lemma~\ref{standard-ff}, immediately implies the natural transformation $\st^\ast {\sf y}_Y \ra \st^\ast \st_\ast\st^\ast {\sf y}_Y$ is an equivalence.
The representable presheaf ${\sf y}_Y$ is a cone-local sheaf because both diagram types (1) open covers and (3) cone-quotients are colimit diagrams in $\Strat$ (by Lemma \ref{lem.opens-cover} and Lemma \ref{lem.blow-up-colims}). 
It remains to verify $\st_\ast\st^\ast({\sf y}_Y)$ is a cone-local sheaf. 
Note that, by definition, $\exit(Y):=\st^\ast({\sf y}_Y)$.
Corollary~\ref{exit-cplt-segal} states that $\exit(Y)$ is an $\infty$-category.  
Theorem~\ref{striation-gcat} states that, for each $\infty$-category $\cC$, the presheaf $\st_\ast \cC$ is a striation sheaf.  
In particular, $\st_\ast \exit(Y)\simeq \st_\ast \st^\ast({\sf y}_Y)$ satisfies (1) and (3) of Definition~\ref{def.striation}.

\end{proof}

\begin{theorem}\label{main}
The exit-path functor
\[
\exit: \Strat\longrightarrow \Cat_{\oo}
\]
is fully faithful.
\end{theorem}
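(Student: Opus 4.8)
The plan is to factor $\exit$ through the equivalence of Theorem~\ref{striation-gcat}. Recall that $\exit\colon \Strat \to \Cat_\infty$ is, by Definition~\ref{exit} together with Corollary~\ref{exit-cplt-segal}, the composite $\Strat \xra{j} \Psh(\Strat) \xra{{\sf st}^\ast} \Psh(\bdelta)$; and by Theorem~\ref{striation-gcat} the functor ${\sf st}^\ast$ restricts to an equivalence $\Stri \xra{\simeq} \Cat_\infty$ whose inverse is the right adjoint ${\sf st}_\ast$. So the claim is equivalent to showing that the composite $\Strat \xra{j} \Psh(\Strat) \xra{c^\ast} \Psh(\strat)$, i.e. the discrete Yoneda functor $X\mapsto \strat(-,X)$ postcomposed with sheafification into $\Strat$ --- more precisely the functor $X\mapsto \Strat(-,X)$ viewed as a presheaf on $\strat$, which by Definition~\ref{def.Strat} lands in $\shv^{\sf cbl}(\strat)\simeq \shv(\Strat)$ --- actually lands in $\Stri$, and that the resulting functor $\Strat \to \Stri$ is fully faithful.

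\textbf{Step 1: each representable $\Strat(-,X)$ is a striation sheaf.} The sheaf and constructible conditions are immediate: $\Strat(-,X)$ is by construction a constructible sheaf on $\strat$ (Definition~\ref{def.Strat}, via Lemma~\ref{lem.strat-isot} and Theorem~\ref{isot.cbl}). The remaining three conditions --- cone-local, consecutive, univalent --- ask that $\Strat(-,X)$ send the three distinguished pushout diagrams of Definition~\ref{def.striation} in $\strat$ to pullback diagrams of spaces; since $\Strat(-,X)$ is representable in $\Strat$, this is exactly the assertion that those diagrams are \emph{pushouts in $\Strat$}. The cone-local case is Corollary~\ref{cones-colimits} (a special case of Lemma~\ref{lem.blow-up-colims}); the consecutive case is Lemma~\ref{lem.pre-consec} (equivalently Corollary~\ref{pre-segal}); the univalent case is the completeness-type diagram, which one checks by the same double-cone analysis --- indeed Corollary~\ref{exit-cplt-segal} already records that $\exit(X)$ is complete, and unwinding that argument shows $\Strat(-,X)$ sends the univalence diagram to a pullback. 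So $\Strat(-,X)\in \Stri$, and we obtain a functor $\Strat \to \Stri$.

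\textbf{Step 2: identify this functor with $\exit$ and conclude full faithfulness.} Under the equivalence $\Stri \simeq \Cat_\infty$ of Theorem~\ref{striation-gcat}, the functor $X\mapsto \Strat(-,X)\in\Stri$ corresponds to $X\mapsto {\sf st}^\ast \Strat(-,X) = \Strat(\Delta^\bullet, X) = \exit(X)$; this is precisely Definition~\ref{exit}. Hence it suffices to prove the functor $\Strat \to \Stri\subset \Psh(\strat)$ is fully faithful. For this I would combine Theorem~\ref{cbl-sheaves}, which identifies $\shv(\Strat)\simeq \shv^{\sf cbl}(\strat)$ with the full subcategory of $\Psh(\Strat)$ picked out by $c^\ast$, with the fact that the Yoneda functor $\Strat \to \Psh(\Strat)$ is fully faithful: since $\Strat$ is (by Definition~\ref{def.Strat}) a full subcategory of $\shv^{\sf cbl}(\strat)$ and representables in $\Strat$ correspond under $\shv(\Strat)\simeq \Psh^{\sf cbl}$-of-$\strat$ to the presheaves $\Strat(-,X)$, we get
\[
\Strat(X,Y)~\simeq~\Map_{\Psh(\strat)}\bigl(\Strat(-,X),\Strat(-,Y)\bigr)~\simeq~\Map_{\Psh(\bdelta)}\bigl(\exit(X),\exit(Y)\bigr)~,
\]
the last equivalence being the content of Lemma~\ref{cone-sheaves} (the unit $\cF\to {\sf st}_\ast{\sf st}^\ast\cF$ is an equivalence on cone-local constructible sheaves, and $\Strat(-,X)$ is one by Step~1). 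This chain of equivalences is exactly full faithfulness of $\exit$.

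\textbf{Main obstacle.} The genuinely hard input is everything feeding Step~1's verification that the three distinguished diagrams are homotopy pushouts in $\Strat$ --- in particular Lemma~\ref{lem.pre-consec} (consecutivity of double cones), whose proof rests on the cone-mapping-space analysis of Corollary~\ref{cor.from-cones} and ultimately on the $\RR$-invariant classification of conically smooth maps between cones. That classification, and the resulting pushout squares, are not available before passing to the localization $\strat[\cJ^{-1}]\simeq \Strat$ (cf.\ Remark~\ref{rem.double-cones-unexpected}); this is where the geometry of conical smoothness from \cite{aft1} does the real work. Assuming those lemmas, the proof of Theorem~\ref{main} itself is a short bookkeeping argument assembling Theorem~\ref{striation-gcat}, Theorem~\ref{cbl-sheaves}, and Lemma~\ref{cone-sheaves}.
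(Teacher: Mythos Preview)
Your proposal is correct and follows essentially the same route as the paper's proof: factor the Yoneda functor $\Strat \to \Psh(\Strat) \to \Psh(\strat)$, use Theorem~\ref{cbl-sheaves} for full faithfulness of the second arrow, verify the image lands in $\Stri$ via the distinguished colimits (Corollary~\ref{cones-colimits}, Corollary~\ref{pre-segal}, and completeness of $\exit(X)$), and then invoke the equivalence $\Stri\simeq\Cat_\infty$ of Theorem~\ref{striation-gcat}. Your explicit chain of equivalences in Step~2 via Lemma~\ref{cone-sheaves} just unpacks what the paper compresses into ``after Theorem~\ref{striation-gcat}, we need only verify that the essential image lies in $\Stri$.''
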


\begin{proof}
Lemma~\ref{lem.stronggen} gives that the, a priori, lax commutative diagram among $\infty$-categories:
\[
\xymatrix{
\Strat\ar[r]^{\exit}\ar@{^{(}->}[d]&\Cat_{\oo}\ar@{^{(}->}[d]\\
\psh(\Strat)&\psh(\bdelta)\ar@{_{(}->}[l]_{\st_!}
}
\]
in fact commutes.  
Lemma~\ref{standard-ff} gives that $\st\colon \bDelta \to \Strat$ is fully faithful, from which it follows that left Kan extension $\st_\ast$ along it is also fully faithful.  
We conclude that the functor $\exit\colon \Strat \ra \Cat_{\oo}$ is fully faithful, as desired.

\end{proof}

\begin{remark}
Theorem \ref{main} is an instance of the general fact that strong generation of a functor $\cC \ra \cD$ implies fully faithfulness of the restricted Yoneda functor $\cD \ra \psh(\cC)$ (\S3.5 of \cite{pkd}), combined with the particular result that the essential image of the restricted Yoneda functor $\Strat\ra \psh(\bdelta)$ lies in $\Cat_{\oo}$.
\end{remark}

\section{Transversality sheaves}\label{sec.trans-sheaves}
Here we give a procedure for manufacturing examples of striation sheaves, hence $\oo$-categories, using even simpler stratified geometry mixed with ordinary category theory.

\subsection{Transversality sheaves}
We now present transversality sheaves, which give checkable conditions that guarantee that the topologizing diagram of an isotopy sheaf is a striation sheaf. 
Roughly, transversality sheaves are to striation sheaves as quasi-categories are to complete Segal spaces. 
We will use transversality sheaves to construct our $\oo$-categories of ultimate interest, $\Bun$ and $\Exit$.

\begin{definition}[Transversality sheaves]\label{def.transversality}
A transversality sheaf is a right fibration $\fF\ra \strat$ among ordinary categories for which its topologizing diagram $|\fF(-\times\Delta^\bullet_e)|$ is a striation sheaf. 
The category of transversality sheaves is the full subcategory 
\[
\trans~\subset~\Cat_{/\strat}
\] 
consisting of transversality sheaves.

\end{definition}

We now give practicable point-set sufficient conditions for checking that a right fibration is a transversality sheaf.

\begin{theorem}\label{transversality.conditions}
If a right fibration $\fF \ra \strat$ admits a straightening $\fF\colon \strat^{\op} \to {\sf Gpd}$ that satisfies the following conditions, it is a transversality sheaf, i.e., $|\fF(-\times\Delta^\bullet_e)|$ is a striation sheaf.

\begin{enumerate}
\item {\bf Sheaf:} For each covering sieve $\cU \subset \strat_{/K}$, the restriction of $\fF$ along the adjoint diagram $\cU^{\tr} \to \strat$ is a limit diagram of groupoids, which is to say that the canonical functor between groupoids
\[
\fF(K)\xra{~\simeq~} \underset{U\in \cU} \limit \fF(U)
\]
is an equivalence.

\item {\bf Isotopy extension:} 
For each stratified space $K$, the following two conditions are satisfied.
\begin{itemize}
\item
For each $p\geq 0$, the functor between groupoids
\[
\fF(K\times \Delta^q_e) \longrightarrow \fF(K\times \partial \Delta^q_e)
\]
is an isofibration.  

\item
For each $0\leq i \leq p$, the functor between groupoids
\[
\fF(K\times \Delta^q_e) \longrightarrow \fF(K\times (\Lambda^q_i)_e)
\]
is surjective on objects and on Hom-sets.

\end{itemize}

\item {\bf Cone-local:} For each compact conically smooth stratified space $L$, the value of $\fF$ on the diagram
\[
\xymatrix{
L  \ar[r]  \ar[d]
&
L\times \RR_{\geq 0}  \ar[d]
\\
\ast  \ar[r]
&
\sC(L)
}
\]
is a limit diagram of groupoids.

\item {\bf Consecutive:} 
For each $ 0<k<p$, and each $q\geq 0$, and for $\square$ the pullback groupoid as in the diagram
\[
\xymatrix{
\square  \ar[r]   \ar[d]
&
\fF(\Delta^{\{0<\dots<k\}}\times \Delta^q_e) \underset{\fF(\Delta^{\{k\}\times \Delta^q_e})}\times \fF(\Delta^{\{k<\dots<p\}}\times \Delta^q_e)    \ar[d]
\\
\fF(\Delta^p\times \partial \Delta^q_e)   \ar[r]
&
\fF(\Delta^{\{0<\dots<k\}}\times \partial \Delta^q_e)\underset{\fF(\Delta^{\{k\}}\times \partial \Delta^q_e)}\times\fF( \Delta^{\{k<\dots<p\}}\times \partial\Delta^q_e)~,
}
\]
the canonical functor $\fF(\Delta^p\times\Delta^q_e)  \to \square$ is surjective on objects and on $\Hom$-sets.

\item {\bf Univalent:}
The topologizing diagram $|\fF(-\times\Delta^\bullet_e)|$ carries the diagram in $\strat$
\[
\xymatrix{
\Delta^{\{0<2\}}\amalg \Delta^{\{1<3\}}  \ar[r] \ar[d]
&
\Delta^{\{0<1<2<3\}} \ar[d]
\\
\ast \thinspace\amalg \thinspace\ast  \ar[r]
&
\ast
}
\]
to a pullback diagram of spaces.  

\end{enumerate}
\end{theorem}
\begin{proof}
Notice that the first two conditions coincide with those of an isotopy sheaf.
So Theorem \ref{isot.cbl} grants that the topologizing diagram $|\fF(-\times\Delta^\bullet_e)|$ is a constructible sheaf.
Clearly, the {\bf univalent} condition on $\fF$ equals the univalent condition for $|\fF(-\times \Delta^\bullet_e)|$.  
It remains to show that {\bf cone-locality} and {\bf consecutivity} for $\fF$ imply the corresponding conditions for its topologizing diagram $|\fF(-\times\Delta^\bullet_e)|$.

\medskip

\noindent
{\bf Cone-local:}
From the cone-local condition, and since, for each $q\geq 0$, taking products with $\Delta^q_e$ commutes with pushouts, we have that the canonical diagram of groupoids
\[\xymatrix{
\fF(\sC(L)\times\Delta^p_e)\ar[r]\ar[d]&\fF(L\times\RR_{\geq 0}\times\Delta^p_e)\ar[d]\\
\fF(\Delta^p_e)\ar[r]&\fF(L\times\Delta^p_e)\\}\]
is a point-set pullback.
Taking nerves and restricting to diagonals preserves limits, therefore the canonical diagram of Kan complexes
\begin{equation}\label{some-diagram}
\xymatrix{
\delta^\ast\fF(\sC(L)\times\Delta^\bullet_e)_\star\ar[r]\ar[d]&\delta^\ast\fF(L\times\RR_{\geq 0}\times\Delta^\bullet_e)_\star\ar[d]\\
\delta^\ast\fF(\Delta^\bullet_e)_\star\ar[r]&\delta^\ast\fF(L\times\Delta^\bullet_e)_\star\\}
\end{equation}
is a point-set pullback. 
From the proof of Theorem \ref{isot.cbl}, the restriction map \[\delta^\ast\fF(L\times\RR_{\geq 0}\times\Delta^\bullet_e)_\star\longrightarrow \delta^\ast\fF(L\times\Delta^\bullet_e)_\star\] is a Kan fibration. 
The diagram~(\ref{some-diagram}) is therefore a homotopy pullback among Kan complexes in Quillen's model structure on simplicial sets.  
In particular, the diagram~(\ref{some-diagram}) gives a pullback diagram in the $\infty$-category $\Spaces$.  

\medskip

\noindent
{\bf Consecutive:}
We now show $|\fF(-\times\Delta^\bullet_e)|: \strat^{\op}\ra \spaces$ is consecutive. 
That is, we show that the composite
\[\xymatrix{\bdelta^{\op}\ar[rr]^{\st} &&\strat^{\op}\ar[rr]^{|\fF(-\times\Delta^\bullet_e)|}&&\spaces\\}\]
is a Segal space. 
Fix $0\leq k \leq p$, with associated maps $\Delta^k \ra \Delta^p$ and $\Delta^{p-k}\ra \Delta^p$ induced by the inclusions of $[k]=\{0<\ldots <k\}$ and $[p-k]=\{k<\ldots <p\}$ into $[p]=\{0<\ldots<p\}$.  We will show that the square among bisimplicial sets
\[
\xymatrix{
 \fF(\Delta^p\times \Delta^\bullet_e)_\star  \ar[rr]  \ar[d]
&&
 \fF(\Delta^{p-k}\times \Delta^\bullet_e)_\star  \ar[d]
\\
 \fF(\Delta^{k}\times \Delta^\bullet_e)_\star  \ar[rr] 
&&
 \fF(\Delta^{\{k\}}\times \Delta^\bullet_e)_\star 
}
\]
is a homotopy pullback in the diagonal model structure. 
We show that the canonical map to the pullback
\[
\fF(\Delta^p\times\Delta^\bullet_e)_\star\longrightarrow \square
\]
is an acyclic fibration for the diagonal model structure on bisimplicial sets.
By \cite{jardine}, acyclic fibrations for the diagonal model structure are detected by having the right lifting property with respect to the cofibrations
\begin{equation}\label{lift11}
\partial \Delta[q] \boxtimes \Delta[r]\underset{\partial \Delta[q] \boxtimes\partial \Delta[r]}\coprod \Delta[q]\boxtimes \partial \Delta[r]~\hookrightarrow~ \Delta[q]\boxtimes\Delta[r]~\qquad~
\end{equation}
where $\boxtimes:\Fun(\bdelta^{\op},\set)\times \Fun(\bdelta^{\op},\set)\ra \Fun(\bdelta^{\op}\times\bdelta^{\op},\set)$ is the external product of simplicial sets. 
In the case at hand, for each conically smooth stratified space $Z$ and each $q\geq 0$, the simplicial set $\fF(Z\times \Delta^q_e)_\star$ is $1$-coskeletal, since it is the nerve of a groupoid.
Thus, it suffices to verify the right lifting property in the cases $r=0,1$.

{\bf Case $r=0$:} We solve the lifting problem
\[
\xymatrix{
\partial\Delta[q]\boxtimes \Delta[0]\ar[rr]  \ar[d]
&&
 \fF(\Delta^p\times \Delta^\bullet_e)_\star  \ar[d]
\\
\Delta[q]\boxtimes\Delta[0]  \ar[rr] \ar@{-->}[urr]
&&
 \fF(\Delta^{k}\times \Delta^\bullet_e)_{\star} \underset{\fF(\Delta^{\{k\}}\times \Delta^\bullet_e)_{\star}}   \times \fF( \Delta^{p-k}\times \Delta^\bullet_e)_\star~.
}
\]
This problem is adjoint to the problem of surjectivity on objects of the functor
\[
\fF(\Delta^p\times \Delta^q_e) \longrightarrow \square~.
\]
Consequently, this problem is solved by the assumed {\bf consecutive} condition on $\fF$.  

{\bf Case $r=1$:} 
We solve the lifting problem
\[
\xymatrix{
\partial\Delta[q]\boxtimes \Delta[1]\underset{\partial\Delta[q]\boxtimes\partial\Delta[1]}\coprod\Delta[q]\boxtimes\partial\Delta[1]\ar[rr]  \ar[d]
&&
 \fF(\Delta^p\times \Delta^\bullet_e)_\star  \ar[d]
\\
\Delta[q]\boxtimes\Delta[1]  \ar[rr] \ar@{-->}[urr]
&&
 \fF(\Delta^{k}\times \Delta^\bullet_e)_{\star} \underset{\fF(\Delta^{\{k\}}\times \Delta^\bullet_e)_{\star}}   \times \fF( \Delta^{p-k}\times \Delta^\bullet_e)_\star~.
}
\]
This problem is adjoint to the problem of surjectivity on $\Hom$-sets of the functor
\[
\fF(\Delta^p\times \Delta^q_e) \longrightarrow \square~.
\]
Consequently, this problem is solved by the assumed {\bf consecutive} condition on $\fF$.

\end{proof}

\begin{remark}\label{class-vs-top}
Restriction along $\bdelta \xra{\Delta^\bullet} \strat$ gives the functor $\Psh(\strat)\to \Psh(\bdelta)$ to simplicial spaces.
By inspection, the map of cosimplicial stratified spaces $\Delta^\bullet \to \Delta^\bullet_e$, which is the standard map on underlying topological spaces, factors as
\[
\Delta^\bullet \longrightarrow E^\bullet\longrightarrow \Delta^\bullet_e
\]
where $E^p$ is the simplicial stratified space $[r]\mapsto \underset{[r]\to \sE^p} \coprod \Delta^r$ where $\sE^p$ is the ordinary category corepresenting a composible sequence of $p$ isomorphisms.  
There results, for each presheaf $\fF\in \Psh(\strat)$, a natural transformation
\[
|\fF(-\times\Delta^\bullet_e)|_{|\bdelta^{\op}} \longrightarrow |\fF(-\times E^\bullet)|_{|\bDelta^{\op}}
\]
from the restriction of the topologizing diagram to the classifying diagram.
After Rezk's reformulation of the completeness condition for Segal spaces (see~\S10 of~\cite{rezk-n}), our {\bf univalence} condition gives that this natural transformation is an equivalence for $\fF$ a \emph{transversality sheaf}.

\end{remark}

\section{Constructible bundles}
We will think of the category $\strat$ as a category of parametrizing objects for stratified manifold structures, just as affine schemes parametrize algebro-geometric structures.
In this section, we investigate a universal such example, $\Bun$, for which a $K$-point is a constructible bundle $X\to K$.   
This universal striation sheaf specializes to many interesting situations.  For instance, from it one can recover: the $\infty$-category of smooth $n$-manifolds and smooth embeddings among them, for any $n$; the simplicial category $\bdelta$; and the category of based finite sets.

\subsection{Closure properties of constructible bundles}
We show that constructible bundles are closed under composition and pullback, among other formations. We first recall the definition.
\begin{definition}\label{def.cbl}
Let $X\xra{\pi} K$ be a conically smooth map between stratified spaces.
\begin{itemize}
\item This map $\pi$ is a \emph{fiber bundle} if there is a basis $\{U_\alpha \subset Y\}$ for the topology of $Y$ for which, for each $\alpha$, there is a pullback diagram among stratified spaces
\[
\xymatrix{
F\times U_\alpha \ar[r]  \ar[d]^-{\sf pr}
&
X  \ar[d]
\\
U_\alpha \ar[r]
&
Y.
}
\]

\item 
This map $\pi$ is a \emph{weakly constructible bundle} if, for each stratum $K_q\subset K$, the restriction $X_{|K_q}\to K_q$ is a fiber bundle.

\item
This map $\pi$ is a \emph{constructible bundle} if it is weakly constructible, and if it satisfies the following condition which is inductive on the depth of $K$.
\begin{itemize}

\item In the case that $K$ has depth zero, there is no further condition on $\pi$.  

\item Suppose $K$ has finite depth $d>0$.  
For $K_d\subset K$ the deepest strata, the condition on $\pi$ is that the canonical map
\[
\Link_{X_{|K_q}}(X)\longrightarrow X_{|K_q}\underset{K_q}\times \Link_{K_q}(K)
\]
is a constructible bundle.

\item For general $K$, the condition on $\pi$ is that, for each open subset $U\subset K$ for which the inherited stratification on $U$ has finite depth, the restriction $X_{|U_\alpha}\to U_\alpha$ is a constructible bundle.  

\end{itemize}

\end{itemize}

\end{definition}

\begin{remark}
The added condition of constructibility on links is essential for the existence of pullbacks shown in Lemma \ref{cbls-pullback}. See Remark \ref{rem.wcbl.dont}.
\end{remark}

We observe several common classes of constructible bundles.
\begin{observation}\label{bdl-are-cbl}\label{cbl-are-bdl}\label{strata-are-cbl}\label{sub-cbl}

\begin{itemize}
\item[~]
\item
A fiber bundle $X\ra K$ is a constructible bundle: it is weakly constructible, and for each stratum $K_q\subset K$ the natural map
\[
\Link_{X_{|K_q}}(X)\longrightarrow X_{|K_q}\underset{K_q}\times \Link_{K_q}(K)
\] 
is an isomorphism.

\item
A weakly constructible bundle $X\ra K$ with a smooth base is necessarily a fiber bundle.

\item 
For each conically smooth stratified space $X = (X\to P)$, and each consecutive subposet $Q\subset P$, the inclusion of the preimage of $Q$
\[
X_Q~\hookrightarrow~X
\]
is a constructible bundle. 

\item
For a constructible bundle $X\to K$ and a stratum $X_p\subset X$, the composite $X_p\to K$ is again a constructible bundle.  

\end{itemize}

\end{observation}

The condition of being a constructible bundle is local in the base.
\begin{observation}\label{cbls-cover}
Let $X\ra K$ be a conically smooth map, and let $\cU_0\subset K$ be an open cover of $K$.
Then $X\ra K$ is a constructible bundle if and only if, for each $U\in \cU_0$, the restriction $X_{|U}\ra U$ is a constructible bundle.

\end{observation}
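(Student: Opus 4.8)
The statement asserts that being a constructible bundle is a local condition on the base. The plan is to reduce the assertion to the corresponding locality statement for fiber bundles, since the definition of constructible bundle is phrased stratum-by-stratum in terms of fiber bundles, and the definition of fiber bundle (Definition~\ref{def.cbl}) is itself given by a basis condition on the topology of the base, which is manifestly local.

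First I would note the trivial direction: if $X\to K$ is a constructible bundle, then for each stratum $K_q\subset K$ the restriction $X_{|K_q}\to K_q$ is a fiber bundle; restricting further to an open set $U\in\cU_0$, the preimage $U\cap K_q$ is open in $K_q$ and carries the structure of a stratum of $U$ (with its inherited stratification), so $(X_{|U})_{|U\cap K_q} \to U\cap K_q$ is the restriction of a fiber bundle to an open subspace, hence a fiber bundle. Thus each $X_{|U}\to U$ is a constructible bundle. For the converse, suppose $X_{|U}\to U$ is a constructible bundle for every $U\in\cU_0$. Fix a stratum $K_q\subset K$; I must show $X_{|K_q}\to K_q$ is a fiber bundle, i.e.\ that the images $\phi(O)\subset K_q$ arising from pullback squares with open-embedding horizontal maps form a basis for the topology of $K_q$. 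Given a point $x\in K_q$ and a neighborhood $N$ of $x$ in $K_q$, choose $U\in\cU_0$ containing $x$; then $U\cap N$ is a neighborhood of $x$ in $U\cap K_q$, which is a stratum of $U$, and since $X_{|U}\to U$ is a constructible bundle, $(X_{|U})_{|U\cap K_q}\to U\cap K_q$ is a fiber bundle. Hence there is a pullback square of the required form over $U\cap K_q$ with image contained in $U\cap N$; because $U\cap K_q$ is open in $K_q$, this same square is a pullback square over $K_q$ (open embeddings into an open subspace are open embeddings into the total space), exhibiting a basis element of $K_q$ contained in $N$ and containing $x$.

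I would then assemble these observations: a map is a constructible bundle iff each of its stratum-restrictions is a fiber bundle, and each stratum-restriction is a fiber bundle iff its further restrictions over members of an open cover are fiber bundles — the latter being the genuinely local input, which follows because the basis condition defining a fiber bundle can be checked on the members of any open cover of the base (a basis for a topology restricts to a basis on each open subspace, and conversely a family is a basis if it is so on each member of an open cover). The only point requiring a small remark is the compatibility of stratifications: the preimage $X_Q\hookrightarrow X$ of a consecutive subposet is itself a constructible bundle by Observation~\ref{strata-are-cbl}, and the stratification that $U\in\cU_0$ inherits from $K$ is such that $U\cap K_q$ is exactly the $q$-stratum of $U$, so "stratum of $K$ intersected with $U$" and "stratum of $U$" coincide. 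I do not expect any serious obstacle here; the statement is essentially a formal unwinding of the two nested definitions, and the only thing to be careful about is keeping track of which topology (that of $K_q$ versus that of $U\cap K_q$) the basis condition is being checked in — open subspaces make this harmless.
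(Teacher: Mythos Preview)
Your proposal is correct. The paper gives no proof of this statement at all --- it is recorded as an ``Observation'' and left to the reader as a direct consequence of the definitions. Your argument is precisely the natural unwinding: constructibility is defined stratum-by-stratum via the fiber-bundle condition, the fiber-bundle condition (Definition~\ref{def.cbl}) is a basis-for-the-topology condition on the base and hence local, and the strata of an open subspace $U\subset K$ are exactly the intersections $U\cap K_q$. There is nothing to compare; you have simply written out what the authors took as evident.
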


The following is routine.

\begin{lemma}[Lemma 7.1.3 of \cite{aft1}]\label{bdls-homot}
For each fiber bundle $X\xra{f} K\times \RR^i\times \sC(L)$, there is an isomorphism
\[
X~\cong~ X_{|K\times\{0\}}\times\RR^i\times\sC(L)
\]
over $K\times \RR^i\times \sC(L)$ and under $K\times\{0\}$. 

\end{lemma}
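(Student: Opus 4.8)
\textbf{Proof proposal for Lemma~\ref{bdls-homot}.}

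The plan is to reduce to the local structure of fiber bundles and then propagate a trivialization across the fibers $\RR^i \times \sC(L)$ by an averaging/partition-of-unity argument, exactly as one does for ordinary fiber bundles over contractible bases, but carried out conically smoothly. First I would observe that $K \times \RR^i \times \sC(L)$ is covered by open sets of the form $O \times \RR^i \times \sC(L)$ with $O \subseteq K$ conically smooth; indeed $\RR^i \times \sC(L)$ itself deformation retracts onto $\{0\}$ via the conically smooth scaling $\phi_t(v,s,\ell) = (tv, ts, \ell)$ furnished by the cone structure, so that any fiber bundle over $K \times \RR^i \times \sC(L)$ restricted over such a scaled-down image is controlled by its restriction over $K \times \{0\}$. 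The key point, as recalled in \S\ref{sec-reg}, is that $\RR^i \times \sC(L)$ admits a conically smooth one-parameter family of self-embeddings contracting toward the origin, whose images form a neighborhood basis of $0$, and that this family is the derivative-at-cone-point picture from~\cite{aft1}.

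The main steps, in order: (1) Reduce to $K$ compact, or more precisely work locally in $K$ so as to be able to choose conically smooth partitions of unity; since the statement is a global isomorphism over $K \times \RR^i \times \sC(L)$ under $K \times \{0\}$, one must actually globalize, so I would instead argue that the set of points of $K \times \RR^i \times \sC(L)$ over which $X$ is isomorphic to the pulled-back product $X_{|K\times\{0\}} \times \RR^i \times \sC(L)$ (compatibly under $K \times \{0\}$) is open, and then use the contraction $\phi_t$ to see it is also all of $K \times \RR^i \times \sC(L)$. (2) For the openness: near any point the fiber-bundle condition gives a local product chart $F \times O' \hookrightarrow X$; combined with Lemma-type local triviality of $F$ over the $\RR^i \times \sC(L)$ direction (again via scaling), one gets a local isomorphism with the asserted product form, and these local isomorphisms can be patched using a conically smooth partition of unity subordinate to a cover of $K$ — here one uses that the structure "group" acts through conically smooth automorphisms, and the convexity needed to average transition data comes from the $\RR^i$-coordinate plus the interval coordinate of the cone. (3) Use $\phi_t \colon X \times \RR \to X$ covering the cone-scaling to pull the trivialization defined near $K \times \{0\}$ out to all of $K \times \RR^i \times \sC(L)$, since as $t \to \infty$ (or after reparametrizing, $t \to 1$) the images $\phi_t(\RR^i \times \sC(L))$ exhaust the whole space, so the local-near-$0$ isomorphism transports to a global one. (4) Check compatibility: that the resulting isomorphism is the identity on the fiber over $K \times \{0\}$, which is automatic from how the transport is set up.

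I expect the main obstacle to be Step (2), the patching: assembling the local product structures into a \emph{global} conically smooth isomorphism requires choosing the transition automorphisms of $\RR^i \times \sC(L)$ and interpolating them conically smoothly, and one must verify that the interpolation — which on the manifold part is ordinary convex averaging of bundle transition functions, but here must respect the cone-point stratum and the conical-smoothness of the derivative condition~(\ref{derivative}) — stays within conically smooth maps. The safest route is to invoke the corresponding trivialization/averaging machinery from~\cite{aft1} (which is cited as the source of this lemma) rather than rebuild it: the identification $\Aut(\sC(L)) \simeq \Aut(L)$ from~\cite{aft1} is exactly what lets one average cone automorphisms, since it reduces the structure data to automorphisms of the compact link $L$, over which no cone-point subtlety arises. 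So in practice the proof is: cover $K$, get local product trivializations over $K \times \RR^i \times \sC(L)$ from fiber-bundle-ness plus conical scaling, and glue them via a conically smooth partition of unity on $K$ using the contractibility of $\RR^i$ and the $\Aut$-identification to average the cone-direction transition data; the resulting isomorphism restricts to the identity over $K \times \{0\}$ by construction.
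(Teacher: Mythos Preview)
The paper does not give its own proof of this lemma: it is stated with the attribution ``\cite{aft1}'' and introduced by the sentence ``The following is routine, and is proved in~\cite{aft1},'' with no argument supplied. So there is nothing to compare your proposal against here; the authors simply import the result.

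That said, your sketch is in the right spirit for how such a result is established in~\cite{aft1}: one uses that $\RR^i\times\sC(L)$ conically smoothly deformation-retracts onto the origin via scaling, so a fiber bundle over it (parametrized further by $K$) is trivialized by flowing along a suitable lift of the contracting vector field, patched over $K$ via conically smooth partitions of unity. Your identification of the delicate point --- that averaging transition data must remain conically smooth through the cone-locus, and that this is handled by the $\Aut\bigl(\sC(L)\bigr)\simeq\Aut(L)$ identification --- is accurate, and your conclusion that one should ultimately defer to the machinery of~\cite{aft1} matches exactly what the paper does. One small caution: your Step~(1), phrased as ``the set of points over which $X$ is trivialized is open and then exhausts,'' is not quite the right framing, since what you need is a single globally coherent isomorphism, not a pointwise property; the cleaner phrasing is the vector-field/flow argument (lift the Euler-type field on $\RR^i\times\sC(L)$ to $X$ and flow), which is how the later Lemma~\ref{cbls-with-R} in this paper is argued and how~\cite{aft1} proceeds.
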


\begin{cor}\label{bdls-compose}
The composition of conically smooth maps $X\ra Y\ra Z$ is a fiber bundle if both $X\ra Y$ and $Y\ra Z$ are fiber bundles.

\end{cor}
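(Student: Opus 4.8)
The plan is to check the fiber-bundle condition for $X\to Z$ locally over $Z$, using the two given bundle structures together with the trivialization result of Lemma~\ref{bdls-homot}. Fix a point $z\in Z$ and a basic neighborhood $z\in \RR^i\times \sC(L)\subset Z$ over which $Y\to Z$ is trivial; that is, there is an isomorphism $Y_{|\RR^i\times\sC(L)}\cong F\times \RR^i\times\sC(L)$ over $\RR^i\times\sC(L)$, where $F$ denotes the fiber of $Y\to Z$ over $z$. It then suffices to produce, in an arbitrarily small neighborhood of $z$, a pullback square exhibiting $X\to Z$ as trivial; shrinking neighborhoods and ranging over $z$ yields a basis of the sort demanded by Definition~\ref{def.cbl}.

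The key step is to trivialize $X$ over $Y_{|\RR^i\times\sC(L)}\cong F\times\RR^i\times\sC(L)$. Because $F$ is itself a stratified space, cover it by basic charts $\RR^j\times\sC(M)\hookrightarrow F$; then $X$ restricted over $\RR^j\times\sC(M)\times\RR^i\times\sC(L)$ is a fiber bundle over a product of Euclidean factors and two open cones, so Lemma~\ref{bdls-homot} (applied with base $K=\RR^{i+j}$, using the two cone factors in succession, or at once after absorbing one cone into $L$ — more carefully, one first applies it to peel off $\RR^i\times\sC(L)$ and then to peel off $\RR^j\times \sC(M)$) gives an isomorphism
\[
X_{|\RR^j\times\sC(M)\times\RR^i\times\sC(L)}~\cong~ X_0\times\RR^j\times\sC(M)\times\RR^i\times\sC(L)
\]
over the base, where $X_0$ is the fiber of $X\to Y$ over the cone point of $\RR^j\times\sC(M)$ lying over $z$. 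Passing to the interior $Y_z\subset Y_{|\RR^i\times\sC(L)}$ is not needed; rather, since the images of such charts $\RR^j\times\sC(M)\hookrightarrow F$ form a basis for $F$ and $X\to Y$ is conically smooth, these local trivializations assemble, over the single chart $\RR^i\times\sC(L)$ of $Z$, to exhibit $X\to Z$ as a fiber bundle there: the images $\phi(O)\subset Z$ one obtains (namely $\RR^i\times\sC(L)$ itself, suitably shrunk) form a basis for the topology of $Z$ near $z$.

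I expect the main obstacle to be bookkeeping of the iterated cone and Euclidean factors when applying Lemma~\ref{bdls-homot}: the lemma is stated for a single $\RR^i\times\sC(L)$ factor over a base $K$, so one must either invoke it twice (once with $K$ containing the first cone factor, then noting $K\times\RR^i\times\sC(L)$ is again of the required form — which it is not literally, since $K$ is not Euclidean) or, more cleanly, observe that a product of two open cones $\sC(L)\times\sC(L')$ sits inside a single basic up to the relevant local structure, so that one application of the lemma over base $\RR^{i+j}$ suffices after reorganizing. The safest route is: first trivialize $X\to Y$ over a basic chart of $Y$ (which, pulled back along $Y\to Z$ being a bundle, lives over a basic chart of $Z$ times a basic chart of the fiber $F$), giving a product decomposition over $\RR^{i+j}\times\sC(L)\times\sC(M)$; then absorb this into the statement of Lemma~\ref{bdls-homot} by treating $\RR^{i+j}$ as the parameter $K$ and handling the two cone factors one at a time, each time the remaining factors being absorbed into the new base. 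All remaining points — that open embeddings compose, that conical smoothness is preserved, that the resulting collection of images is a basis — are routine and follow from the corresponding statements in \S\ref{recall} and~\cite{aft1}.
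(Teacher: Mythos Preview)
Your argument has a genuine gap at the assembly step. You trivialize $X$ over charts of $Y$ of the form $\RR^j\times\sC(M)\times\RR^i\times\sC(L)$, but to show $X\to Z$ is a fiber bundle you need a trivialization of $X$ over a neighborhood in $Z$, namely over all of $\RR^i\times\sC(L)$. Your local trivializations, one for each basic chart of $F$, have potentially different fibers $X_0$ and are not shown to glue compatibly in the $\RR^i\times\sC(L)$ direction. The sentence ``these local trivializations assemble, over the single chart $\RR^i\times\sC(L)$ of $Z$, to exhibit $X\to Z$ as a fiber bundle there'' is precisely the unproven claim.

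The fix is much simpler than any of your proposed workarounds, and it is exactly what the paper does: Lemma~\ref{bdls-homot} allows an \emph{arbitrary} stratified space $K$ as the parameter, not just a point or a Euclidean space. So once $Y_{|\RR^i\times\sC(L)}\cong Y_0\times\RR^i\times\sC(L)$, apply Lemma~\ref{bdls-homot} directly to the fiber bundle $X\to Y_0\times\RR^i\times\sC(L)$ with $K=Y_0$. This yields $X\cong X_{|Y_0\times\{0\}}\times\RR^i\times\sC(L)$ over $Y_0\times\RR^i\times\sC(L)$, and hence the composite $X\to Z$ is isomorphic to the projection $X_0\times\RR^i\times\sC(L)\to\RR^i\times\sC(L)$. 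No covering of $F$, no product of cones, no iterated bookkeeping is needed. You correctly quoted the lemma as having a general base $K$, but then never exploited that generality.
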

\begin{proof}
It is sufficient to consider the case $Z=\RR^i\times\sC(L)$ is a basic.  
Lemma~\ref{bdls-homot} grants that the map $Y\to Z$ is isomorphic to a projection $Y_0\times \RR^i\times \sC(L) \to \RR^i\times \sC(L)$.
Another application of Lemma~\ref{bdls-homot} gives that $X\to Y$ is isomorphic to a product map $X_0\times\RR^i\times \sC(L)\to Y_0\times \RR^i\times \sC(L)$.  
It follows that the composition $X\to Z$ is isomorphic to a projection $X_0\times \RR^i\times \sC(L) \to \RR^i\times \sC(L)$.  

\end{proof}

The designed regularity within stratified spaces gives the following easy criterion that characterizes weak constructibility.
\begin{lemma}\label{to-R}
The following two conditions on a conically smooth map $f\colon X\to K$ are equivalent.
\begin{enumerate}
\item
The map $X\ra K$ is a weakly constructible bundle.

\item
The composite $X_p\to X\to K$ is a weakly constructible bundle for each stratum $X_p\subset X$.  

\end{enumerate}

\end{lemma}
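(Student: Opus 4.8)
The plan is to prove the equivalence of the two conditions by reducing to a local statement over a basic $K \cong \RR^i \times \sC(L)$ in the base, and then exploiting the structure of strata. The implication $(1)\Rightarrow(2)$ is essentially immediate: if $X\to K$ is a constructible bundle, then Observation~\ref{strata-are-cbl} (the part asserting that for a constructible bundle $X\to K$ and a stratum $X_p\subset X$ the composite $X_p\to K$ is again a constructible bundle) gives exactly $(2)$. So the entire content is in the converse $(2)\Rightarrow(1)$.

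For $(2)\Rightarrow(1)$, I would first invoke Observation~\ref{cbls-cover} to reduce to the case where the base is a basic $K = \RR^i\times\sC(L)$; by Observation~\ref{bdl-are-cbl} (a constructible bundle over a smooth base is a fiber bundle), it suffices to treat strata of $K$ one at a time, so really one wants to show $X_{|K_q}\to K_q$ is a fiber bundle for each stratum $K_q$. Fix a stratum $K_q\subset K$ and consider its preimage $X_{|K_q}\subset X$; this is a constructible closed (or consecutive) sub-stratified space, and $X_{|K_q}$ is itself stratified, say $X_{|K_q}\to R$. Each stratum $X_p$ of $X$ meets $X_{|K_q}$ in a union of strata $(X_p)_{|K_q}$, and by hypothesis $(2)$, $X_p\to K$ restricts to a constructible bundle $X_p\to K$, hence over the smooth manifold $K_q$ the restriction $(X_p)_{|K_q}\to K_q$ is a fiber bundle (Observation~\ref{bdl-are-cbl} again). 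So we are reduced to the following: given a stratified space $Y=X_{|K_q}$ with a conically smooth map to a smooth manifold $S=K_q$ such that the restriction to each stratum $Y_r\to S$ is a fiber bundle, show $Y\to S$ is a fiber bundle. This is the heart of the matter, and it should follow by induction on depth using the blow-up/unzip square along a deepest stratum (\S\ref{sec-reg}): the deepest stratum $Y_0\to S$ is a fiber bundle by assumption, the unzip $\Unzip_{Y_0}(Y)\to S$ has strictly smaller depth and its strata are (pullbacks of) strata of $Y$, so by induction it is a fiber bundle, and then one reassembles $Y$ from the blow-up square $\Link_{Y_0}(Y)\to\Unzip_{Y_0}(Y)\rightrightarrows Y_0\to Y$ using the tubular-neighborhood description $Y_0\subset\sC(\pi_0)\hookrightarrow Y$ together with a local triviality argument for the link bundle over $S$.

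The main obstacle I anticipate is the reassembly step: one must promote fiber-bundle structures on $Y_0\to S$, on $\Unzip_{Y_0}(Y)\to S$, and on the link $\Link_{Y_0}(Y)\to S$ into a single fiber-bundle structure on $Y\to S$, compatibly with the conically smooth gluing. This requires a parametrized version of the fact that a conically smooth stratified space is, near its deepest stratum, the fiberwise open cone on its link — i.e., one needs the local triviality of $Y_0\subset\sC(\pi_0)\hookrightarrow Y$ to be arranged so that local trivializations over $S$ on the pieces are chosen coherently. The key technical input is the strong regularity afforded by conical smoothness, specifically the existence of conically smooth tubular (regular) neighborhoods recalled in \S\ref{sec-reg}, and Lemma~\ref{bdls-homot} (bundles over a basic split as products), which together let one straighten out the cone coordinate fiberwise over a chart in $S$. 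I would also expect to use Corollary~\ref{bdls-compose} freely to compose the bundle structures obtained at each stage of the induction. Since the paper flags this lemma as "easy" given the regularity of stratified spaces, I expect the induction to close cleanly once the deepest-stratum case and the cone-straightening are in hand, with no need for the more elaborate machinery of later sections.
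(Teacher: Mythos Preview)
Your overall strategy matches the paper's: reduce to a smooth base $K_q$, induct on the depth of $Y=X_{|K_q}$, and decompose along the deepest stratum. The implication $(1)\Rightarrow(2)$ and the reduction to a smooth base are handled exactly as you say.

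There is one inaccuracy worth flagging. You write that the strata of $\Unzip_{Y_0}(Y)$ are ``(pullbacks of) strata of $Y$''; this is false for the strata of the link $\Link_{Y_0}(Y)$, which are new. The paper avoids this by applying the inductive hypothesis not to $\Unzip_{Y_0}(Y)$ but to the open complement $Y_{>d}=Y\smallsetminus Y_d$, whose strata genuinely are strata of $Y$. (Your route can be repaired: the link projection $\Link_{Y_d}(Y)\to Y_d$ is a fiber bundle, so each link stratum is a fiber bundle over $Y_d$ and hence over $S$ by Corollary~\ref{bdls-compose}; but this is more work than simply inducting on $Y_{>d}$.)

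You are right that the reassembly step is the crux, and you point to the correct ingredients, but the actual mechanism is more delicate than ``straighten the cone coordinate.'' The paper's argument runs as follows. One has a tubular neighborhood $e\colon\sC(\pi)\hookrightarrow Y$ under $Y_d$, but not a priori over $S$. Working locally with $S=\RR^i$, one uses Lemma~\ref{bdls-homot} to choose isomorphisms $Y_d\cong F_d\times\RR^i$ and $\Link_{Y_d}(Y)\cong L_d\times\RR^i$, hence $\sC(\pi)\cong\sC(\pi_d)\times\RR^i$. The composite $fe\colon\sC(\pi_d)\times\RR^i\to\RR^i$ restricts to the identity on the cone-locus $F_d\times\RR^i$, so by the inverse function theorem of \cite{aft1} there is a neighborhood $O$ of the cone-locus on which each slice $fe_{|\{x\}}\colon\RR^i\to\RR^i$ is an open embedding with image containing any prescribed ball $\sB_R(0)$. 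Shrinking $\sC(\pi_d)$ into $O$ and precomposing with the fiberwise inverse $(fe_{|\{x\}})^{-1}$ yields a new embedding $\sC(\pi_d)\times\sB_R(0)\hookrightarrow Y$ that \emph{is} over $\sB_R(0)$. This exhibits $Y_{|\sB_R(0)}\to\sB_R(0)$ as a fiber bundle, and since $\{\sB_R(0)\}_R$ covers $\RR^i$, one concludes. This inverse-function-theorem correction is the content your proposal was missing.
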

\begin{proof}
Suppose~(1) is true.
Each composition $X_p\to X \to K$ factors through a stratum $K_q\subset K$.
Because $\emptyset \to L$ is weakly constructible for any $L$, it is enough to show that the factorized map $X_p\to K_q$ is a fiber bundle.
By assumption, there is an open cover $\cU_0$ of $K_q$ together with isomorphisms $X_{|U}\cong F\times U$ over $U$ for each $U\in \cU_0$. 
In particular, there are isomorphisms among fiberwise strata $(X_p)_{|U} \cong F_p\times U$ over $U$ for each $U\in \cU_0$.  
This implies $X_p\to K_q$ is a fiber bundle.

Now suppose~(2) is true.
We must show, for each stratum $K_q\subset K$, that the restriction $X_{|K_q} \to K_q$ is a fiber bundle.  
So we can assume $K$ is trivially stratified, in which case we are to show $X\ra K$ is a fiber bundle.

Each stratified space $Z$ admits an open cover $\{Z^{\leq n}\mid n\in \ZZ\}$ with each $Z^{\leq n}\subset Z$ the locus of those points $z\in Z$ whose local dimension ${\sf dim}_z(Z)$ is at most $n$.  
Evidently, each assignment $Z\mapsto Z^{\leq n}$ is functorial with respect to conically smooth open embeddings.
Therefore, we can assume $X$ has depth which is bounded above.
We will induct on the depth of $X$.  

Should this depth be zero, which is to say that each connected component of $X$ is trivially stratified, the assertion is true because $X\to K$ is assumed to be a fiber bundle.
Denote the union of deepest strata $X_d\subset X$, and its complement $X_{>d}$.  
By induction, each of the restrictions $X_d\to K$ and $X_{>d}\to K$ is a fiber bundle.  
Consider the link $\Link_{X_d}(X) \xra{\pi} X_d$ and the fiberwise open cone $\sC(\pi)$. This is equipped with the cone-locus section $X_d\to \sC(\pi)$.  
Choose a conically smooth open embedding $\sC(\pi) \hookrightarrow X$ under $X_d$, which exists by Proposition 8.2.5 of~\cite{aft1}.  
This witnesses a pushout 
\[
\sC(\pi) \underset{\Link_{X_d}(X)\times \RR_{>0}}\coprod X_{>d}~\cong~ X~.
\]
Both of the projections $\Link_{X_d}(X) \xra{\pi} X_d$ and $\sC(\pi) \to X_d$ are fiber bundles;
after Corollary~\ref{bdls-compose} we see that both of these projections further project to $K$ as fiber bundles.
Thus, to verify that $X\to K$ is a fiber bundle, it suffices to show that the conically smooth open embedding $\sC(\pi)\hookrightarrow X$ can be re-chosen as one over $K$.  
For this, we can work locally in $K$ and assume $K=\RR^i$ for some $i$.

Fix isomorphisms $X_d \cong F_d \times \RR^i$ and $X_{>d}\cong F_{>d}\times \RR^i$, each over $\RR^i$ -- such a choice is possible because $\RR^i$ is contractible.  
Likewise, choose an isomorphism $\Link_{X_d}(X) \cong  L_d\times \RR^i$ over the first of these isomorphisms.
Denote the projection $\pi_d\colon L_d \to F_d$.
There results an isomorphism $\sC(\pi)\cong \sC(\pi_d)\times \RR^i$.  
We have thus established a conically smooth open embedding
\[
e\colon \sC(\pi_d)\times\RR^i\cong \sC(\pi)\hookrightarrow X
\]
under $F_d\times\RR^i\cong X_d$.  
Optimistically, we seek a conically smooth open embedding $\phi\colon \sC(\pi_d)\times \RR^i\hookrightarrow \sC(\pi_d)\times\RR^i$ under $F_d\times\RR^i$ so that the diagram
\[
\xymatrix{
\sC(\pi_d)\times \RR^i  \ar[r]^-{\phi}  \ar[dr]_-{{\sf pr}}
&
\sC(\pi_d)\times \RR^i  \ar[r]^-{e}  
&
X  \ar[dl]^-f
\\
&
\RR^i
&
}
\]
commutes, though we will only approximate as much.

Consider the composition $fe\colon \sC(\pi_d)\times \RR^i\to \RR^i$. By construction, for each point $x_d\in F_d$ of the cone-locus, the restriction $fe_{|\{x_d\}}\colon \RR^i\to \RR^i$ is the identity map.
The inverse function theorem of \cite{aft1} grants that, for each $R>0$, there is an open neighborhood of this cone-locus $F_d \subset O\subset \sC(\pi_d)$ for which, for each $x\in O$, the restriction $fe_{|\{x\}}\colon \RR^i\to \RR^i$ is a smooth open embedding whose image contains the ball $\sB_R(0)$ of radius $R$ about the origin.  
Because conically smooth open self-embeddings $\sC(L_d)\hookrightarrow \sC(L_d)$ form a basis for the topology about the origin $\ast \in \sC(L_d)$, there is a conically smooth open self-embedding $\un{\phi}\colon \sC(\pi_d) \hookrightarrow \sC(\pi_d)$ over $F_d$ whose image lies in $O$.  
Consider $\phi_R\colon \sC(\pi_d) \times \sB_R(0)\hookrightarrow \sC(\pi_d)\times \RR^i$ whose projection to $\sC(\pi_d)$ is $\un{\phi}$ and whose projection to $\RR^i$ is $(fe_{|\{-\}})^{-1}$. This map is a conically smooth open embedding.
By construction, the diagram
\[
\xymatrix{
\sC(\pi_d)\times \sB_R(0)  \ar[r]^-{\phi_R}  \ar[d]_-{{\sf pr}}
&
\sC(\pi_d)\times \RR^i  \ar[r]^-{e}  
&
X  \ar[d]^-f
\\
\sB_R(0)  \ar[rr]
&&
\RR^i
}
\]
commutes, where the bottom arrow is the standard inclusion.  
This is our approximate solution indicated earlier.  
This implies that the restriction $X_{|\sB_R(0)}\to \sB_R(0)$ is a fiber bundle.  
Because the collection $\{\sB_R(0)\mid R>0\}$ is an open cover of $\RR^i$, we conclude that $X\to \RR^i$ is a fiber bundle. This proves~(1).

\end{proof}

\begin{prop}\label{cbl-compose}
The composition of conically smooth maps $ X\ra Y\ra Z$ is a weakly constructible bundle if both $X\ra Y$ and $Y\ra Z$ are weakly constructible bundles. If, furthermore, $X\ra Y$ and $Y\ra Z$ are constructible bundles, then the composite $X\ra Z$ is a constructible bundle.

\end{prop}
\begin{proof}
We first show that the composite $X\ra Z$ is a weakly constructible bundle. Using Lemma~\ref{to-R}, we need only verify that, for each stratum $X_p\subset X$, the restriction of the composite $X_p\ra Z$ is weakly constructible.
From the definition of conically smooth maps, this composition factors as a composition through strata: $X_p\to Y_q \to Z_r$.  
By assumption, each of these factored maps is a fiber bundle among smooth manifolds.
The result follows because the composition of smooth fiber bundles is again a smooth fiber bundle. 

Now, assuming $X\ra Y$ and $Y\ra Z$ are constructible bundles, we show further that $X\ra Z$ is constructible. 
By base change along a basic neighborhood in $Z$, we can assume $Z$ has finite depth.
We prove this by induction on the depth of $Z$. 
The depth zero case is immediate, since the composite of fiber bundles is again a fiber bundle. For the inductive step, let $Z_r\subset Z$ be any stratum. We have a commutative diagram
\[
\xymatrix{
\Link_{X_{|Z_r}}(X)\ar[d] \ar[r]&X_{|Z_r}\underset{Z_r}\times\Link_{Z_r}(Z)\ar@{=}[d]\\
X_{|Z_r}\underset{Y_{|Z_r}}\times\Link_{Y_{|Z_r}}(Y)\ar[r]&X_{|Z_r}\underset{Y_{|Z_r}}\times Y_{|Z_r}\underset{Z_r}\times\Link_{Z_r}(Z)}
\]
factoring the map in question as a composite of constructible bundles. 
Since the depth has decreased, the inductive hypothesis implies that the composite map is a constructible bundle.

\end{proof}

In order to state Lemma \ref{lemma.cbltokanfib}, we recall in some detail the notion of parallel vector fields from \cite{aft1}.
For $Z$ a stratified space, recall from~\S7 is its \emph{total unzip}, $\Unzip(Z)$, which is a smooth manifold with corners equipped with a surjective weakly constructible bundle
\[
q_Z\colon \Unzip(Z) \longrightarrow Z~.
\]
We next recall from~\S8 of~\cite{aft1} the sheaf $\Theta_Z$ of \emph{parallel vector fields on $Z$}:
first suppose that $Z$ is a manifold with corners, with $\sT Z \to Z$ the ordinary tangent bundle of $Z$.
For $U\subset Z$ an open subset, the vector space $\Theta_Z(U)$ is the vector subspace of those smooth sections
\[
\xymatrix{
&&
\sT  Z  \ar[d]
\\
U  \ar@{-->}[urr]^-{V}  \ar[rr]
&&
Z
}
\]
such that for each point $z\in U\cap \partial_S Z$ belonging to a face of $Z$, the vector $V_z \in \sT_z \partial_S Z \subset \sT_z Z$ is tangent to that face.
Restricting smooth sections along open inclusions defines the functor $\Theta_Z$, which is evidently a sheaf.

Now suppose $Z$ is an arbitrary stratified space.
For each stratum $\Unzip(Z)_{\w{p}} \subset \Unzip(Z)$, the restriction
\[
(q_Z)_{|\w{p}}    \colon \Unzip(Z)_{\w{p}} \longrightarrow  Z_p\subset Z
\]
necessarily factors through a single stratum of $Z$, as indicated.
Consequently, this restriction $(q_Z)_{|\w{p}}$ is a smooth map between smooth manifolds (possibly with corners).
Taking its derivative determines the bundle map
\[
\sD (q_Z)_{|\w{p}} \colon \sT \Unzip(Z)_{\w{p}}\longrightarrow {(q_Z)_{|\w{p}}}^\ast \sT Z_p
\]
over $(q_Z)_{|\w{p}}$.
Consider the subsheaf ${\sf Ker}(\sD q_Z) \subset \Theta_{\Unzip(Z)}$ consisting, for each open subset $U\subset \Unzip(Z)$, of those parallel vector fields $V$ on $U$ for which, for each point $\w{z}\in \Unzip(Z)$ belonging to the $\w{p}$-stratum, the tangent vector $V_{\w{z}}$ belongs to the kernel of the linear map
\[
\sD_{\w{z}} (q_Z)_{|\w{p}} \colon \sT_{\w{z}} \Unzip(Z)_{\w{p}}\longrightarrow \sT_{q_Z(\w{z})} Z_p~.
\]
Finally, the sheaf $\Theta_Z$ of \emph{parallel vector fields on $Z$} is the cokernel among sheaves on $Z$ of vector spaces
\[
(q_Z)_\ast {\sf Ker}(\sD q_Z)  \longrightarrow
(q_Z)_\ast \Theta_{\Unzip(Z)} \longrightarrow
\Theta_Z
\]
involving pushforwards along the continuous map $q_Z$.

Before stating our the next result, recall also the simplicial enrichment
\[
\Strat(V,X)_p := \strat_{/\Delta_e^p}(V\times\Delta_e^p,V\times\Delta_e^p)
\]
with respect to which the simplicial set $\Strat(V,X)$ is a Kan complex for any conically smooth stratified spaces $V$ and $X$ (Lemma 4.1.4 of \cite{aft1}).

\begin{lemma}\label{lemma.cbltokanfib}
Let $f:X\ra K$ be a weakly constructible bundle.
\begin{enumerate}
\item\label{one.kan}
For any conically smooth stratified space $U$, the induced postcomposition
\[
\Strat(U,X)\longrightarrow \Strat(U,K)
\]
is a Kan fibration.
\item\label{two.vect}
For each parallel vector field $V$ on $K$, there is a parallel vector field $\w{V}$ on $X$ such that for each $x\in X$, the derivative
\[
\sD_x f \colon \sT_x X \longrightarrow \sT_{f x}K
\]
carries $\w{V}_x$ to $V_{f x}$.

\end{enumerate}
\end{lemma}
\begin{proof}
Statement (\ref{two.vect}) is an infinitesimal version of statement (\ref{one.kan}).
To show (\ref{one.kan}), we show the existence of a dashed lift for every commutative diagram
\[
\xymatrix{
\Lambda_k[p]\ar[r]\ar[d]&\Strat(U,X)\ar[d]
&{\rm or}&U\times (\Lambda_k^p)_e\ar[r]\ar[d]&X\ar[d]\\
\Delta[p]\ar[r]\ar@{-->}[ur]&\Strat(U,K)
&&U\times \Delta^p_e\ar[r]\ar@{-->}[ur]&K}
\]
where the first diagram is of simplicial sets, and the second diagram is of conically smooth stratified spaces.
In the first diagram, $\Lambda_k[p]$ is the $k$-horn of the $p$-simplex $\Delta[p]$, for each $0\leq k\leq p$.
In the second diagram, $(\Lambda_k^p)_e$ is the extended $k$-horn in the extended topological $p$-simplex $\Delta^p_e$.
(A conically smooth map $U\times (\Lambda_k^p)_e\ra X$ is understood to mean a stratified map such that each of the $p$ restrictions $U\times\Delta^{p-1}_e\ra X$ is conically smooth.)
The equivalence of the two lifting problems is immediate from the definition of the simplicial enrichment $\Strat$.
Note that the construction of this lift is local in $U\times\Delta^p_e$.
Consequently, by reduction to a cover, it suffices to show the existence of a diagonal lift
\[
\xymatrix{
Z\times \{0\}\ar[r]^{\ov{g}_0}\ar[d]&X\ar[d]\\
Z\times \RR\ar[r]^g\ar@{-->}[ur]&K}
\]
for every compact conically smooth stratified space $Z$.
We delineate the steps of our proof.
\begin{enumerate}
\item\label{step.one} We obtain from the map $g:Z\times\RR\ra K$ a parallel vector field $V$ on $K\times Z\times\RR$.
\item\label{step.two} We lift $V$ to a parallel vector field $\widetilde{V}$ on $X\times Z\times\RR$.
\item\label{step.three} We construct a lift $\ov{g}: Z\times\RR \ra X$ by flowing $\widetilde{V}$. 
\end{enumerate}
We establish step (\ref{step.one}). The data of a conically smooth map $Z\times \RR \ra K$ is equivalent to a commutative diagram in $\strat$
\[
\xymatrix{
Z\times\RR \ar@{^{(}->}[rr]^{g\times {\sf id}_{Z\times\RR}}\ar@{=}[dr]&& K\times Z\times \RR\ar[dl]\\
&Z\times\RR}
\]
where $g\times{\sf id}:Z\times\RR \ra K\times Z\times\RR$ is a proper embedding and $K\times Z\times \RR\ra Z\times\RR$ is the projection map. By pushing forward the parallel vector field $\partial_t$ on $Z\times \RR$ we obtain a parallel vector field $\sD (g\times {\sf id}_{Z\times\RR})(\partial_t)$ defined on the graph of $g$.
By construction $\sD (g\times{\sf id}_{Z\times\RR})(\partial_t)$ lies over the standard vector field $\partial_t$ with respect to the projection.
By existence of regular neighborhoods and conically smooth partitions of unity (\S8.2 and \S7.1 of \cite{aft1}), we extend $\sD (g\times {\sf id}_{Z\times\RR})(\partial_t)$ to a vector field defined on all of $K\times Z\times \RR$ which projects to $\partial_t$ and which is properly supported with respect to projection. Denote this vector field $V$. The flow of $V$ is defined for all time and gives a map $K\times Z\times\RR\times\RR\ra K\times Z \times\RR$ such that the composite
\[
(Z\times\{0\})\times\RR\xra{(g_0\times{\sf id}_Z)\times{\sf id}_{\RR}}(K\times Z\times\RR)\times\RR\longrightarrow K\times Z\times\RR
\]
is equal to the original map $g\times {\sf id}_{Z\times\RR}$. That is, we have expressed $g$ from the $t=0$ condition $g_0$ and the flow of a vector field $V$.

We now establish step (\ref{step.two}), which is a form of assertion (\ref{two.vect}).
By the functoriality of the $\Unzip$ construction, any conically smooth map $f\colon Y\to S$ determines another conically smooth map
\[
\w{f}\colon \Unzip(Y) \longrightarrow \Unzip(S)
\]
which lies over $f$.
By direct inspection, if $f$ is a weakly constructible bundle, then $\w{f}$ is also weakly constructible bundle.
The map $f\colon Y \to S$ also determines a morphism between sheaves of vector spaces on $Y$,
\[
\sD \w{f}\colon \Theta_{\Unzip(Y)} \longrightarrow \w{f}^\ast \Theta_{\Unzip(S)}~,
\]
which descends to a morphism between sheaves of vector spaces on $Y$:
\[
\sD f \colon \Theta_Y \longrightarrow f^\ast \Theta_S~.
\]
Should $f\colon Y \to S$ be a weakly constructible bundle, then the map $\sD \w{f}$ is surjective.
Using that both $q_Y$ and $q_S$ are surjective weakly constructible bundles, the asymmetric 2-of-3 property for surjections gives that $\sD f$ is surjective.
By setting $Y=X\times Z\times \RR$ and $S=K\times Z\times\RR$, this surjectivity establishes step (\ref{two.vect}), that there exists a lift $\widetilde{V}$ of $V$.

In step (\ref{step.three}), we now flow the parallel vector field $\widetilde{V}$ on $X\times Z\times\RR$. Since $V$ lies over $\partial_t$, the flow is defined for all time. Thus, we obtain a map $X\times Z\times\RR\times\RR \ra X\times Z\times\RR$. Define $\ov{g}$ to be the composite
\[
Z\times\{0\}\times\RR \xra{(\ov{g}_0\times{\sf id}_Z)\times{\sf id}_{\RR}}X\times Z\times\RR\times\RR\longrightarrow X\times Z\times\RR
\xra{\sf proj}
X
\]
Since $\widetilde{V}$ lies over $V$, their flows commute, and so by construction $\ov{g}$ lifts the map $g:Z\times\RR \ra K$ and at $t=0$ is equal to $\ov{g}_0$.

\end{proof}

\begin{lemma}\label{lemma.PBstratStrat}
Consider a commutative diagram 
\[
\xymatrix{
X'\ar[r]\ar[d]&X\ar[d]^{\wcbl}\\
K'\ar[r]&K}
\]
which has the properties that
\begin{itemize}
\item the diagram is a limit diagram in the ordinary category $\strat$; and
\item the map $X\ra K$ is a weakly constructible bundle.
\end{itemize}
Then the diagram is again a limit in the $\oo$-category $\Strat$.
\end{lemma}
\begin{proof}
It suffices to show that for every $Z\in \Strat$, the diagram
\[
\xymatrix{
\Strat(Z,X')\ar[r]\ar[d]&\Strat(Z,X)\ar[d]\\
\Strat(Z,K')\ar[r]&\Strat(Z,K)}
\]
is a homotopy limit diagram of simplicial sets, with respect to the Kan model structure. Note that since $X'$ is the limit in the ordinary category $\strat$, the diagram is a point-set limit in the ordinary category of simplicial sets; this follows directly from the definition of the enrichment of $\Strat$.
By Lemma \ref{lemma.cbltokanfib}, the right vertical map is a Kan fibration.
Thus, the diagram is a point-set limit of Kan complexes in which the vertical arrows are Kan fibrations;
consequently, it is a homotopy limit with respect to the Kan model structure.

\end{proof}

\begin{lemma}\label{cbls-pullback}
Constructible bundles pull back along conically smooth maps. 
That is, for each constructible bundle $X\xra{f} K$ and each conically smooth map $K'\xra{g} K$, the pullback 
\[
\xymatrix{
X\underset{K}\times K'  \ar[r]  \ar[d]_-{f'}
&
X  \ar[d]^f
\\
K'  \ar[r]^-g
&
K
}
\]
exists in $\strat$, and the map $X\times_KK'\xra{f'} K'$ is again a constructible bundle.
Furthermore, $f'$ is proper whenever $f$ is proper.

\end{lemma}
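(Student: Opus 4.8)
The plan is to construct the pullback first as a topological space, then upgrade it to a stratified space by exhibiting a conically smooth atlas, and finally verify that the projection to $K'$ is a constructible bundle by reducing to the local model via Lemma~\ref{to-R} and Observation~\ref{cbls-cover}. For the set-level pullback, $X\underset{K}\times K' := \{(x,k')\in X\times K'\mid f(x)=g(k')\}$ with the subspace topology; stratify it over the poset $Q'\times_Q P$ (or just over $Q'$, pulling back the stratification of $K'$ refined by the image strata of $f$) — the point is that the stratification should be fine enough that on each stratum of $K'$ the map is a fiber bundle. The real content is producing the conically smooth atlas and checking the local triviality.

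First I would reduce to the case where $g$ is an open embedding of a basic, i.e. $K' = \RR^i\times\sC(W)$ mapping into $K$: by Observation~\ref{cbls-cover} constructibility is local in the base, and conically smooth maps are built locally from charts, so it suffices to understand $X\underset{K}\times K'$ when $K'$ is a basic chart mapped into $K$. Next, stratumwise: by Lemma~\ref{to-R} it is enough to check that for each stratum of the total space the composite to $K'$ is a constructible bundle, so by the definition of conically smooth maps everything factors through strata $X_p \to K_q$, which are smooth fiber bundles, and the pullback of a smooth fiber bundle along any smooth map is a smooth fiber bundle — this handles the "fiberwise" constructibility once the stratified structure on the pullback is in place. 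The nontrivial step is therefore: given a constructible bundle $X\to K$ and the local form $K' = \RR^i\times\sC(W)\to K$, produce a conically smooth atlas on $X\underset{K}\times K'$. Here I would use that $X\to K$ is, over each stratum of $K$, locally of the form $F\times O\to O$ with $O$ open in a stratum; combined with the regular-neighborhood / unzipping structure of $X$ relative to the stratification pulled back from $K$, one gets local models of the fiber product as $F\times(\RR^i\times\sC(W))$-type pieces glued along links, which are conically smooth by the product and cone constructions of \cite{aft1}. Universality of the pullback in $\strat$ is then formal: any conically smooth $T\to X$, $T\to K'$ agreeing over $K$ factors through the set-level pullback, and conical smoothness of the factorization is checked in charts.

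For the final sentence, properness is preserved under base change by the usual point-set argument: if $C\subset K'$ is compact then its image $g(C)\subset K$ is compact, so $f^{-1}(g(C))\subset X$ is compact (as $f$ is proper), and the preimage of $C$ in $X\underset{K}\times K'$ is a closed subspace of $f^{-1}(g(C))\times C$, hence compact.

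The main obstacle I anticipate is the stratified-space construction of the pullback — i.e. writing down the conically smooth atlas and verifying the transition maps are conically smooth, so that $X\underset{K}\times K'$ is genuinely an object of $\strat$ and not merely a stratified topological space. This is where the conical-smoothness machinery of \cite{aft1} (charts by basics, behavior of the derivative \eqref{derivative} under products and cones, existence of regular neighborhoods) does the heavy lifting; everything downstream (constructibility of the projection, universality, properness) is then either formal or a stratumwise reduction to smooth fiber bundles.
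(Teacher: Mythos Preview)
Your outline correctly isolates the crux --- producing a conically smooth atlas on the set-level fiber product --- but the plan you give for that step is not a proof. Saying that ``local models of the fiber product look like $F\times(\RR^i\times\sC(W))$-type pieces glued along links'' presumes exactly what must be shown: that the pieces, and the gluing data among them, are conically smooth and assemble to an object of $\strat$. Reducing to $K'$ a basic (by covering $K'$) does not help with this, because the restricted map $g\colon \RR^i\times\sC(W)\to K$ is still an arbitrary conically smooth map crossing arbitrarily many strata of $K$; the structure you need to control is that of $K$ and of $f$, not of $K'$. (Also, your phrase ``$g$ is an open embedding of a basic'' is misleading: after covering $K'$ you have reduced to $K'$ a basic, but $g$ itself is not an open embedding.)

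The paper closes this gap with a different reduction: induction on the \emph{depth of $K$}. The base case (depth zero, so $f$ is a genuine fiber bundle) is classical. For the inductive step, choose a deepest stratum $K_0\subset K$, set $K'_0:=g^{-1}K_0$ and $X_0:=f^{-1}K_0$, and write the desired pullback as the pushout
\[
\bigl(X_0\underset{K_0}\times K'_0\bigr)
\underset{\bigl(\Link_{X_0}(X)\underset{\Link_{K_0}(K)}\times \Link_{K'_0}(K')\bigr)}
\coprod
\bigl(\Unzip_{X_0}(X)\underset{\Unzip_{K_0}(K)}\times \Unzip_{K'_0}(K')\bigr),
\]
each of whose three constituent fiber products is over a base of strictly smaller depth (or is the base case). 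A further reduction, using collar neighborhoods of the link, brings the unzip term down to the case $K=L\times[0,1)$ with $f$ and $g$ refinements of product maps; one then splits the product using $X\times_{Y\times Z}K'\cong (X\times_Y K')\times_{Y\times Z}(X\times_Z K')$ and handles the $[0,1)$ factor directly. This is the concrete mechanism that replaces your hand-wave about ``the machinery of \cite{aft1}''.

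Your arguments for constructibility of the left vertical map (once existence is granted) and for preservation of properness are fine and match the paper's.
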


\begin{proof}
The properness assertion is immediate. We first prove the existence of such pullbacks.
Open covers give colimit diagrams in $\strat$, and open covers pull back along conically smooth maps.  
Given these considerations, because $K$ admits an open cover by stratified spaces of bounded depth, we can assume $K$ has bounded depth.
We proceed by induction on the depth of $K$.
The statement is standard provided $X\ra K$ is a fiber bundle, which proves the case that $K$ has depth zero.  

We now prove the inductive step.
Consider a deepest stratum $K_0\subset K$.
Consider the preimages $K'_0:=g^{-1}K_0$ and $X_0:=f^{-1}K_0$.  
Provided the existence of the fiber product $X\times_K K'$ in $\strat$, there is a canonical conically smooth isomorphism
\begin{equation}\label{pullback-blowups}
\bigl(X_0\underset{K_0}\times K'_0\bigr)
\underset{\bigl(\Link_{X_0}(X)\underset{\Link_{K_0}(K)}\times \Link_{K'_0}(K')\bigr)} 
\coprod 
\bigl(\Unzip_{X_0}(X)\underset{\Unzip_{K_0}(K)}\times \Unzip_{K'_0}(K')\bigr)
~\longrightarrow~ 
X\underset{K} \times K'~.
\end{equation}
We will show that $X\times_KK'$ exists in $\strat$ by showing that this lefthand pushout exists.
Being a constructible cover, this lefthand pushout exists provided the existence of the three fiber products in the lefthand expression.
By inspection, should each of the fiber products in the lefthand expression exist, this pushout exhibits the desired pullback, and the above arrow will be an isomorphism.
The base case of our induction assures the existence of the left term of the pushout.
We next show existence for the intermediate term of the pushout, that the fiber product of links exists: we have a commutative diagram
\[
\xymatrix{
\Link_{X_0}(X)\ar[rr]\ar[dr]&&\Link_{K_0}(K)\\
&X_0\underset{K_0}\times\Link_{K_0}(K)\ar[ur]}
\]
where the first map is a constructible bundle by assumption and the second map is the pullback of the constructible bundle $X_0\ra K_0$ along the map $\Link_{K_0}(K)\ra K_0$. Consequently, using the inductive hypothesis since this is a case of lesser depth, we obtain that the second map is a constructible bundle. By Proposition \ref{cbl-compose}, we obtain that the composite map $\Link_{X_0}(X)\ra \Link_{K_0}(K)$ is a constructible bundle. Again applying the inductive hypothesis now proves the existence of the intermediate term, as it is a pullback of a constructible bundle of stratified spaces of lesser depth.

To complete the proof, we lastly show that the fiber product of unzippings exists. Due to the existence of regular neighborhoods, there is a conically smooth map $\unzip_{K_0}(K)\xra{\alpha} \RR_{\geq 0}$ with the following properties:
\begin{itemize}
\item The preimage 
\[
\Link_{K_0}(K)=\alpha^{-1}0
\]
is the link.
Thereafter, we identify $\Link_{K'_0}(K')=(\alpha g)^{-1}0$ and $\Link_{X_0}(X)=(\alpha f)^{-1}0$.
\item There is an open conically smooth map 
\[
\Link_{K_0}(K)\times [0,1) \hookrightarrow \Unzip_{K_0}(K)
\]
under $\Link_{K_0}(K)$ and over the standard inclusion $[0,1)\to \RR_{\geq 0}$. 
Likewise, there are open conically smooth maps $\Link_{K'_0}(K')\times [0,1) \hookrightarrow \Unzip_{K'_0}(K')$ and $\Link_{X_0}(X)\times [0,1) \hookrightarrow \Unzip_{X_0}(X)$, respectively under $\Link_{K'_0}(K')$ and $\Link_{X_0}(X)$, each over the standard inclusion $[0,1)\to \RR_{\geq 0}$.
\end{itemize}
So, after Observation~\ref{cbls-cover}, to prove the existence of the fiber product of unzippings we are reduced to proving that a fiber product $X\underset{K}\times K'$ exists in the special case in which: 
\begin{itemize}
\item $K = L\times [0,1) = \Link_{K_0}(K)\times [0,1)$;

\item the map $X\xra{f} K=L\times[0,1)$ fits into a diagram among stratified spaces
\[
\xymatrix{
E\times [0,1)  \ar[r]^-r  \ar[d]_-{f_0\times {\sf id}} 
&
X  \ar[d]^-f
\\
L\times [0,1)  \ar[r]
&
K
}
\]
with $r$ a refinement and $f_0$ constructible;

\item the map $K'\xra{g} K=L\times[0,1)$ fits into a diagram among stratified spaces
\[
\xymatrix{
L'\times [0,1)  \ar[d]_-s  \ar[r]^-{g_0\times {\sf id} } 
&
L\times [0,1)  \ar@{=}[d]
\\
K'  \ar[r]^-g
&
K
}
\]
with $s$ a refinement.

\end{itemize}
By commuting limits, we have the identification
\[
X\underset{L\times [0,1)} \times K'~\cong~ (X\underset{L}\times K')\underset{L\times [0,1)} \times (X\underset{[0,1)}\times K')
\]
provided existence of each pullback on the right.
Applying this to our case at hand,
by induction on depth, we are reduced to the case that $L=\ast$.  
The identity
\[
\bigl(E\times[0,1)\bigr)\underset{[0,1)}\times \bigl(L'\times [0,1)\bigr) \cong \bigl(E\times L')\times [0,1)
\]
verifies the existence of the pullback term in this expression.
Finally, we recognize the desired pullback
\[
\bigl(E\times[0,1)\bigr)\underset{[0,1)}\times \bigl(L'\times [0,1)\bigr) \longrightarrow X\underset{[0,1)}\times K'
\]
as the domain of a refinement, thereby demonstrating its existence.

We have now established the existence of pullbacks of constructible bundles. To conclude the proof, we show that the left vertical map $X\times_KK' \ra K'$ is a constructible bundle. 
We first show that this map is a weakly constructible bundle.
By definition, this immediately reduces to the case that $K'$ consists of a single stratum.  
In this case, the map $K'\to K$ factors through a single stratum, $K_q\subset K$. 
The problem thereby reduces to the case that $K$ too consists of a single stratum and so that $X\to K$ is a fiber bundle.  
This problem is local in $K'$, so we can assume that both $K'$ and $K$ are Euclidean spaces.
In this case, there is an isomorphism $X\cong F\times K$ over $K$, which thereafter determines an isomorphism $X\times_KK' \cong F\times K'$ over $K'$.
We conclude that $X\times_KK'  \to K'$ is a weakly constructible bundle, as desired.

It remains to show that $X\times_KK' \to K'$ is constructible. 
For the duration of the proof, fix the notation $X' := X\times_KK'$ and $X_q:=X_{|K_q}$ and $X'_q := X_q \times_{K_q}K_q'$. 
We must show that the conically smooth map
\[
\Link_{X_q'}(X')\longrightarrow \Link_{K_q'}(K')
\]
is a constructible bundle. 
To prove this, we show that the natural diagram in $\strat$
\begin{equation}\label{97}
\xymatrix{
\Link_{X_q'}(X') \ar[r]\ar[d]&\Link_{X_q}(X)\ar[d]\\
\Link_{K_q'}(K')\ar[r]&\Link_{K_q}(K)}
\end{equation}
is a pullback.
By definition of $X\to K$ being a constructible bundle, the right vertical map in this diagram is a constructible bundle.
Ths proof is then complete by induction on the depth of the bottom terms in this diagram, which are strictly less than the respective depths of $K'$ and $K$.

Using that the right vertical map in~(\ref{97}) is a constructible bundle, the argument above ensures the existence of the pullback $\Link_{K'_q}(K')\underset{\Link_{K_q}(K)}\times \Link_{X_q}(X)$ in the category $\strat$.
So we must show that the conically smooth map 
\begin{equation}\label{94}
\Link_{X_q'}(X') \longrightarrow \Link_{K'_q}(K')\underset{\Link_{K_q}(K)}\times \Link_{X_q}(X)
\end{equation}
is an isomorphism in the category $\strat$.
We do this by first arguing that this map is a refinement, then by arguing that this map induces a homotopy equivalence between strata.

The assumption that $X\to K$ is a constructible bundle implies $\Unzip_{X_q}(X) \to \Unzip_{K_q}(K)$ is a constructible bundle.
As argued above, this ensures that the pullback $\Unzip_{K'_q}(K')\underset{\Unzip_{K_q}(K)}\times \Unzip_{X_q}(X)$ in $\strat$ exists.  
This pullback is a stratified space with boundary $\Link_{K'_q}(K')\underset{\Link_{K_q}(K)}\times \Link_{X_q}(X)$ whose interior is $X'\smallsetminus X'_q$.
Consider the diagram in $\strat$:
\begin{equation}\label{95}
\xymatrix{
\Link_{K'_q}(K')\underset{\Link_{K_q}(K)}\times \Link_{X_q}(X)\ar@{^{(}->}[r]\ar[d]&\ar[d] \Unzip_{K'_q}(K')\underset{\Unzip_{K_q}(K)}\times \Unzip_{X_q}(X)\\
K'_q\underset{K}\times X\ar@{^{(}->}[r]& X' .
}
\end{equation}
Using the commutation of pushouts and fiber products of topological spaces, 
we identify the pushout term for the diagram~(\ref{95}) as the fiber product
\[
\Bigl(K'_q\underset{\Link_{K'_q}(K')}\amalg\Unzip_{K'_q}(K')\Bigr)
\underset{\bigl(K_q\underset{\Link_{K_q}(K)}\amalg\Unzip_{K_q}(K)\bigr)}\times
\Bigl(X_q\underset{\Link_{X_q}(X)}\amalg\Unzip_{X_q}(X)\Bigr)
\cong
K'
\underset{K}\times
X
=:X'~.
\]
This is to say that the diagram~(\ref{95}) is a pushout diagram in $\strat$.  
In particular, there is an open map from a deleted collar-neighborhood of the boundary in $\Unzip_{K'_q}(K')\underset{\Unzip_{K_q}(K)}\times \Unzip_{X_q}(X)$,
\[
\bigl(\Link_{K'_q}(K')\underset{\Link_{K_q}(K)}\times \Link_{X_q}(X) \bigr)\times (0,1) \longrightarrow X'\smallsetminus X'_q~,
\]
onto a deleted collar-neighborhood of $X'_q\subset X'$.
Necessarily, this open map factors the likewise open map
\[
\bigl(\Link_{X_q'}(X') \bigr)\times (0,1) \longrightarrow X'\smallsetminus X'_q~.
\]
We conclude that the canonical map $\Link_{X_q'}(X')\to \Link_{K'_q}(K')\underset{\Link_{K_q}(K)}\times \Link_{X_q}(X)$ is a refinement, as desired.

It remains to show that the map~(\ref{94}) induces a homotopy equivalence
\begin{equation}\label{92}
\underset{p'}\coprod \Link_{X_q'}(X')_{|X_{p'}} \xra{~\simeq~}
\underset{q'}\coprod \Link_{K'_q}(K')_{|K'_{q'}}\underset{\underset{q} \coprod \Link_{K_q}(K)_{|K_q}}\times \underset{p}\coprod \Link_{X_q}(X)_{|X_p}
\end{equation}
between disjoint union of strata.
Through the identification of exit-path $\infty$-categories offered by Lemma~\ref{exit-description}, we recognize this map as that between spaces of morphisms induced by the functor
\begin{equation}\label{93}
\exit(X')\longrightarrow \exit(K')\underset{\exit(K)}\times \exit(X)~.
\end{equation}
Being a restricted Yoneda functor, $\exit: \Strat\ra \Cat_{\oo}$ preserves limit diagrams.
The canonical map
$X' \to K'\underset{K}\times X$
is an equivalence in $\Strat$ by applying Lemma \ref{lemma.PBstratStrat}. Consequently, the functor~(\ref{93}) is an equivalence between $\infty$-categories.
In particular, this functor~(\ref{93}) is an equivalence on spaces of morphisms.
We conclude that the map~(\ref{92}) is a homotopy equivalence, as desired, thereby completing this proof.

\end{proof}

\begin{remark}\label{rem.wcbl.dont}
Weakly constructible bundles do not pull back in $\strat$. For example, let $M$ be a compact smooth manifold with $x\in M$ a point which is not isolated.
Consider the map to the open cone on $M$
\[
[0,1) \overset{(x,-)}\longrightarrow M\times[0,1)\longrightarrow \sC(M)~.
\]
This gives rise to a limit diagram of stratified topological spaces
\[
\xymatrix{
[0,1)\underset{\sC(M)}\times M\times [0,1)\ar@{^{(}->}[r]\ar[d]&M\times [0,1)\ar[d]\\
[0,1)\ar@{^{(}->}[r]&\sC(M)}
\]
where $M\times [0,1)\ra \sC(M)$ is a weakly constructible bundle which is not constructible.
The above fiber product is isomorphic as a stratified topological space to the stratified subspace
\[
M\times\{0\}\cup\{x\}\times[0,1) \subset M\times[0,1)~.
\]
By inspection, there is no basic neighborhood about the point $(x,0) \in M\times\{0\}\cup\{x\}\times[0,1)$.
Therefore, this fiber product is not even a $C^0$ stratified space in the sense of \S1.1. Consequently, the above pullback does not exist in $\strat$.
\end{remark}

From the previous lemma we conclude that constructible covers pull back along constructible bundles.

\begin{cor}\label{cbl-covers-pullback}
For each constructible bundle $X\to K$ and each constructible cover
\[
\xymatrix{
L  \ar[r]  \ar[d]
&
\w{K} \ar[d] &\text{the pullback diagram}& X\underset{K}\times L  \ar[r]  \ar[d]
&
X\underset{K}\times\w{K} \ar[d]
\\
K_0 \ar[r]
&
K, && X\underset{K}\times K_0 \ar[r]
&
X
}
\]
is a constructible cover.

\end{cor}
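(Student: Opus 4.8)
The plan is to realize the pulled-back square as the blow-up square of $X$ along a constructible closed sub-stratified space, and then to feed everything through Lemma~\ref{cbls-pullback}. First, since $X\to K$ is a constructible bundle, Lemma~\ref{cbls-pullback} guarantees that the pullbacks $X\underset{K}\times K_0$, $X\underset{K}\times L$, $X\underset{K}\times\w{K}$ (and of course $X=X\underset{K}\times K$) all exist, so the square in the statement is defined and commutes. Each of its four maps is a base change of the corresponding map of the given constructible cover: $X\underset{K}\times L\to X\underset{K}\times K_0$ is the base change of the (proper constructible) map $L\to K_0$ along $X\underset{K}\times K_0\to K_0$; $X\underset{K}\times L\to X\underset{K}\times\w{K}$ is the base change of $L\to\w{K}$ along $X\underset{K}\times\w{K}\to\w{K}$; and $X\underset{K}\times K_0\to X$, $X\underset{K}\times\w{K}\to X$ are the base changes of $K_0\to K$, $\w{K}\to K$ along $X\to K$. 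By Lemma~\ref{cbls-pullback} the maps $X\underset{K}\times K_0\to K_0$ and $X\underset{K}\times\w{K}\to\w{K}$ are themselves constructible bundles, so Lemma~\ref{cbls-pullback} applied once more to each of the four displayed base changes shows that every map in the square is a constructible bundle; and each is proper, since properness is stable under base change and each map of a constructible cover is proper.

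\textbf{Recognizing the square as a constructible cover.} It remains to see that the square is a pushout in $\strat$ with $X\underset{K}\times K_0\hookrightarrow X$ a constructible closed sub-stratified space. For the latter: monomorphisms are stable under base change, so $X\underset{K}\times K_0\to X$ is an embedding; its image is $f^{-1}(K_0)\subset X$, which is closed because $K_0\subset K$ is closed; and constructibility was just established. Hence $X\underset{K}\times K_0$ is a constructible closed sub-stratified space of $X$, and \S\ref{sec-reg} provides its blow-up square, a pushout in $\strat$ all of whose maps are proper and constructible -- that is, a constructible cover. The proof is then finished by identifying this blow-up square with the pulled-back square: the compatibility of the unzipping construction with base change along a constructible bundle supplies canonical identifications $\Unzip_{X\underset{K}\times K_0}(X)\cong X\underset{K}\times\w{K}$ and $\Link_{X\underset{K}\times K_0}(X)\cong X\underset{K}\times L$ over $X$ and under $X\underset{K}\times K_0$, carrying one square to the other. (If one takes ``constructible cover'' in a form more general than a blow-up square, the same compatibility lets one run the induction on the depth of $K$ exactly as in the proof of Lemma~\ref{cbls-pullback}, using that open covers give colimit diagrams in $\strat$ and pull back along conically smooth maps to reduce to the case of a single deepest stratum.)

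\textbf{The main obstacle.} The one non-formal ingredient is precisely this last identification: that forming the link and the unzip along a constructible closed sub-stratified space commutes with pullback along a constructible bundle, with the resulting blow-up square agreeing with the base-changed one. This is geometric input about conically smooth structures drawn from~\cite{aft1}; alternatively it can be extracted directly from the inductive construction in the proof of Lemma~\ref{cbls-pullback}, where the pullback of a constructible bundle along an arbitrary conically smooth map is assembled out of exactly such fiber products of links and unzips over links and unzips, cf.\ the isomorphism~(\ref{pullback-blowups}). Everything else is bookkeeping with pullbacks together with the already-proven stability of constructible bundles, and of properness and of (closed) embeddings, under base change.
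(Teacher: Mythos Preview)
Your proof is correct and follows essentially the same route as the paper's. The paper's two-line argument---existence via Lemma~\ref{cbls-pullback}, pullback-ness by composition, and pushout-ness by ``inspection of expression~(\ref{pullback-blowups})''---is exactly what you have unpacked: the pulled-back square is the blow-up square of $X$ along $X\times_K K_0$, and the identifications $\Link_{X\times_K K_0}(X)\cong X\times_K L$ and $\Unzip_{X\times_K K_0}(X)\cong X\times_K\w{K}$ are precisely what one reads off from the pushout~(\ref{pullback-blowups}) in the proof of Lemma~\ref{cbls-pullback}. Your explicit verification that each map in the square is proper and constructible, and your flagging of the Link/Unzip base-change compatibility as the one substantive geometric input, make transparent what the paper leaves to ``inspection.''
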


\begin{proof}
Lemma~\ref{cbls-pullback} grants that the pullback square exists.
Because the first displayed diagram in the statement is a pullback, then so is the second displayed diagram.
Inspection of expression~(\ref{pullback-blowups}) verifies that the second displayed diagram is a pushout.  

\end{proof}

\begin{lemma}\label{cbls-with-R}
For each weakly constructible bundle $X\to Z\times \RR$, there is an isomorphism
\[
X~\cong~X_{|Z\times\{0\}}\times \RR
\]
under $X_{|Z\times\{0\}}\times \{0\}$ and over $Z\times \RR$.

\end{lemma}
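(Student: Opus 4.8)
The plan is to reduce to the local situation in $Z$, invoke Lemma~\ref{to-R} to analyze the constructible bundle $X \to Z \times \RR$ stratum-by-stratum in the source, and then build a trivializing isomorphism by an explicit vector-field / flow argument along the $\RR$-direction, proceeding by induction on the depth of $X$ exactly as in the proof of Lemma~\ref{to-R}. First I would observe that the assertion is local in $Z$ (open covers give colimit diagrams in $\strat$ and pull back, by Observation~\ref{cbls-cover} and the discussion preceding Lemma~\ref{cbls-pullback}), so one may assume $Z = \RR^i$ is Euclidean and moreover, using the stratawise-bounded-depth cover $\{X^{\leq n}\}$ from the proof of Lemma~\ref{to-R}, that $X$ has bounded depth. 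The base case is when every connected component of $X$ is trivially stratified: then $X \to Z \times \RR$ is a fiber bundle (Observation~\ref{cbl-are-bdl}), and Lemma~\ref{bdls-homot} — applied with $K = Z$, $L = \emptyset$, so that $Z\times\RR = Z \times \RR^1 \times \sC(\emptyset)$ — directly yields $X \cong X_{|Z\times\{0\}} \times \RR$ over $Z \times \RR$ and under $X_{|Z\times\{0\}}\times\{0\}$.

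For the inductive step, let $X_d \subset X$ be the union of deepest strata and $X_{>d}$ its open complement. Pulling back the blow-up square of $X_d \subset X$ (and using that each of $X_d \to Z\times\RR$, $X_{>d}\to Z\times\RR$, $\Link_{X_d}(X)\to Z\times\RR$, $\Unzip_{X_d}(X)\to Z\times\RR$ is again a constructible bundle by Proposition~\ref{cbl-compose} and Observation~\ref{strata-are-cbl}), I would trivialize each of these over $\RR$ by the inductive hypothesis, compatibly: first trivialize $\Link_{X_d}(X) \cong \Link_{X_d}(X)_{|Z\times\{0\}} \times \RR$, then — using the fiberwise open cone $\sC(\pi) \hookrightarrow X$ under $X_d$ furnished by regular neighborhoods (as in Lemma~\ref{to-R}) together with the pushout $X \cong \sC(\pi)\underset{\Link_{X_d}(X)\times\RR_{>0}}\coprod X_{>d}$ — assemble a global isomorphism. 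The one subtlety is that the chosen open embedding $\sC(\pi)\hookrightarrow X$ need not be compatible with projection to $\RR$; here I would run the same inverse-function-theorem correction as in the proof of Lemma~\ref{to-R}: rescale the cone coordinate by a conically smooth self-embedding $\underline\phi\colon \sC(\pi_d)\hookrightarrow\sC(\pi_d)$ over the cone-locus whose image lands in a neighborhood where the $\RR$-component of $f\circ e$ is a smooth open self-embedding of $\RR$ containing a prescribed ball, and precompose with its inverse. Since $\{\RR = \bigcup_R \sB_R(0)\}$ is an open cover, this produces the trivialization over all of $\RR$.

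The main obstacle I anticipate is exactly this compatibility step — gluing the stratumwise and linkwise trivializations over $\RR$ into a single isomorphism under $X_{|Z\times\{0\}}\times\{0\}$, since the various regular-neighborhood choices are not canonically compatible with the $\RR$-coordinate. The resolution is the flow/correction trick from Lemma~\ref{to-R}, and one must check that all the auxiliary maps ($\underline\phi$, the collar $\Link\times[0,1)\hookrightarrow\Unzip$, the partition of unity) can be taken independent of, or smoothly varying in, the base $Z$ and the parameter in $\RR$; this is routine given the parametrized versions of the results of \cite{aft1} already used in Lemmas~\ref{to-R} and~\ref{cbls-pullback}. Alternatively — and perhaps more cleanly — one could deduce the statement formally from Lemma~\ref{cbls-pullback}: the two maps $Z\times\RR \to Z$ (projection) and $Z \hookrightarrow Z\times\RR$ (zero-section) exhibit $X_{|Z\times\{0\}}\times\RR$ and $X$ as pullbacks of $X_{|Z\times\{0\}}\to Z$ along $\RR$-homotopic maps, and then invoke constructibility-implies-$\RR$-invariance (Lemma~\ref{R-invt=cbl}) to identify them up to the stated isomorphism; I would first attempt this route, falling back on the explicit flow argument if the point-set isomorphism (as opposed to an equivalence in $\Strat$) needs to be produced by hand.
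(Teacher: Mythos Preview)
Your primary approach --- localizing in $Z$, then inducting on depth via the blow-up decomposition and gluing trivializations --- has a genuine gap at the very first step. The assertion ``there is an isomorphism $X\cong X_{|Z\times\{0\}}\times\RR$ over $Z\times\RR$'' is \emph{not} local in $Z$: if you produce such an isomorphism over each member of an open cover of $Z$, these need not agree on overlaps (they may differ by fiberwise automorphisms over $\RR$), and there is no cocycle vanishing available to glue them. The fact that open covers give colimit diagrams in $\strat$ only tells you that $X$ is the colimit of the $X_{|U\times\RR}$; it does not patch independently chosen isomorphisms. The same issue recurs in your inductive step, where the trivializations of $X_d$, $\sC(\pi)$, and $X_{>d}$ must match along the collar $\Link_{X_d}(X)\times(0,1)$; the inverse-function-theorem correction from Lemma~\ref{to-R} adjusts the embedding $\sC(\pi)\hookrightarrow X$ to lie over the base, but does nothing to force the independently chosen trivializations of the pieces to coincide on the overlap.

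The paper avoids this gluing problem by constructing a single global object rather than patching local ones: a parallel vector field $V$ on $X$ lifting $\partial_t$ on $Z\times\RR$, whose flow (defined for all time since the base is $Z\times\RR$) gives the isomorphism directly. The construction proceeds by depth in $X$: for each $k$, Lemma~\ref{to-R} and Lemma~\ref{bdls-homot} trivialize the stratum $X_k$ over $\RR$, yielding a lift $V_k$ of $\partial_t$ on $X_k$; this is extended over a tubular neighborhood $\nu_k\subset X$, and a conically smooth partition of unity subordinate to $\{\nu_k\}_{k\geq 0}$ produces a global $V'$. This $V'$ need not lie over $\partial_t$, so a second inductive pass (splitting $Df$ along strata and subtracting a vertical correction that vanishes on lower-depth strata) modifies it to a $V$ that does. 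The point is that parallel vector fields, unlike isomorphisms, are sections of a sheaf and can be averaged by partitions of unity; this is exactly what makes the paper's route succeed where yours stalls. Your alternative ``formal'' route via Lemma~\ref{R-invt=cbl} would only yield an equivalence in $\Strat$, not the point-set isomorphism the lemma asserts --- and the latter is what is used downstream (e.g., in Lemma~\ref{cbls-over-cones} and the proof of Lemma~\ref{psi-mono}).
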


\begin{proof}

Fix a weakly constructible bundle $X\xra{f} Z\times \RR$, and denote the stratification $X=(X\to P)$.
There is the map of posets $P\xra{\sf depth} \ZZ_{\geq 0}$ that reports the codimension of $X_p\subset X$.
For $S\subset \ZZ_{\geq 0}$, we notate $X_S:= X_{{\sf depth}^{-1}S} \subset X$ for the union of strata whose depth is an element of $S$.

Consider the parallel vector field $\partial_t$ on $Z\times \RR$. Its flow is a conically smooth map $(Z\times\RR)\times  \RR\to Z\times\RR$ given by $(z,s,t)\mapsto (z,s+t)$. This flow is defined for all time.
In a moment, we will argue that there exist a parallel vector field $V$ on $X$ lifting $\partial_t$.
Provided this, the flow of $V$ is defined for all time.
In particular, we have an isomorphism $\gamma\colon X_{|Z\times\{0\}}\times \RR\ra X$ over $Z\times \RR$, as desired.

Lemma~\ref{to-R} implies that, for each $k\geq 0$, the restriction $X_k\to Z\times \RR$ factors through some stratum $Z_q\times \RR\subset Z\times \RR$ as a fiber bundle.
Through Lemma~\ref{bdls-homot}, choose an isomorphism $X_k\cong (X_k)_{|Z_q\times\{0\}} \times \RR$ under $(X_k)_{|Z_q\times\{0\}}\times \{0\}$ and over $Z_k\times \RR$.
By way of the above isomorphism, there results a parallel vector field $V_k$ on $X_k$ lifting $\partial_t$ on $Z_q\times \RR$. For each $k\geq 0$, choose a tubular neighborhood $X_k\subset \nu_k\subset X$. (Such exists through the results of~\cite{aft1} discussed in~\S\ref{recall} of this article).
This open neighborhood $\nu_k\subset X$ is equipped with a conically smooth retraction $\nu_k\to X_k$ which is a fiber bundle, equipped with a section.
Another application of Lemma~\ref{bdls-homot} gives the isomorphism $\nu_k\cong (\nu_k)_{|Z_q\times\{0\}} \times \RR$ over and under $X_k\cong (X_k)_{|Z_q\times\{0\}} \times \RR$.
In particular, there is a parallel vector field $\w{V}_k$ on $\nu_k$ extending $V_k$ on $X_k$.
The chosen collection of tubular neighborhoods $\{\nu_k\}_{k\geq 0}$ is an open cover of $X$.
Choose a partition of unity $\{\psi_k\}_{k\geq 0}$ subordinate to this open cover.
Consider the parallel vector field on $X$
\[
V'~:=~\sum_{k\geq 0} \psi_k \w{V}_k~.
\]
We now modify $V'$ to ensure that it lies over $\partial_t$ on $Z\times \RR$.

We define a sequence $\{V_k'\}_{k\geq 0}$ of parallel vector fields on $X$ with the following properties:
\begin{itemize}
\item For each $0\leq i\leq k \leq l$, the restrictions $(V'_k)_{|X_i} = (V'_l)_{|X_i}$ agree.

\item For each $0\leq i\leq k$, the restriction $(V'_k)_{|X_i}$ lies over $\partial_t$ on $Z\times \RR$.

\end{itemize}
We define this sequence by induction on $k\geq 0$.
For $k=0$, set $V'_k=V'$.
It is immediate to verify that the restriction $V'_{|X_0}$ lies over $\partial_t$, because $V'_{|X_0} = V_0$ (for $(\psi_k)_{|X_0} = 0$ unless $k=0$).
Now suppose $V'_k$ has been constructed for $k<d>0$.
The projection $X_d \to Z\times \RR$ factors through some stratum $Z_q\times \RR$ as a fiber bundle.
In particular, the map of sheaves of parallel vector fields ${\sf D}f_{|d}:\Theta_{X_d}\ra f^\ast \Theta_{Z_q\times \RR}$ has locally constant rank, and it admits a splitting $\Theta_{X_d} \simeq {\sf Ker}({\sf D}f_{|d}) \oplus \Theta_{Z_q\times \RR}$.
By way of this splitting, choose a vector field $W_d$ on $X_d$ for which $(V'_{d-1})_{|X_{d}}-W_d$ lies over $\partial_t$ on $Z_q\times \RR$.
Because the restriction $(V'_{d-1})_{|X_{<d}}$ lies over $\partial_t$, we can take $W_d$ to vanish conically smoothly as it approaches $X_{<d}$.
In particular $(V'_{d-1})_{|X_d}\smallsetminus W_d$ gives an extension of $(V'_{d-1})_{|X_{d-1}}$ to $X_{\leq d}$.
Call this extension $W'_d$. It is a parallel vector field on $X_{\leq d}$ that agrees with $(V'_{d-1})$ on $X_{<d}$.
As performed previously, choose a regular neighborhood $X_{\leq d}\subset O\subset X$ as in~\S\ref{recall}, and extend $W'_d$ to a parallel vector field $\w{W}'_d$ on $O$.
Choose a partition of unity $\{\phi_O,\phi_{>d}\}$ subordinate to the open cover $O\cup (X\smallsetminus X_{\leq d})$ of $X$.
Take
\[
V_d~:=~ \phi_O \w{W}'_d+ \phi_{>d}V'_{d-1}~.
\]
This is a parallel vector field on $X$, and it satisfies the two required properties by construction.

The desired parallel vector field on $X$ is given by the expression
\[
V~:=~\underset{k\geq 0} \limit V'_k
\]
which is defined because $X$ is locally compact, so in particular the map ${\sf depth}\colon X\to \ZZ_{\geq 0}$ is locally bounded.
By construction, this parallel vector field lifts $\partial_t$ on $Z\times \RR$.

\end{proof}

\subsection{Decomposing constructible bundles}\label{sec.decompose}
We study how to break up total spaces of constructible bundles.
The main result of this section is Corollary~\ref{Cbl.ess}.

\begin{observation}\label{unzips-over-cones}
Each conically smooth map $X\to K\times \oC(Z)$ to the product of a smooth manifold $K$ and a closed cone determines a map between inclusions
\[
\Bigl(\Link_{X_{|K}}(X) \to \Unzip_{X_{|K}}(X)\Bigr)  \longrightarrow \Bigl(K\times Z \xra{\{0\}} K\times Z\times[0,1]\Bigr)~,
\]
over the inclusion $K \to K\times \oC(Z)$.
Furthermore, should $X\to K\times \oC(Z)$ be a constructible bundle, then each of the maps above is a constructible bundle.
\end{observation}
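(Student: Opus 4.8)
The plan is to deduce both assertions from the local structure of the closed cone together with the naturality of the blow-up construction recalled in \S\ref{sec-reg}. First I would note that the cone-point $\ast\hookrightarrow\oC(Z)$ is the (closed) $\ast$-stratum, hence a constructible closed sub-stratified space — an instance of Observation~\ref{sub-cbl} — so that the blow-up square along $\ast\subset\oC(Z)$ is defined, and that this square is precisely $\bigl(Z\xra{\{0\}}Z\times[0,1]\bigr)\to\bigl(\ast\to\oC(Z)\bigr)$: indeed $\Link_\ast(\oC(Z))=Z$ and $\Unzip_\ast(\oC(Z))=Z\times[0,1]$, the latter obtained by un-collapsing the open-cone neighborhood $\sC(Z)\subset\oC(Z)$ of $\ast$ and gluing it to $\oC(Z)\smallsetminus\ast=Z\times(0,1]$, with blow-down $Z\times[0,1]\to\oC(Z)$ the quotient collapsing $Z\times\{0\}$ to $\ast$. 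Since $\ast\hookrightarrow\oC(Z)$ is constructible and closed, its preimage $X_{|\ast}=f^{-1}(\ast)\hookrightarrow X$ is again constructible and closed (as in the proof of Lemma~\ref{lem.to-cone}), so the blow-up square along $X_{|\ast}\subset X$ is likewise defined.

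Next I would produce the map of inclusions. Away from $X_{|\ast}$ there is no choice: the blow-down $\Unzip_{X_{|\ast}}(X)\to X$ is an isomorphism over $X\smallsetminus X_{|\ast}$, where $f$ lands in $\oC(Z)\smallsetminus\ast=Z\times(0,1]\subset Z\times[0,1]$, so this prescribes $\Unzip_{X_{|\ast}}(X)\to Z\times[0,1]$ over $X\smallsetminus X_{|\ast}$. The remaining datum is the \emph{link map} $\Link_{X_{|\ast}}(X)\to Z$, the "angular derivative" of $f$ transverse to $X_{|\ast}$. To construct it I would use a conically smooth tubular neighborhood $X_{|\ast}\subset\sC(\pi)\hookrightarrow X$ of $X_{|\ast}$ (with $\pi\colon\Link_{X_{|\ast}}(X)\to X_{|\ast}$), shrunk so that $f$ carries $\sC(\pi)$ into the basic $\sC(Z)\subset\oC(Z)$; then $f$ sends cone-locus to cone-locus, and unwinding the conical-smoothness condition (\ref{derivative}) in a basic chart $\RR^i\times\sC(L)\hookrightarrow X$ about a point of $X_{|\ast}$ supplies a conically smooth extension of the angular component ${\sf pr}_Z\circ f\colon\sC(\pi)\smallsetminus X_{|\ast}=\Link_{X_{|\ast}}(X)\times(0,1)\to Z$ across $\Link_{X_{|\ast}}(X)\times\{0\}$ — this is exactly the functoriality of the link construction for conically smooth maps from \cite{aft1}, applied here to $f$ and the constructible closed subspace $\ast\subset\oC(Z)$. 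These two prescriptions glue (they agree on the overlap $\Link_{X_{|\ast}}(X)\times(0,1)$) to a conically smooth map $\Unzip_{X_{|\ast}}(X)\to Z\times[0,1]$ restricting to the link map over $Z\times\{0\}$, and by construction the whole square lies over $f\colon X\to\oC(Z)$ via the blow-down maps.

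For the ``furthermore'' clause I would check that $\Unzip_{X_{|\ast}}(X)\to Z\times[0,1]$ is a constructible bundle stratum-locally over the base, using Lemma~\ref{to-R} and that constructibility of a bundle may be tested over each stratum-union of the base. Over the open part $Z\times(0,1]$ the restriction is identified with the restriction of the constructible bundle $f$ to $X\smallsetminus X_{|\ast}$, hence is a constructible bundle; over $Z\times\{0\}\cong Z$ it is the link map $\Link_{X_{|\ast}}(X)\to Z$, which is a fiber bundle (concretely: $\Unzip_{X_{|\ast}}(X)\to X$ is a proper constructible bundle by \S\ref{sec-reg}, $f$ is a constructible bundle by hypothesis, and one trivializes locally using Lemma~\ref{bdls-homot} in basic charts near $X_{|\ast}$ — equivalently this is the functoriality of links for constructible bundles in \cite{aft1}). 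The statement that $\Link_{X_{|\ast}}(X)\to Z$ is itself a constructible bundle is then the restriction of the previous map over the closed stratum $Z\times\{0\}$. The only genuinely non-formal point — and the main obstacle, such as it is for an observation of this kind — is the construction of the link map and the verification of its conical smoothness; this is where the conical-smoothness hypothesis on $f$ is actually used, everything else being bookkeeping with pushouts and the closure properties of constructible bundles established in this section.
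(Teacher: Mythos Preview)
Your argument is correct. The paper states this result as an \emph{Observation} without any proof, treating it as immediate from the functoriality of $\Link$ and $\Unzip$ (established in~\cite{aft1}) applied to the map $f\colon X\to\oC(Z)$ together with the identification $\Unzip_\ast\bigl(\oC(Z)\bigr)\cong Z\times[0,1]$. Your proposal is exactly a careful unpacking of that functoriality: you identify the blow-up of the target, pull back the constructible closed stratum, and then build the link map by hand from the conical-smoothness limit~(\ref{derivative}) before gluing. The ``furthermore'' clause you handle stratum-by-stratum over $Z\times[0,1]$, which is again what the paper would have you do implicitly via Lemma~\ref{to-R} and the closure properties of~\S6.1. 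So there is no genuine difference in approach---you have simply written out what the paper leaves tacit.
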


\begin{lemma}\label{cbls-over-cones}
For each constructible bundle $X\to K\times \oC(Z)$ over the product of a smooth manifold and a closed cone, there is an open conically smooth map under $\Link_{X_{|K}}(X)$
\[
\Link_{X_{|K}}(X) \times [0,1] \longrightarrow \Unzip_{X_{|K}}(X)
\]
over $K\times Z\times[0,1]$.  
Furthermore, for each open conically smooth map $e:X \ra Y$ of constructible bundles over $K\times \oC(Z)$, the diagram 
\[
\xymatrix{
\Link_{X_{|K}}(X)      \ar[r]^-{\{0\}}    \ar[dr]
&
\Link_{X_{|K}}(X)\times[0,1]  \ar[rr]^-{\Link(e)\times {\sf id}}     \ar@{-->}[d]
&&
\Link_{Y_{|K}}(Y)\times [0,1]  \ar@{-->}[d] 
&
\Link_{Y_{|K}}(Y)   \ar[l]_-{\{0\}}    \ar[ld] 
\\
&\Unzip_{X_{|K}}(X)  \ar[rr]^-{\Unzip(e)}  
&&
\Unzip_{Y_{|K}}(Y)
}
\]
over $K\times Z\times [0,1]$ can be filled with the dashed arrows are open conically smooth maps.  

\end{lemma}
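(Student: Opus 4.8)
<br>

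The plan is to reduce the statement to a local computation over a basic neighborhood in $\oC(Z)$, then invoke the structural results on links and unzips of constructible bundles recorded in Section~1 and in \cite{aft1}. First I would establish the existence of the open conically smooth map $\Link_{X_{|\ast}}(X)\times[0,1]\hookrightarrow \Unzip_{X_{|\ast}}(X)$ over $Z\times[0,1]$. By Observation~\ref{unzips-over-cones}, the constructible bundle $X\to \oC(Z)$ induces a constructible bundle $\Unzip_{X_{|\ast}}(X)\to Z\times[0,1]$ which restricts over $Z\times\{0\}$ to $\Link_{X_{|\ast}}(X)\to Z$. Since $Z\times[0,1]\cong Z\times \oC(\ast)$ has a deepest stratum locus $Z\times\{0\}$ whose link-cone structure is a product, I would apply Lemma~\ref{cbls-homot} (trivialization of fiber bundles over $K\times\RR^i\times\sC(L)$, here with $\sC(\ast)=[0,1)$) stratum by stratum, together with Lemma~\ref{to-R} to assemble the strata-wise trivializations, and the existence of stratified regular neighborhoods from Section~\ref{sec-reg} to get the open embedding under $\Link_{X_{|\ast}}(X)$ and over $Z\times[0,1]$. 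The key point is that $\Unzip_{X_{|\ast}}(X)$ is a stratified space with boundary $\Link_{X_{|\ast}}(X)$, and the collar-neighborhood theorem for stratified spaces with boundary gives the collar $\Link_{X_{|\ast}}(X)\times[0,1)\hookrightarrow \Unzip_{X_{|\ast}}(X)$; one then promotes $[0,1)$ to $[0,1]$ by reparametrizing and checks the result lies over $Z\times[0,1]$ using the constructible bundle structure.

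Second, for the functoriality statement, given an open conically smooth map $e\colon X\to Y$ of constructible bundles over $\oC(Z)$, I would first observe that $e$ carries $X_{|\ast}$ into $Y_{|\ast}$ (since $e$ is a refinement onto its image and lies over $\oC(Z)$), hence induces $\Link(e)\colon \Link_{X_{|\ast}}(X)\to \Link_{Y_{|\ast}}(Y)$ and $\Unzip(e)\colon \Unzip_{X_{|\ast}}(X)\to \Unzip_{Y_{|\ast}}(Y)$; this is the content of Observation~\ref{unzips-over-cones} applied to maps. Then the dashed arrows are required to make the pentagon commute; the two collar embeddings $\Link_{X_{|\ast}}(X)\times[0,1]\hookrightarrow\Unzip_{X_{|\ast}}(X)$ and $\Link_{Y_{|\ast}}(Y)\times[0,1]\hookrightarrow\Unzip_{Y_{|\ast}}(Y)$ are generally not compatible with $\Unzip(e)$ on the nose, so the fillers have to be produced by a relative collar-uniqueness argument: I would choose the collar for $Y$ first, pull back the resulting tubular structure along $\Unzip(e)$ (using that $e$ is open, so $\Unzip(e)$ is open onto its image), and then modify the collar on $X$ within the subspace of conically smooth collars — which is contractible, or at least connected by stratified isotopies, by the isotopy-extension machinery — to arrange strict compatibility over $Z\times[0,1]$. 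Concretely, I would build the $X$-side dashed arrow as $\Unzip(e)^{-1}$ applied to the $Y$-collar near the boundary, then extend inward by a partition-of-unity interpolation against the $X$-collar, exactly as in the gluing arguments of the proof of Lemma~\ref{cbls-with-R}.

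The main obstacle I anticipate is precisely this compatibility/filling step: producing collars on both sides that are simultaneously open embeddings, lie over $Z\times[0,1]$, and make the diagram commute. The individual existence statements are routine consequences of collar-neighborhood theorems and Lemma~\ref{bdls-homot}, but coordinating them along an arbitrary open map $e$ requires working stratum by stratum from the deepest upward (bounding depth first by covering $\Unzip_{X_{|\ast}}(X)$ by loci of bounded local dimension, as in the proof of Lemma~\ref{cbls-pullback}), extending vector fields or collar data across tubular neighborhoods, and patching with subordinate partitions of unity so that the extension restricts correctly on lower strata. One must also take care that ``open conically smooth map'' is preserved under these patchings, which follows since the interpolations are through open embeddings on each stratum and the maps are refinements onto their images. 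I would not expect any genuinely new geometric input beyond what Section~1 and \cite{aft1} already supply; the work is in organizing the induction on depth and the relative collar argument cleanly.
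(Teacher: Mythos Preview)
The paper takes a more direct route than you propose. Rather than invoking a collar-neighborhood theorem and then attempting to promote it, the paper simply lifts the vector field $\partial_t$ on $Z\times[0,1]$ to a parallel vector field $V$ on $\Unzip_{X_{|\ast}}(X)$: by Lemma~\ref{cbls-with-R}, the constructible bundle $\Unzip_{X_{|\ast}}(X)\to Z\times[0,1]$ admits an atlas by charts of the form $\RR^{i_\alpha}\times\sC(L_\alpha)\times I_\alpha \to U_\alpha\times I_\alpha$ with $I_\alpha\subset[0,1]$ an interval, on each of which $\partial_t$ has the obvious lift; a partition of unity averages these to a global $V$. The desired map $\Link\times[0,1]\to\Unzip$ is then the flow of $V$ restricted to $\Link\times[0,1]$, and it lies over $Z\times[0,1]$ automatically because $V$ lifts $\partial_t$. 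The relative statement is declared ``manifest from the above construction'': one arranges $V_Y$ first and then builds $V_X$ compatibly, which is possible since $\Unzip(e)$ is open and $V_Y$ can be pulled back where defined and patched.

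Your proposal has a genuine gap at the step ``promote $[0,1)$ to $[0,1]$ by reparametrizing and check the result lies over $Z\times[0,1]$.'' A collar of the boundary $\Link$ in $\Unzip$ is a local object near the boundary: its image lies over some neighborhood of $Z\times\{0\}$ in the base, not over all of $Z\times[0,1]$, and no reparametrization of the collar coordinate changes which part of the base is hit. What you need is precisely a mechanism that carries $\Link$ fiberwise across the entire interval $[0,1]$ in the base direction---that is, a global lift of $\partial_t$---and neither a boundary collar nor a stratum-wise application of Lemma~\ref{bdls-homot} together with Lemma~\ref{to-R} supplies this without the vector-field assembly. (Lemma~\ref{to-R} characterizes constructibility; it does not glue trivializations.) Once you adopt the flow approach, your functoriality strategy (choose the $Y$-collar first, pull back along $\Unzip(e)$, interpolate against the $X$-collar) becomes unnecessary: compatible lifts $V_X$ and $V_Y$ of $\partial_t$ can be arranged from the outset, and the resulting flows then intertwine with $\Unzip(e)$ by construction. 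Your instincts about partitions of unity and working from the deepest strata upward are correct, but they should be organized around lifting $\partial_t$ directly rather than around collars.
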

\begin{proof}
Consider the vector field $\partial_t$ on $K\times Z\times [0,1]$. 
Its flow is the conically smooth map $\chi\colon (K\times Z\times[0,1])\times  \RR_{\geq 0}\dashrightarrow K\times Z\times[0,1]$ given by $(x,z,s,t)\mapsto (x,z,s+t)$. 
This is defined on $K\times Z\times\{(s,t)\in [0,1]\times \RR_{\geq 0}\mid 0\leq s+t\leq 1\}$.
We now now argue that there exists a vector field $V$ on $\Unzip_{X_{|K}}(X)$ which lifts $\partial_t$.  

From its construction, and after Lemma~\ref{cbls-with-R}, the constructible bundle $\Unzip_{X_{|K}}(X)\to K\times Z\times [0,1]$ admits an atlas consisting of maps among basics of the form 
$f_\alpha\times {\sf pr}:\RR^{i_\alpha}\times \sC(L_\alpha)\times I_\alpha \ra \RR^{j_\alpha}\times U_\alpha\times I_\alpha$ 
with $I_\alpha\subset [0,1]$ an open subspace which is an interval, and $\RR^{j_\alpha} \hookrightarrow K$ and $U_\alpha \hookrightarrow Z$ members of the respective atlases of $K$ and $Z$.  
In particular, there is a standard lift $V_\alpha$ of the restriction of $\partial_t$ to $\RR^{j_\alpha}\times U_\alpha \times I_\alpha$.  
Choose a conically smooth partition of unity $\{\phi_\alpha\}$ subordinate to this open cover for $\Unzip_{X_{|K}}(X)$ (such exists, as established in~\cite{aft1}).  
The vector field $V:= \sum_\alpha \phi_\alpha  V_\alpha$ defines a vector field on $\Unzip_{X_{|K}}(X)$.
By design, $V$ lifts $ \partial_t$ on $K\times Z\times [0,1]$.  

The flow of $V$ is a conically smooth map $\gamma\colon \Unzip_{X_{|K}}(X) \times \RR_{\geq 0}\dashrightarrow \Unzip_{X_{|K}}(X)$ over $\chi$, and is defined on the preimage of the domain of $\chi$.   
Furthermore, for each $t\in \RR_{\geq 0}$, the conically smooth map $V\colon \Unzip_{X_{|K}}(X) \dashrightarrow \Unzip_{X_{|K}}(X)$ is open, where it is defined. 
In particular, the restriction
\[
\Link_{X_{|K}}(X)\times [0,1] \longrightarrow \Unzip_{X_{|K}}(X)_{|K\times Z} 
\]
is an open conically smooth map over the isomorphism $\chi_1\colon (K\times Z\times\{0\})\times [0,1]\xra{\cong} K\times Z\times[0,1]$.  

Finally, the relative assertion is manifest from the above construction.  

\end{proof}

\begin{lemma}\label{cbls-open-over-cones}
For each constructible bundle $X\to K\times \oC(Z)$ over the product of a smooth manifold and a closed cone, there is a pushout diagram in $\strat$
\[
\xymatrix{
\Link_{X_{|K}}(X) \times (0,1]    \ar[rr]  \ar[d]
&&
X_{|K\times Z} \times (0,1] \ar[d]
\\
\Link_{X_{|K}}(X)\times [0,1]  \ar[rr]
&&
\Unzip_{X_{|K}}(X)
}
\]
over the trivial pushout diagram $K\times Z\times [0,1] \underset{K\times Z\times(0,1]} \coprod K\times Z\times(0,1]\cong K\times Z\times[0,1]$.  

\end{lemma}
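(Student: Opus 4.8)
The plan is to realise the unzip $\Unzip:=\Unzip_{X_{|\ast}}(X)$ as the pushout, along a two‑element open cover, of a collar of the link $\Link:=\Link_{X_{|\ast}}(X)$ and the restriction of $\Unzip$ over the open collar $Z\times(0,1]$ of the boundary of $\oC(Z)$. The first piece is produced by Lemma~\ref{cbls-over-cones}; the second piece is produced by trivialising a constructible bundle in a collar direction. Since open covers yield pushouts in $\strat$, once the two opens and their overlap are identified, the asserted square is recognised as such a pushout.

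First I would record the ambient maps. By Observation~\ref{unzips-over-cones}, the constructible bundle $X\to\oC(Z)$ makes $\Unzip$ a constructible bundle over $Z\times[0,1]$, compatibly with the blow‑up map $\Unzip\to X$ and the map $Z\times[0,1]\to\oC(Z)$ collapsing $Z\times\{0\}$ to the cone point and identity on $Z\times(0,1]$; here $\Link=\Unzip_{|Z\times\{0\}}$, with $\Link\to\Unzip$ the canonical inclusion. As $Z\times(0,1]$ misses the cone point, $X_{|Z\times(0,1]}$ is an open substratified space of $X$ disjoint from $X_{|\ast}$, and the unzip along $X_{|\ast}$ — being supported in a tubular neighborhood of $X_{|\ast}$ — restricts over $Z\times(0,1]$ to an isomorphism $\Unzip_{|Z\times(0,1]}\cong X_{|Z\times(0,1]}$ over $Z\times(0,1]$. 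Because $Z\times(0,1]$ is the standard open collar $Z\times(0,1]\hookrightarrow\oC(Z)$ of the boundary $Z\cong Z\times\{1\}$ (\S\ref{recall}), I would then trivialise $X_{|Z\times(0,1]}$ in the collar direction: lifting the vector field $-\partial_s$ on $Z\times(0,1]$ to the total space by a conically smooth partition of unity subordinate to a bundle atlas and flowing from $s=1$ — a verbatim rerun of the argument of Lemma~\ref{cbls-with-R}, adapted to the half‑open interval, where the flow of the lifted field is defined for all $s\in(0,1]$ — yields an isomorphism $X_{|Z\times(0,1]}\cong X_{|Z}\times(0,1]$ over $Z\times(0,1]$ and under $X_{|Z}\times\{1\}$, with $X_{|Z}$ the fiber of $X$ over $Z\times\{1\}$.

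Next I would invoke Lemma~\ref{cbls-over-cones} for the open conically smooth map $j\colon\Link\times[0,1]\to\Unzip$ over $Z\times[0,1]$ restricting to the inclusion $\Link\hookrightarrow\Unzip$ on $\Link\times\{0\}$, and observe — by inspecting the flow construction in that proof, after a conically smooth reparametrisation of $[0,1]$ keeping the flow time inside the domain of the flow — that $j$ may be taken to be an open embedding. Writing $U:=j(\Link\times[0,1])$ and $V:=\Unzip_{|Z\times(0,1]}$, both are open in $\Unzip$; since $\Unzip\smallsetminus V=\Unzip_{|Z\times\{0\}}=\Link\subseteq U$ we get $U\cup V=\Unzip$, and since $j$ lies over $Z\times[0,1]$ we get $U\cap V=j(\Link\times(0,1])$. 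Hence $\Unzip$ is the pushout in $\strat$ of $U\leftarrow U\cap V\rightarrow V$; translating through $U\cong\Link\times[0,1]$ (via $j$), $U\cap V\cong\Link\times(0,1]$, and $V\cong X_{|Z}\times(0,1]$ (via the previous step) — the left‑down leg being the open inclusion and the top leg being $j$ followed by the trivialisation — this pushout is exactly the asserted square, all of whose maps lie over $Z\times[0,1]$ resp. $Z\times(0,1]$ by construction, so that the whole diagram sits over the trivial pushout $Z\times[0,1]\amalg_{Z\times(0,1]}Z\times(0,1]\cong Z\times[0,1]$.

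The main obstacle will be the passage from Lemma~\ref{cbls-over-cones}'s open conically smooth map to a genuine open embedding whose image meets $V$ in exactly $j(\Link\times(0,1])$ — i.e. controlling the collar of $\Link$ precisely enough that the open cover $\{U,V\}$ has the claimed nerve; I expect this to fall out of a careful reading (or mild strengthening) of the flow/reparametrisation in the proof of Lemma~\ref{cbls-over-cones}. By comparison the collar trivialisation over $(0,1]$ is routine, being essentially Lemma~\ref{cbls-with-R} on a half‑open interval, and the gluing is the standard fact that an open cover yields a pushout in $\strat$.
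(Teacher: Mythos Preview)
Your approach is essentially the same as the paper's: take the open cover of $\Unzip_{X_{|\ast}}(X)$ by the image of the collar map from Lemma~\ref{cbls-over-cones} and by the complement of the link, then identify the pieces. The paper is terser; it simply takes $\nu$ to be the image of the map $\Link_{X_{|\ast}}(X)\times[0,1]\to\Unzip_{X_{|\ast}}(X)$ produced by Lemma~\ref{cbls-over-cones}, invokes the open-cover pushout $\nu\amalg_{\nu\smallsetminus\Link}(\Unzip\smallsetminus\Link)\cong\Unzip$, and notes only that the collar map is a monomorphism of underlying sets. Your explicit trivialisation $\Unzip_{|Z\times(0,1]}\cong X_{|Z}\times(0,1]$ fills in a step the paper leaves implicit. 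Your stated ``main obstacle'' dissolves once you recall that in this paper an \emph{open} conically smooth map is by definition an open embedding of underlying topological spaces and a refinement onto its image; since the map in Lemma~\ref{cbls-over-cones} is built as the flow of a vector field lifting $\partial_t$ (hence tangent to strata and injective across different time-slices), it is in fact a stratified open embedding, and the overlap with $V$ is automatically $j(\Link\times(0,1])$ because $j$ lies over the identity of $Z\times[0,1]$.
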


\begin{proof}
For each open neighborhood $\Link_{X_{|K}}(X) \subset \nu\subset \Unzip_{X_{|K}}(X)$ there is an open cover in $\strat$
\[
\xymatrix{
\nu\smallsetminus \Link_{X_{|K}}(X)    \ar[rr]  \ar[d]
&&
\Unzip_{X_{|K}}(X) \smallsetminus \Link_{X_{|K}}(X) \ar[d]
\\
\nu \ar[rr]
&&
\Unzip_{X_{|K}}(X),
}
\]
thereby demonstrating a pushout diagram.  
Now, take $\nu$ to be the image of a conically smooth map
\[
\Link_{X_{|K}}(X)\times[0,1] \longrightarrow \Unzip_{X_{|K}}(X)
\]
over $K\times Z\times[0,1]$, as produced by Lemma~\ref{cbls-over-cones}.  
The result follows because this map is, in particular, a monomorphism of underlying sets.  

\end{proof}

\begin{cor}\label{cbl-cone-pushout}
For each constructible bundle $X\to K\times \oC(Z)$ over the product of a smooth manifold and a closed cone, there is a diagram among stratified spaces
\[
\xymatrix{
&&
\Link_{X_{|K}}(X) \times (0,1]    \ar[rr]  \ar[d]
&&
X_{|K\times Z} \times (0,1] \ar[dd]
\\
\Link_{X_{|K}}(X)  \ar[rr]^-{\{0\}}  \ar[d]
&&
\Link_{X_{|K}}(X)\times [0,1]   \ar[rrd]
&&
\\
X_{|K} \ar[rrrr]
&&
&&
X
}
\]
witnessing a colimit.
\end{cor}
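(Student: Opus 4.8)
The plan is to assemble the claimed colimit diagram from the two pushout squares already established, namely the blow-up square for the deepest stratum $X_{|\ast}\subset X$ and the pushout of Lemma~\ref{cbls-open-over-cones}, by pasting them along their common edge. First I would recall that, since $X\to\oC(Z)$ is a constructible bundle over a closed cone, the preimage $X_{|\ast}$ of the cone-point is a constructible closed sub-stratified space of $X$ (Observation~\ref{unzips-over-cones} together with Observation~\ref{sub-cbl}), so that the blow-up square along $X_{|\ast}\subset X$ exists and is a pushout in $\strat$:
\[
\xymatrix{
\Link_{X_{|\ast}}(X)  \ar[r]  \ar[d]
&
\Unzip_{X_{|\ast}}(X)  \ar[d]
\\
X_{|\ast}  \ar[r]
&
X~.
}
\]
This is exactly the bottom-left square of the target diagram (with the map $\Link_{X_{|\ast}}(X)\hookrightarrow \Link_{X_{|\ast}}(X)\times[0,1]$ at $\{0\}$ followed by $\Link_{X_{|\ast}}(X)\times[0,1]\to \Unzip_{X_{|\ast}}(X)$ serving as the horizontal map, after one observes that this composite is the structure map of the blow-up square — here I would cite the construction of the blow-up square in \S\ref{sec-reg}, where the link includes into the unzip precisely via the $\{0\}$-section of a collar).

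Next I would invoke Lemma~\ref{cbls-open-over-cones}, which gives the pushout square
\[
\xymatrix{
\Link_{X_{|\ast}}(X) \times (0,1]    \ar[rr]  \ar[d]
&&
X_{|Z} \times (0,1] \ar[d]
\\
\Link_{X_{|\ast}}(X)\times [0,1]  \ar[rr]
&&
\Unzip_{X_{|\ast}}(X)~,
}
\]
which is the top-right square of the target diagram. The target diagram is then obtained by pasting these two pushout squares along the shared object $\Unzip_{X_{|\ast}}(X)$ and the shared edge $\Link_{X_{|\ast}}(X)\times[0,1]\to \Unzip_{X_{|\ast}}(X)$. By the pasting lemma for pushouts (colimits compose), the total outer diagram
\[
\xymatrix{
\Link_{X_{|\ast}}(X) \times (0,1]    \ar[rr]  \ar[d]
&&
X_{|Z} \times (0,1] \ar[dd]
\\
\Link_{X_{|\ast}}(X)  \ar[r]^-{\{0\}}  \ar[d]
&
\Link_{X_{|\ast}}(X)\times [0,1]   \ar[rd]
&
\\
X_{|\ast} \ar[rr]
&&
X
}
\]
is a colimit diagram in $\strat$, which is the assertion. (The only point requiring care in the pasting is that the left vertical composite $\Link_{X_{|\ast}}(X)\times(0,1]\to \Link_{X_{|\ast}}(X)\times[0,1]$ in the top square and the map $\Link_{X_{|\ast}}(X)\to X_{|\ast}$ in the bottom square share the intermediate object $\Link_{X_{|\ast}}(X)\times[0,1]$ correctly oriented; this is visibly so, since the blow-up square's top-left corner is $\Link_{X_{|\ast}}(X)$ and the inclusion $\{0\}\colon\Link_{X_{|\ast}}(X)\hookrightarrow \Link_{X_{|\ast}}(X)\times[0,1]$ factors the link-to-unzip map.)

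I expect the main obstacle to be purely bookkeeping: namely, identifying the horizontal map $\Link_{X_{|\ast}}(X)\times[0,1]\to\Unzip_{X_{|\ast}}(X)$ supplied by Lemma~\ref{cbls-over-cones} with the structure map of the blow-up square along $X_{|\ast}\subset X$ — equivalently, checking that the composite $\Link_{X_{|\ast}}(X)\xra{\{0\}}\Link_{X_{|\ast}}(X)\times[0,1]\to\Unzip_{X_{|\ast}}(X)$ agrees with the canonical inclusion $\Link_{X_{|\ast}}(X)\hookrightarrow\Unzip_{X_{|\ast}}(X)$ from the blow-up square, and that this is compatible over $\oC(Z)$ with the cone-locus section. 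This is local, and the collar-neighborhood constructions of \cite{aft1} recalled in \S\ref{sec-reg} handle it: the link always sits inside the unzip via the zero-section of such a collar, and Lemma~\ref{cbls-over-cones} produces precisely such a collar compatibly with the bundle structure over $Z\times[0,1]$. Once this identification is in place, the rest is the pasting lemma, which is formal.
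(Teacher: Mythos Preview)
Your proposal is correct and matches the paper's intended argument: the corollary is stated without proof precisely because it follows immediately by pasting the blow-up pushout square for $X_{|\ast}\subset X$ with the pushout of Lemma~\ref{cbls-open-over-cones}. Your bookkeeping concern is already handled by the statement of Lemma~\ref{cbls-over-cones}, which provides the map $\Link_{X_{|\ast}}(X)\times[0,1]\to\Unzip_{X_{|\ast}}(X)$ explicitly \emph{under} $\Link_{X_{|\ast}}(X)$, so its restriction along $\{0\}$ is the canonical inclusion by construction.
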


\begin{remark}
The above span $X_{|K} \xla{\pi} \Link_{X_{|K}}(X) \xra{\gamma} X_{|K\times Z}$ has the property that $\pi$ is proper and constructible, and $\gamma$ is open; furthermore, any refinement $\Link_{X_{|K}}(X) \to L'$ factoring both $\pi$ and $\gamma$ is in fact an isomorphism.  

\end{remark}

The following is a converse to Corollary~\ref{cbl-cone-pushout}.
\begin{observation}\label{building-cbl}
For each span among stratified spaces
$
\bigl(X_0 \xla{\pi} L \xra{\gamma} X_1\bigr)
$
in which $\pi$ is proper and constructible and $\gamma$ is open, the colimit of the diagram
\[\xymatrix{X_0
&\ar[l] L \ar[r]^-{\{0\}}& L\times[0,1]&\ar[l]L\times(0,1]\ar[r]&X_1\times (0,1] \\}
\]
exists; call it $X$. Furthermore, for each natural transformation $(X_0\xla{\pi}L \xra{\gamma}X_1)\to (K\xla{\sf pr} K\times Z \xra{=}K\times Z)$ by constructible bundles in which $Z$ is compact, the resulting canonical map 
$
X\to
K\times \oC(Z)
$
too is constructible, manifestly.

\end{observation}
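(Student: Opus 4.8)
The plan is to realize the colimit as the total space of a constructible bundle over the stratified interval $\oC(\ast)$, and then to obtain the final sentence directly from the universal property of that colimit by a stratum-wise check.

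For the existence of the colimit, I would first manufacture from the span $\bigl(X_0 \xla{\pi} L \xra{\gamma} X_1\bigr)$ a constructible bundle $\w{X}\to \oC(\ast)$ over the stratified interval (so $\oC(\ast)=[0,1]$ with $\{0\}$ the deepest stratum), equipped with identifications $\w{X}_{|\{0\}}\cong X_0$ and $\w{X}_{|(0,1]}\cong X_1\times(0,1]$, and an identification $\Link_{X_0}(\w{X})\cong L$ under which the link-projection becomes $\pi$ and the collar map of Lemma~\ref{cbls-over-cones} becomes $\gamma\times\mathrm{id}_{(0,1]}$. Concretely: since $\pi$ is a proper constructible bundle it has a fiberwise closed cone $\oC(\pi)\to X_0$ carrying a cone-section $X_0\hookrightarrow \oC(\pi)$ whose complement is identified with $L\times(0,1]$ over $X_0$ --- this is the regular-neighborhood technology of \cite{aft1} recalled in \S\ref{sec-reg} (note $\oC(\pi)$ itself is exactly the pushout $X_0\amalg_L (L\times[0,1])$). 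One then sets $\w{X}$ to be the stratified space obtained by gluing $\oC(\pi)$ to $X_1\times(0,1]$ along the open subspace $L\times(0,1] = \oC(\pi)\smallsetminus X_0$ via $\gamma\times\mathrm{id}$, exactly the collar gluing performed in the proof of Lemma~\ref{to-R}. The step I expect to be the main obstacle is verifying that this gluing is legitimate within $\strat$: the two pieces carry different stratifications along the overlap, since $\gamma$ may be a \emph{proper} refinement, and the point is that $\gamma$ is \emph{open}, i.e.\ an open embedding of underlying spaces that is moreover a refinement onto its image --- precisely the regularity that Lemma~\ref{cbls-over-cones} shows collar maps always have; this is exactly what is needed for $\w{X}$ to inherit a conically smooth atlas, patched from that of $\oC(\pi)$ near $X_0$ and that of $X_1\times(0,1]$ away from $X_0$, with $\Link_{X_0}(\w{X})\cong L$, link-projection $\pi$, and collar map $\gamma\times\mathrm{id}$. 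Granting $\w{X}$, Corollary~\ref{cbl-cone-pushout} applied to the constructible bundle $\w{X}\to\oC(\ast)$ exhibits $\w{X}$ as the colimit of precisely the diagram displayed in the statement; hence that colimit exists, and we call it $X$.

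For the final assertion, suppose $X_1\xra{f} Z$ is a constructible bundle for which $L\xra{f\gamma} Z$ is also constructible. Using the cone-collapse $Z\times[0,1]\to\oC(Z)$ and the open embedding $Z\times(0,1]\hookrightarrow \oC(Z)$, I would assemble the cocone on the displayed diagram with target $\oC(Z)$ that sends $X_0$ and $L$ to the cone-point, sends $L\times[0,1]$ via $(f\gamma)\times\mathrm{id}$ followed by $Z\times[0,1]\to\oC(Z)$, and sends $X_1\times(0,1]$ via $f\times\mathrm{id}$ followed by $Z\times(0,1]\hookrightarrow\oC(Z)$ (the value on $L\times(0,1]$ being then forced); these maps are conically smooth because $f$ and $f\gamma$ are constructible, and they are compatible over the diagram because $f\gamma = f\circ\gamma$. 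The resulting map $X\to\oC(Z)$ is conically smooth, and I would check it is a constructible bundle stratum-wise on $\oC(Z)$: over $\{\ast\}$ its restriction is $X_0\to\ast$, a fiber bundle; over any stratum contained in the open subspace $Z\times(0,1]$ its restriction is a restriction of $X_1\times(0,1]\xra{f\times\mathrm{id}}Z\times(0,1]$, which is a fiber bundle since $f$ is constructible. By Definition~\ref{def.cbl} this makes $X\to\oC(Z)$ a constructible bundle --- the asserted ``manifestly'' --- and, if one prefers, Observation~\ref{cbls-cover} lets this last check be phrased locally in the base.
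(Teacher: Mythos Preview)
The paper gives no proof of this Observation; it regards the construction as self-evident (``manifestly''). Your overall strategy---assemble $X$ as an iterated pushout (fiberwise closed cone on $\pi$, then collar-glue to $X_1\times(0,1]$) and then check constructibility of $X\to\oC(Z)$ stratum-by-stratum---is exactly what the paper has in mind, and your treatment of the second half (constructibility) is fine.

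There is, however, a genuine gap in your first half when $\gamma$ is a \emph{proper} refinement. Your proposed atlas on $\w{X}$, ``patched from that of $\oC(\pi)$ near $X_0$ and that of $X_1\times(0,1]$ away from $X_0$'', is inconsistent in that case: the $\oC(\pi)$-atlas forces the $L$-stratification on $\w{X}\smallsetminus X_0$, whereas the $X_1\times(0,1]$-atlas forces the strictly coarser $X_1$-stratification, and these disagree on the overlap. In particular your claim $\Link_{X_0}(\w{X})\cong L$ is false in general---the link is the terminal coarsening $L'$ of $L$ through which both $\pi$ and $\gamma$ factor (compare Remark~\ref{terminal-refinement})---so your application of Corollary~\ref{cbl-cone-pushout} recovers the $L'$-diagram, not the given $L$-diagram. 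The fix is either to pass to $L'$ at the outset (so $\gamma'\colon L'\to X_1$ is an honest open embedding and the collar-gluing is along open embeddings on both sides), then observe that the $L$-diagram and the $L'$-diagram have the same colimit; or, more directly, to drop the detour through Corollary~\ref{cbl-cone-pushout} altogether and argue that the two iterated pushouts exist in $\strat$ and hence compute the displayed colimit---the second pushout requires exactly the coarsening you flag as the ``main obstacle'', but carried out correctly it yields the $X_1$-stratification on the open part and the coarsened link $L'$, not $L$.
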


The next result is phrased in terms of the following categories.
Fix a compact conically smooth stratified space $Z$ and a smooth manifold $K$.
Consider the full subcategory 
\[
{\sf Cbl}(K\times Z)~\subset ~ \strat_{/K\times Z}
\]
consisting of the constructible bundles over $K\times Z$.
Consider the category
\[
{\sf Burn}'_1(K,Z)
\]
for which an object is a diagram in $\strat$
\[
\xymatrix{
X_K  \ar[d]
&
L   \ar[l]_-{\pi}  \ar[r]^-{\gamma}  \ar[d]
&
X_{K\times Z}  \ar[d]
\\
K
&
K\times Z  \ar[l]_-{\sf pr}  \ar[r]^-{=}
&
K\times Z
}
\]
in which $\pi$ is a proper constructible bundle, $\gamma$ is an open map, and the downward maps are constructible. 
A morphism is a natural transformation of such diagrams that restricts as the identity natural transformation on the bottom span; composition is given by composing natural transformations. 
Consider likewise the category over ${\sf Burn}'_1(K,Z)$,
\begin{equation}\label{11}
{\sf Burn}_1(K,Z) \longrightarrow {\sf Burn}'_1(K,Z)~,
\end{equation}
defined as follows.  
An object of ${\sf Burn}_1(K,Z)$ over an object $(X_K\xla{\pi} L{\xra{\gamma}} X_{K\times Z})\in {\sf Burn}'_1(K,Z)$ is a colimit diagram in $\strat_{/K\times \oC(Z)}$:
\[
\xymatrix{
&&
L \times (0,1]    \ar[rr]^-{\gamma \times{\sf id}}  \ar[d]
&&
X_{K\times Z} \times (0,1] \ar[dd]
\\
L \ar[rr]^-{\{0\}}  \ar[d]_-{\pi}
&&
L \times [0,1]   \ar[rrd]
&&
\\
X_K \ar[rrrr]
&&
&&
X.
}
\]
There is a unique morphism in ${\sf Burn}_1(K,Z)$ between two such, over a given morphism in ${\sf Burn}'_1(K,Z)$.
Composition in ${\sf Burn}_1(K,Z)$ is composition in ${\sf Burn}'_1(K,Z)$.
Observation~\ref{building-cbl} gives that the functor~(\ref{11}) is surjective on objects, and is therefore an equivalence between categories.
Evaluation at the cone-point determines a functor
\begin{equation}\label{0-burn-to-bun}
{\sf Burn}_1(K,Z) \longrightarrow {\sf Cbl}\bigl(K\times \oC(Z)\bigr)~.
\end{equation}

\begin{remark}\label{terminal-refinement}
The functor~(\ref{0-burn-to-bun}) is not an equivalence; it is not even conservative.
In particular, for each span $(X_0\xla{\pi} L \xra{\gamma}X_1)$ with $\pi$ proper and constructible and $\gamma$ open, any refinement $L\to L'$ factoring both $\pi$ and $\gamma$ induces the same colimit of Observation~\ref{building-cbl}.  
Nevertheless, there is a terminal such $L'$ under $L$.  

\end{remark}

We consider a relative version of ${\sf Burn}(K,Z)$ and of ${\sf Cbl}\bigl(K\times \oC(Z)\bigr)$.  
We do this through the following notion.
\begin{definition}\label{mutually.transverse}
Let $K$ be a smooth manifold.
A \emph{transverse collection in $K$} is a finite collection $K_0:=\{W\xra{f}K\}$ of smooth proper embeddings with the property that it is either empty or each proper subset $S\subset K_0$ is transverse and, for each $(W\xra{f} K) \in K_0\smallsetminus S$, this map $f$ is transverse to the smooth map 
\[
\underset{(W'\xra{f'}K)\in S} \bigcap W' \longrightarrow K
\]
from the iterated fiber product over $K$.

\end{definition}

\begin{remark}
Let $K$ be a smooth manifold.
Let $\{W_i\xra{f_i}K\}_{i=1}^n$ be a collection of smooth proper embeddings, indexed by the set $\{1,\dots,n\}$.
If $n=0$, this collection is empty, and, by definition, it is transverse.
If $n=1$, this collection is transverse.
If $n=2$, this collection is transverse if and only if $f_1\pitchfork f_2$.
If $n=3$, this collection is transverse if and only if $f_i\pitchfork f_j$ for each $i\neq j$ and also $f_i\pitchfork f_{jk}$ for each $i\neq j \neq k \neq i$ where
$f_{jk} \colon W_j \underset{K}\times W_k \to K$.  

\end{remark}

\begin{example}\label{ex.partial}
Let $q>0$.
Consider the collection of smooth maps $\{\Delta^{S}_e \to \Delta^q_e\}$, indexed by the set of subsets $S\subset \{0,\dots,q\}$ with cardinality $q$.
This collection is transverse, and we denote it simply as $K_0 =\partial \Delta^q_e$.

\end{example}

\begin{notation}\label{on.K0}
Let $\fI \colon \strat^{\op} \to \cX$ be a functor to a category with finite limits.  
For each stratified space $Y$, each smooth manifold $K$, and each transverse collection $K_0$, the value
\[
\fI(Y\times K_0) ~:=~\underset{S\in \cP(K_0)}\limit~\fI\bigl(Y\times \underset{(W\xra{f} K)\in S} \bigcap W\bigr)
\]
is the limit, indexed by the poset $\cP(K_0)$ of non-empty subsets of $K_0$, ordered by inclusion.

\end{notation}

Fix a smooth manifold $K$ and a transverse collection $K_0$ in $K$.  
For a fixed constructible bundle $\bigl(X_0\to K_0\times \oC(Z)\bigl)\in \bun\bigl(K_0\times \oC(Z)\bigr)$, 
denote the categories in upper left and upper middle of the diagram
\[
\xymatrix{
{\sf Burn}_1(K,Z; X_0)  \ar[rr]   \ar[d]
&&
{\sf Cbl}\bigl(K\times \oC(Z);X_0\bigr)  \ar[rr]  \ar[d]
&&
\ast  \ar[d]^-{\lag X_0\to K_0\times \oC(Z)\rag}
\\
{\sf Burn}_1(K,Z)     \ar[rr]^-{(\ref{0-burn-to-bun})}
&&
{\sf Cbl}\bigl(K\times \oC(Z)\bigr)  \ar[rr]
&&
{\sf Cbl}\bigl(K_0\times \oC(Z)\bigr)
}
\]
in which each square is a pullback.
In the case that $K_0=\emptyset$, or $K_0 = \{\emptyset \to K\}$, note that each of the canonical functors
\[
{\sf Burn}_1(K , Z; \emptyset) \xra{~\cong~} {\sf Burn}_1(K ,Z)
\qquad \text{ and }\qquad
{\sf Cbl}\bigl(K \times  \oC(Z);\emptyset\bigr)  \xra{~\cong~}{\sf Cbl}\bigl(K \times \oC(Z)\bigr) 
\]
is an isomorphism between categories.

In the next result we denote by $\Ar^{\open}(-)$ the full subcategory of those arrows that are by \emph{open} conically smooth maps.  
\begin{cor}\label{0burn-bun-ess}
Let $K$ be a smooth manifold, let $Z$ be a compact stratified space, and let $K_0$ be a transverse collection in $K$.
For each constructible bundle $X_0\to K_0\times \oC(Z)$, each of the functors
\[
{\sf Burn}_1(K,Z;X_0) \xra{(\ref{0-burn-to-bun})} {\sf Cbl}\bigl(K\times \oC(Z);X_0\bigr)
\]
and
\[
\Ar^{\open}\bigl({\sf Burn}_1(K,Z;X_0)\bigr) \xra{~\Ar^{\open}(\ref{0-burn-to-bun})~} \Ar^{\open}\bigl({\sf Cbl}\bigl(K\times \oC(Z);X_0\bigr)\bigr)
\]
is surjective on objects.
\end{cor}

\begin{proof}
The absolute case (in which $K_0=\emptyset$) is implied by Corollary~\ref{cbl-cone-pushout}, 
using Lemma~\ref{cbls-open-over-cones}.
The general relative case is implied likewise, by inspecting the functor~(\ref{0-burn-to-bun}).  

\end{proof}

We now show how Corollary~\ref{0burn-bun-ess} can be iterated.
Fix a smooth manifold $K$, a conically smooth stratified space $Z$, and an integer $p\geq 0$.
Consider the poset $\cP_p$ of non-empty convex subsets $S\subset \{0<\dots<p\}$ for which $p\in S$. 
Order $\cP_p$ by inclusion.  
Denote the functor 
\[
(K,Z)_p\colon \cP_p^{\op} \longrightarrow \strat
\]
defined as follows.
The value on $S$ is $K$ if $p\notin S$ and is $K\times Z$ if $p\in S$.
The value on an inclusion $(S\subset T)$ is $(K\xra{=}K)$ if $p\notin T$, is $(K\times Z\xra{=}K\times Z)$ if $p\in S$, and is the projection $(K\times Z\xra{\sf pr}K)$ if $p\in T\smallsetminus S$.  
Consider the full subcategory
\[
{\sf Burn}_{p}(K,Z)~\subset~\Fun\bigl(\cP_p^{\op},\strat\bigr)_{/(K,Z)_p}
\]
consisting of those functors $\cP_p^{\op} \xra{X_-} \strat$ over $(K,Z)_p$ that satisfy the following conditions.
\begin{itemize}
\item 
For each $S\in \cP_p$, the conically smooth map 
\[
X_S \longrightarrow (K,Z)_p(S)
\]
is a constructible bundle.  

\item 
For each relation $S\subset T$ in $\cP_p$ for which the minima ${\sf Min}(S) = {\sf Min}(T)$ agree, the conically smooth map 
\[
X_T \longrightarrow X_S
\]
is a proper constructible bundle.

\item For each relation $S\subset T$ in $\cP_p$ for which the maxima ${\sf Max}(S) = {\sf Max}(T)$ agree, the conically smooth map
\[
X_T \longrightarrow X_S
\]
is \emph{open}.

\item For each pair $S,T\in \cP_p$ for which $S\cap T \neq \emptyset$, the square among stratified spaces 
\[
\xymatrix{
X_{S\cup T}  \ar[r]  \ar[d]
&
X_T  \ar[d]
\\
X_S  \ar[r]
&
X_{S\cap T}
}
\]
is a pullback.  

\end{itemize}

Note the identification
\begin{equation}\label{79}
{\sf Burn}_0(K,Z)\xra{~=~} {\sf Cbl}(K\times Z)~.
\end{equation}
For $p>0$, this category fits into an evident square
\[
\xymatrix{
{\sf Burn}_p(K,Z)  \ar[r]  \ar[d]
&
{\sf Burn}_{p-1}(K,Z)  \ar[d]
\\
{\sf Burn}_1(K,\ast)  \ar[r]
&
{\sf Burn}_0(K,\ast)
}
\]
in which the horizontal arrows are given by restriction along the standard inclusion $\{1<\dots<p\}\hookrightarrow\{0<\dots<p\}$, and the vertical arrows are induced from the standard inclusion $\{0<1\} \hookrightarrow \{0<\dots<p\}$.
Directly so, all of the functors in this square are isofibrations.
Because constructible bundles admit base-change (Lemma~\ref{cbls-pullback}), the canonical functor to the pullback,
\begin{equation}\label{burn-segal}
{\sf Burn}_p(K,Z) \longrightarrow  {\sf Burn}_1(K,\ast)  \underset{{\sf Burn}_0(K,\ast)}\times {\sf Burn}_{p-1}(K,Z)~,
\end{equation}
is an equivalence between categories that is surjective on objects.

We now construct a functor
\begin{equation}\label{p-burn-to-bun}
{\sf Burn}_p(K,Z) \longrightarrow {\sf Cbl}\bigl(K\times \oC^p(Z)\bigr)
\end{equation}
by induction on $p$.
For $p=0$, this is the isomorphism~(\ref{79}).
For $p=1$ this is the functor~(\ref{0-burn-to-bun}).  
Now suppose $p>1$.  
There is the diagram among categories
\[
\xymatrix{
{\sf Burn}_p(K,Z)    \ar[rrr]^-{{\sf ev}_{\{0,1\}}}  \ar[d]
&&&
\Ar^{\cbl}(\strat)  \ar[d]^-{{\sf ev}_1}
\\
\Ar^{\open}\bigl({\sf Burn}_{p-1}(K,Z)\bigr) \ar[r]^-{{\sf ev}_0}
&
{\sf Burn}_{p-1}(K,Z)  \ar[r]^-{(Z\to \ast)}
&
{\sf Burn}_{p-1}(K,\ast)  \ar[r]^-{{\sf ev}_{\{0,1\}}}
&
\strat
}
\]
with the left vertical functor given by restriction along the inclusion of posets
\[
\cup \colon \cP_{p-1}\times [1]\cong \cP_{\{1<\dots<p\}}\times \bigl\{\emptyset\subset\{0\}\bigr\} ~\cong~ \bigl\{S\in \cP_p\mid S \neq \{0\}\bigr\} ~\subset ~\cP_p~.
\] 
The canonical functor to the fiber product,
\begin{equation}\label{another-pb}
{\sf Burn}_p(K,Z) \xra{~\simeq~} \Ar^{\open}\bigl({\sf Burn}_{p-1}(K,Z)\bigr) \underset{\strat} \times \Ar^{\cbl}(\strat)~,
\end{equation}
is a surjective equivalence between categories
since there is a pushout expression among posets,
\[
\bigl\{0\subset \{0,1\}\bigr\}\underset{\{0,1\}}\coprod   \bigl\{S\in \cP_p\mid S \neq \{0\}\bigr\}\xra{~\cong~}\cP_p~,
\]
and because constructible bundles admit base-change (Lemma~\ref{cbls-pullback}).  
We now define the functor~(\ref{p-burn-to-bun}) as composite of the sequence of functors
{\Small
\begin{equation}\label{p-diag}
\xymatrix{
{\sf Burn}_p(K,Z)    \ar[d]^-{(\ref{another-pb})}_-{\simeq}
&&
{\sf Burn}_1\bigl(K,\oC^{p-1}(Z)\bigr)   \ar[r]^-{(\ref{0-burn-to-bun})}   \ar[d]^-{(\ref{another-pb})}_-{\simeq}
&
{\sf Cbl}\bigl(K\times \oC^p(Z)\bigr)
\\
\Ar^{\open}\bigl({\sf Burn}_{p-1}(K,Z)\bigr) \underset{\strat} \times \Ar^{\cbl}(\strat)  \ar[rr]^-{\rm induction}
&&
\Ar^{\open}\bigl({\sf Cbl}\bigl(K\times \oC^{p-1}(Z)\bigr) \underset{\strat} \times \Ar^{\cbl}(\strat)   
&
.
}
\end{equation}
}

Now, fix a transverse collection $K_0$ in $K$, and fix a constructible bundle $X_0 \to K_0\times \oC^p(Z)$.  
Denote the categories in upper left and upper middle of the diagram
\[
\xymatrix{
{\sf Burn}_p(K,Z; X_0)  \ar[rr]   \ar[d]
&&
{\sf Cbl}\bigl(K\times \oC^p(Z);X_0\bigr)  \ar[rr]  \ar[d]
&&
\ast  \ar[d]^-{\lag X_0\to K_0\times \oC(Z)\rag}
\\
{\sf Burn}_p(K,Z)     \ar[rr]^-{(\ref{p-burn-to-bun})}
&&
{\sf Cbl}\bigl(K\times \oC^p(Z)\bigr)  \ar[rr]
&&
{\sf Cbl}(K_0\times \oC^p(Z))
}
\]
in which each square is a pullback.
In the case that $K_0=\emptyset$, note that each of the canonical functors
\[
{\sf Burn}_p(K,Z; \emptyset) \xra{~\cong~} {\sf Burn}_p(K,Z)
\qquad \text{ and }\qquad
{\sf Cbl}\bigl(K\times \oC^p(Z);\emptyset\bigr)  \xra{~\cong~}{\sf Cbl}\bigl(K\times \oC^p(Z)\bigr) 
\]
is an isomorphism between categories.

This inductive definition of the functor~(\ref{p-burn-to-bun}) facilitates the next result.
\begin{cor}\label{gpd-ess}
For each transverse collection $K_0$ in a smooth manifold $K$, each compact conically smooth stratified space $Z$, each $p\geq 0$, and each constructible bundle $X_0\to K_0\times \oC^p(Z)$, each of the functors
\[
{\sf Burn}_p(K,Z;X_0) \xra{(\ref{p-burn-to-bun})} {\sf Cbl}\bigl(K\times \oC^p(Z);X_0\bigr)
\]
and
\[
\Ar^{\open}\bigl({\sf Burn}_p(K,Z;X_0)\bigr) \xra{~\Ar^{\open}(\ref{p-burn-to-bun})~} \Ar^{\open}\bigl({\sf Cbl}\bigl(K\times\oC^p(Z);X_0\bigr)\bigr)
\]
are surjective on objects.

\end{cor}

\begin{proof}
Surjectivity on objects of the second functor follows a similar argument as for the first functor.
We proceed by induction on $p$.  
The case $p=0$ is the identification~(\ref{79}).
The case $p=1$ is Corollary~\ref{0burn-bun-ess}.
Now suppose $p>1$.  
From the definition of~(\ref{p-burn-to-bun}) in terms of the diagram~(\ref{p-diag}), it is enough to verify that each of the arrows in~(\ref{p-diag}) is surjective on objects.
The vertical functors are equivalences between categories.
The upper right horizontal functor is surjective on objects, as the $p=1$ case.
The functor labeled as ``induction'' is base change of the functor
\[
\Ar^{\open}\bigl({\sf Burn}_{p-1}(K,Z)\bigr)
\longrightarrow 
\Ar^{\open}\bigl({\sf Cbl}\bigl(K\times \oC^{p-1}(Z)\bigr)~.
\]
By induction, this latter functor is surjective on objects.
We conclude that the functor ${\sf Burn}_p(K,Z;X_0) \to {\sf Cbl}\bigl(K\times \oC^p(Z);X_0\bigr)$ is surjective on objects, as desired.

\end{proof}

We insert the following observation, which is implied by Corollary~\ref{gpd-ess}.
\begin{cor}\label{link-link}
Let $X\to K\times \oC^2(Z)$ be a constructible bundle over a double closed cone.  
There are open conically smooth maps $\gamma_{1Z}$, $\gamma_{0Z}$, $\gamma_{01Z}$, and $\gamma_{010Z}$ over $K\times Z$, as well as $\gamma_{01}$,  fitting into a commutative diagram
\[
\Small
\xymatrix{
\Link_{X_{|K}}(X_{|K\times \oC^{\{0\}}(Z)})  \ar@(u,u)[drrr]^-{\gamma_{0Z}}  \ar@(l,l)[dddr]_-{\pi_{0Z}}  
&&&
\\
&
\Link_{\Link_{X_{|K}}(X_{|K\times \oC^2(\emptyset)})}\bigl(\Link_{X_{|K}}(X)\bigr)  \ar[r]_-{\gamma_{01Z}}  \ar[d]^-{\pi_{01Z}}  \ar[ul]^-{\gamma_{010Z}}
&
\Link_{X_{|\oC^{\{1\}}(\emptyset)}}\bigl(X_{|K\times \oC^{\{1\}}(Z)}\bigr)   \ar[d]^-{\pi_{1Z}}  \ar[r]_-{\gamma_{1Z}}
&
X_{|K\times Z}
\\
&
\Link_{X_{|K}}(X_{|K\times \oC^2(\emptyset)})  \ar[r]_-{\gamma_{01}}    \ar[d]^-{\pi_{01}}
&
X_{|K\times \oC^{\{1\}}(\emptyset)}  
&
\\
&
X_{|K}
}
\]
in which the square is a pullback, and the map $\gamma_{010Z}$ is a refinement from the pullback.

\end{cor}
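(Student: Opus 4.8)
The plan is to read the diagram off the identification of constructible bundles over $\oC^2(Z)$ with objects of ${\sf Burn}_2(Z)$; throughout I assume $Z$ compact, as in Corollary~\ref{gpd-ess}. First I would invoke the essential surjectivity of~(\ref{p-burn-to-bun}) with $p=2$ (Corollary~\ref{gpd-ess}) to choose a functor $X_-\colon\cP_2\to\strat$ in ${\sf Burn}_2(Z)$ whose image under~(\ref{p-burn-to-bun}) is isomorphic to the given $X\to\oC^2(Z)$ over $\oC^2(Z)$. The inductive construction of~(\ref{p-burn-to-bun}) out of~(\ref{0-burn-to-bun}) via the pullbacks~(\ref{burn-segal}) and~(\ref{another-pb}) expresses $X$ as the iterated colimit of Corollary~\ref{cbl-cone-pushout}; conversely, repeatedly taking fibers over the two cone-loci of $\oC^2(Z)$ and forming links recovers the values $X_S$. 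Concretely, applying Corollary~\ref{cbl-cone-pushout} to the presentation $\oC^2(Z)=\oC\bigl(\oC(Z)\bigr)$ produces the bottom span $X_{|\ast}\xleftarrow{\pi_{01}}\Link_{X_{|\ast}}\bigl(X_{|\oC^2(\emptyset)}\bigr)\xrightarrow{\gamma_{01}}X_{|\oC^{\{1\}}(\emptyset)}$ together with its fiberwise counterpart over the $Z$-stratum, giving $\pi_{01Z}$, $\gamma_{01Z}$, $\pi_{1Z}$, $\gamma_{1Z}$; a second application, this time inside the inner cone $\oC^{\{0\}}(Z)$, produces the remaining maps $\pi_{0Z}$, $\gamma_{0Z}$ and the doubly iterated link in the middle. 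The asserted properties of these maps --- $\pi_\bullet$ proper constructible, $\gamma_\bullet$ open, everything conically smooth --- are exactly the output of Corollary~\ref{cbl-cone-pushout} (or, equivalently, the defining conditions on a functor in ${\sf Burn}_2(Z)$, transported through the identifications above).

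Next I would verify commutativity and the two remaining assertions. Commutativity is automatic because each triangle and square in the diagram is either a structure map of one of the colimits of Corollary~\ref{cbl-cone-pushout} at a successive stage or an instance of the naturality of the link/unzip construction in Lemma~\ref{cbls-over-cones}. The inner square being a pullback is precisely the defining pullback condition on $X_-\colon\cP_2\to\strat$ for the pair of elements of $\cP_2$ whose intersection indexes $X_{|\oC^{\{1\}}(\emptyset)}$ --- equivalently, it is the pullback~(\ref{another-pb}) at stage $p=2$ evaluated over that stratum --- which amounts to the statement that links commute with the base-changes of Lemma~\ref{cbls-pullback}. Finally, that $\gamma_{010Z}$ is terminal among refinements out of the pullback is the instance of Remark~\ref{terminal-refinement} (equivalently, the remark following Corollary~\ref{cbl-cone-pushout}) attached to the span produced at the last stage: among refinements of the doubly iterated link through which $\pi_{0Z}$ and $\gamma_{0Z}$ both factor there is a terminal one, and $\gamma_{010Z}$ realizes it.

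I expect the main difficulty to be purely organizational: pinning down the dictionary between the seven values $X_S$ for $S\in\cP_2$ (and the many structure maps among them) and the specific fibers, iterated links, and labeled maps in the displayed diagram, and in particular checking that the pullback square exhibited in the statement is the defining pullback of ${\sf Burn}_2(Z)$ for the correct pair of convex subsets rather than for some neighboring pair. Once this dictionary is fixed, no further geometric input is needed: every clause of the corollary is a transcription of a defining property of ${\sf Burn}_2(Z)$ together with Corollary~\ref{cbl-cone-pushout}, Lemma~\ref{cbls-pullback}, and Remark~\ref{terminal-refinement}.
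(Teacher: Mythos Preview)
Your proposal is correct and follows essentially the same approach as the paper: the paper offers no proof beyond the sentence ``which is implied by Corollary~\ref{gpd-ess}'', and your plan is precisely to invoke that essential surjectivity and then transcribe the seven displayed objects and their maps from the defining data of an object of ${\sf Burn}_2(Z)$, with the terminal-refinement clause coming from Remark~\ref{terminal-refinement}. One small bookkeeping point: of the seven elements of $\cP_2$ one has value $X_{\{+\}}=Z$, so only six of them correspond directly to vertices of the diagram; the seventh vertex, $\Link_{X_{|\ast}}(X_{|\oC^{\{0\}}(Z)})$, is not one of the $X_S$ but rather the terminal refinement under the pullback (i.e., the ``composed span''), exactly as you say in your final paragraph.
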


\begin{cor}\label{Cbl.ess}
For $K_0$ a transverse collection in a smooth manifold $K$ and a compact conically smooth stratified space $Z$. For each $0\leq k <p$ and each constructible bundle $X_0\to K_0\times \oC^p(Z)$, the canonical functor
\[
{\sf Cbl}\bigl(K\times \oC^p(Z);X_0\bigr)    \longrightarrow    
{\sf Cbl}\bigl(K\times \oC^k(\ast);(X_0)_{|K_0\times \oC^k(\ast)}\bigr)
\underset{{\sf Cbl}(K;(X_0)_{|\ast})}\times 
{\sf Cbl}\bigl(K\times \oC^{p-k}(Z);(X_0)_{|K_0\times \oC^{p-k}(Z)}\bigr)
\]
is surjective on objects.

\end{cor}

\begin{proof}
The functors~(\ref{p-burn-to-bun}) supply the downward functors in the diagram of categories
{\Small
\begin{equation}\label{25}
\xymatrix{
{\sf Burn}_p(K,Z;X_0) \ar[d]  \ar[dr]
&
\\
{\sf Cbl}\bigl(K\times \oC^p(Z); X_0\bigr) \ar[dr] 
&
{\sf Burn}_k(K,\ast; (X_0)_{|K_0\times \oC^k(\ast)})\underset{{\sf Burn}_0(K,\ast;(X_0)_{|\ast}))}\times {\sf Burn}_{p-k}(K,Z;(X_0)_{|K_0\times \oC^{p-k}(Z)})  \ar[d]
\\
&
{\sf Cbl}\bigl(K\times \oC^k(\ast);(X_0)_{|K_0\times \oC^k(\ast)}\bigr)
\underset{{\sf Cbl}(K;(X_0)_{|\ast})}\times 
{\sf Cbl}\bigl(K\times \oC^{p-k}(Z);(X_0)_{|K_0\times \oC^{p-k}(Z)}\bigr)
.
}
\end{equation}
}
The top diagonal functor is a surjective equivalence of categories, as implied by the identification~(\ref{burn-segal}).  
Corollary~\ref{gpd-ess} gives that the left downward functor is surjective on objects.
The right downward functor is given by taking fiber products of the horizontal diagrams in the commutative diagram among categories:
\[
\xymatrix{
{\sf Burn}_k(K,\ast; (X_0)_{|K_0\times \oC^k(\ast)})  \ar[r]  \ar[d]
&
{\sf Burn}_0(K,\ast;(X_0)_{|\ast}))  \ar[d]
&
{\sf Burn}_{p-k}(K,Z;(X_0)_{|K_0\times \oC^{p-k}(Z)})  \ar[l]  \ar[d]
\\
{\sf Cbl}\bigl(K\times \oC^k(\ast);(X_0)_{|K_0\times \oC^k(\ast)}\bigr)  \ar[r]
&
{\sf Cbl}(K;(X_0)_{|\ast})
&
{\sf Cbl}\bigl(K\times \oC^{p-k}(Z);(X_0)_{|K_0\times \oC^{p-k}(Z)}\bigr).  \ar[l]
}
\]
The middle downward functor is~(\ref{79}), which is an isomorphism between categories.
Corollary~\ref{gpd-ess} gives that the outer downward functors are surjective on objects. 
It follows that the right downward functor in~(\ref{25}) is surjective on objects.
We conclude that the bottom diagonal functor is surjective on objects, as desired.

\end{proof}

\subsection{Classifying constructible bundles}
We examine a category of constructible bundles, through which we define the $\infty$-category $\Bun$. First, recall from the end of~\S\ref{sec.stratified-spaces} the notion of \emph{finitary}.  
\begin{definition}[$\bun$]\label{def.bun}
An object of the category $\bun$ is a constructible bundle $f:X\ra K$ whose fibers are finitary.  
A morphism from $(X\xra{f} K)$ to $(X'\xra{f'} K')$ is a pullback diagram among stratified spaces
\[
\xymatrix{
X  \ar[d]_-f     \ar[r]^{g'}
&
X'\ar[d]^-{f'}
\\
K   \ar[r]_-g  
& 
K'~.
}
\] 
Composition is given by concatenating such squares horizontally, then composing horizontal maps.  

\end{definition}

\begin{lemma}\label{bun-right-fibration}
The forgetful functor 
\[
\bun\longrightarrow \strat~,\qquad (X\to K)\mapsto K~,
\]
is a right fibration, and as so it is a cone-local sheaf.  
\end{lemma}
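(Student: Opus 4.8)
The plan is to verify the two assertions separately: first that $\bun \to \strat$ is a right fibration, and then that, as a presheaf of groupoids, it satisfies the sheaf condition for the Grothendieck topology on $\strat$ (which, combined with constructibility and cone-locality, is what ``cone-local sheaf'' should mean here in the sense of the preceding discussion). For the right fibration claim, I would argue as follows. A functor $p\colon \bun \to \strat$ is a right fibration precisely when it is a Grothendieck opfibration... no, a fibration: for every morphism $g\colon K\to K'$ in $\strat$ and every object $(X'\to K')$ over $K'$, there is a (unique, up to unique isomorphism) Cartesian lift. But that is exactly the content of Lemma~\ref{cbls-pullback}: the pullback $X\underset{K'}\times K \to K$ exists and is again a constructible bundle, and its fibers --- being fibers of $X'\to K'$ --- are still finitary. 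Uniqueness of the lift is the universal property of the pullback. So the first half is essentially a citation of Lemma~\ref{cbls-pullback} together with the observation that finitariness of fibers is preserved under base change (fibers only get reindexed, not changed).

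For the sheaf condition, I would straighten $\bun \to \strat$ to the presheaf of groupoids $K \mapsto \bun(K)$, whose objects are constructible bundles with finitary fibers over $K$ and whose morphisms are isomorphisms over $K$. Given a covering sieve $\cU \subset \strat_{/K}$, I must show $\bun(K)\xra{\simeq}\lim_{U\in\cU}\bun(U)$ is an equivalence of groupoids. By the definition of the topology, it suffices to treat an open cover $\{U_i\}$ of $K$ (the general covering sieve reduces to this, as in the arguments of \S\ref{recall} and the proof of Theorem~\ref{isot.cbl}). For essential surjectivity: a descent datum is a family of constructible bundles $X_i\to U_i$ together with isomorphisms over the overlaps $U_i\cap U_j$ satisfying the cocycle condition; one glues the $X_i$ as stratified spaces --- this is legitimate since stratified spaces form a category closed under gluing along open subspaces, and open covers give colimit diagrams in $\strat$ --- and the resulting $X\to K$ is a constructible bundle by Observation~\ref{cbls-cover} (constructibility is local in the base), with finitary fibers since this is a fiberwise and hence local condition. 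For full faithfulness: an isomorphism between two constructible bundles over $K$ is the same as a compatible family of isomorphisms over the $U_i$ agreeing on overlaps, again because a conically smooth map is determined by its restrictions to an open cover and local conically smooth maps that agree on overlaps glue.

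I would then note that constructibility of the presheaf $K\mapsto\bun(K)$ (invariance under $K\times\RR\to K$) is Lemma~\ref{cbls-with-R}: every constructible bundle over $Z\times\RR$ is pulled back from $Z\times\{0\}$, which gives the required equivalence of groupoids $\bun(Z)\xra{\simeq}\bun(Z\times\RR)$ up to checking it is inverse to restriction, and cone-locality --- that $\bun$ sends the cone pushout square to a pullback of groupoids --- follows from Corollary~\ref{cbl-cone-pushout} together with Lemma~\ref{cbls-open-over-cones} and Lemma~\ref{cbls-pullback}: a constructible bundle over $\sC(L)$ is equivalent data to its restriction over $\ast$, its restriction over $L\times\RR_{\geq0}$, and the identification of these over $L$. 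I expect the main obstacle to be the bookkeeping in the gluing/descent argument --- making precise that the cocycle condition for the isomorphisms exactly matches the gluing data for stratified spaces, and that the ``finitary fibers'' condition is genuinely local and stable, rather than any deep geometry; all the hard geometric inputs (pullbacks exist, composition, $\RR$-invariance, the cone decomposition) have already been established in the earlier lemmas, so this lemma is largely an assembly of those facts.
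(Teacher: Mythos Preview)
Your proposal is correct and follows essentially the same approach as the paper. The paper's proof is terser but identical in substance: Lemma~\ref{cbls-pullback} gives the Cartesian lifts (hence a Cartesian fibration with groupoid fibers, hence a right fibration); Observation~\ref{cbls-cover} plus the fact that open covers pull back gives the sheaf condition; and cone-locality is handled by a single citation to Corollary~\ref{cbl-covers-pullback}, which packages exactly the statement you unwind---that pulling back a constructible bundle along the blow-up square yields again a pushout square---rather than the combination of Corollary~\ref{cbl-cone-pushout}, Lemma~\ref{cbls-open-over-cones}, and Lemma~\ref{cbls-pullback} you cite (those concern \emph{closed} cones and would need a small translation). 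Note also that the lemma does not assert constructibility, so your paragraph invoking Lemma~\ref{cbls-with-R} is unnecessary here; that property is established later, in the proof of Theorem~\ref{isaquasi-cat}, via the isotopy conditions.
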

\begin{proof} 
Lemma~\ref{cbls-pullback} immediately implies that this projection is a Cartesian fibration.  By design, the fibers are groupoids.  It follows that it is a right fibration.
Observation~\ref{cbls-cover}, together with the fact that open covers pull back, implies $\bun \to \strat$ is a sheaf.  
Corollary~\ref{cbl-covers-pullback} implies $\bun\to \strat$ is cone-local.  

\end{proof}

Recall the notion of a transversality sheaf from Definition~\ref{def.transversality}.
\begin{theorem}\label{isaquasi-cat}
The functor $\bun\ra \strat$ is a transversality sheaf.
\end{theorem}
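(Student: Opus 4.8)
The plan is to verify the six conditions of Theorem~\ref{transversality.conditions} for the right fibration $\bun \to \strat$, straightened to the functor $K \mapsto \cbl(K)$, the groupoid of constructible bundles over $K$ with finitary fibers. Three of these are already in hand: Lemma~\ref{bun-right-fibration} establishes the \textbf{sheaf} and \textbf{cone-local} conditions (and is itself built on Observation~\ref{cbls-cover}, Lemma~\ref{cbls-pullback}, and Corollary~\ref{cbl-covers-pullback}). So the work reduces to checking \textbf{isotopy extension}, \textbf{isotopy equivalence}, \textbf{consecutive}, and \textbf{univalent}.

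For \textbf{isotopy extension} and \textbf{isotopy equivalence}: given a weakly regular subspace $K_0 \subset K$, choose a refinement $\w K \to K$ with respect to which $K_0$ is the image of a proper constructible embedding $\w K_0 \to \w K$. Pulling back a constructible bundle along $\w K \to K$ (Lemma~\ref{cbls-pullback}) and reconstructing it via the blow-up pushout $\w X_{|K_0} \amalg_{\Link} \Unzip \cong X$ (as in the proof of Corollary~\ref{Cbl-ess}) lets one transfer bundles and isomorphisms across the restriction $\cbl(K) \to \cbl(K_0)$. Surjectivity on Hom-sets (extending an isotopy, i.e.\ an isomorphism of bundles over $K_0$, to one over $K$) follows because an isomorphism over $K_0$ can be extended over a regular neighborhood using the collaring/vector-field techniques of~\cite{aft1} (compare Lemmas~\ref{cbls-with-R} and~\ref{cbls-open-over-cones}), then damped to the identity outside. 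For a weakly regular deformation retraction, surjectivity on objects (extending a bundle over $K_0$ to one over $K$) follows similarly: pull the bundle back along the retraction $\Delta^1 \times \w K \to \w K_0$ and restrict.

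The \textbf{consecutive} condition is the technical heart, and I expect it to be the main obstacle. We must show that for $k \le p$ the functor
\[
\cbl(\Delta^p) \longrightarrow \cbl(\Delta^{\{0<\dots<k\}}) \underset{\cbl(\Delta^{\{k\}})}{\times} \cbl(\Delta^{\{k<\dots<p\}})
\]
is essentially surjective and surjective on Hom-sets. Essential surjectivity is precisely Corollary~\ref{Cbl-ess}, using the identification $\Delta^j \cong \oC^j(\emptyset)$ (Observation~\ref{simplices-are-cones}) — this is where the whole apparatus of $\S\ref{sec.decompose}$ (the Burnside categories $\burn_p$, the functors~(\ref{p-burn-to-bun}), Corollary~\ref{gpd-ess}, and the Segal-type pullback~(\ref{burn-segal})) gets used. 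Surjectivity on Hom-sets is an isotopy-extension statement for the glued simplex: given a constructible bundle over $\Delta^p$ and an isomorphism of its two restrictions over $\Delta^{\{0<\dots<k\}} \cup_{\Delta^{\{k\}}} \Delta^{\{k<\dots<p\}}$, extend the isomorphism across $\Delta^p$. Since $\Delta^{\{0<\dots<k\}} \cup_{\Delta^{\{k\}}} \Delta^{\{k<\dots<p\}} \subset \Delta^p$ is a weakly regular subspace (indeed a regular neighborhood, by Corollary~\ref{pre-segal}), this follows from the isotopy-extension property already verified — exactly the pattern used in the proof of Theorem~\ref{transversality.conditions} itself.

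Finally, for \textbf{univalent}: one must show the topologizing diagram $|\bun(-\times \Delta^\bullet_e)|$ sends the $4$-term completeness square to a pullback. Concretely, an object of $\bun(\Delta^3)$ whose restrictions to $\Delta^{\{0<2\}}$ and $\Delta^{\{1<3\}}$ are ``constant'' (pulled back from the point) must itself be constant, up to coherent isotopy. Using $\Delta^3 \cong \oC^3(\emptyset)$ and the classification of bundles over iterated cones from $\S\ref{sec.decompose}$, a bundle over $\Delta^3$ is built from a chain of spans $X_0 \xla{\pcbl} L_1 \xra{\opn} \cdot \xla{\pcbl} L_2 \xra{\opn} \cdots$; the hypothesis that the ``$02$'' and ``$13$'' faces are constant forces the intermediate proper constructible and open maps to be isomorphisms (cf.\ Remark~\ref{terminal-refinement} and Corollary~\ref{link-link}), so the whole bundle is a product. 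This is the bundle-theoretic incarnation of $\Diff(M,M) \simeq \Emb^\sim(M,M)$ alluded to in the introduction, and I would carry it out by the same fiberwise-vector-field deformation arguments used throughout, making the necessary homotopies conically smooth via~\cite{aft1}. With all six conditions verified, Theorem~\ref{transversality.conditions} yields that $\bun \to \strat$ is a transversality sheaf.
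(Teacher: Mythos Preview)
Your handling of the first five conditions tracks the paper closely: Lemma~\ref{bun-right-fibration} for the sheaf and cone-local axioms; the blow-up pushout expression~(\ref{colimitexpression}) for isotopy extension; pulling back along a retraction of a refinement for isotopy equivalence; and Corollary~\ref{Cbl-ess} together with the weak regularity of $\Delta^{\{0<\dots<k\}}\cup_{\Delta^{\{k\}}}\Delta^{\{k<\dots<p\}}\subset\Delta^p$ for consecutivity. These parts are essentially the paper's arguments.

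The gap is in \textbf{univalence}. Your proposed route---analyze a bundle over $\Delta^3$ via its chain of spans and argue that constancy of the $\{0<2\}$ and $\{1<3\}$ faces forces each leg to be an isomorphism---does not do the job. First, that implication is not true as stated: the composite span along the $\{0<2\}$ face is a fiber product of the two intermediate spans followed by a terminal refinement (Corollary~\ref{link-link}), and its being an isomorphism does not force the individual legs to be isomorphisms. Second, and more importantly, the univalence condition is a pullback of \emph{spaces}, not a $\pi_0$ statement about $\bun(\Delta^3)$; a span-decomposition argument of the kind you sketch would at best control isomorphism classes, not the full homotopy type of the relevant fiber.

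The paper instead uses Remark~\ref{class-vs-top} to recast univalence as the assertion that $\Bun(\Delta^0)\to\Bun(E^1)$ is an equivalence of spaces, identifies both sides componentwise as $\coprod_{[X]}\Aut(X)$ and $\coprod_{[X]}\Emb^\sim(X,X)$, and then proves $\Aut(X)\simeq\Emb^\sim(X,X)$ directly. That last step is where the real content lies, and it uses the finitary hypothesis essentially: write $X$ as the interior of a compact $\ov X$, choose a collar, and build a one-parameter family $\phi_t\in\Aut(X)$ that sucks everything toward the complement $Z$ of the collar. Conjugation $f\mapsto\phi_t^{-1}f\phi_t$ extends across $t=0$ to a ``derivative along $Z$'' map $D_Z\colon\Emb_Z(X)\to\Aut_Z(X)$, which furnishes the homotopy inverse on fibers over $\Emb(Z,X)$. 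You correctly name the target equivalence $\Aut(X)\simeq\Emb^\sim(X,X)$, but you would need this explicit shrinking construction (or an equivalent one) rather than the $\Delta^3$ span analysis.
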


\begin{notation}
Lemma~\ref{bun-right-fibration} states that $\bun \to \strat$ is a right fibration and a sheaf.
So we fix, once and for all, a straightening $\bun\colon \strat^{\op} \to \gpd$. 
We will write $\bun(K)$ for the fiber of $\bun\ra \strat$ over $K\in \strat$.
This is the value on $K$ of the functor $\bun: \strat^{\op} \ra \gpd$ which is the straightening the right fibration $\bun\ra \strat$.
\end{notation}

We establish the following isofibration property; recall Remark \ref{rem.isofib} regarding our use of isofibrations.

\begin{lemma}\label{iso.fib.opens}
For each open subset $U\subset Z$ of a stratified space, the restriction functor
\[
\bun(Z) \longrightarrow \bun(U)
\]
is an isofibration.
\end{lemma}
\begin{proof}
Let $X\to Z$ and $Y_U \ra U$ be constructible bundles, and let $\alpha_U\colon X_{|U} \cong Y_U$ be an isomorphism over $U$.
We must construct an isomorphism $\alpha \colon X\cong Y$ over $Z$ extending $\alpha_U$.  
Choose an open subset $V\subset Z$ for which $\{U,V\}$ is an open cover of $Z$.
This determines a commutative diagram
\[
\xymatrix{
X_{|U}  \ar[d]_-{\alpha_U}
&&
X_{|U\cap V}  \ar[rr]  \ar[ll]  \ar[d]^-{=}
&&
X_{|V}  \ar[d]^-{=}
\\
Y_U  
&&
X_{|U\cap V}  \ar[rr]  \ar[ll]_-{(\alpha_{U})_{|X_{|U\cap V}}}
&&
X_{|V}
}
\]
in which the unlabeled arrows are the canonical inclusions.  
There results an isomorphism between pushouts in the category $\strat$ (which, in this case, exist):
\[
\alpha\colon Y~:=~ Y_U \underset{X_{|U\cap V}} \coprod X_{|V}  ~\cong~  X_{|U} \underset{X_{|U\cap V}} \coprod X_{|V}~=~X~,
\]
as desired.

\end{proof}

\begin{proof}[Proof of Theorem \ref{isaquasi-cat}]

We will employ the conditions of Theorem~\ref{transversality.conditions} in showing that $\bun\ra \strat$ is a transversality sheaf.

\noindent
{\bf Isotopy extension:}

We are given the right fibration $\bun \to \strat$ between ordinary categories.
We  must show, for each stratified space $K$ and each $0\leq i \leq q$, that the functor between groupoids
\begin{equation}\label{39}
\bun(K\times \Delta^q_e)
\longrightarrow
\bun(K\times \partial \Delta^q_e) 
\end{equation}
is an isofibration, and that the functor between groupoids
\begin{equation}\label{40}
\bun(K\times \Delta^q_e)
\longrightarrow
\bun(K\times (\Lambda^q_i)_e) 
\end{equation}
is essentially surjective and surjective on Hom-sets. (See Notation \ref{simp.not} for the righthand terms.)

We first establish that the functor~(\ref{39}) is an isofibration. 
Consider a finite open cover $\cO$ of $\Delta^q_e$ that is closed under finite intersections, and for which, for each $U\in \cO$, there is a diffeomorphism $\phi_U\colon U \cong \RR^q$ with the following property:
\begin{itemize}
\item[~]
for each non-empty subset $S\subset \{0,\dots,r\}$ for which $U\cap \Delta^S_e \neq \emptyset$ is not empty, the composition 
\[
U\cap \Delta^S_e \hookrightarrow U \xra{~\phi_U~} \RR^q
\]
is a diffeomorphism onto a coordinate vector subspace $\RR^{|S|-1}\subset \RR^q$.  

\end{itemize}
(Such an open cover exists, by choosing complements of skeleta of the topological simplex $\Delta^q$.)
For each $0\leq a \leq  q$, consider the poset $\cP_a$ of proper subsets of $\{0,\dots,q\}$, ordered by inclusion.
Consider the functor 
\[
\cP_q  \longrightarrow   \strat
~,\qquad
S\mapsto \RR^S~,
\]
carrying inclusions to coordinate inclusions.  
By design then, for each $U\in \cO$ there is an $0\leq a \leq q$ for which such a $\phi_U$ identifies the restriction functor $\bun(K\times U) \to \bun(K\times \partial \Delta_e \cap U)$ as the canonical functor
\begin{equation}\label{35}
\bun(K\times \RR^q) \longrightarrow \underset{S\in \cP_a}\limit \bun(K\times \RR^{q-a}\times \RR^S)~.
\end{equation}
Through Lemma~\ref{iso.fib.opens}, the fact that $\bun$ is a sheaf on $\strat$ reduces the problem of showing~(\ref{39}) is an isofibration to that of showing the functor~(\ref{35}) is an isofibration.

So let $X\to K\times \RR^q$ be a constructible bundle, and let $(X_{|K\times \RR^{q-a}\times \RR^S}\xra{\alpha_S} Y_S)_{S\in \cP_a}$ be an isomorphism in the codomain of~(\ref{35}).  
We must extend this system of isomorphisms to an isomorphism $X\cong Y$ over $K\times \RR^q$.  
Denote the restriction $X_0:=X_{|K\times\{0\}}$. 
Lemma~\ref{cbls-with-R} gives an isomorphism $X\cong X_0\times \RR^q$ over $K\times \RR^q$ and under $X_0$.  
Composing with this isomorphism, we can assume that $X=X_0\times \RR^q$.  
Consider the functor $\cP_a^{\op}\to \strat$ whose value on $S\in \cP_a$ is $Y_S$ and whose value on an inclusion $S'\subset S$ is the composition 
\[
Y_S \xra{\alpha_S^{-1}} X_0\times \RR^{q-a}\times \RR^S \xra{{\sf id}_{X_0\times \RR^{q-a}}\times {\sf pr}} X_0\times \RR^{q-a}\times \RR^{S'} \xra{\alpha_{S'}} Y_{S'}~;
\]
here, ${\sf pr}$ is the evident coodinate projection.
By construction, the system of isomorphisms $(\alpha_S)$ determines a map betwen limits (which exist, in this case):
\[
\alpha\colon X_0\times \RR^q  \cong   X_0\times  \underset{S\in \cP_a}\limit \RR^{q-a}\times \RR^S  \cong  \underset{S\in \cP_a}\limit X_0\times \RR^{q-a}\times \RR^S 
\xra{~{}~\cong~{}~}
\underset{S\in \cP_a}\limit Y_S ~=:~ Y
\]
over the limit $K \times \RR^q  \cong   K \times  \underset{S\in \cP_a}\limit \RR^{q-a}\times \RR^S  \cong  \underset{S\in \cP_a}\limit K \times \RR^{q-a}\times \RR^S$.
This is the sought isomorphism extending the system $(\alpha_S)$.

We now show that the functor~(\ref{40}) is surjective on objects and on Hom-sets.
The case $q=0$ is immediate, since $\bun(\emptyset) =\{\emptyset\}$ is terminal.
In the case that $q=1$, there is an isomorphism $((\Lambda^q_i)_e \subset \Delta^q_e) \cong (\{0\} \subset \RR)$.  
Surjectivity on objects and on Hom-sets of~(\ref{40}) follows from the observation that for any stratified space $Z$, the map ${\sf id}\times\{0\}Z\ra Z\times \RR$ is a section of the projection $Z\times \RR \ra Z$.

We proceed by induction on $q> 1$.
Note the isomorphism $(\Lambda^q_i)_e \cong (\Lambda^q_0)_e$ between diagrams of smooth manifolds; we therefore only consider the case $i=0$, for simplicity. 
We begin by showing that surjectivity on Hom-sets is implied by surjectivity on objects.
Let $\w{X}_0 \to K\times \Delta^q_e$ and $\w{X}_1 \to K\times \Delta^q_e$ be constructible bundles, and let 
\[
\alpha\colon X_0 := (\w{X}_1)_{|K\times (\Lambda^q_i)_e}~\cong~(\w{X}_0)_{|K\times (\Lambda^q_i)_e}=: X_1
\]
be an isomorphism between their restrictions over $K\times (\Lambda^q_i)_e$.  
We must extend the isomorphism $\alpha$ to an isomorphism $X_0\cong X_1$ between constructible bundles over $K\times \Delta^q_e$.  
The isomorphism $\alpha$ determines a constructible bundle $X\to K\times (\Lambda^q_i)_e \times \RR$ whose restrictions are identified
\[
X_{|K\times (\Lambda^q_i)_e \times \{0\}}~=~X_0
\qquad \text{ and }\qquad
X_{|K\times (\Lambda^q_i)_e \times \{1\}}~ = ~X_1~.
\]
The triple $(X,X_0\coprod X_1,\w{X}_0\coprod \w{X}_1)$ is then an object of the codomain of the functor between groupoids
\[
\bun(K\times \Delta^q_e\times \RR) 
\longrightarrow
\bun(K\times (\Lambda^q_i)_e \times \RR) 
\underset{\bun(K\times (\Lambda^q_i)_e \times \{0,1\})}\times
\bun(K\times \Delta^q_e \times \{0,1\})~.
\]
Provided the functor~(\ref{40}) is surjective on objects, Observation~\ref{two.formals} gives that this functor is surjective on objects.  
We can therefore choose a constructible bundle $\w{X} \to K\times \Delta^q_e\times \RR$ whose restrictions are identified
\[
\w{X}_{|K\times (\Lambda^q_i)_e \times \RR }~=~X~
\qquad ~,~\qquad
\w{X}_{|K\times (\Lambda^q_i)_e \times \{0,1\}}~=~X_0\coprod X_1~,
\qquad \text{ and }\qquad
X_{|K\times \Delta^q_e \times \{0,1\}}~ = ~\w{X}_0\coprod \w{X}_1~.
\]
Lemma~\ref{cbls-with-R} gives an isomorphism $\w{\alpha}\colon \w{X} \cong \w{X}_0\times \RR$ between constructible bundles over $K\times \Delta^q_e\times \RR$ under $\w{X}_0$.  
The sought isomorphism is the restriction $\w{\alpha}_{|\{1\}}\colon \w{X}_1\cong \w{X}_0\times \{1\}$.

We are therefore reduced to showing that the functor~(\ref{40}) is surjective on objects.
Note the isomorphism among diagrams of smooth manifolds
\[
(\Lambda^q_0)_e~\cong~ \Bigl(\Delta^{q-1}_e \leftarrow (\Lambda^{q-1}_0)_e \rightarrow (\Lambda^{q-1}_0)_e \times \RR    \Bigr)~.
\]
Note that this isomorphism lies over an isomorphism of diagrams of smooth manifolds
\[
\Delta^q_e~\cong~ \Bigl(\Delta^{q-1}_e \leftarrow \Delta^{q-1}_e \rightarrow \Delta^{q-1}_e \times \RR    \Bigr)~.
\]
Through these isomorphisms, the functor~(\ref{40}) is recognized as the functor
\begin{eqnarray}
\nonumber
\bun(K\times \Delta^q_e)
&
\xra{~\cong~}
&
\limit \Bigl(\bun(K\times \Delta^{q-1}_e) \to \bun(K\times \Delta^{q-1}_e) \leftarrow \bun(K\times \Delta^{q-1}_e \times \RR)    \Bigr)
\\
\nonumber
&
\longrightarrow
&
\limit \Bigl(\bun(K\times \Delta^{q-1}_e) \to \bun(K\times (\Lambda^{q-1}_0)_e) \leftarrow \bun(K\times (\Lambda^{q-1}_0)_e \times \RR)    \Bigr)
\\
\nonumber
&
\xla{~\cong~}
&
\bun(K\times (\Lambda^q_i)_e) 
\end{eqnarray}
in which the middle functor is that induced by taking limits of the horizontal diagrams in the diagram
\begin{equation}\label{41}
\xymatrix{
\bun(K\times \Delta^{q-1}_e)  \ar[r]  \ar[d]
&
\bun(K\times \Delta^{q-1}_e)  \ar[d]
&
\bun(K\times \Delta^{q-1}_e \times \RR)  \ar[l]  \ar[d]
\\
\bun(K\times \Delta^{q-1}_e) \ar[r]
&
\bun(K\times (\Lambda^{q-1}_0)_e) 
&
\bun(K\times (\Lambda^{q-1}_0)_e \times \RR)    \ar[l]
}
\end{equation}
among groupoids.
Note that the left downward functor and the upper rightward functor are the identity functors.
Therefore, surjectivity of the functor~(\ref{40}) on objects is implied by surjectivity on objects of the functor
\begin{equation}\label{42}
\bun(K\times \Delta^{q-1}_e \times \RR) 
\longrightarrow
\bun(K\times \Delta^{q-1}_e)  \underset{\bun(K\times (\Lambda^{q-1}_0)_e) }\times  \bun(K\times (\Lambda^{q-1}_0)_e \times \RR) ~.
\end{equation}
By Observation~\ref{two.formals}, this functor is an isofibration. 
Therefore, surjectivity of~(\ref{42}) on objects is implied by essential surjectivity of~(\ref{42}), which we now establish.
Choose a constructible bundle $\w{X}_0 \to K\times \Delta^{q-1}_e$ and an extension $X\to K\times (\Lambda^{q-1}_0)_e \times \RR$ of its restriction $X_0:=(\w{X}_0)_{|K\times (\Lambda^{q-1}_0)_e}$.
Consider the product constructible bundle $\w{X}_0\times \RR \to K\times \Delta^{q-1}_e\times \RR$.
Lemma~\ref{cbls-with-R} provides an isomorphism between constructible bundles over $K\times (\Lambda^{q-1}_0)_e \times \RR$,
\[
\alpha\colon X~\cong~ X_0 \times \RR~\cong~(\w{X}_0\times \RR)_{|K\times (\Lambda^q_i)_e\times \RR}~,
\]
under $X_0$.  
We conclude that the functor~(\ref{42}) is essentially surjective, as desired.

\noindent
{\bf Consecutive:}
In this proof, for $D$ and $E$ finite simplicial sets, we use the notation
\[
\bun(D\times E_e)~:=~\limit \bigl((\bDelta_{/D})^{\op}\times (\bDelta_{/E})^{\op} \to \bDelta^{\op}\times \bDelta^{\op} \xra{\bun(\Delta^\bullet\times \Delta^{\bullet'}_e)} {\sf Gpd}  \bigr)~.
\]
Because, for each simplicial set $C$, the full subcategory of $\bDelta_{/C}$ consisting of the non-degenerate simplices of $C$ is final, this limit can be computed as a finite limit.  
Also, in this proof, for each $0<k<p$, we denote the finite simplicial set 
\[
D^p_k~:=~\Delta[\{0<\dots<k\}]\underset{\Delta[\{k\}]}\amalg \Delta[\{k<\dots<p\}]~.
\]

Let $0<k<p$ and $q\geq 0$.
With the notation from just above, we must show that the canonical functor between groupoids,
\begin{equation}\label{20}
\bun(\Delta^p\times \Delta^q_e) \longrightarrow \bun(D^p_k \times \Delta^q_e)\underset{\bun(D^p_k\times \partial \Delta^q_e)}\times \bun(\Delta^p\times \partial \Delta^q_e)~,
\end{equation}
is surjective on objects and on Hom-sets.

We first establish surjectivity of~(\ref{20}) on objects.
Let $(X_0\to \Delta^p\times \partial \Delta^q_e) \in \bun(\Delta^p\times \partial \Delta^q_e)$ be an object.
The desired surjectivity is implied by such surjectivity on fiber groupoids over $X_0$.  
We are therefore reduced to showing that the canonical functor
\begin{equation}\label{21}
\bun(\Delta^p\times \Delta^q_e;X_0) \longrightarrow 
\bun(D^p_k \times \Delta^q_e;(X_0)_{| D^p_k \times \partial \Delta^q_e})
\end{equation}
is surjective on objects.
By definition, $\bun(Y) := {\sf Cbl}(Y)^\sim$ is the maximal subgroupoid for each stratified space $Y$.  
Surjectivity of~(\ref{21}) on objects therefore follows directly from Corollary~\ref{Cbl.ess}, 
applied to the case $Z=\ast$ so that $\oC^p(\ast) \cong \Delta^p$ and $K=\Delta^q_e$ with $K_0 = \partial \Delta^q_e$ (as in Example~\ref{ex.partial}).

We now establish surjectivity of~(\ref{20}) on Hom-sets.
Let $X \to \Delta^p\times \Delta^q_e$ and $Y\to \Delta^p\times \Delta^q_e$ be constructible bundles, and let $\alpha_{|}\colon X_{|} \cong Y_{|}$ be an isomorphism between their images by the functor~(\ref{20}).  
We must extend $\alpha_{|}$ as an isomorphism $\alpha\colon X\cong Y$ over $\Delta^p\times \Delta^q_e$.
Using the given isomorphism $\alpha_{|}$, choose an object of 
\[
Z\in \bun(D^p_k \times \Delta^q_e\times \Delta^1_e)\underset{\bun(D^p_k\times \partial \Delta^q_e\times \Delta^1_e)}\times \bun(\Delta^p\times \partial \Delta^q_e\times \Delta^1_e)
\]
whose restriction
\[
Z_{|\partial \Delta^1_e}~=~X_{|} \amalg Y_{|}~
\in
~\bun(D^p_k \times \Delta^q_e\times \partial \Delta^1_e)\underset{\bun(D^p_k\times \partial \Delta^q_e\times \partial \Delta^1_e)}\times \bun(\Delta^p\times \partial \Delta^q_e\times \partial \Delta^1_e)
\]
is identical over $D^p_k\times \Delta^q_e\times \partial \Delta^1_e\cong D^p_k\times \Delta^q_e \amalg D^p_k\times \Delta^q_e$ to the coproduct of the restrictions of $X$ and $Y$.
The constructible bundle $(X\amalg Y \to \Delta^p \times \Delta^q_e\times \partial \Delta^1_e)$ together with $Z$ thus define an object of the codomain of the canonical functor
\begin{equation}\label{22}
\bun(\Delta^p\times \Delta^q_e\times\Delta^1_e) 
\longrightarrow
 \bun(D^p_k\times \Delta^q_e\times \Delta^1_e)\underset{\bun(D^p_k\times \Delta^q_e\times \partial \Delta^1_e)}\times \bun(\Delta^p\times \Delta^q_e\times \partial \Delta^1_e)~.
\end{equation}
Surjectivity on objects of this functor follows directly from Corollary~\ref{Cbl.ess}, applied to the case $Z=\ast$ so that $\oC^p(\ast) \cong \Delta^p$ and $K=\Delta^q_e\times \Delta^1_e$ with $K_0 =  \bigl\{\Delta^q_e\times \Delta^{\{0\}}_e~,~\Delta^q_e\times \Delta^{\{1\}}_e\bigr\}$.
We can therefore choose a constructible bundle $\w{Z} \to \Delta^p\times \Delta^q_e\times \Delta^1_e$ extending $Z$ and $X$ and $Y$.  
Lemma~\ref{cbls-with-R} grants an isomorphism $\w{\alpha}\colon \w{Z}\cong X\times \Delta^1_e$ over $\Delta^p\times \Delta^q_e\times \Delta^1_e$.
Given by flowing a parallel vector field, this isomorphism $\w{\alpha}$ can be ensured to extend the given isomorphism $\alpha_{|}$.  
The desired isomorphism $\alpha\colon X \cong Y$ is the restriction of $\w{\alpha}_{|\Delta^p\times \Delta^q_e\times \Delta^{\{1\}}_e}$.

\noindent
{\bf Univalent:}
As in Remark~\ref{class-vs-top}, to prove univalence for $\bun$ amounts to proving that the map of spaces
\[
\Bun(\Delta^0)  \longrightarrow   \Bun(E^1)
\]
is an equivalence, where $E^1$ is the simplicial stratified space corresponding, via $\Delta^\bullet$, to the nerve of the free isomorphism.
Noncanonically, we can identify these spaces as disjoint unions indexed by the collection of isomorphism classes $[X] \in \pi_0\Bun(\Delta^0)$, as
\[\xymatrix{
\Bun(\Delta^0) \simeq\underset{[X]}\coprod\Aut(X)&{\rm and}&\Bun(E^1) \simeq \underset{[X]}\coprod\Emb(X)~.\\}
\]
Here $\Aut(X)$ is the space of (conically smooth) automorphisms of $X$, and $\Emb(X) := \Emb^\sim(X,X)$ is the space of (conically smooth) self-embeddings of $X$ which are isotopic to an automorphism (via a conically smooth isotopy consisting of embeddings). To show the desired equivalence, we therefore reduce to showing that for any finitary stratified space $X$, the natural map
\[\Aut(X) \longrightarrow \Emb(X)\]
is a homotopy equivalence of Kan complexes. 
We now prove this equivalence.

Since $X$ is finitary, it is the interior of a compact conically smooth stratified space with boundary $\ov{X}$. We choose an open collar $C \cong \partial \ov{X}\times (0,1]$ of the boundary $\partial \ov{X}$, and then set $Z := X\smallsetminus C$ to be the complement of the open collar. By sucking in along the parametrization of the collar, we can define a 1-parameter family of automorphisms
\[
\phi: (0,1]\longrightarrow \Aut(X)
\]
such that:
\begin{itemize}
\item $\phi_1 = {\sf id}_X$;
\item  $\phi_{t|_Z} = {\sf id}_Z$ for each $t\in (0,1]$;
\item for each $x\in C$ and each open $U\subset X$ containing $Z$, there exists $t$ such that $\phi_s(x)\in U$ for all values $0<s<t$.
\end{itemize}  
Fix such a $Z$ and such a $\phi$. Denote by $\Emb_Z(X)\subset \Emb(X)$ the submonoid of those self-embeddings of $X$ that are the identity map on $Z$, and set $\Aut_Z(X) = \Aut(X)\cap \Emb_Z(X)$.
Since $Z$ is compact, the isotopy extension theorem grants that the natural restrictions $\Aut(X) \ra \Emb(Z,X)$ and $\Emb(X)\ra \Emb(Z,X)$ are surjective Kan fibrations. We thus have a natural map of homotopy fiber sequences
\[\xymatrix{
\Aut_Z(X) \ar[r]\ar[d]&\Aut(X) \ar[r]\ar[d]&\Emb(Z,X)\ar@{=}[d]\\
\Emb_Z(X) \ar[r]&\Emb(X) \ar[r]&\Emb(Z,X)\\}
\]
which is the identity on the base $\Emb(Z,X)$. Consequently, to prove the equivalence of $\Aut(X)\ra \Emb(X)$, we can reduce to proving that the map of fibers $\Aut_Z(X)\ra \Emb_Z(X)$ is an equivalence. To do so, we construct an explicit homotopy inverse using our previously chosen $\phi$.

Consider the map
\[
\gamma\colon \Emb_Z(X) \times (0,1] \longrightarrow \Emb_Z(X)~,\qquad (f,t)\mapsto \phi_t^{-1} \circ f \circ \phi_t
\]
given by conjugating with $\phi$.
Since $\Emb_Z(X)$ consists of conically smooth maps, there is an extension of $\gamma$ to a map $\ov{\gamma}\colon \Emb_Z(X)\times [0,1]\ra \Strat_Z(X,X)$. We denote the restriction of this extension along $\{0\}$ as
\[
D_Z \colon \Emb_Z(X) \longrightarrow \Strat_Z(X,X)~.
\]
To conclude the argument, we show that the values of $D_Z$ are automorphisms of $X$.
Let $f\colon X\to X$ be an embedding that is the identity on $Z$.
Then there is an open neighborhood $Z\subset O\subset X$ on which there is a conically smooth inverse $f^{-1}\colon X\dashrightarrow X$.
Denote the pre-image $O':=f^{-1}(O)\subset X$.
The chain rule of~\cite{aft1} gives the identities ${\sf id}_{O} = D_Z(f\circ f^{-1}) = D_Z f \circ D_Zf^{-1}$ and ${\sf id}_{O'} = D_Z(f^{-1}\circ f) = D_Z f^{-1} \circ D_Zf$.
This demonstrates an inverse to $D_Zf$.
We thus conclude that the map factors as $D_Z\colon \Emb_Z(X) \to \Aut_Z(X)$, and therefore that the isotopy $\ov{\gamma}$ witnesses $D_Z$ as a homotopy inverse to the inclusion $\Aut_Z(X) \to \Emb_Z(X)$.

\end{proof}

The efforts of this article culminate in the following example: an $\infty$-category $\Bun$ that classifies constructible bundles.  
Recall the Definition~\ref{def.bun} of $\bun$, which Theorem~\ref{isaquasi-cat} verifies is a transversality sheaf.  
\begin{definition}[$\Bun$]\label{def.Bun}
$\Bun$ is the $\oo$-category associated to the transversality sheaf $\bun$ by way of the functor $\trans\ra \Stri$ and the equivalence $\Stri\simeq \Cat_{\oo}$ of Theorem~\ref{striation-gcat}.  

\end{definition}

\begin{remark}
For each conically smooth stratified space $K$, there is an identification of the space of functors
\[
\Map\bigl(\exit(K),\Bun\bigr)~\simeq~\Bigl([q]\mapsto \bigl\{X\underset{\sf cbl} \longrightarrow K\times \Delta^q_e \bigr\}  \bigr)\Bigr)
\]
as the space associated to the simplicial set for which a $q$-simplex is a constructible bundle over $K\times \Delta^q_e$.
Each object of the $\infty$-category $\Bun$ is represented by a conically smooth stratified space; the space of morphisms in $\Bun$ from $X_0$ to $X_1$ is represented by the simplicial set for which a $q$-simplex is a constructible bundle $X\to \Delta^1\times\Delta^q_e\cong [0,1]\times\RR^q$ with identifications $X_0\times\RR^q \cong X_{|\{0\}\times\RR^q}$ and $X_1\times\RR^q\cong X_{|\{1\}\times\RR^q}$ over $\RR^q$.  

\end{remark}

\begin{remark}
For each conically smooth stratified space $K$, the set of components of the space of functors
\[
\pi_0\Map\bigl(\exit(K),\Bun\bigr)~\cong~ \Bigl\{X\underset{\sf cbl}\longrightarrow K\Bigr\}_{/\sf conc}
\]
is identified as the set of smooth concordance classes of constructible bundles over $K$.  
In this sense, the $\infty$-category $\Bun$ can be interpreted as one that classifies constructible bundles.  

\end{remark}

\begin{remark}\label{as-spans}
Corollary~\ref{0burn-bun-ess} implies a surjection
\[
\bigl\{X_0 \xla{\sf p.cbl} L  \xra{\open} X_1\bigr\}_{/\sf iso}\longrightarrow \pi_0 \Bun(\Delta^1)
\]
to the set of path components of the space of morphisms of $\Bun$,
from the set of isomorphism classes of spans among stratified spaces for which the leftward map is proper and constructible and the rightward map is open.  
Corollary~\ref{gpd-ess} generalizes this as a surjection
\[
\bigl\{X_0 \xla{\sf p.cbl} L_{0,1}  \xra{\open} X_1\xla{\sf p.cbl} L_{1,2}\xra{\open} \dots \xla{\sf p.cbl} L_{p-1,p} \xra{\open}X_p\bigr\}_{/\sf iso}\longrightarrow \pi_0 \Bun(\Delta^p)
\]
for each $p\geq 0$.  
Entertain the existence of a pre-Burnside category of $\strat$ (with leftward factor \emph{proper and constructible} and with rightward factor \emph{open}); composition in this pre-Burnside category is defined precisely because constructible bundles admit base-change.
The surjections above can be improved as a functor from this pre-Burnside category to the $\infty$-category $\Bun$ which, on the level of homotopy categories, is essentially surjective and full.  
Because it is not important for our purposes, we forgo justifying this assertion.

\end{remark}

There are subcategories
\[
\cbun ~\subset ~\bun^{\sf cpt}~\subset~ \bun 
\]
consisting of those constructible bundles $X\to K$ that are \emph{proper}, and those that have compact fibers.

\begin{observation}\label{variants-are-trans}
Following the proof that $\bun$ is a transversality sheaf (Theorem~\ref{isaquasi-cat}), the subcategories $\cbun$ and $\bun_{\leq n}$ and $\bun^{\sf cpt}$ too are transversality sheaves.

\end{observation}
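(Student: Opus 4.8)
The plan is to rerun the proof of Theorem~\ref{isaquasi-cat} — that is, to check the six point-set hypotheses of Theorem~\ref{transversality.conditions} — uniformly for $\cbun$, $\bun_{\leq n}$, and $\bun^{\sf cpt}$ at once, by regarding each as cut out of $\bun$ by a property $\mathcal{P}$ of constructible bundles (``proper'', ``fiber dimension $\leq n$'', ``compact fibers''). The first step is to isolate the stability features of $\mathcal{P}$ that drive the argument: (i) $\mathcal{P}$ is stable under base change — for properness this is the last clause of Lemma~\ref{cbls-pullback}, and for the other two it is immediate since base change does not change fibers; (ii) $\mathcal{P}$ is local in the base, so that $\mathcal{P}\bun\subset\bun$ is a full subcategory closed under Cartesian arrows, whence $\mathcal{P}\bun\to\strat$ is again a right fibration and, by the proof of Lemma~\ref{bun-right-fibration}, a cone-local sheaf; (iii) $\mathcal{P}$ is inherited by the pieces of a blow-up square — links and unzips along closed constructible strata are themselves proper constructible, do not raise dimension, and preserve compactness of fibers — and is preserved by the colimit gluings of Observation~\ref{building-cbl}, Corollary~\ref{cbl-cone-pushout}, and expression~(\ref{colimitexpression}), for which one notes that the fiber over the cone-point of a glued bundle is the restriction $X_{|\ast}$ and the generic fibers are those of the outer bundle.

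Granting (i)--(iii), the verifications of the \textbf{isofibration} replacement, \textbf{isotopy extension}, and \textbf{isotopy equivalence} conditions from the proof of Theorem~\ref{isaquasi-cat} carry over verbatim, since they are assembled entirely from base change along refinements and deformation retractions together with the colimit expression~(\ref{colimitexpression}), all of which respect $\mathcal{P}$. For the \textbf{consecutive} condition, the essential-surjectivity half rested on Corollary~\ref{Cbl-ess}; I would establish the $\mathcal{P}$-decorated versions of Corollaries~\ref{0burn-bun-ess}, \ref{gpd-ess}, and \ref{Cbl-ess} by introducing $\mathcal{P}$-variants of the Burnside categories ${\sf Burn}_p(Z)$ (imposing $\mathcal{P}$ on the structure maps to $Z$, together with compactness of the relevant terms in the proper and compact-fiber cases), and observing that the comparison functors~(\ref{0-burn-to-bun}) and~(\ref{p-burn-to-bun}) restrict to them — again by (iii) — so that essential surjectivity propagates through the same induction on $p$ via the pullback squares~(\ref{burn-segal}) and~(\ref{another-pb}). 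Surjectivity on Hom-sets is, as in the original proof, the remark that any automorphism of the restriction of a bundle to the weakly regular subspace $\Delta^{\{0<\dots<k\}}\cup_{\Delta^{\{k\}}}\Delta^{\{k<\dots<p\}}$ extends over $\Delta^p$ via~(\ref{colimitexpression}), and this is insensitive to $\mathcal{P}$.

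For the \textbf{univalent} condition I would follow the reduction in the proof of Theorem~\ref{isaquasi-cat} to the assertion that $\Aut(X)\to\Emb^\sim(X,X)$ is a homotopy equivalence for each fiber $X$ of a $\mathcal{P}\bun$-object: for $\cbun$ and $\bun^{\sf cpt}$ the fibers are compact, so an open self-embedding of $X$ isotopic to an automorphism is already an automorphism and the claim is essentially tautological, while for $\bun_{\leq n}$ the argument there — the collar-sucking isotopy $\phi$ and the chain rule of~\cite{aft1} — never invokes a dimension bound and applies unchanged. The step I expect to be the main (purely bookkeeping) obstacle is stability (iii): verifying that properness, compactness of fibers, and the dimension bound really are preserved by the non-obvious gluings of~\S\ref{sec.decompose} — in particular that in Observation~\ref{building-cbl} the total space over $\oC(Z)$ acquires $\mathcal{P}$ exactly when the span $X_0\xla{\pcbl}L\xra{\opn}X_Z$ and its structure maps to $Z$ do — but this is a routine inspection of the explicit pushouts already recorded there.
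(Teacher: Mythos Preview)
Your proposal is correct and is precisely the approach the paper intends: the paper offers no separate proof of this Observation, simply asserting that the six conditions of Theorem~\ref{transversality.conditions} carry over from the proof of Theorem~\ref{isaquasi-cat}. You have supplied the bookkeeping the paper leaves implicit, including the pleasant simplification in the \textbf{univalent} step for the compact-fiber variants (where $\Emb^\sim(X,X)=\Aut(X)$ on the nose).
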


After Observation~\ref{variants-are-trans}, there results $\infty$-subcategories
\[
\cBun~\subset~\Bun^{\sf cpt}~\subset~ \Bun ~\supset~ \Bun_{\leq n}
\]
as well as their intersections $\cBun_{\leq n}\subset\Bun^{\sf cpt}\subset \Bun$, the second inclusion being full, where $\Bun_{\leq n}$ consists of those constructible bundles whose fiberwise dimension is bounded by $n$. (Note that the objects of $\cBun$ are compact, which is why we notate with the {\sf c}.)
\begin{lemma}\label{Fin}
There are equivalences of $\infty$-categories
\[
\cBun_{\leq 0}\xra{~\simeq~} \Fin^{\op}\qquad \text{ and }\qquad \Bun_{\leq 0}^{\sf cpt} \xra{~\simeq~} \Fin_\ast^{\op}
\] 
where $\Fin$ is the category of finite sets and $\Fin_\ast$ is the category of pointed finite sets.
\end{lemma}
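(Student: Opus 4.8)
## Proof Proposal

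The plan is to unwind the definitions of the $\infty$-categories $\cBun_{\leq 0}$ and $\Bun_{\leq 0}^{\sf cpt}$ directly from their descriptions as transversality sheaves, and then match them against $\Fin^{\op}$ and $\Fin_\ast^{\op}$ by hand. The key point is that a constructible bundle $X \to K$ with $0$-dimensional fibers (i.e., an object of $\bun_{\leq 0}$) is just a finite-sheeted covering space when $K$ is trivially stratified, and over a general stratified space $K$ is a locally constant sheaf of finite sets on $\exit(K)$. The properness condition in $\cBun$ forces these finite sets to be honest finite sets (no need to adjoin a point at infinity), while the compact-fibers condition in $\Bun^{\sf cpt}$ without properness allows exactly the open maps $L \to K$ whose fibers are finite, which upon passing to $\exit(K)$ corresponds to finite \emph{pointed} sets — the basepoint recording the ``ends'' that escape under the open but non-proper inclusion.

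First I would make the source side concrete: using the description of morphisms in $\Bun$ from Remark~\ref{as-spans}, a morphism $X_0 \to X_1$ in $\Bun_{\leq 0}$ is represented by a span $X_0 \xla{\pcbl} L \xra{\opn} X_1$ of $0$-dimensional stratified spaces, i.e., of sets (with the discrete topology and trivial stratification), with the left leg proper-constructible and the right leg open. Since everything is $0$-dimensional, a proper-constructible map of sets is just a map of finite sets (properness $=$ finite fibers $=$ finiteness of the source when the target is a point), and an open map of sets is an injection onto a sub-set. A span $X_0 \la L \hookrightarrow X_1$ with $L \to X_0$ arbitrary and $L \hookrightarrow X_1$ injective is exactly the data of a partial map $X_1 \rightharpoonup X_0$; composition of such spans via pullback (which exists by Lemma~\ref{cbls-pullback}) is composition of partial maps. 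The category of finite sets and partial maps is canonically $\Fin_\ast$ (adjoin a disjoint basepoint to represent ``undefined''). So $\Bun_{\leq 0}^{\sf cpt}$, whose objects are finite sets and whose morphisms are these spans, is $\Fin_\ast^{\op}$; and imposing that $L \to X_0$ be surjective onto $X_0$ — equivalently requiring the partial map $X_1 \rightharpoonup X_0$ to be total, i.e., an honest map of finite sets — cuts down $\cBun_{\leq 0}$ to $\Fin^{\op}$.

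Second I would verify that these identifications are compatible with the higher structure — that is, with the full simplicial-space (complete Segal) data, not just the homotopy categories. For this I would check that $\bun_{\leq 0}$ and $\cbun_{\leq 0}$, as transversality sheaves, take discrete groupoid values on each $\Delta^p_e$ (since $\Delta^p_e$ is contractible and the bundles are $0$-dimensional, $\bun_{\leq 0}(\Delta^p_e)$ is the groupoid of finite sets, with no interesting automorphisms beyond bijections, and the topologizing diagram collapses the $\Delta^\bullet_e$-direction). Thus the resulting striation sheaves $\Bun_{\leq 0}$ and $\cBun_{\leq 0}$ are $0$-truncated in the appropriate sense — they are (complete Segal spaces arising from) ordinary categories — so it suffices to identify them with $\Fin_\ast^{\op}$ and $\Fin^{\op}$ as ordinary categories, which the previous paragraph does. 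One also checks the Segal maps: a constructible bundle of $0$-manifolds over $\Delta^p = \oC^p(\ast)$ is determined, via Corollary~\ref{Cbl-ess} and induction, by its iterated links and unzips, which for $0$-dimensional fibers are again just finite sets with maps/injections among them — exactly a composable string of partial (resp.\ total) maps of finite sets, which is an object of $N(\Fin_\ast^{\op})_p$ (resp.\ $N(\Fin^{\op})_p$).

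The main obstacle I anticipate is the bookkeeping in the second step: making precise the claim that a constructible bundle $X \to \oC^p(\ast)$ with finitary (resp.\ compact) $0$-dimensional fibers is \emph{equivalent data} to a $p$-simplex of $N(\Fin^{\op})$ (resp.\ $N(\Fin_\ast^{\op})$), functorially and coherently. The cleanest route is probably to exhibit a functor in each direction: from $\Fin^{\op}$ (resp.\ $\Fin_\ast^{\op}$) one builds a transversality sheaf — a morphism of finite sets $S' \to S$ gives the open cylinder construction $\cylr$ applied to the corresponding proper constructible bundle of finite sets — and then shows the induced functor of $\infty$-categories is an equivalence by checking it is fully faithful and essentially surjective using the explicit Hom-description of $\Bun$ from the final Remark before Definition~\ref{def.Bun}. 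Essential surjectivity is immediate (every finitary $0$-stratified space is a finite set); fully faithfulness reduces to the statement that the space of constructible bundles $X \to [0,1] \times \RR^q$ of finite sets, with prescribed fibers over $0$ and $1$, is homotopy discrete with $\pi_0$ the set of partial (resp.\ total) maps $X_1 \to X_0$ — which follows from Lemma~\ref{cbls-with-R} (triviality over the $\RR^q$-direction) and the classification of $0$-dimensional constructible bundles over $[0,1]$ via links at the cone point $\{0\}$.
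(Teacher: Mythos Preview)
Your overall strategy---unwinding the span description of morphisms in $\Bun_{\leq 0}$ and matching it against partial/total maps of finite sets---is sound and genuinely different from the paper's proof, which instead builds an explicit functor $\Bun_{\leq 0}^{\sf cpt}\to\Fin_\ast^{\op}$ by forming the fiberwise one-point compactification $X^\ast\to K$, observing that $\exit(X^\ast)\to\exit(K)$ is a right fibration with a section, and then producing an inverse on mapping spaces via the reversed mapping cylinder. Your approach is more combinatorial and requires less machinery; the paper's approach has the advantage that the functor is written down globally before any morphism-level analysis.

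However, there is a genuine error in your identification of the condition that cuts $\Bun_{\leq 0}^{\sf cpt}$ down to $\cBun_{\leq 0}$. You write that imposing ``$L\to X_0$ surjective'' is equivalent to the partial map $X_1\rightharpoonup X_0$ being total. This is wrong on both counts. A partial map $X_1\rightharpoonup X_0$ with domain $L\subset X_1$ is \emph{total} exactly when $L=X_1$, i.e., when the open leg $\gamma\colon L\hookrightarrow X_1$ is a bijection; surjectivity of the other leg $\pi\colon L\to X_0$ is irrelevant to totality. More importantly, you never actually argue \emph{why} properness of the constructible bundle $X\to\Delta^1$ singles out the correct subclass of spans. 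The analysis is this: from the colimit description of $X$ in Observation~\ref{building-cbl}, the total space contains $(X_1\smallsetminus\gamma(L))\times(0,1]$ as an open and closed subspace, and this is noncompact unless $X_1\smallsetminus\gamma(L)=\emptyset$. Hence properness of $X\to\Delta^1$ is equivalent to $\gamma$ being a bijection, which is what makes the span into an honest map $X_1\to X_0$. With this correction your argument goes through; without it the claimed identification of $\cBun_{\leq 0}$ is unjustified.

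A smaller point: your assertion that the relevant transversality sheaves take ``discrete groupoid values on each $\Delta^p_e$'' is not quite right---the groupoid $\bun_{\leq 0}(\Delta^q_e)$ has nontrivial automorphisms (permutations of the fiber). What you actually need, and what is true, is that the \emph{fibers} of $\Bun_{\leq 0}(\Delta^1)\to\Bun_{\leq 0}(\partial\Delta^1)$ are discrete: an automorphism of $X\to\Delta^1$ fixing both endpoint fibers must be the identity on $X_1\times(0,1]$ by connectedness, hence the identity. You should state it this way.
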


\begin{proof}
We prove the second equivalence.
The first equivalence is a restriction of the second equivalence, and follows by inspecting the construction of the second equivalence.  

First, we show that $\Bun^{\sf cpt}_{\leq 0}$ is a discrete $\oo$-category. That is, we show that, for any two objects $I$ and $J$ of $\Bun^{\sf cpt}_{\leq 0}$ (which are compact 0-manifolds, hence finite sets), the space of morphisms $\Bun^{\sf cpt}(I, J)$ has the homotopy type of a set. This is to show that, for any smooth manifold $K$, each map from its underlying space $K\ra \Bun^{\sf cpt}(I, J)$ factors through $K \ra \pi_0 K$, its set of path components. 
Such a map from $K$ is the data of the a diagram among stratified spaces
\[
\xymatrix{
I\times K\ar[r]\ar[d]&X\ar[d]^{\cbl}&\ar[l]J\times K\ar[d]\\
\Delta^{\{0\}}\times K \ar[r]&\Delta^1 \times K&\ar[l]\Delta^{\{1\}}\times K}
\]
in which the two squares are pullback.
Consider the link $\Link_{I\times K}(X)$ of the fiber over $\Delta^{\{0\}}\times K$ in the total space $X$.
This link is equipped with a conically smooth map $\Link_{I\times K}(X) \to \Link_{\Delta^{\{0\}}\times K}(\Delta^1\times K) \cong K$, which is a constructible bundle with compact fibers. 
Because $K$ is trivially stratified, this constructible bundle $\Link_{I\times K}(X)  \to K$ is a finite sheeted covering space.  
Furthermore, this link is equipped with maps over $K$:
\begin{equation}\label{98}
I\times K  \xla{~\pi~}  \Link_{I\times K}(X) \xra{~\gamma~}J\times K
\end{equation}
in which $\pi$ is a proper constructible bundle and $\gamma$ is an open map. 
Because the codomain of $\gamma$ is a trivial finite-sheeted covering space over $K$, we conclude that the finite sheeted covering space $\Link_{I\times K}(X)  \to K$ is trivializable.
Consequently, the span~(\ref{98}) is pulled back from a likewise span 
\[
I \times \pi_0 (K) \longleftarrow L\times \pi_0 (K)\longrightarrow J\times \pi_0 (K)~.
\]
It follows that the proper constructible bundle $X\to K$ is pulled back from a proper constructible bundle $\Delta^1\times \pi_0(K)$. 
We conclude that the given map $K\to \Bun_{\leq 0}^{\sf cpt}(I,J)$ factors through the canonical map $K\to \pi_0(K)$, as desired.

Now consider a $K$-point $\exit(K)\ra \Bun_{\leq 0}^{\sf cpt}$ classifying a constructible bundle $X\to K$ with compact fibers.
Consider the topological space $X^\ast$ over $K$, equipped with an embedding $X\hookrightarrow X^\ast$ over $K$, as well as a section $K\to X^\ast$ for which $X^\ast\to K$ is proper and the map $K\amalg X \to X^\ast$ is a bijection.  
There exists a unique such $X^\ast$, and it inherits the structure of a conically smooth stratified space with respect to which the maps $K\amalg X\to X^\ast \to K$ are conically smooth.  
The map $X^\ast \to K$ has the property that each diagram among stratified spaces
\[
\xymatrix{
Z   \ar[r]  \ar[d]_-{\{1\}}
&
X^\ast \ar[d]
\\
Z\times \Delta^1  \ar[r]  \ar@{-->}[ur]
&
K
}
\]
has a unique filler for $Z$ compact.  

The functor $\Bun^{\sf cpt}_{\leq 0} \to \Fin_\ast^{\op}$ is given by assigning to such a $K$-point the functor
\[
\exit(X^\ast) \longrightarrow \exit(K)
\]
together with its section.
The path lifting property implies this functor is a right fibration, and therefore is classified by a functor $(\Bun_{\leq 0}^{\sf cpt})^{\op} \to \Fin_\ast$.
This functor is evidently essentially surjective.  

To argue that this functor is fully faithful we construct an inverse on mapping spaces.  
Notice that, for each map $I_+\xra{f} J_+$ among based finite sets, the reversed mapping cylinder (Definition \ref{def-cylr})
\[
{\sf Cylr}(f) \longrightarrow \Delta^1
\]
is equipped with a standard map to the topological $1$-simplex, which is in fact a constructible bundle of stratified spaces, and the base-point determines a section. 
The map ${\sf Cylr}(f) \smallsetminus \Delta^1 \to \Delta^1$ is a constructible bundle with compact 0-dimensional fibers, and thus describes a morphism of $\Bun^{\sf cpt}_{\leq 0}$.  By direct inspection, the first equivalence is a restriction of the second.

\end{proof}

\subsection{The absolute exit-path $\oo$-category}
We introduce another interesting striation sheaf, the absolute exit-path $\oo$-category. This example ties together the previously considered $\Bun$ and exit-paths.

\begin{definition}\label{def.big-exit}
The category of \emph{absolute exit-paths} $\fexit$
has as an object a constructible bundle $X\xra{f} K$ together with a section $K\xra{s} X$. A morphism from $(X\overset{s}{\underset{f}\leftrightarrows} K)$ to $(X'\overset{s'}{\underset{f'}\leftrightarrows} K')$ is a pullback diagram in $\strat$
\[
\xymatrix{
X  \ar[r]^-{g'}  \ar[d]_f
&
X'  \ar[d]^{f'} &\text{such that the diagram of sections}& X   \ar[r]^-{g'} 
&
X' 
\\
K  \ar[r]^-{g}
&
K' && K    \ar[u]^s    \ar[r]^-g
&
K'     \ar[u]_{s'}
}
\]
commutes.
Composition is given by concatenating such squares horizontally and composing horizontal arrows.  
\end{definition}

Notice the projection $\fexit\to \bun$, given by $(X\overset{s}{\underset{f}\leftrightarrows} K)\mapsto (X\xra{f}K)$, and the composite functor $\fexit \to \bun \to \strat$.

\begin{theorem}\label{exit-transverse}
The functor $\fexit\ra \strat$ is a transversality sheaf.
\end{theorem}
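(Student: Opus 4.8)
The plan is to verify the hypotheses of Theorem~\ref{transversality.conditions} for the right fibration $\fexit \to \strat$, just as was done for $\bun$ in Theorem~\ref{isaquasi-cat}. The first observation is that $\fexit \to \strat$ is a right fibration: by Lemma~\ref{cbls-pullback}, constructible bundles admit base-change, and a section pulls back to a section, so the fiber over $K$ is the groupoid of constructible bundles over $K$ with a section, and every map is Cartesian. That $\fexit \to \strat$ is a sheaf and is cone-local follows exactly as in Lemma~\ref{bun-right-fibration}, using Observation~\ref{cbls-cover} and Corollary~\ref{cbl-covers-pullback} together with the fact that a section over $K$ is the same datum as a compatible system of sections over the members of a (constructible) cover of $K$ — this last point is where one must check that the gluing of a pair of sections along the blow-up expression~(\ref{colimitexpression}) is again a section, which is immediate from the pushout property.

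Next I would run through the remaining conditions of Theorem~\ref{transversality.conditions}. For the \textbf{isofibration}, \textbf{isotopy extension}, and \textbf{isotopy equivalence} conditions, the arguments are parallel to those in the proof of Theorem~\ref{isaquasi-cat}: one uses the colimit expression~(\ref{colimitexpression}) for a constructible bundle over a space refining a weakly regular subspace, and observes that an isomorphism (respectively a morphism, respectively a constructible bundle with section) over the subspace extends over the whole space. The only new feature is carrying along the section, which poses no difficulty: a section over $K_0$ lifts through the pushout~(\ref{colimitexpression}), and in the isotopy equivalence case one pulls back the section along the deformation retraction $\rho\colon \w K \to \w K_0$ just as the bundle itself is pulled back. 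The \textbf{consecutive} condition reduces, via the isomorphisms $\oC^p(\ast)\cong\Delta^p$ and the identification of $\fexit(Z)$ as the groupoid of constructible bundles over $Z$ with section, to an essential-surjectivity and Hom-surjectivity statement about restriction to $\Delta^{\{0<\dots<k\}}\cup_{\Delta^{\{k\}}}\Delta^{\{k<\dots<p\}}$; essential surjectivity follows from Corollary~\ref{Cbl-ess} (a constructible bundle over $\oC^p(Z)$ built from a span decomposition carries along compatible sections, since the section is part of the $\sf Burn_p$-data once one enriches the decomposition categories to track sections), and Hom-surjectivity follows from the colimit expression~(\ref{colimitexpression}) applied to the weakly regular subspace $\Delta^{\{0<\dots<k\}}\cup_{\Delta^{\{k\}}}\Delta^{\{k<\dots<p\}}\subset\Delta^p$, since an automorphism of the restricted bundle-with-section extends to one over $\Delta^p$.

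The \textbf{univalent} condition is the step I expect to be the real obstacle. As in the proof of Theorem~\ref{isaquasi-cat}, this amounts to showing that for each finitary stratified space $X$ with a chosen point $x\in X$, the natural map from automorphisms of the pair $(X,x)$ to self-embeddings of $(X,x)$ isotopic to an automorphism is a homotopy equivalence of Kan complexes. I would deduce this from the pointless case already established in Theorem~\ref{isaquasi-cat} by a fibration argument: evaluation at the marked point gives a map $\Aut(X,x)\to\{x\}\times\{\text{nearby points}\}$, respectively $\Emb(X,x)\to(\text{same})$, and the isotopy extension theorem (applied near the point, using a collar and the sucking-in family $\phi$ exactly as in Theorem~\ref{isaquasi-cat}) realizes both $\Aut(X,x)\to\Aut(X)$ and $\Emb(X,x)\to\Emb(X)$ as fibrations over a common base (the space of points of $X$ together with local embedding data), with a map of fiber sequences that is the identity on the base. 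Then the equivalence $\Aut(X)\xrightarrow{\simeq}\Emb(X)$ of Theorem~\ref{isaquasi-cat} forces the equivalence of fibers, hence of $\Aut(X,x)\to\Emb(X,x)$. Care is needed that the marked point can be moved freely within its stratum and that the conjugation-by-$\phi$ homotopy inverse construction of Theorem~\ref{isaquasi-cat} can be performed fixing the marked point; since the point can be taken to lie in the compact core $Z=X\smallsetminus C$ after an isotopy, the construction there applies verbatim. With all six conditions of Theorem~\ref{transversality.conditions} verified, the theorem follows.
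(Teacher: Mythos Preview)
Your overall strategy matches the paper's: verify the six conditions of Theorem~\ref{transversality.conditions}, leaning on the already-established case of $\bun$. The right-fibration, sheaf, cone-local, isotopy extension, isotopy equivalence, and univalent verifications are all correct and essentially identical to the paper's (the paper is in fact terser than you for univalence, simply noting that $\Aut_\ast(X)$ and $\Emb_\ast^\sim(X,X)$ are the fibers of evaluation maps to $X$, so the pointed equivalence follows from the unpointed one).

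The one substantive divergence is the \textbf{consecutive} step. Your proposal to ``enrich the ${\sf Burn}_p$ decomposition categories to track sections'' is plausible but not substantiated: the machinery of \S\ref{sec.decompose} is set up for bare constructible bundles, and redoing it with sections would require reverifying the pullback squares~(\ref{burn-segal}) and~(\ref{another-pb}) in the enriched setting. The paper avoids this entirely with a cleaner trick. It first uses that $\fexit\to\bun$ is fibered in \emph{sets} together with consecutivity of $\bun$ to reduce to the pure section-extension problem: given a constructible bundle $X\to\Delta^p$ and a section $s_0$ over the wedge $\Delta^{\{0<\dots<k\}}\cup_{\Delta^{\{k\}}}\Delta^{\{k<\dots<p\}}$, extend $s_0$ to a section over $\Delta^p$. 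The key observation is that $s_0$ is a constructible embedding, so one can refine the total space $X_0:=X_{|\text{wedge}}$ to some $\w X_0$ making the image of $s_0$ a constructible closed subspace. Now $\w X_0\to(\text{wedge})$ is itself an object of $\bun$, so by consecutivity of $\bun$ it extends to some $\w X\to\Delta^p$. The section is then recovered as the intersection $S\subset\w X$ of all constructible subspaces containing the image of $s_0$; Lemma~\ref{to-R} gives that $S\to\Delta^p$ is a constructible bijection, hence an isomorphism, and its inverse is the desired section (composed with the refinement $\w X\to X$ supplied by Corollary~\ref{gpd-ess}). This argument uses only the already-proved consecutivity of $\bun$, without rebuilding any of the ${\sf Burn}_p$ apparatus.
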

\begin{proof}
We show the six conditions, in sequence.
Most of these arguments follow those proving that $\bun$ is a transversality sheaf (Theorem~\ref{isaquasi-cat}), so we will be terse.  

\noindent
{\bf Right fibration:} 
Since sections restrict contravariantly, the projection $\fexit \ra \bun$ is a category fibered in sets. In particular, this projection is a right fibration. 
It follows that the composition $\fexit\to \bun \to \strat$ is a right fibration, because $\bun\to \strat$ is.

\noindent
{\bf Isotopy extension:} 
This follows a nearly identical argument as the verification for $\bun$ (Theorem~\ref{isaquasi-cat}).

\noindent
{\bf Cone-local sheaf:} 
After the corresponding properties of $\bun \to \strat$, the sheaf condition, and also the cone-local condition, follows for $\fexit$ because covering sieves, as well as blow-up squares, are colimits in $\strat$.

\noindent
{\bf Consecutive:} 
Let $0<k<p$ and $q\geq 0$.
Recall the notational conventions from the proof of consecutivity for $\bun$ in Theorem~\ref{isaquasi-cat}.

Let 
\[
X_0 \to D^p_k\times \Delta^q_e \underset{D^p_k\times \partial \Delta^q_e}\amalg \Delta^p\times \partial \Delta^q_e
\]
be a constructible bundle, equipped with a section 
\[
\sigma_0\colon D^p_k\times \Delta^q_e \underset{D^p_k\times \partial \Delta^q_e}\amalg \Delta^p\times \partial \Delta^q_e\longrightarrow X_0~.
\]
We must extend this constructible bundle $X_0$ to one over $\Delta^p\times \Delta^q_e$, and the section $\sigma_0$ to one over $\Delta^p\times\Delta^q_e$.

Consider the terminal refinement $\w{X}_0\to X$ for which the conically smooth map $\sigma_0$ is a proper constructible embedding.
Because $\sigma_0$ is a section, the composition 
\[
\w{X}_0 \longrightarrow X \longrightarrow D^p_k\times \Delta^q_e \underset{D^p_k\times \partial \Delta^q_e}\amalg \Delta^p\times \partial \Delta^q_e
\]
is a constructible bundle.  
Using that $\bun$ is consecutive, choose a constructible bundle $\w{X}$ over $\Delta^p\times \Delta^q_e$ extending $\w{X}_0$.
Consider the initial proper constructible subspace of $\w{X}$ containing the image of $\sigma_0$.
This subspace projects to $\Delta^p\times \Delta^q_e$ as an isomorphism.  
The inverse of this isomorphism defines a section $\sigma \colon \Delta^p\times \Delta^q_e \to \w{X}$.  
Consider the initial refinement $\w{X}\to X$ that restricts over $D^p_k\times \Delta^q_e \underset{D^p_k\times \partial \Delta^q_e}\amalg \Delta^p\times \partial \Delta^q_e$ as $\w{X}_0\to X_0$. 
This constructible bundle $X\to \Delta^p\times \Delta^q_e$, together with the composite section $\sigma \colon \Delta^p\times \Delta^q_e \to \w{X} \to X$, is the desired extension.

\noindent
{\bf Univalent:}
For $\bun$, the proof of univalence is implied by the equivalence $\Aut(X)\simeq \Emb^\sim(X,X)$. The statement for $\fexit$ follows by the identical argument given the equivalence
\[\Aut_\ast(X)\simeq \Emb_\ast^\sim(X,X)\]
between pointed automorphisms and pointed self-embedding of a pointed stratified space $X$ which are isotopic to pointed automorphisms. The pointed assertion follows from the unpointed one, since these spaces are fibers of evaluation maps to $X$.
\end{proof}

The following result connects the three main striation sheaves considered in this paper, asserting that the fibers of $\Exit$ over $\Bun$ consist of exit-path $\oo$-categories. This explains our calling $\Exit$ the absolute exit-path $\oo$-category.

\begin{prop}
For each constructible bundle $X\xra{f} K$, classified by a functor $\exit(K) \xra{(X\xra{f} K)} \bu$, the diagram among $\infty$-categories
\[
\xymatrix{
\exit(X)   \ar[rr]^{(X\underset{K}\times X \rightleftarrows X)}   \ar[d]_-{\exit(f)}
&&
\Exit\ar[d]
\\
\exit(K)     \ar[rr]^{(X\xra{f} K)}
&&
\bu
}
\]
is a pullback.
\end{prop}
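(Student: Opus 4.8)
The plan is to establish the pullback square by verifying it levelwise, that is, after applying the evaluation functors $\mathrm{ev}_{[p]}\colon \Cat_\infty \simeq \Psh^{\sf Segal,cplt}(\bdelta) \to \Spaces$ for each $[p]\in\bdelta$. Since limits in $\Cat_\infty$ are computed levelwise as complete Segal spaces, it suffices to show that for every $p\geq 0$ the square of spaces
\[
\xymatrix{
\Strat(\Delta^p,X) \ar[r]\ar[d] & \fexit(\Delta^p\times\Delta^\bullet_e) \ar[d]\\
\Strat(\Delta^p,K) \ar[r] & \bun(\Delta^p\times\Delta^\bullet_e)
}
\]
is a pullback. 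Here I am using the explicit $\kan$-enriched model of $\Strat$ from Corollary~\ref{Strat-Kan}, and the description of $\Exit$ and $\Bun$ as topologizing diagrams of the transversality sheaves $\fexit$ and $\bun$ (Theorem~\ref{exit-transverse} and Theorem~\ref{isaquasi-cat}); the simplicial-space incarnation of $\exit(X)$ from Definition~\ref{exit} identifies its space of $p$-simplices with $\Strat(\Delta^p,X)$. So the content is: the space of pairs consisting of a map $\Delta^p\to K$ and a section over $\Delta^p$ of the pulled-back constructible bundle is equivalent to $\Strat(\Delta^p,X)$, naturally in $[p]$ and in the cosimplicial direction $\Delta^\bullet_e$.

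The key geometric input is the pullback property of constructible bundles (Lemma~\ref{cbls-pullback}): given a conically smooth map $\gamma\colon \Delta^p\times\Delta^q_e\to K$, the pullback $X\underset{K}\times(\Delta^p\times\Delta^q_e)$ exists and is a constructible bundle over $\Delta^p\times\Delta^q_e$; and a section of this pullback is precisely the same data as a conically smooth map $\sigma\colon \Delta^p\times\Delta^q_e\to X$ lifting $\gamma$ along $f$. Thus an object of the fiber product $\Strat(\Delta^p,K)\underset{\bun(\Delta^p\times\Delta^\bullet_e)}{\times}\fexit(\Delta^p\times\Delta^\bullet_e)$ over a $q$-simplex is exactly: a map $\gamma$ as above, the witness that the pulled-back bundle over $\Delta^p\times\Delta^q_e$ is isomorphic to $\gamma^\ast X$, and a section of that bundle — which by the universal property of the pullback is the same as a map $\Delta^p\times\Delta^q_e\to X$ over $K$. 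This is, term by term, the data of a $q$-simplex of $\Strat(\Delta^p,X)$ together with the (contractible) data recording the pullback. First I would spell out this identification as an isomorphism of simplicial sets, then promote it to an equivalence of Kan complexes by noting that the space parametrizing pullback witnesses is contractible since pullbacks are unique up to canonical isomorphism — exactly the mechanism by which $\fexit\to\bun$ is "fibered in sets" over $\bun$, as recorded in the proof of Theorem~\ref{exit-transverse}.

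The main obstacle I anticipate is bookkeeping the naturality in the two simplicial directions simultaneously and confirming that the identification is compatible with the structure maps making these into simplicial spaces — in particular, that the geometric realization $|\fexit(-\times\Delta^\bullet_e)|$ and the pullback commute, which requires knowing that the relevant restriction maps are Kan fibrations so that the levelwise pullback of simplicial sets computes the homotopy pullback of spaces. This is precisely the kind of argument carried out in the proof of Theorem~\ref{transversality.conditions}: the restriction $\fexit(\Delta^p\times\Delta^\bullet_e)\to \bun(\Delta^p\times\Delta^\bullet_e)$ is a Kan fibration because $\fexit\to\bun$ is an isofibration over each weakly regular subspace and because sections extend along regular neighborhoods, so the point-set fiber product of the displayed square computes the homotopy fiber product. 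Once that fibrancy is in hand, the isomorphism of simplicial sets from the previous paragraph suffices, and combining over all $[p]$ with the completeness and Segal conditions — already established for $\exit(X)$, $\exit(K)$, $\Exit$, $\Bun$ — yields the asserted pullback of $\oo$-categories. A final remark: one should check the square does land in $\oo$-categories and the comparison is the evident one induced by $X\underset{K}\times X\rightleftarrows X$, which is immediate from tracing the identifications, the diagonal section of $X\underset{K}\times X\to X$ being the universal section pulled back along $f$.
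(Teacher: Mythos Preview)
Your approach is correct and is essentially the paper's own argument spelled out in much more detail. The paper's proof is three sentences: for any conically smooth $Z\to K$ it observes that the space of $Z$-points of $\exit(X)$ lying over it is the space of sections $\Gamma(X_{|Z}\to Z)$ of the pulled-back bundle (Lemma~\ref{cbls-pullback}), which is exactly the fiber of $\Exit\to\Bun$ over the corresponding $Z$-point --- your plan unfolds this same identification levelwise in $[p]$ and $[q]$, with explicit attention to the fibrancy needed to identify point-set and homotopy pullbacks.
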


\begin{proof}
Let $Z\to K$ be a conically smooth map between stratified spaces.  Denote the pullback $X_{|Z}:= Z \times_K X$, which exists by Lemma \ref{cbls-pullback}.
The space of $Z$-points of $\exit(X)$ over $Z\to K$ is the space of sections $\Gamma\bigl(X_{|Z} \to Z\bigr)$.
\end{proof}

\subsection{Classes of constructible bundles}\label{sec.subcats}
We isolate several useful classes of constructible bundles, and show that some of them form a factorization system on the $\infty$-category $\Bun$.

Consider a constructible bundle $X\xra{f} K$, as well as a closed constructible subspace $K_0\subset K$.  
Denote the preimage $X_0:=f^{-1}K_0\subset X$.  
There are blow-up diagrams
\[
\xymatrix{
\Link_{X_0}(X)  \ar[r]  \ar[d]_-{\pi_{X_0}}
&
\Unzip_{X_0}(X)  \ar[d]
&&
\Link_{K_0}(K)  \ar[r]  \ar[d]
&
\Unzip_{K_0}(K)  \ar[d]
\\
X_0  \ar[r]
&
X
&\text{ and }&
K_0  \ar[r]
&
K
}
\]
and a constructible map from the left diagram to the right diagram.  
Also, there exists an open conically smooth map under $\Link_{X_0}(X)$
\[
\gamma_{X_0}\colon \Link_{X_0}(X)  \times [0,1) \longrightarrow \Unzip_{X_0}(X)
\]
over 
an open conically smooth map under $\Link_{K_0}(K)$
\[
\Link_{K_0}(K)  \times [0,1) \longrightarrow \Unzip_{K_0}(K)~.
\]

\begin{definition}[Class~$\psi$]\label{def.classes}
For the situation above:
\begin{enumerate}
\item The constructible bundle $f$ is \emph{closed} at $K_0$ if 
\begin{itemize}
\item the constructible proper map $\pi_{X_0}\colon \Link_{X_0}(X)\to X_0$ is an embedding;
\item the open conically smooth
$
\gamma_{X_0}\colon \Link_{X_0}(X)\times[0,1)\cong \Unzip_{X_0}(X)_{|\Link_{K_0}(K)\times[0,1)}
$
can be chosen to be an isomorphism.  

\end{itemize}

\item The constructible bundle $f$ is \emph{creating} at $K_0$ if
\begin{itemize}
\item the constructible proper map $\pi_{X_0}\colon \Link_{X_0}(X)\to X_0$ is surjective;
\item the open conically smooth
$
\gamma_{X_0}\colon \Link_{X_0}(X)\times[0,1)\cong \Unzip_{X_0}(X)_{|\Link_{K_0}(K)\times[0,1)}
$
can be chosen to be an isomorphism.  

\end{itemize}

\item The constructible bundle $f$ is \emph{refining} at $K_0$ if
\begin{itemize}
\item the constructible proper map $\pi_{X_0}\colon \Link_{X_0}(X)\xra{\cong} X_0$ is an isomorphism;
\item the open conically smooth
$
\gamma_{X_0}\colon \Link_{X_0}(X)\times[0,1)\to \Unzip_{X_0}(X)_{|\Link_{K_0}(K)\times[0,1)}
$
can be chosen to be a refinement.  

\end{itemize}

\item The constructible bundle $f$ is \emph{embedding} at $K_0$ if
\begin{itemize}
\item the constructible proper map $\pi_{X_0}\colon \Link_{X_0}(X)\xra{\cong} X_0$ is an isomorphism;
\item the open conically smooth
$
\gamma_{X_0}\colon \Link_{X_0}(X)\times[0,1)\hookrightarrow \Unzip_{X_0}(X)_{|\Link_{K_0}(K)\times[0,1)}
$
can be chosen to be an embedding.  

\end{itemize}

\end{enumerate}
There are some umbrella classes.
\begin{enumerate}

\item The constructible bundle $f$ is \emph{active} at $K_0$ if 
\begin{itemize}
\item the constructible proper map $\pi_{X_0}\colon \Link_{X_0}(X)\to X_0$ is surjective.
\end{itemize}

\item The constructible bundle $f$ is \emph{proper constructible} at $K_0$ if 
\begin{itemize}
\item the open conically smooth map
$
\gamma_{X_0}\colon \Link_{X_0}(X)\times[0,1)~\cong ~\Unzip_{X_0}(X)_{|\Link_{K_0}(K)\times[0,1)}
$
can be chosen to be an isomorphism.  
\end{itemize}

\item The constructible bundle $f$ is \emph{open} at $K_0$ if 
\begin{itemize}
\item the constructible proper map $\pi_{X_0}\colon \Link_{X_0}(X)\to X_0$ is an isomorphism.
\end{itemize}

\end{enumerate}
For economy of language, we will use the placeholder $``\psi"$ for any of the above classes of constructible bundles.  We say $f$ is of class $\psi$ if it is at each closed constructible subspace $K_0\subset K$. 

\end{definition}

For the next observation, we consider the full subcategory ${\sf Cbl}^\psi(Y)\subset {\sf Cbl}(Y)$ of those constructible bundles over $Y$ of class $\psi$.
\begin{observation}\label{burn-psi}
For each class $\psi$ of constructible bundle, the condition of a constructible bundle being of class $\psi$ is independent of isomorphism-type over the base.  
Furthermore, for each compact conically smooth stratified space $Z$, the preimage in ${\sf Burn}_1(Z) \to {\sf Cbl}\bigl(\sC(Z)\bigr)$ of ${\sf Cbl}^\psi\bigl(\sC(Z)\bigr)$ is the full subcategory consisting of those $(X_0 \xla{\pi} L \underset{{\rm over~}Z}{\xra{\gamma}} X_1)$ for which the constructible bundles $L\to Z\leftarrow X_1$ are of class $\psi$, and 
\begin{itemize}
\item should $\psi$ be \emph{closed}, then $\pi$ is an embedding and $\gamma$ is isotopic to an isomorphism;

\item should $\psi$ be \emph{creating}, then $\pi$ is surjective and $\gamma$ is isotopic to an isomorphism;

\item should $\psi$ be \emph{refining}, then $\pi$ is an isomorphism and $\gamma$ is isotopic to a refinement;

\item should $\psi$ be \emph{embedding}, then $\pi$ is an isomorphism and $\gamma$ is isotopic to an embedding;

\item should $\psi$ be \emph{active}, then $\pi$ is surjective;

\item should $\psi$ be \emph{proper constructible}, then $\gamma$ is isotopic to an isomorphism;

\item should $\psi$ be \emph{open}, then $\pi$ is an isomorphism.  

\end{itemize}

\end{observation}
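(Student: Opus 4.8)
The statement to prove is Observation~\ref{burn-psi}, which asserts that the class-$\psi$ condition on a constructible bundle is invariant under isomorphism over the base, and that under the essentially-surjective functor ${\sf Burn}_1(Z) \to {\sf Cbl}(\sC(Z))$, the preimage of the class-$\psi$ constructible bundles over $\sC(Z)$ consists precisely of the spans $(X_0 \xla{\pi} L \xra{\gamma} X_1)$ in which $L \to Z \leftarrow X_1$ are class $\psi$ and $\pi$, $\gamma$ satisfy the listed properties.

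This is a bookkeeping observation, so let me think about what actually needs to be checked.

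First, the isomorphism invariance. The definition of class $\psi$ at $K_0$ involves the blow-up data $\Link_{X_0}(X) \xra{\pi_{X_0}} X_0$ and the open conically smooth map $\gamma_{X_0}$. If $X \cong X'$ over $K$, then the preimages $X_0 \cong X'_0$ over $K_0$, and blow-ups are functorial (stated in the discussion of links and regular neighborhoods in §1.2), so $\Link_{X_0}(X) \cong \Link_{X'_0}(X')$ compatibly with the projections $\pi_{X_0}$, $\pi_{X'_0}$. Since the conditions on $\pi_{X_0}$ (being an embedding / surjective / isomorphism) are isomorphism-invariant, and the conditions on $\gamma_{X_0}$ are stated with a "can be chosen" quantifier — i.e. they are conditions on the existence of some such $\gamma$ with the given property — these too transport across an isomorphism over $K$. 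So the first assertion follows directly from functoriality of blow-ups.

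Now the meat: identifying the preimage. The key computational input is Corollary~\ref{cbl-cone-pushout}, which for a constructible bundle $X \to \oC(Z)$ exhibits $X$ as the colimit of the diagram built from the span $X_{|\ast} \xla{\pi} \Link_{X_{|\ast}}(X) \xra{\gamma} X_{|Z}$ — and this is exactly the span $(X_0 \xla{\pi} L \xra{\gamma} X_1)$ that ${\sf Burn}_1(Z)$ records, via the remark following Corollary~\ref{cbl-cone-pushout} that $\pi$ is proper constructible, $\gamma$ is open, and the terminal refinement factoring both is an isomorphism. So the task reduces to: given a constructible bundle $X \to \sC(Z)$ built from such a span, unwind the definition of "class $\psi$ at $K_0$" for the various closed constructible subspaces $K_0 \subset \sC(Z)$, and translate each condition into a condition on $\pi$, $\gamma$, $L \to Z$, $X_1 \to Z$.

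The plan is to organize the closed constructible subspaces $K_0 \subset \sC(Z)$ into cases. The crucial case is $K_0 = \{\ast\}$, the cone point: here $X_0 = X_{|\ast}$, $\Link_{X_0}(X) = L$, $\pi_{X_0} = \pi$, and $\gamma_{X_0}$ can be taken (after Lemma~\ref{cbls-open-over-cones} and Corollary~\ref{cbl-cone-pushout}) to be essentially $\gamma$ times an interval coordinate, so the conditions on $\pi_{X_0}$ and $\gamma_{X_0}$ become exactly the stated conditions on $\pi$ and $\gamma$ — with "$\gamma$ can be chosen to be an isomorphism" becoming "$\gamma$ is isotopic to an isomorphism" because the $\gamma_{X_0}$ produced by the flow construction of Lemma~\ref{cbls-over-cones}/\ref{cbls-open-over-cones} is only well-defined up to the flow isotopy, and two choices of collar-type embedding differ by a stratified isotopy. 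The remaining closed constructible subspaces $K_0$ either lie in $Z \times \{1\} = Z$ (the "base" of the cone, away from the cone point), in which case $X \to \sC(Z)$ restricts over a neighborhood of $K_0$ to a product with an interval, by the product-over-cone structure $X_{|Z \times (0,1]} \cong X_1 \times (0,1]$, and the condition "class $\psi$ at $K_0$" becomes "$X_1 \to Z$ is class $\psi$ at $K_0 \cap Z$" — giving the "$L \to Z \leftarrow X_1$ are class $\psi$" clause via a similar argument on the link stratum $L \to Z$; or $K_0$ contains the cone point and meets $Z$, which reduces to a combination of the previous two cases via the pullback/pushout compatibility already recorded in the blow-up formalism.

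The main obstacle, I expect, is carefully handling the "can be chosen" quantifiers and the $\gamma$ vs. "isotopic to $\gamma$" distinction — making precise that the specific $\gamma_{X_0}$ arising from the cone-decomposition is pinned down only up to stratified isotopy, so that conditions like "$\gamma_{X_0}$ can be chosen to be an isomorphism" correctly translate to "$\gamma$ is isotopic to an isomorphism" rather than "$\gamma$ is an isomorphism". This requires invoking the uniqueness-up-to-isotopy of collars and tubular neighborhoods from \cite{aft1} (the conical-smoothness analogues of the collar uniqueness theorem), together with the observation from the remark after Corollary~\ref{cbl-cone-pushout} that any refinement factoring both $\pi$ and $\gamma$ is an isomorphism, which rigidifies the span enough that the comparison is unambiguous. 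The rest — enumerating the closed constructible subspaces of $\sC(Z)$ and matching up conditions case by case — is routine given the structural results of §6.2, and I would present it compactly as a table-style verification rather than spelling out every case, since the paper itself signals (as with Remark~\ref{as-spans}) that it does not belabor such bookkeeping.
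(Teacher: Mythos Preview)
The paper states this result as an Observation without proof, so there is no paper argument to compare against; your sketch is therefore being evaluated on its own merits.

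Your overall strategy is sound: the isomorphism-invariance follows from functoriality of blow-ups, and the preimage identification reduces to checking the class-$\psi$ condition at each closed constructible subspace $K_0 \subset \oC(Z)$, with the case $K_0 = \{\ast\}$ producing the conditions on $\pi$ and $\gamma$ via Corollary~\ref{cbl-cone-pushout} and Lemma~\ref{cbls-over-cones}. Your handling of the ``can be chosen'' quantifier versus ``isotopic to'' is also correct and is the one genuinely delicate point.

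There is, however, an inaccuracy in your case breakdown. You write that some closed constructible subspaces of $\oC(Z)$ ``lie in $Z\times\{1\}=Z$ \ldots away from the cone point''. This cannot happen: the stratifying poset of $\oC(Z)$ is $P^{\tl}$ with $\ast$ as the minimum, so every nonempty downward-closed subset contains $\ast$, and hence every nonempty closed constructible subspace of $\oC(Z)$ contains the cone point. The closed constructible subspaces are exactly $\emptyset$, $\{\ast\}$, and $\oC(Z_Q)$ for $Q\subset P$ downward-closed. For $K_0=\oC(Z_Q)$, the blow-up of $X$ along $X_{|K_0}$ decomposes along the pushout expression for $X$: over the open locus $Z\times(0,1]$ it restricts to the blow-up of $X_1\times(0,1]$ along $(X_1)_{|Z_Q}\times(0,1]$, yielding exactly the class-$\psi$ condition on $X_1\to Z$ at $Z_Q$; near the cone point it involves the link $L$ and its map to $Z$, yielding the class-$\psi$ condition on $L\to Z$ at $Z_Q$. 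This is how the clause ``$L\to Z\leftarrow X_1$ are of class $\psi$'' actually arises. Once you correct the enumeration of $K_0$'s accordingly, the rest of your argument goes through.
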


\begin{lemma}\label{classes-pullback}
For each class $\psi$ of constructible bundle, the projection $\bun^\psi \to \strat$ is a transversality sheaf.

\end{lemma}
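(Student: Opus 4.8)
The plan is to verify that $\bun^\psi \to \strat$ satisfies the six conditions of Theorem~\ref{transversality.conditions}, in parallel with the proof of Theorem~\ref{isaquasi-cat} that $\bun$ is a transversality sheaf. The key structural input is Observation~\ref{burn-psi}, which tells us that being of class $\psi$ is invariant under isomorphism over the base, and which identifies the $\psi$-constrained objects of ${\sf Burn}_1(Z)$ explicitly in terms of the span data. First I would observe that, because the $\psi$-condition is isomorphism-invariant over the base (Observation~\ref{burn-psi}), $\bun^\psi \to \strat$ is again a right fibration, cut out of $\bun \to \strat$ by a full subcategory on each fiber; and for each weakly regular subspace $K_0 \subset K$ the restriction $\bun^\psi(K) \to \bun^\psi(K_0)$ is an isofibration, since the colimit expression~(\ref{colimitexpression}) from the proof of Theorem~\ref{isaquasi-cat} preserves the $\psi$-condition at each blow-up (the $\psi$-condition is checked at closed constructible subspaces, and these match up under refinement and restriction).

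Next I would run through the three isotopy conditions. The \textbf{isofibration}, \textbf{isotopy extension} and \textbf{isotopy equivalence} arguments all carry over verbatim from Theorem~\ref{isaquasi-cat}: given an automorphism (resp.\ a bundle) defined on $K_0$, one extends it using~(\ref{colimitexpression}), and the only extra point to check is that the resulting extension is still of class $\psi$. This follows because the $\psi$-condition is local in the base (it is checked at each closed constructible subspace, and closed constructible subspaces of $K$ restrict to closed constructible subspaces of $\w K$, where the data is assembled from pieces that are already of class $\psi$ by hypothesis). For the \textbf{cone-local sheaf} condition: $\bun^\psi \to \strat$ is a sheaf because $\bun \to \strat$ is and the $\psi$-condition is local in the base (Observation~\ref{cbls-cover} plus isomorphism-invariance), and it is cone-local because blow-up squares are colimits in $\strat$ (Corollary~\ref{cbl-covers-pullback}) and the $\psi$-condition on the total space over $\sC(L)$ is, by Observation~\ref{burn-psi}, equivalent to $\psi$-conditions on the pieces $L \to Z \leftarrow X_1$ together with a condition on the span maps, all of which are assembled compatibly.

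The main obstacle will be the \textbf{consecutive} condition: one must show that for each $0 \le k \le p$ the restriction functor
\[
\bun^\psi(\Delta^p) \longrightarrow \bun^\psi(\Delta^{\{0<\dots<k\}}) \underset{\bun^\psi(\Delta^{\{k\}})}\times \bun^\psi(\Delta^{\{k<\dots<p\}})
\]
is essentially surjective and surjective on Hom-sets. Essential surjectivity is the heart of it: one needs a $\psi$-constrained refinement of Corollary~\ref{Cbl-ess}, namely that the essentially-surjective functor from ${\sf Cbl}\bigl(\oC^p(Z)\bigr)$ to the fiber product restricts to the full subcategories of $\psi$-class objects. I would prove this by inspecting the inductive construction of the functor~(\ref{p-burn-to-bun}) ${\sf Burn}_p(Z) \to {\sf Cbl}\bigl(\oC^p(Z)\bigr)$ and of the Segal equivalence~(\ref{burn-segal}), noting via Observation~\ref{burn-psi} that the $\psi$-condition on a $\bun$-object over a cone translates into $\psi$-conditions on the span data defining the corresponding ${\sf Burn}$-object, and that these $\psi$-conditions are compatible with the pullback decomposition~(\ref{burn-segal}); since all the functors in the relevant square are isofibrations even after restricting to $\psi$-subcategories, essential surjectivity of the restricted functor follows by the same induction as in Corollary~\ref{gpd-ess} and Corollary~\ref{Cbl-ess}. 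Surjectivity on Hom-sets follows exactly as in Theorem~\ref{isaquasi-cat}: any automorphism of $X_{|\Delta^{\{0<\dots<k\}} \cup_{\Delta^{\{k\}}} \Delta^{\{k<\dots<p\}}}$ over its base extends to an automorphism of $X$ over $\Delta^p$ via~(\ref{colimitexpression}), and this extension preserves the $\psi$-condition since it is an isomorphism over the base. Finally, the \textbf{univalent} condition for $\bun^\psi$ follows from that of $\bun$: the relevant $\Aut$-to-$\Emb^\sim$ equivalence, for a finitary $X$ equipped with whatever $\psi$-structure, restricts along the inclusion of $\psi$-constrained embeddings, and the homotopy-inverse construction ($D_Z$, conjugation by the collar-sucking isotopy $\phi$) preserves the $\psi$-class because it is built from isotopies through maps of the appropriate type.
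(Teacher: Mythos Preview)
Your proposal is correct and follows the same strategy as the paper's proof: verify the conditions of Theorem~\ref{transversality.conditions} by specializing the argument for $\bun$ in Theorem~\ref{isaquasi-cat}, with the essential surjectivity in the \textbf{consecutive} condition singled out as the one nontrivial point and handled via Observation~\ref{burn-psi}. One small sharpening: the right-fibration property for $\bun^\psi \to \strat$ needs that class~$\psi$ is preserved under \emph{base change} (as the paper notes by inspection), not merely under isomorphism over the base---your phrase ``cut out by a full subcategory on each fiber'' does not by itself ensure closure under Cartesian morphisms.
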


\begin{proof}
Being of class $\psi$ is preserved under base change, by inspection.
It follows that the projection $\bun^\psi \to \strat$ is a right fibration.
The proof that $\bun$ satisfies the conditions of Theorem~\ref{transversality.conditions} specializes to a proof that $\bun^\psi$ satisfies the conditions of Theorem~\ref{transversality.conditions}, with the essential surjectivity part of the {\bf consecutive} condition the only aspect that is not direct from inspection.  
To verify this essential surjectivity follows using Observation~\ref{burn-psi}.

\end{proof}

\begin{definition}\label{bun-psi}
For each class $\psi$ of constructible bundle, $\Bun^\psi$ is the $\infty$-category associated to the transversality sheaf $\bun^\psi$ by way of the functor $\trans\to \Stri$ and the equivalence $\Stri\simeq \Cat_\infty$ of Theorem~\ref{striation-gcat}.

\end{definition}

For each class $\psi$ of constructible bundle, the inclusion $\bun^\psi \to \bun$ of right fibrations over $\strat$ determines a functor among $\infty$-categories $\Bun^\psi \to \Bun$.

\begin{lemma}\label{psi-mono}
For each class $\psi$ of constructible bundle, the functor 
\[
\Bun^\psi \to \Bun
\]
is a monomorphism.

\end{lemma}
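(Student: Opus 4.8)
The claim is that $\Bun^\psi \to \Bun$ is a monomorphism of $\infty$-categories, i.e., it is fully faithful and injective on equivalence classes of objects (equivalently, a monomorphism in $\Cat_\infty$, which for functors means fully faithful with the map on maximal sub-$\infty$-groupoids a monomorphism of spaces).

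My plan: translate the statement back to the level of transversality sheaves / right fibrations over $\strat$, where it becomes a concrete point-set assertion. A functor $\cD \to \cC$ is a monomorphism of $\infty$-categories iff it is fully faithful and $\cD^\sim \to \cC^\sim$ is a monomorphism of spaces; since both $\Bun^\psi$ and $\Bun$ are obtained from right fibrations over $\strat$ via the topologizing diagram, and the topologizing diagram is built levelwise from $\bun^\psi(-\times\Delta^\bullet_e) \to \bun(-\times\Delta^\bullet_e)$, it suffices to show that for every stratified space $K$ the functor of groupoids $\bun^\psi(K) \to \bun(K)$ is fully faithful and injective on isomorphism classes — i.e., that $\bun^\psi(K)$ is a full subcategory of $\bun(K)$. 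This is essentially Observation~\ref{burn-psi}: being of class $\psi$ is a condition on a constructible bundle that is invariant under isomorphism over the base, so $\bun^\psi(K) \subset \bun(K)$ is the full subgroupoid on those objects satisfying this condition. Taking products with $\Delta^q_e$ preserves this (the class $\psi$ conditions are stable under base change along $K\times\Delta^q_e \to K$, and conversely a constructible bundle over $K\times\Delta^q_e$ that is of class $\psi$ restricts to one of class $\psi$ over $K$ — indeed by $\RR$-invariance, Lemma~\ref{cbls-with-R}, the bundle over $K\times\Delta^q_e\cong K\times\RR^q$ is pulled back from $K$, so the conditions match up).

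So the key steps in order are: (i) recall that $\Bun^\psi$ and $\Bun$ are the topologizing diagrams $|\bun^\psi(-\times\Delta^\bullet_e)|$ and $|\bun(-\times\Delta^\bullet_e)|$, and that a levelwise fully faithful inclusion of groupoid-valued presheaves induces a monomorphism after applying nerve, diagonal, and geometric realization (a levelwise inclusion of simplicial sets which is "full" in the groupoid sense gives, after taking diagonals, an inclusion of Kan complexes onto a union of connected components with the correct homotopy type — more precisely, since $\bun^\psi(Z) \hookrightarrow \bun(Z)$ is full and faithful, its nerve is the full sub-simplicial-set on the corresponding vertices, hence its realization is a sub-space which is a monomorphism onto a homotopy-monomorphic image); (ii) verify, via Observation~\ref{burn-psi}, that $\bun^\psi(K) \to \bun(K)$ is full and faithful for each $K$; (iii) verify compatibility with the cosimplicial direction $K \mapsto K\times\Delta^\bullet_e$ using $\RR$-invariance of the class $\psi$ conditions (Lemma~\ref{cbls-with-R}); (iv) conclude that $\Bun^\psi \to \Bun$ is a fully faithful functor and a monomorphism on underlying $\infty$-groupoids, hence a monomorphism in $\Cat_\infty \simeq \Stri$.

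The main obstacle — though it is more bookkeeping than deep difficulty — is step (i): carefully checking that the topologizing diagram functor $\trans \to \Stri$ sends a levelwise fully-faithful inclusion of right fibrations to a monomorphism of $\oo$-categories. The subtlety is that "monomorphism in $\Cat_\infty$" is not merely "injective on objects and fully faithful" but requires that the induced map on cores be a monomorphism of spaces; one must see that because $\bun^\psi(Z) \subset \bun(Z)$ is a full subgroupoid (a union of connected components at the level of cores), passing to nerves, diagonals, and realizations preserves this, so that $\Bun^\psi$ is a union of components of... well, not literally components of $\Bun$, but the map $(\Bun^\psi)^\sim \to \Bun^\sim$ is, componentwise in $\pi_0\Bun(\ast)$, an inclusion of the corresponding automorphism spaces — and fully faithfulness of $\Bun^\psi \to \Bun$ reduces to the levelwise statement via the Segal-space model. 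I would organize this as a short lemma: the topologizing diagram carries monomorphisms of isotopy sheaves (in the sense of levelwise full inclusions of groupoids) to monomorphisms of constructible sheaves, and then monomorphisms of transversality sheaves to monomorphisms of striation sheaves; granting Observation~\ref{burn-psi} and Lemma~\ref{classes-pullback} (which already established $\bun^\psi$ is a transversality sheaf), the present lemma then follows formally.
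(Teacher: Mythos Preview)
Your approach is correct and uses the same two ingredients as the paper: Observation~\ref{burn-psi} (class $\psi$ is an isomorphism-invariant condition, so $\bun^\psi(K)\subset\bun(K)$ is a full subgroupoid) and Lemma~\ref{cbls-with-R} (any constructible bundle over $K\times\Delta^q_e$ is pulled back from $K$, so being of class $\psi$ over $K\times\Delta^q_e$ is equivalent to being of class $\psi$ over $K$).

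One caution: your step~(i) as stated --- that a levelwise full inclusion of groupoid-valued presheaves yields a monomorphism after the topologizing diagram --- is \emph{false} in general; two class-$\psi$ objects over $K$ could in principle be connected by a concordance in $\bun(K\times\Delta^1_e)$ that leaves $\bun^\psi$. It is precisely your step~(iii) that rules this out, so the ``short lemma'' you propose at the end needs that extra hypothesis baked in. The paper sidesteps this by working directly: it reduces (via the Segal condition) to showing $\Bun^\psi(\Delta^p)\to\Bun(\Delta^p)$ is an inclusion of components for $p=0,1$, and then states the concordance assertion explicitly --- if $X_0,X_1\to\Delta^p$ are of class $\psi$ and concordant via $X\to\Delta^p\times\Delta^1_e$, then $X$ itself is of class $\psi$ --- which follows immediately from Lemma~\ref{cbls-with-R} and Observation~\ref{burn-psi}. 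This is the same content as your steps~(ii)--(iii), just organized so that the load-bearing step is visible rather than hidden inside a general preservation lemma.
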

\begin{proof}
We must show that the map of spaces $\Bun^\psi(\Delta^p)\to \Bun(\Delta^p)$ is an inclusion of components for $p=0,1$. 
By construction, this map of spaces is represented by the inclusion of simplicial groupoids
\[
\bun^{\psi}(\Delta^p\times \Delta^\bullet_e)   ~\hookrightarrow~ \bun(\Delta^p\times \Delta^\bullet_e)~.
\]
We can therefore solve our problem by verifying the following assertion.
\begin{itemize}
\item[~]
Let $X_0 \to \Delta^p$ and $X_1\to \Delta^p$ be two constructible bundles of class $\psi$.
Suppose they are concordant as constructible bundles over $\Delta^p$, which is to say there is a constructible bundle $X\to \Delta^p\times \Delta^1_e$ restricting over $\Delta^{\{i\}}_e$ as $X_{i}$.  
Then $X\to \Delta^p\times \Delta^1_e$ too is of class $\psi$.  
\end{itemize}
Lemma~\ref{cbls-with-R} grants an isomorphism $X \cong X_{\frac{1}{2}}\times \Delta^1_e$ over $\Delta^p\times \Delta^1_e$, where $X_{\frac{1}{2}}$ is the fiber over $\frac{1}{2}\in\RR\cong \Delta^1_e$ of the composite map $X\ra \Delta^p\times \Delta^1_e \ra \Delta^1_e$.
The assertion follows using the first part of Observation~\ref{burn-psi}.

\end{proof}

We phrase the next result in terms of \emph{factorization systems}, which we first define.
For $\cC$ an $\infty$-category, we say a pair $(\cL,\cR)$ of $\infty$-subcategories of $\cC$ forms a \emph{factorization system} on $\cC$ if, for each morphism $[1]\xra{f} \cC$ the $\infty$-category of $(\cL,\cR)$-factorizations ${\sf Fact}_{\cL,\cR}(f)$ is terminal.
Here ${\sf Fact}_{\cL,\cR}(f)$ is defined as the pullback in the diagram
\[
\xymatrix{
{\sf Fact}_{\cL,\cR}(f)  \ar[rr]  \ar[d]
&&
\cC^{\{0<1<2\}}  \ar[d]
\\
\cL^{\{0<1\}} \times \cR^{\{1<2\}}  \ar[r]^-{\{f\}}
&
\cL^{\{0<1\}} \times\cC^{\{1<2\}}\times \cR^{\{1<2\}}  \ar[r]
&
\cC^{\{0<1\}}\times \cC^{\{0<2\}}\times \cC^{\{1<2\}}
}
\]
where $\cX^{\{0<\ldots<k\}}$ denotes the functor $\oo$-category $\Fun({\{0<\ldots<k\}}, \cX)$.
\begin{theorem}\label{Bun-factorization}
The pair of $\infty$-subcategories $(\Bun^{\cls},\Bun^{\act})$ forms a factorization system on $\Bun$, where these $\oo$-subcategories are given by Definition \ref{bun-psi} in the cases $\psi=\cls$ and $\psi = \act$.

\end{theorem}

\begin{proof}
Fix a constructible bundle $E\xra{f} \Delta^1$.
Choose an open conically smooth map $\gamma$ as in the span among stratified spaces 
\begin{equation}\label{concat-omega}
E_{|\Delta^{\{0\}}} \xla{\pi} \Link_{E_{|\Delta^{\{0\}}}}(E) \xra{\gamma} E_{|\Delta^{\{1\}}}
\end{equation}
in which $\pi$ is the standard projection from the link.  
By way of the equivalence~(\ref{burn-segal}), the concatenation of spans
\begin{equation}\label{concat-1}
E_{|\Delta^{\{0\}}} \xla{~\pi~} \Link_{E_{|\Delta^{\{0\}}}}(E) \xra{~=~}  \Link_{E_{|\Delta^{\{0\}}}}(E)\xla{~=~} \Link_{E_{|\Delta^{\{0\}}}}(E) \xra{~\gamma~} E_{|\Delta^{\{1\}}}
\end{equation}
determines an object of ${\sf Burn}_2(\ast)$.
The functor~(\ref{p-burn-to-bun}) thus assigns a constructible bundle $\ov{E} \to \oC^2(\ast) \cong \Delta^2$ to this concatenation of spans.

Fix a functor $\exit(Z)\xra{f_Z} {\sf Fact}_{\Bun^{\cls},\Bun^{\act}}(f)$, with $Z$ a compact conically smooth stratified space.
This functor $f_Z$ classifies a constructible bundle $X\xra{f_Z} Z\times \Delta^2$ with the following properties:
\begin{itemize}
\item $f_Z$ is \emph{closed} at $Z\times \Delta^{\{0<1\}}$,
\item $f_Z$ is \emph{active} at $Z\times \Delta^{\{1<2\}}$,
\item there is an isomorphism $X_{|Z\times \Delta^{\{0<2\}}} \cong Z\times E$ over $Z\times \Delta^{\{0<2\}}$. 
\end{itemize}
Choose an open conically smooth maps $\gamma_{01}$ over $Z\times \Delta^{\{0<1\}}$ and $\gamma_{12}$ over $Z\times \Delta^{\{1<2\}}$ as in the concatenation of spans that map constructibly to $Z$:
\begin{equation}\label{concat-2}
\Small
X_{|Z\times \Delta^{\{0\}}} \xla{\pi_{01}} \Link_{X_{|Z\times \Delta^{\{0\}}}}(X_{|Z\times \Delta^{\{0<1\}}})\xra{\gamma_{01}} X_{|Z\times \Delta^{\{1\}}}   
\xla{\pi_{12}} \Link_{X_{|Z\times \Delta^{\{1\}}}}(X_{|Z\times \Delta^{\{1<2\}}})\xra{\gamma_{12}} X_{|Z\times \Delta^{\{2\}}} .
\end{equation}
The equivalence~(\ref{burn-segal}) recognizes this concatenation of spans as an object of ${\sf Burn}_2(\ast)$ constructibly over $Z$.  
A last application of Corollary~\ref{gpd-ess} gives that, up to isomorphism over $Z\times \Delta^2$, the constructible bundle $f_Z$ is determined from the diagram~(\ref{concat-2}). 

The first condition on $f_Z$ in particular implies the map $\gamma_{01}$ can be taken to be an isomorphism, and so the composing the spans of~(\ref{concat-2}) gives a span
\begin{equation}\label{concat-0}
X_{|Z\times \Delta^{\{0\}}} \xla{~\pi_{12}\circ\gamma_{01}^{-1} \circ \pi_{01}~}  \Link_{X_{|Z\times \Delta^{\{1\}}}}(X_{|Z\times \Delta^{\{1<2\}}})\xra{~\gamma_{12}~} X_{|Z\times \Delta^{\{2\}}} .
\end{equation}
Through an ultimate application of Corollary~\ref{link-link}, there results a map from the diagram~(\ref{concat-0}) to the diagram
\begin{equation}\label{concat-3}
X_{|Z\times \Delta^{\{0\}}} \xla{\pi_{02}} \Link_{X_{|Z\times \Delta^{\{0\}}}}(X_{|Z\times \Delta^{\{0<2\}}})
\xra{\gamma_{02}} X_{|Z\times \Delta^{\{2\}}}
\end{equation}
under $X_{|Z\times \Delta^{\{0\}}}$ and $X_{|Z\times\Delta^{\{2\}}}$, and constructibly over $Z$.
The first condition on $f_Z$ further implies $\pi_{01}$ is an embedding, and the second condition on $f_Z$ implies $\pi_{12}$ is surjective.
Because there are no refinements from $\Link_{X_{|Z\times \Delta^{\{1\}}}}(X_{|Z\times \Delta^{\{1<2\}}})$ factoring $\pi_{12}$ and $\gamma_{12}$, then we conclude that there are no refinements from $\Link_{X_{|Z\times \Delta^{\{1\}}}}(X_{|Z\times \Delta^{\{1<2\}}})$ factoring $\pi_{12}\circ\gamma_{01}^{-1} \circ \pi_{01}$ and $\gamma_{12}$.  
Another ultimate application of Corollary~\ref{link-link} gives that this map of diagrams $(\ref{concat-0})\to(\ref{concat-3})$ is in fact an equivalence.  
Through the same ongoing reasoning,~(\ref{concat-3}) determines, up to isomorphism, the restricted constructible bundle $X_{|Z\times \Delta^{\{0<2\}}}\to Z\times \Delta^{\{0<2\}}$.  
The third condition on $f_Z$ thus implies~(\ref{concat-3}) is isomorphic to~(\ref{concat-omega}).
In this way, we conclude an isomorphism $X\cong Z\times \ov{E}$ over $Z\times \Delta^2$.
This isomorphism is classified by an equivalence between the functor $\exit(Z)\xra{f_Z}{\sf Fact}_{\Bun^{\cls},\Bun^{\act}}(f)$ and the constant functor at $\{f\}$.

\end{proof}

\subsection{Subcategories of $\Bun$}
We now realize certain $\oo$-subcategories of $\Bun$ in terms of subcategories of $\strat$.

\begin{definition}
The subcategory \[\strat^{\open}\subset \strat\]
has as morphisms those maps of stratified spaces which are open embeddings of underlying topological spaces. The categories
\[\strat^{\rf} \ {\rm and} \ \strat^{\emb}\]
are the further subcategories whose morphisms are: homeomorphisms of underlying topological spaces (for $\strat^{\rf})$; stratified open embeddings (for $\strat^{\emb}$).
\end{definition}

\begin{remark}
In \cite{aft1}, $\Strat^{\emb}$ is denoted $\Snglr$.
\end{remark}

\begin{definition}
The category \[\strat^{\pcbl}\] has as objects conically smooth stratified spaces and as morphisms those maps which are both proper and constructible. The categories
\[\strat^{\pcbl, \inj} \qquad {\rm and} \qquad  \strat^{\pcbl,\surj}\]
are the further subcategories whose morphisms are additionally injective or surjective, respectively.
\end{definition}

\begin{definition}
Given a subcategory $\strat^\psi\subset \strat$ and two constructible bundles $X\ra K$ and $Y\ra K$, the set
\[\strat^\psi_K(X,Y)\subset \strat_K(X,Y)\]
consists of those maps which belong to $\strat^\psi$ fiberwise: for every $t\in K$, the map of fibers $X_t\ra Y_t$ belongs to $\strat^\psi$.
A subcategory $\strat^\psi\subset\strat$ is parametrizable if for every pair of conically smooth stratified spaces $X$ and $Y$ the simplicial set
\[\strat^\psi_{\Delta^\bullet_e}(X\times\Delta^\bullet_e, Y\times\Delta^\bullet_e)\]
is a Kan complex. For $\strat^\psi$ parametrizable, the associated $\kan$-enriched category is $\Strat^\psi$.
\end{definition}

\begin{lemma}
For $\strat^\psi$ one of the subcategories $\strat^{\open}$, $\strat^{\rf}$, $\strat^{\emb}$, $\strat^{\pcbl}$, $\strat^{\pcbl,\surj}$, or $\strat^{\pcbl,\surj}$, then $\strat^\psi$ is parametrizable. That is, for any conically smooth stratified spaces $X$ and $Y$, the simplicial set
\[\strat^\psi_{\Delta^\bullet_e}(X\times\Delta^\bullet_e,Y\times\Delta^\bullet_e)\]
is a Kan complex.
\end{lemma}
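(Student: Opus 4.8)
The plan is to show that each of the named classes of maps is closed under the operations needed to extend the standard Kan-complex criterion for mapping spaces in $\Strat$ to these fiberwise mapping spaces. Recall that for the full mapping-space simplicial set $\strat(X\times\Delta^\bullet_e,Y)$ we already know it is a Kan complex, because $\strat_{/Y}\to\strat$ is an isotopy sheaf (Lemma~\ref{lem.strat-isot}) and the topologizing-diagram machinery of Theorem~\ref{isot.cbl} produces a constructible sheaf, which is in particular a simplicial set valued in Kan complexes (Step~1 of the proof of Theorem~\ref{isot.cbl}). The point here is that each subcollection $\strat^\psi_{\Delta^\bullet_e}(X\times\Delta^\bullet_e,Y\times\Delta^\bullet_e)$ is \emph{detected} inside $\strat(X\times\Delta^\bullet_e,Y\times\Delta^\bullet_e)$ by a pointwise-in-the-base condition, and I would argue directly that horn-filling for the ambient simplicial set can be performed within the subcollection. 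Concretely: given an inner or outer horn $\Lambda^q_k\to \strat^\psi_{\Delta^\bullet_e}(X\times\Delta^\bullet_e,Y\times\Delta^\bullet_e)$, fill it in the ambient Kan complex; the filler is a conically smooth map $X\times\Delta^q_e\to Y\times\Delta^q_e$ over $\Delta^q_e$; one must show that the property of being fiberwise of class $\psi$, which holds over the sub-simplices of the horn, propagates to the whole of $\Delta^q_e$. For the Kan condition one only needs $q=1$ horns together with higher ones, but in fact since $\strat(X\times\Delta^\bullet_e,Y)$ is already Kan the only real content is this propagation.

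First I would reduce to the case $q=1$ and, using that being of class $\psi$ is local in the base and checkable stratum-by-stratum of the base (this is how all of $\strat^{\opn},\strat^{\rf},\strat^{\emb},\strat^{\pcbl},\strat^{\pcbl,\inj},\strat^{\pcbl,\surj}$ are defined — each is an open, or closed, or open-and-closed, or fiberwise-bundle condition, all of which are visibly open or closed in the base and hence "spreadable"), reduce further to the case where the base $\Delta^1_e\cong\RR$ is a single Euclidean stratum. Then I would invoke Lemma~\ref{cbls-with-R}: for a constructible bundle $W\to Z\times\RR$ there is a canonical trivialization $W\cong W_{|Z\times\{0\}}\times\RR$ over $Z\times\RR$. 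Applied to $X\times\RR$ and $Y\times\RR$ this identifies a conically smooth map $g\colon X\times\RR\to Y\times\RR$ over $\RR$ with a one-parameter family of conically smooth maps $g_t\colon X\to Y$, and the property "$g$ is fiberwise open/refining/embedding/proper-constructible/injective/surjective" translates into "$g_t$ has that property for each $t$." Since these properties are open (for $\strat^{\opn},\strat^{\emb},\strat^{\rf}$) or closed (injectivity, properness) or locally constant (fiber-bundle-ness, by Lemma~\ref{to-R}) conditions in $t$, and since the horn data already guarantees the property at the endpoints (or, for an inner horn, along a sub-face meeting every fiber), the property holds for all $t\in\RR$. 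Hence the filler lies in the subcollection, proving it is a Kan complex.

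The step I expect to be the main obstacle is the propagation argument for the \emph{open} and \emph{refining} classes, where "open embedding of underlying topological spaces" or "homeomorphism of underlying topological spaces" are conditions that are a priori only \emph{open} in the parameter but need not be closed; one must rule out that the family $g_t$ degenerates (e.g. fails to be injective, or develops image that is no longer open) at some interior $t$. Here I would use the rigidity supplied by conical smoothness together with Lemma~\ref{cbls-with-R}: because $g$ itself is a conically smooth map of constructible bundles over $\RR$ and the trivializations are compatible, the rank data $Dg_t$ (in the sense of the derivative from \cite{aft1}, as used in the proof of Lemma~\ref{cbls-with-R}) varies locally constantly along strata, so once $g_{t_0}$ is an open embedding (resp. refinement) on a fiber the same holds on a neighborhood in $\RR$; a connectedness argument on $\RR$ then finishes it. The remaining classes ($\strat^{\pcbl}$, $\strat^{\pcbl,\inj}$, $\strat^{\pcbl,\surj}$, and $\strat^{\emb}$) are easier: properness and constructibility of a map between constructible bundles over $\RR$ are handled by Proposition~\ref{cbl-compose} and Lemma~\ref{to-R} directly, injectivity/surjectivity are closed/open conditions that are automatically preserved under the product trivialization, and the embedding case combines the open-embedding rigidity above with the constructibility bookkeeping. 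Assembling these case-checks yields the lemma.
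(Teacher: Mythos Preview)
Your strategy has a real gap. The plan of filling the horn in the ambient Kan complex $\strat_{\Delta^\bullet_e}(X\times\Delta^\bullet_e,Y\times\Delta^\bullet_e)$ and then arguing that the filler is automatically fiberwise of class $\psi$ cannot succeed: an arbitrary ambient filler need not be of class $\psi$, and your open/closed/locally-constant propagation argument does not repair this.

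The obstruction is geometric. For $q\geq 2$ the extended horn $(\Lambda^q_k)_e\subset\Delta^q_e\cong\RR^q$ is a finite union of affine hyperplanes --- a nowhere-dense subset of the parameter space. Knowing that $g_t$ is of class $\psi$ for $t$ on these hyperplanes places no constraint on $g_t$ at a generic $t\in\RR^q$. Concretely, take $X=Y=\RR$, $\psi=\emb$, $q=2$, and $\Lambda^2_1$-horn data given by the constant family $g_t(x)=x$ on both face lines. One perfectly good ambient filler is $g(x,t)=x\,(1-\rho(t))$, where $\rho$ is a smooth bump on $\Delta^2_e$ vanishing on both face lines but with $\rho(t_0)=1$ at some interior $t_0$; then $g_{t_0}=0$ is not an embedding. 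So the class-$\psi$ sub-simplicial set is \emph{not} a union of path components of the ambient one, and your ``reduce to $q=1$'' step has no force: the $q\geq 2$ horns are exactly where the problem lives and cannot be reduced away. (Your phrase ``a sub-face meeting every fiber'' suggests you are picturing the horn as surjecting onto the parameter space $\Delta^q_e$; it does not.)

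The paper's one-line proof is pointing at the opposite mechanism. The Kan-complex argument in \cite{aft1} does not merely assert the existence of some filler but produces a \emph{specific} one, built so that the fiber of the filler over each $t\in\Delta^q_e$ is obtained from the given horn data by an explicit, class-preserving operation. A construction of that shape respects any fiberwise condition automatically, which is why it ``immediately extends'' to each $\strat^\psi$ with no extra argument. To fix your proof you would need to open up that explicit construction from \cite{aft1} rather than invoke an unspecified ambient filler; once you do, the propagation step becomes unnecessary.
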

\begin{proof}
The proof from \cite{aft1} that $\strat_{\Delta^\bullet_e}(X\times\Delta^\bullet_e,Y\times\Delta^\bullet_e)$ is itself a Kan complex immediately extends to these cases.
\end{proof}

We give a homotopy equivalent description of these $\kan$-enriched categories $\Strat^\psi$ using the model of simplicial objects in simplicial groupoids.

\begin{definition}
The value of the bisimplicial groupoid $\strat^\psi_{\star,\bullet}$ on $([p],[q])$ is the subcategory
\[\strat^\psi_{p,q}\subset \Fun([p], \strat_{/\Delta^q_e})^\sim\] consisting of those sequences
\[\xymatrix{
X_0\ar[drr]\ar[r]&X_1\ar[dr]\ar[r]&\ldots\ar[r]&X_p\ar[dl]\\
&&\Delta^q_e\\}
\]
for which each map $X_i \ra \Delta^q_e$ is a fiber bundle and each map $X_i\ra X_{i+1}$ belongs to $\strat^\psi_{\Delta^q_e}(X_i,X_{i+1})$.
\end{definition}

Note that the existence of pullbacks along constructible bundles ensures that $\strat_{\star,\bullet}^\psi$ is indeed a bisimplicial object.

\begin{definition}
The simplicial space $|\strat_{\star,\bullet}^\psi |$ is the vertical geometric realization of $\strat^\psi_{\star,\bullet}$ (i.e., in the $\bullet$ direction).
\end{definition}

\begin{lemma}
The simplicial spaces $|\strat^{\pcbl}_{\star,\bullet}|$ and $|\strat^{\open}_{\star,\bullet} |$ are complete Segal spaces.
\end{lemma}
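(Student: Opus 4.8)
The plan is to show that each of $|\strat^{\pcbl}_{\star,\bullet}|$ and $|\strat^{\opn}_{\star,\bullet}|$ satisfies the Segal and completeness conditions of Definition~\ref{def.complete-segal}, by recognizing each as the image under the topologizing diagram of a transversality sheaf already in hand. Indeed, the bisimplicial groupoid $\strat^{\psi}_{\star,\bullet}$ is built exactly as in the construction of $\Strat^\psi$ and, fiberwise over $\Delta^\bullet_e$, is the groupoid of sequences of $\psi$-morphisms between fiber bundles. For $\psi = {\sf pcbl}$ this is the groupoid-of-$p$-simplices of the transversality sheaf $\cbun$ (via the contravariant equivalence between proper constructible bundles and closed morphisms in $\Bun$, cf.\ the linear overview), and for $\psi = {\sf opn}$ it is the groupoid-of-$p$-simplices of the transversality sheaf $\bun^{\opn}$ from Lemma~\ref{classes-pullback} (via the open cylinder functor $\Strat^{\opn}\hookrightarrow \Bun$). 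So the first and main step is to make this identification precise: produce isomorphisms of bisimplicial groupoids $\strat^{\pcbl}_{\star,\bullet}\cong \bun^{\cls}(\Delta^\star\times\Delta^\bullet_e)$ and $\strat^{\opn}_{\star,\bullet}\cong \bun^{\opn}(\Delta^\star\times\Delta^\bullet_e)$ (or their opposites, as dictated by the cylinder functors), compatibly with all structure maps. Here one uses Observation~\ref{simplices-are-cones}, the description of morphisms in $\Bun$ via spans from Corollary~\ref{0burn-bun-ess}, Observation~\ref{burn-psi}, and the fact that a morphism in $\Bun$ of class ${\sf cls}$ (resp.\ ${\sf opn}$) corresponds precisely to a proper constructible (resp.\ open) map of the relevant fibers.

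\textbf{Key steps, in order.} First, unwind the definition of $\strat^\psi_{\star,\bullet}$ and of $\bun^{\psi'}$ (for $\psi'$ the matching class of constructible bundle) and construct the comparison of bisimplicial groupoids, checking it respects face and degeneracy maps in both simplicial directions; the degenerate direction $[q]\mapsto \Delta^q_e$ is the ``topologizing'' direction and the $[p]$-direction is the nerve direction, so after vertical geometric realization one obtains exactly $|\fF(\Delta^\star\times\Delta^\bullet_e)|$ for $\fF = \bun^{\cls}$ or $\bun^{\opn}$. Second, invoke Lemma~\ref{classes-pullback} (together with Theorem~\ref{isaquasi-cat} for the base case) to conclude that $\bun^{\cls}$ and $\bun^{\opn}$ are transversality sheaves; by Definition~\ref{def.transversality} their topologizing diagrams are striation sheaves. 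Third, apply Theorem~\ref{striation-gcat}: a striation sheaf, restricted along $\bdelta\xra{\sf st}\strat$, is a complete Segal space. Precisely, the {\bf consecutive} and {\bf univalent} conditions of Definition~\ref{def.striation} specialize under the equivalence $\shv^{\sf cone,\cbl}(\strat)\simeq\Psh(\bdelta)$ of Lemma~\ref{cone-sheaves} to the {\bf Segal} and {\bf complete} conditions of Definition~\ref{def.complete-segal}. Combining, $|\strat^{\pcbl}_{\star,\bullet}|$ and $|\strat^{\opn}_{\star,\bullet}|$ are complete Segal spaces.

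\textbf{Main obstacle.} The delicate point is the first step: matching $\strat^\psi_{\star,\bullet}$ with the simplices of a transversality sheaf on the nose, including getting the variance right. A $q$-simplex of $\exit(\Delta^p)$-points of $\Bun$ is a constructible bundle over $\Delta^p\times\Delta^q_e$, whereas $\strat^{\opn}_{p,q}$ is a \emph{sequence} of open embeddings between fiber bundles over $\Delta^q_e$; bridging these requires the open cylinder functor $\cylo\colon \Strat^{\opn}\hookrightarrow\Bun$ and the span description of morphisms (Corollary~\ref{0burn-bun-ess} and Observation~\ref{burn-psi}), and one must check that under $\cylo$ a composable string of open embeddings goes to a constructible bundle over $\Delta^p=\oC^p(\ast)$ that is ``open'' at every closed constructible stratum, i.e.\ lands in $\bun^{\opn}$, and conversely that every such constructible bundle arises this way up to isomorphism over the base. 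The proper constructible case is dual, using $\cylr$ and contravariance, so there the comparison is with $(\bun^{\cls})$ and the opposite simplicial structure; one must verify that reversing $[p]$ is harmless after geometric realization since the Segal and completeness conditions are self-dual under $\cC\mapsto\cC^{\op}$ (Remark~\ref{rm.not-canonical}). Once these identifications are in place, the rest is a direct appeal to Lemma~\ref{classes-pullback}, Definition~\ref{def.transversality}, and Theorem~\ref{striation-gcat}.
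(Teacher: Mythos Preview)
Your strategy is valid in principle but takes a far longer route than the paper, and in doing so inverts the paper's logical order. The paper's argument is direct and brief: the Segal condition is immediate because $\strat^\psi_{p,q}$ is by definition the groupoid of length-$p$ composable strings in a category (fibered over $\Delta^q_e$), so the simplicial object in the $\star$-direction is a nerve and hence Segal; completeness is then checked by hand, identifying the space of objects as $\coprod_{[X]}\Aut(X)$ and, in the open case, invoking the equivalence $\Aut(X)\simeq\Emb^\sim(X,X)$ already established in the univalence proof for $\bun$. No appeal to $\Bun^\psi$, cylinder functors, or striation-sheaf machinery is needed.

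Your route, by contrast, makes the lemma a corollary of the identification $|\strat^{\psi}_{\star,\bullet}|\simeq \Bun^{\psi'}{}_{|\bdelta}$, but that identification is precisely the content of the \emph{subsequent} theorem in the paper (the one constructing $\cylo$ and $\cylr$ and proving they are monomorphisms onto the indicated $\infty$-subcategories of $\Bun$). Two cautions if you pursue this: first, the comparison is not an isomorphism of bisimplicial groupoids on the nose, only an inclusion of components after vertical realization, and even that requires the auxiliary trivialization space $\Bun^{\sf triv}$ argument; so your ``first step'' as phrased is too strong. Second, a small mismatch of classes: $(\Strat^{\pcbl})^{\op}$ corresponds to the umbrella class $\Bun^{\pcbl}$, not to $\Bun^{\cls}$ (the latter matches $\Strat^{\pcbl,\inj}$). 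Your approach would buy you a uniform explanation via Lemma~\ref{classes-pullback} and Theorem~\ref{striation-gcat}, but at the cost of front-loading the cylinder theorem; the paper instead proves this lemma cheaply and uses it as input to that theorem.
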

\begin{proof}
The Segal condition is immediate. Completeness follows by observing that the spaces of objects are a disjoint union over all isomorphism classes $[X]$ of stratified spaces of the spaces of automorphisms $\Aut(X)$. This is given by identifying $\Delta^\bullet_e$ and the interior of $\Delta^\bullet$. For $\Strat^{\open}$, the underlying space of objects is a disjoint of union of spaces $\Emb^\sim(X,X)$, stratified embeddings which are isotopic to automorphisms, so we additionally use the equivalence $\Aut(X) \simeq\Emb^\sim(X,X)$ established in proving the univalence of $\bun$.

\end{proof}

\begin{lemma}
There is a natural equivalence
\[\Strat^\psi \simeq |\strat^\psi_{\star,\bullet}|\]
for $\Strat^\psi$ one of the $\oo$-categories $\Strat^{\open}$, $\Strat^{\rf}$, $\Strat^{\emb}$, $\Strat^{\pcbl}$, $\Strat^{\pcbl,\surj}$, or $\Strat^{\pcbl,\surj}$.\end{lemma}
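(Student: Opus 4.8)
The plan is to compare two models of the $\kan$-enriched category $\Strat^\psi$: the ``mapping-space'' model from Corollary~\ref{Strat-Kan} (morphism spaces $\strat^\psi_{\Delta^\bullet_e}(X\times\Delta^\bullet_e,Y\times\Delta^\bullet_e)$), and the ``nerve-of-groupoid'' model $|\strat^\psi_{\star,\bullet}|$ obtained by realizing the bisimplicial groupoid vertically. This is an instance of the same passage already studied in the paper via the topologizing diagram versus the classifying diagram (Remark~\ref{class-vs-top}), so the first step is to recognize that the bisimplicial groupoid $\strat^\psi_{\star,\bullet}$ is, up to levelwise equivalence, the restriction along $\bdelta\xra{\Delta^\bullet_e}$ of the right fibration $\fF^\psi\to\strat$ whose fiber over $K$ is the groupoid of sequences $X_0\to\cdots\to X_p$ of maps in $\strat^\psi$ over $K$ with each $X_i\to K$ a fiber bundle; equivalently, $\strat^\psi_{p,-}$ is the classifying diagram applied in the $q$-direction to $\fF^\psi(\Delta^\bullet_e)$. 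The key point, supplied by the preceding two lemmas, is that each $|\strat^\psi_{\star,\bullet}|$ is a complete Segal space whose space of $1$-simplices is the appropriate space of $\psi$-morphisms between (families of) stratified spaces, identified by replacing $\Delta^\bullet_e$ with the interior of $\Delta^\bullet$ and using the univalence equivalences $\Aut(X)\simeq\Emb^\sim(X,X)$ of Theorem~\ref{isaquasi-cat} (and their evident variants for the other classes).

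First I would set up, for each class $\psi$ among $\{\opn,\rf,\emb,\pcbl,\pcbl\!-\!\inj,\pcbl\!-\!\surj\}$, a natural comparison map $\Strat^\psi\longrightarrow|\strat^\psi_{\star,\bullet}|$ of complete Segal spaces. On $p$-simplices this sends a $\kan$-enriched composable string to the corresponding vertical geometric realization of the bisimplicial groupoid, using that a $q$-simplex of $\strat^\psi_{p,\bullet}$ is exactly a string $X_0\to\cdots\to X_p$ of fiberwise-$\psi$ maps over $\Delta^q_e$ and that such a string over $\Delta^q_e$ is the same as a $\kan$-simplex in the mapping-space model (after using Lemma~\ref{bdls-homot} to trivialize the bundles $X_i\times\Delta^q_e\to\Delta^q_e$). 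Second, since both sides are complete Segal spaces, it suffices to check this map is an equivalence on $0$- and $1$-simplices; completeness then forces it to be an equivalence of $\oo$-categories. On $0$-simplices both sides are $\coprod_{[X]}\Aut(X)$ (interpreting $\Aut$ as $\Emb^\sim(X,X)$ when $\psi$ is one of the open classes), by Observation~\ref{exit-smooth}-style identification of $\Delta^\bullet_e$ with the interior of $\Delta^\bullet$ and a partition-of-unity contraction. On $1$-simplices, the mapping space $\strat^\psi_{\Delta^\bullet_e}(X\times\Delta^\bullet_e,Y\times\Delta^\bullet_e)$ is, essentially by definition, the same Kan complex of fiberwise-$\psi$ maps that computes the space of $1$-simplices of $\strat^\psi_{\star,\bullet}$; here one must only check the two a priori different parametrizations (strings over $\Delta^q_e$ versus the $\kan$-hom with its concatenation composition of Corollary~\ref{Strat-Kan}) induce the same homotopy type, which is the content of Lemma~\ref{bdls-homot}.

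The main obstacle I anticipate is the $0$-simplex comparison for the \emph{open} classes $\opn,\rf,\emb$: on the $|\strat^\psi_{\star,\bullet}|$ side the space of objects a priori records \emph{all} fiber bundles over $\Delta^\bullet_e$, and to see its realization is $\coprod_{[X]}\Emb^\sim(X,X)$ rather than $\coprod_{[X]}\Aut(X)$ one genuinely needs the univalence input $\Aut(X)\simeq\Emb^\sim(X,X)$ from the proof of Theorem~\ref{isaquasi-cat}; similarly the $\pcbl$ variants need the observation that proper constructible bundles over $\Delta^\bullet_e$ are concordance-rigid (Lemma~\ref{cbls-with-R}), so that the groupoid of such is homotopy-discrete on components. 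Once these identifications are in place, the rest is the formal ``equivalence of complete Segal spaces detected on $[0]$ and $[1]$'' argument together with the bookkeeping that the natural transformation is compatible with the simplicial structure, which is routine. I would present the proof as: (i) construct the comparison functor naturally in $\psi$; (ii) reduce to $[0]$ and $[1]$ by completeness and the Segal condition; (iii) identify both $0$-simplex spaces via the interior-of-simplex trick and the univalence/rigidity equivalences; (iv) identify both $1$-simplex spaces via Lemma~\ref{bdls-homot}; (v) conclude.
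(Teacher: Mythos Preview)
Your proposal is correct and takes essentially the same approach as the paper's (very terse) proof: construct the comparison map, then check it is an equivalence on object and morphism spaces using that fiber bundles over $\Delta^q_e$ split as products (Lemma~\ref{bdls-homot}). Your concern about needing the univalence input $\Aut(X)\simeq\Emb^\sim(X,X)$ for the open classes is on point---the paper invokes exactly this, having established it in the preceding completeness lemma; one minor correction is that the reduction to levels $[0]$ and $[1]$ uses the Segal condition rather than completeness.
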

\begin{proof}
There is a canonical map from $\Strat^\psi$, as it is a localization. The equivalence on spaces of objects follows immediately, since in both sides are disjoint unions of automorphism spaces. The equivalence on morphism spaces follows from the fact that every constructible bundle $X\ra \Delta^q_e$ splits as a product; once observed, the two sides have identical morphism spaces. 
\end{proof}

In the following definition of the open cylinder of a composable sequence of $p$ open stratified maps $X_0\ra\ldots \ra X_p$, we make use of the open complements of the standard filtration of the $p$-simplex,
\[\Delta^p_{> p-1} \subset \Delta^p_{> p-2}\subset \ldots \Delta^p_{>0}\subset \Delta^p~,\]
where $\Delta^p_{>i} \cong \Delta^p\smallsetminus \Delta^p_{\leq i}$.

\begin{definition}[Open cylinder]
Let $X_0 \ra X_1$ be an open conically smooth map. 
The \emph{open cylinder} $\cylo(X_0\ra X_1)$ is the pushout in $\strat$
\[\xymatrix{
X_0\times\Delta^1\smallsetminus\{0\}\ar[d]\ar[r]&X_1\times\Delta^1\smallsetminus\{0\}\ar[d]\\
X_0\times\Delta^1\ar[r]&\cylo\bigl(X_0\ra X_1\bigr)~.\\}
\]
Given a sequence of open conically smooth maps $X_0\ra \ldots \ra X_p$, the \emph{open cylinder} $\cylo(X_0\ra \ldots\ra X_p)$ is the iterated pushout in $\strat$
\[
X_0\times\Delta^p \underset{X_0\times\Delta^p_{>0}}\amalg X_1\times\Delta^p_{>0}
\ldots\underset{X_{p-2}\times\Delta^p_{>p-2}}\amalg X_{p-1}\times\Delta^p_{>p-2}\underset{X_{p-1}\times\Delta^p_{>p-1}}\amalg X_p\times\Delta^p_{>p-1}~.
\]

\end{definition}

\begin{observation}\label{obs.cylo}
Let $X_i \ra K$ be a collection of constructible bundles, for $0\leq i\leq p$, and let $X_0\ra \ldots X_p$ be a sequence of open conically smooth maps. The natural projection
\[\cylo\bigl(X_0\ra\ldots\ra X_p\bigr) \longrightarrow \Delta^p\times K\]
is a constructible bundle, where $\Delta^p$ carries the standard stratification.
\end{observation}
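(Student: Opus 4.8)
The statement to prove is Observation~\ref{obs.cylo}: for constructible bundles $X_i \to K$ with $0 \le i \le p$ together with a sequence of open conically smooth maps $X_0 \to \cdots \to X_p$, the projection $\cylo(X_0 \to \cdots \to X_p) \to \Delta^p \times K$ is a constructible bundle.

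\medskip

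\textbf{Approach.} The plan is to verify the defining property of a constructible bundle (Definition~\ref{def.cbl}) stratum by stratum over $\Delta^p \times K$, exploiting two facts: first, being a constructible bundle is local in the base (Observation~\ref{cbls-cover}), so one can work over basic neighborhoods of $\Delta^p \times K$; and second, the strata of $\Delta^p \times K$ are products $(\Delta^p)_{\{i\}} \times K_q = \{i\}\text{-stratum} \times K_q$, where $(\Delta^p)_{\{i\}}$ is an open face-interior and hence contractible (indeed diffeomorphic to an open simplex). By Lemma~\ref{to-R}, it suffices to check that the restriction of the projection to each stratum of the total space is a constructible bundle over $\Delta^p \times K$; equivalently, since the base strata are products, that each such restriction is a fiber bundle over the corresponding base stratum.

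\medskip

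\textbf{Key steps.} First I would reduce the question to the case $K$ a point, or more precisely to working over a basic $\RR^i \times \sC(L) \subset K$: since constructible covers and open covers pull back and give colimit diagrams, and each constructible bundle $X_i \to K$ is locally over such a basic a product $F_i \times \RR^i \times \sC(L)$ by Lemma~\ref{bdls-homot} (applied stratum-wise via Lemma~\ref{to-R}), the open cylinder construction commutes with this product decomposition. Thus one reduces to showing $\cylo(F_0 \to \cdots \to F_p) \to \Delta^p$ is a constructible bundle for finitary stratified spaces $F_j$ and open maps among them. Second, I would identify the stratification: the poset of $\cylo(X_0 \to \cdots \to X_p) \to \Delta^p \times K$ restricts over the open face-interior $(\Delta^p)_{\{j\}} \times K$ to the stratification of $X_j \times (\Delta^p)_{\{j\}} \times K$ coming from $X_j \to K$ and the trivial stratification of $(\Delta^p)_{\{j\}}$; this is because in the iterated pushout defining the open cylinder, the preimage of $(\Delta^p)_{\{j\}}$ is exactly $X_j \times (\Delta^p)_{\{j\}}$ — the $\Delta^p_{>j-1}$-layer contributes $X_j$ over that locus while all higher layers $X_{j+1}, \dots$ are glued in only over $\Delta^p_{>j}$, which misses $(\Delta^p)_{\{j\}}$, and the lower layers $X_0, \dots, X_{j-1}$ live over $\Delta^p_{\le j-1}$, also disjoint from it. Third, with this identification, the restriction of the projection to each stratum $(X_j)_p \times (\Delta^p)_{\{j\}} \times K$ (for $(X_j)_p$ a stratum of $X_j$) maps to $(\Delta^p)_{\{j\}} \times K_q$, and by Lemma~\ref{to-R} together with the hypothesis that $X_j \to K$ is a constructible bundle, this is a fiber bundle; invoking Lemma~\ref{to-R} in the reverse direction then concludes that $\cylo(\dots) \to \Delta^p \times K$ is a constructible bundle.

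\medskip

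\textbf{Main obstacle.} The delicate point is the second step: carefully verifying that the open cylinder, which is defined as an iterated pushout along \emph{open} complements of the standard filtration, has the expected stratification over each open face-interior $(\Delta^p)_{\{j\}}$, and that the pushout actually exists as a stratified space with this stratification (rather than merely as a topological space). This requires tracking which layer of the iterated pushout dominates over which locus of $\Delta^p$, and checking that the gluing maps — which involve the open conically smooth maps $X_{j-1} \to X_j$ restricted over open subspaces $\Delta^p_{>j-1}$ — assemble conically smoothly; the openness hypothesis on the maps $X_j \to X_{j+1}$ is exactly what makes the pushouts be constructible covers (refinements onto open images), so that Observation~\ref{cbls-cover} applies and the colimit is computed correctly in $\strat$. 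Once the stratification is pinned down, the verification that each stratum-restriction is a fiber bundle is routine given Lemma~\ref{bdls-homot} and Lemma~\ref{to-R}.
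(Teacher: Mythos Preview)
The paper states this as an \emph{observation} and gives no proof; it is treated as an immediate consequence of the definitions. Your outline is essentially a correct way to justify it, with one inaccuracy worth flagging.

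In your second step you write that ``the lower layers $X_0,\dots,X_{j-1}$ live over $\Delta^p_{\le j-1}$, also disjoint from'' the $j$-stratum. That is not true: in the iterated pushout the term $X_i\times\Delta^p_{>i-1}$ covers the $j$-stratum for every $i\le j$, since $(\Delta^p)_{\{j\}}\subset\Delta^p_{>i-1}$. What actually happens is that over $(\Delta^p)_{\{j\}}$ each $X_i$ with $i<j$ is identified, via the composite open map $X_i\to X_j$, with an open subspace of $X_j$; the pushout over that locus is therefore $X_j\times(\Delta^p)_{\{j\}}$ \emph{with the stratification of $X_j$}. The reason the stratification is $X_j$'s and not some finer one is the universal property: the structure map $X_j\times(\Delta^p)_{\{j\}}\to\cylo(\cdots)$ must be conically smooth, which forces the target stratification on that locus to be no finer than $X_j$'s, while the maps from $X_i\times(\Delta^p)_{\{j\}}$ (for $i<j$) are then conically smooth because refinements are. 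With this correction your argument goes through: the restriction over $(\Delta^p)_{\{j\}}\times K$ is the product $(X_j\to K)\times(\Delta^p)_{\{j\}}$, which is a fiber bundle over each stratum $K_q\times(\Delta^p)_{\{j\}}$ since $X_j\to K$ is constructible.

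The ``main obstacle'' you identify --- that the iterated pushout exists in $\strat$ and has the expected local structure near $X_0\times\Delta^{\{0\}}$ --- is real but routine: locally it is the standard cone/collar picture, and the openness of the maps $X_i\to X_{i+1}$ guarantees the pushouts are along open covers in the sense of Observation~\ref{cbls-cover}. This is presumably why the paper leaves the statement as an observation.
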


In the following definition of the cylinder of a composable sequence of $p$ stratified maps $X_p\ra\ldots \ra X_0$, we again make use of the standard filtration of the $p$-simplex, $\Delta^p_{\leq 0} \subset \Delta^p_{\leq 1}\subset \ldots \Delta^p_{\leq p}=\Delta^p$. For notational economy, we identify $\Delta^p_{\leq i} \cong\Delta^i$.

\begin{definition}[Reversed cylinder]\label{def-cylr}
Given a proper constructible bundle $X_1\ra X_0$, the reversed cylinder is the pushout in $\strat$
\[\cylr\bigl(X_1\ra X_0\bigr) := X_0 \underset{X_1\times\{0\}}\amalg X_1\times \Delta^1~.\]
Given a composable sequence of stratified maps $X_p \ra \ldots \ra X_0$, the reversed cylinder is
\[\cylr\bigl(X_p\ra \ldots \ra X_0\bigr) :=
X_0 \underset{X_1\times\{0\}}\amalg X_1\times \Delta^1\underset{X_2\times\Delta^1}\amalg X_2\times\Delta^2\underset{X_3\times\Delta^2}\amalg\ldots \underset{X_p\times\Delta^{p-1}}\amalg X_p\times\Delta^p~.\]
\end{definition}

\begin{remark}
The reversed cylinder admits an inductive description. There is a natural equivalence
\[\cylr\bigl(X_p\ra \ldots \ra X_0\bigr) \cong
\cylr\bigl(X_{p-1}\ra \ldots\ra X_0\bigr) \underset{X_{p}\times\Delta^{p-1}}\amalg X_p\times \Delta^p~.\]
\end{remark}

\begin{observation}\label{obs.cylr}
Let $X_i \ra K$ be a collection of constructible bundles, for $0\leq i\leq p$, and let $X_p \ra \ldots \ra X_0$ be a sequence of proper constructible bundles commuting with the maps to $K$. The natural projection
\[\cylr\bigl(X_p\ra\ldots\ra X_0\bigr) \longrightarrow \Delta^p\times K\]
is a constructible bundle, where $\Delta^p$ carries the standard stratification.
\end{observation}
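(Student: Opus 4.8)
The plan is to induct on $p$, using the inductive description of the reversed cylinder recorded in the Remark above, namely $\cylr(X_p\ra\cdots\ra X_0)\cong C_{p-1}\amalg_{X_p\times\Delta^{p-1}} X_p\times\Delta^p$, where I write $C_{p-1}:=\cylr(X_{p-1}\ra\cdots\ra X_0)$; here $X_p\times\Delta^{p-1}\ra C_{p-1}$ is $(X_p\ra X_{p-1})\times{\sf id}$ followed by the inclusion of the final piece $X_{p-1}\times\Delta^{p-1}\hookrightarrow C_{p-1}$, and $X_p\times\Delta^{p-1}\hookrightarrow X_p\times\Delta^p$ is ${\sf id}$ times the closed face inclusion $\Delta^{p-1}\cong\Delta^p_{\leq p-1}\hookrightarrow\Delta^p$. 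For $p=0$ there is nothing to prove: $\cylr(X_0)=X_0$ and the projection to $\Delta^0\times K=K$ is the given constructible bundle.

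For the inductive step I would first record three facts. (i) $X_p\times\Delta^p\ra K\times\Delta^p$ is a constructible bundle: over any stratum $K_r\times S$ of $K\times\Delta^p$ it restricts to $(X_p)_{|K_r}\times S\ra K_r\times S$, a product of the fiber bundle $(X_p)_{|K_r}\ra K_r$ (Definition~\ref{def.cbl}) with the smooth manifold $S$, hence a fiber bundle; alternatively one applies Lemma~\ref{to-R} stratum-wise in the source. (ii) By the inductive hypothesis $C_{p-1}\ra K\times\Delta^{p-1}$ is a constructible bundle; since $K\times\Delta^{p-1}\cong K\times\Delta^p_{\leq p-1}$ is the preimage in $K\times\Delta^p$ of a downward-closed — hence consecutive — subposet of $P\times[p]$, Observation~\ref{strata-are-cbl} together with Proposition~\ref{cbl-compose} shows the composite $C_{p-1}\ra K\times\Delta^p$ is a constructible bundle (with empty fibers over the open stratum indexed by $p$). (iii) Both maps out of the gluing locus $X_p\times\Delta^{p-1}$ are proper constructible embeddings: ${\sf id}\times(\Delta^p_{\leq p-1}\hookrightarrow\Delta^p)$ is the product of a proper constructible closed embedding admitting a regular neighborhood (\S\ref{sec-reg}) with the identity, and $X_p\times\Delta^{p-1}\ra C_{p-1}$ is the proper constructible bundle $X_p\ra X_{p-1}$ crossed with $\Delta^{p-1}$, landing as the final cylinder piece; in particular this pushout already lives in $\strat$ (Definition~\ref{def-cylr}), and restriction over any stratum of $K\times\Delta^p$ commutes with it.

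With this in hand I would check constructibility of $\cylr(X_p\ra\cdots\ra X_0)\ra K\times\Delta^p$ one stratum of the base at a time (Observation~\ref{cbls-cover}). Over the open stratum indexed by $p$, the face $\Delta^p_{\leq p-1}$ is disjoint from it, so $C_{p-1}$ contributes nothing and the restriction is $X_p\times(\Delta^p_{\leq p}\smallsetminus\Delta^p_{\leq p-1})$, a constructible bundle by (i). Over a stratum $K_r\times S$ with $S\subset\Delta^p_{\leq p-1}$, restriction commutes with the pushout by (iii) and the $X_p\times\Delta^p$-leg restricts to the gluing locus itself; since pushing out along an identity leg does nothing, the restriction of $\cylr(X_p\ra\cdots\ra X_0)$ agrees with that of $C_{p-1}$, a fiber bundle by (ii). Equivalently, and perhaps more transparently: cover $K\times\Delta^p$ by the open stratum indexed by $p$ and an open regular neighborhood of $K\times\Delta^p_{\leq p-1}$, use (i) over the former, and over the latter use the local stratified product structure of $\Delta^p$ along its codimension-one face together with the inductive hypothesis and the identity-leg collapse.

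The main obstacle is precisely the stratified bookkeeping of this last paragraph — confirming that restriction to a stratum of $K\times\Delta^p$ genuinely commutes with the iterated pushout defining $\cylr$, and that the outcome is a locally trivial fiber bundle rather than merely a bundle of sets. This is soft, since every gluing in $\cylr$ is a pushout along a proper constructible closed embedding admitting a regular neighborhood, so all of these colimits are well behaved under restriction; but it is where the content sits. The companion Observation~\ref{obs.cylo} for the open cylinder follows by the identical scheme, with the final-piece inclusion replaced by the inclusions of the open complements $\Delta^p_{>i}$; and when $K=\ast$ both statements alternatively follow by identifying the cylinder over $\Delta^p\cong\oC^p(\ast)$ with the image of the evident object of ${\sf Burn}_p(\ast)$ under the functor~(\ref{p-burn-to-bun}) and invoking Corollary~\ref{gpd-ess}.
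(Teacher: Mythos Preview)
The paper records this as an \emph{Observation} and gives no proof; it is treated as immediate from the iterated-pushout definition of $\cylr$. Your inductive argument is a correct and natural way to flesh it out.

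Two small remarks. First, your reference to Observation~\ref{cbls-cover} for the stratum-by-stratum check is misplaced: that observation concerns locality with respect to open covers of the base, whereas checking the restriction over each stratum is literally Definition~\ref{def.cbl}. (Your alternative phrasing via the open cover by the top stratum and a regular neighborhood of $K\times\Delta^p_{\leq p-1}$ \emph{does} invoke Observation~\ref{cbls-cover} correctly.) Second, rather than arguing that ``restriction commutes with the pushout,'' it is cleaner to note directly that the preimage of $K\times\Delta^p_{\leq p-1}$ inside the full cylinder \emph{is} $C_{p-1}$: as a set the pushout is $C_{p-1}\sqcup \bigl(X_p\times(\Delta^p\smallsetminus\Delta^p_{\leq p-1})\bigr)$ because the leg $X_p\times\Delta^{p-1}\hookrightarrow X_p\times\Delta^p$ is injective, and the second piece lies entirely over the top stratum. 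This immediately reduces the fiber-bundle condition over strata indexed by $i<p$ to the inductive hypothesis, with no bookkeeping about colimits under base change.

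Incidentally, you correctly used the gluing locus $X_p\times\Delta^{p-1}$ from the definition of $\cylr$; the displayed Remark in the paper writes $X_{p-1}\times\Delta^{p-1}$, which appears to be a typo.
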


\begin{theorem}
Taking reversed cylinders and open cylinders defines functors to $\Bun$
\[\xymatrix{
\Strat^{\open}\ar[rr]^{\cylo}&&\Bun&&\bigl(\Strat^{\pcbl}\bigr)^{\op}\ar[ll]_{\cylr}}
\]
which realize both sources as $\oo$-subcategories of $\Bun$. These functors restrict to subsidiary equivalences:
\begin{eqnarray}
\nonumber
\Strat^{\emb}\simeq \Bun^{\emb}
&
\qquad\text{ and }\qquad
&
\Strat^{\rf}\simeq \Bun^{\rf}~,
\\
\nonumber
\bigl(\Strat^{\pcbl,\inj}\bigr)^{\op}\simeq \Bun^{\cls}  
&
\qquad \text{ and }\qquad
&
\bigl(\Strat^{\pcbl,\surj}\bigr)^{\op}\simeq \Bun^{\creat} ~.
\end{eqnarray}
\end{theorem}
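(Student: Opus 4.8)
The plan is to work throughout with the bisimplicial groupoid models $\strat^\psi_{\star,\bullet}$ and with the presentation $[p]\mapsto|\bun(\Delta^p\times\Delta^\bullet_e)|$ of $\Bun$, and to build the two functors by applying the cylinder operations levelwise. Given a $p$-fold composable sequence of open maps $X_0\to\cdots\to X_p$ over $\Delta^q_e$ with each $X_i\to\Delta^q_e$ a fiber bundle, Observation~\ref{obs.cylo} produces a constructible bundle $\cylo(X_0\to\cdots\to X_p)\to\Delta^p\times\Delta^q_e$, whose fibers are among the $X_i$ and hence finitary, so it is an object of $\bun(\Delta^p\times\Delta^q_e)$. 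One checks this assignment is compatible with the structure maps in both directions: the $\Delta^\star$-face maps correspond to restriction of the bundle along the standard faces $\Delta^{\{0<\cdots<\widehat{j}<\cdots<p\}}\hookrightarrow\Delta^p$ (which, for an interior vertex, composes the two adjacent open maps), and the $\Delta^\bullet_e$-direction maps correspond to pullback of fiber bundles along $\Delta^{q'}_e\to\Delta^q_e$. After (vertical) geometric realization in $\bullet$ this yields a map of complete Segal spaces $|\strat^{\opn}_{\star,\bullet}|\to\Bun$, i.e.\ a functor $\Strat^{\opn}\to\Bun$; likewise Observation~\ref{obs.cylr} gives $\cylr$, contravariant in the composable sequence, hence a functor $(\Strat^{\pcbl})^{\op}\to\Bun$.

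The second step is to show both functors are monomorphisms of $\oo$-categories. A functor of complete Segal spaces is a monomorphism precisely when it is fully faithful and a monomorphism on underlying $\oo$-groupoids; on objects both cylinder functors are, under the identification $\cylo(X)=X=\cylr(X)$, the inclusion of underlying groupoids, which is a monomorphism, so the content is full faithfulness. In the Segal-space model this reduces, after using Observation~\ref{internal-R-invariance} and Lemma~\ref{cbls-with-R} to pass to $q=0$ and the identification $\Delta^p\cong\oC^p(\ast)$, to a rigidity statement: the cylinder construction reconstructs its input. For $\cylo$ this is extracted from \S\ref{sec.decompose}: Corollary~\ref{gpd-ess} together with Remark~\ref{terminal-refinement} identifies, up to isomorphism over $\oC^p(\ast)$, a constructible bundle with a composable $p$-fold span having proper constructible left legs and open right legs, taken up to the terminal refinement of each span term; inspecting the formulas~(\ref{0-burn-to-bun}) and~(\ref{p-burn-to-bun}), $\cylo$ realizes exactly the span-sequences whose left legs are isomorphisms, in a way that recovers the sequence $X_0\to\cdots\to X_p$ of open maps up to isomorphism and matches automorphisms. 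Dually $\cylr$ realizes the span-sequences whose right legs are isomorphisms, the iterated-link bookkeeping here being packaged by Corollary~\ref{link-link}. Hence in each case the levelwise functor of groupoids is an equivalence onto a component-closed subgroupoid, so the realized functor is a monomorphism.

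For the four subsidiary equivalences, the previous step leaves only essential surjectivity onto the indicated $\oo$-subcategories. First, comparing the explicit link structure of the cylinders with Definition~\ref{def.classes}, $\cylo$ carries a sequence of open embeddings (resp.\ of refinements) into $\bun^{\emb}$ (resp.\ $\bun^{\rf}$), and $\cylr$ carries a sequence of injective (resp.\ surjective) proper constructible maps into $\bun^{\cls}$ (resp.\ $\bun^{\creat}$). For the converse containment, Observation~\ref{burn-psi} identifies the objects of $\bun^{\emb}(\oC^p(\ast))$, and similarly the other three, with span-sequences whose left legs are isomorphisms and right legs are isotopic to embeddings (resp.\ to refinements; resp.\ whose right legs are isos and left legs injective proper constructible; resp.\ whose right legs are isos and left legs surjective proper constructible); Lemma~\ref{cbls-with-R} lets one absorb the isotopy into a concordance, so every such bundle lies, up to concordance, in the image of the corresponding restricted cylinder functor. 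A monomorphism that is essentially surjective is an equivalence, giving the four identifications $\Strat^{\emb}\simeq\Bun^{\emb}$, $\Strat^{\rf}\simeq\Bun^{\rf}$, $(\Strat^{\pcbl,\inj})^{\op}\simeq\Bun^{\cls}$, $(\Strat^{\pcbl,\surj})^{\op}\simeq\Bun^{\creat}$.

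The main obstacle is the rigidity step: making precise and natural the reconstruction of a composable sequence of open (resp.\ proper constructible) maps from its cylinder, including the identification of the automorphism space of the cylinder with that of the sequence. Everything else — verifying the simplicial identities for the cylinder assignments, the class-membership checks against Definition~\ref{def.classes}, and the essential surjectivity via Observation~\ref{burn-psi} — is routine once the decomposition results of \S\ref{sec.decompose} (Corollaries~\ref{0burn-bun-ess}, \ref{gpd-ess}, \ref{link-link} and Remark~\ref{terminal-refinement}) are organized into the single statement that the cylinder functors are conservative modulo refinement and recover their inputs.
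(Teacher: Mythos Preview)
Your overall strategy matches the paper's: both work with the bisimplicial groupoid models $\strat^\psi_{\star,\bullet}$ and $\bun(\Delta^\star\times\Delta^\bullet_e)$, build the cylinder functors levelwise via Observations~\ref{obs.cylo} and~\ref{obs.cylr}, observe that $p=0$ is an isomorphism of simplicial groupoids, and then argue that each functor is a monomorphism of complete Segal spaces. The divergence is in how the monomorphism step is executed, and this is precisely where you flag your own ``main obstacle.''

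You route the rigidity argument through the Burnside machinery of \S\ref{sec.decompose}, invoking Corollary~\ref{gpd-ess} and Remark~\ref{terminal-refinement} to say that a bundle over $\oC^p(\ast)$ is determined by its span-sequence up to terminal refinement. But those results only give \emph{essential surjectivity} of ${\sf Burn}_p(\ast)\to{\sf Cbl}(\oC^p(\ast))$; they do not, as stated, control automorphisms, so the passage from ``the cylinder determines its input span-sequence'' to ``the map on morphism spaces is an inclusion of components'' is not supplied. You acknowledge this, but it is the crux.

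The paper closes this gap differently and more directly. It first uses the Segal condition to reduce to $p=1$, then introduces an auxiliary space $\Bun^{\sf triv}(\Delta^1)$ sitting over $|\bun(\Delta^1\times\Delta^\bullet_e)|$ whose points are constructible bundles $X\to\Delta^1$ together with a chosen trivialization $X_{|\Delta^1_{>0}}\cong X_1\times\Delta^1_{>0}$; this fibration has contractible fibers (by the parallel-vector-field argument of Lemma~\ref{cbls-with-R}), so $\Bun^{\sf triv}(\Delta^1)\simeq\Bun(\Delta^1)$. The open cylinder lifts tautologically to $\Bun^{\sf triv}(\Delta^1)$, and on the union of components in its image there is an \emph{explicit} retraction back to $|\strat^{\opn}_{1,\bullet}|$: from a trivialized bundle one reads off the open map $X_0\to X_1$ directly. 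This produces a two-sided homotopy inverse without ever needing to compare automorphism groups abstractly. The same device, with components $\Bun^{\sf triv}(\Delta^1)_r$, handles $\cylr$.

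In short, your Burnside route is conceptually sound and would succeed if you upgraded Corollary~\ref{gpd-ess} to an equivalence-after-terminal-refinement statement at the level of automorphisms, but the paper's trivialization-space trick sidesteps that work entirely and is what you should supply in place of your rigidity step. The treatment of the four subsidiary equivalences is essentially the same in both: once the monomorphism is established, it is a direct check against Definition~\ref{def.classes} and Observation~\ref{burn-psi}.
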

\begin{proof}

We first apply Observation \ref{obs.cylo} to show the open cylinder defines a map of bisimplicial groupoids
\[\cylr:\strat_{\star,\bullet}^{\open}\longrightarrow \bun(\Delta^\star\times\Delta^\bullet_e)~.\]
This sends an object of $\strat_{p,q}^{\open}$ given by
\[\xymatrix{
X_0\ar[drr]\ar[r]&X_{1}\ar[dr]\ar[r]&\ldots\ar[r]&X_{p-1}\ar[r]\ar[dl]&X_p\ar[dll]\\
&&\Delta^q_e\\}
\]
to the constructible bundle of stratified spaces
\[\cylo\bigl(X_0\ra\ldots \ra X_p\bigr) \longrightarrow \Delta^p\times\Delta^q_e\]
which is an object of the groupoid $\bun(\Delta^p\times\Delta^q_e)$. This is exactly Observation \ref{obs.cylo} applied in the case $K=\Delta^q_e$. We show that the map of spaces
\[\bigl|\strat^{\open}_{p,\bullet}\bigr|\longrightarrow \bigl|\bun(\Delta^p\times\Delta^\bullet_e)\bigr|\]
is an inclusion of components. First, it is immediate that this is an equivalence for $p=0$, since even more is true: the map
\[\strat^{\open}_{0,\bullet}\longrightarrow \bun(\Delta^\bullet_e)\]
is an isomorphism of simplicial groupoids. It now suffices, since both sides satisfy the Segal condition, to show the case of $p=1$.

To do so, we introduce an auxiliary space $\Bun^{\sf triv}$ of constructible bundles with choices of trivializations along strata, as given by a parallel vector field. That is, there is an fibration $\Bun^{\sf triv} \ra \bigl|\bun(\Delta^1\times\Delta^\bullet_e)\bigr|$ whose fiber over a point $X\ra \Delta^1$ is the space of isomorphisms
\[X_{|\Delta^1_{>0}}\cong X_1\times \Delta^1_{>0}\]
over $\Delta^1_{>0}$, where $X_1$ is the fiber over $\{1\}$. Note that the open cylinder functor naturally lifts to a map
\[\bigl|\strat^{\open}_{1,\bullet}\bigr|\longrightarrow\Bun^{\sf triv}(\Delta^1)\]
since such trivializations are present in the construction. Let $\Bun^{\sf triv}(\Delta^1)_o$ be subspace of components in the image of the open cylinder. Then we have a diagram
\[\xymatrix{
\Bun^{\sf triv}(\Delta^1)_o\ar@/_1pc/@{-->}[d]\ar@{^{(}->}[rr]&&\Bun^{\sf triv}(\Delta^1)\ar[d]^{\simeq}\\
\bigl|\strat^{\open}_{1,\bullet}\bigr|\ar[u]\ar[rr]&&\bigl|\bun(\Delta^1\times\Delta^\bullet_e)\bigr|\\}
\]
where the map above has an immediate retraction $\Bun^{\sf triv}(\Delta^1)_o \ra \bigl|\strat^{\open}_{1,\bullet}\bigr|$. This is immediately seen to be a homotopy equivalence, so the assertion follows. Further, this exchanges embedding/refining maps $X_0\ra X_1$ with maps which are embedding/refining, direct from the definition of the latter.

\medskip

We next apply Observation \ref{obs.cylr} to show the reversed cylinder defines a map of bisimplicial groupoids
\[\cylr: \bigl(\strat_{\star,\bullet}^{\pcbl}\bigr)^{\op}\longrightarrow \bun(\Delta^\star\times\Delta^\bullet_e)~.\]
This sends an object of $\strat_{p,q}^{\pcbl}$ given by
\[\xymatrix{
X_p\ar[drr]\ar[r]&X_{p-1}\ar[dr]\ar[r]&\ldots\ar[r]&X_1\ar[r]\ar[dl]&X_0\ar[dll]\\
&&\Delta^q_e\\}
\]
to the constructible bundle of stratified spaces
\[\cylr\bigl(X_p\ra\ldots \ra X_0\bigr) \longrightarrow \Delta^p\times\Delta^q_e\]
which is an object of the groupoid $\bun(\Delta^p\times\Delta^q_e)$. This is exactly Observation \ref{obs.cylr} applied in the case $K=\Delta^q_e$. We show that the map of spaces
\[\bigl|(\strat^{\pcbl}_{p,\bullet})^{\op}\bigr|\longrightarrow \bigl|\bun(\Delta^p\times\Delta^\bullet_e)\bigr|\]
is an inclusion of components. First, it is immediate that this is an equivalence for $p=0$, since even more is true: the map
\[(\strat^{\pcbl}_{0,\bullet})^{\op}\longrightarrow \bun(\Delta^\bullet_e)\]
is an isomorphism of simplicial groupoids. It now suffices, since both sides satisfy the Segal condition, to show the case of $p=1$.

To do so, we again use the auxiliary space $\Bun^{\sf triv}$ of constructible bundles with choices of trivializations along strata. Let $\Bun^{\sf triv}(\Delta^1)_r$ be subspace of components which are in the image of the reversed cylinder. Then we have a diagram
\[\xymatrix{
\Bun^{\sf triv}(\Delta^1)_r\ar@/_1pc/@{-->}[d]\ar@{^{(}->}[rr]&&\Bun^{\sf triv}(\Delta^1)\ar[d]^{\simeq}\\
\bigl|(\strat^{\pcbl}_{1,\bullet})^{\op}\bigr|\ar[u]\ar[rr]&&\bigl|\bun(\Delta^1\times\Delta^\bullet_e)\bigr|\\}
\]
where the map above has an immediate retraction $\Bun^{\sf triv}(\Delta^1)_r \ra \bigl|(\strat^{\pcbl}_{1,\bullet})^{\op}\bigr|$. This is immediately seen to be a homotopy equivalence, so the assertion follows. Further, this exchanges injective/surjective maps $X_1\ra X_0$ with maps which are closed/creation, direct from the definition of the latter.

\end{proof}

\end{document}